\newcommand{\ds}{\displaystyle}
\newcommand\ra{\rightarrow}
\newcommand{\iso}{\cong}
\numberwithin{equation}{section}
\newtheorem{thm}[equation]{Theorem}
\newtheorem{lem}[equation]{Lemma}
\newtheorem{cor}[equation]{Corollary}
\newtheorem{prop}[equation]{Proposition}
\newtheorem{qn}[equation]{Question}
\theoremstyle{definition}
\newtheorem{defn}[equation]{Definition}
\newtheorem{ex}[equation]{Example}
\theoremstyle{remark}
\newtheorem{rem}[equation]{Remark}
\theoremstyle{remark}
\newcommand{\ovl}{\overline}
\subjclass[2010]{20C20 (20G15)}
\keywords{Representations of algebraic groups; reductive algebraic groups; conjugacy classes; nonabelian $1$-cohomology}
\date{October 10, 2017}
\title[On a question of K\"ulshammer for homomorphisms of algebraic groups]
{On a question of K\"ulshammer for homomorphisms of algebraic groups}
\author[D. Lond]{Daniel Lond}
\address
{Weta Digital Ltd.,
P.O.\ Box 15208,
9--11 Manuka Street,
Miramar,
Wellington,
New Zealand}
\email{dlond@wetafx.co.nz}
\author[B.\ Martin]{Benjamin Martin}
\address
{Department of Mathematics,
University of Aberdeen,
King's College,
Fraser Noble Building,
Aberdeen AB24 3UE,
United Kingdom}
\email{B.Martin@abdn.ac.uk}
\begin{document}

\begin{abstract}
 Let $G$ be a linear algebraic group over an algebraically closed field of characteristic $p\geq 0$.  We show that if $H_1$ and $H_2$ are connected subgroups of $G$ such that $H_1$ and $H_2$ have a common maximal unipotent subgroup and $H_1/R_u(H_1)$ and $H_2/R_u(H_2)$ are semisimple, then $H_1$ and $H_2$ are $G$-conjugate.  Moreover, we show that if $H$ is a semisimple linear algebraic group with maximal unipotent subgroup $U$ then for any algebraic group homomorphism $\sigma\colon U\ra G$, there are only finitely many $G$-conjugacy classes of algebraic group homomorphisms $\rho\colon H\ra G$ such that $\rho|_U$ is $G$-conjugate to $\sigma$.  This answers an analogue for connected algebraic groups of a question of B. K\"ulshammer.
 
 In K\"ulshammer's original question, $H$ is replaced by a finite group and $U$ by a Sylow $p$-subgroup of $H$; the answer is then known to be no in general.  We obtain some results in the general case when $H$ is non-connected and has positive dimension.  Along the way, we prove existence and conjugacy results for maximal unipotent subgroups of non-connected linear algebraic groups.  When $G$ is reductive, we formulate K\"ulshammer's question and related conjugacy problems in terms of the nonabelian 1-cohomology of unipotent radicals of parabolic subgroups of $G$, and we give some applications of this cohomological approach.  In particular, we analyse the case when $G$ is a semisimple group of rank 2.
\end{abstract}

\maketitle

\section{Introduction}
\label{sec:intro}

Let $G$ be a linear algebraic group over an algebraically closed field $k$.  A fundamental problem is to describe the subgroup structure of $G$.  Much  effort has been put into doing this when $G$ is simple (see \cite{liebeckseitz0}, \cite{liebecktesterman}, \cite{stewartG2}, \cite{stewartF4}, for example).  We prove the following result concerning subgroups of an arbitrary $G$.

\begin{thm}
\label{thm:samesubgp}
 Let $H_1$ and $H_2$ be connected subgroups of $G$ such that $R_u(H_1)= R_u(H_2)$ and $H_1/R_u(H_1)$ and $H_2/R_u(H_2)$ are semisimple.  Suppose $H_1$ and $H_2$ have a common maximal unipotent subgroup $U$.  Then $H_1$ and $H_2$ are $N_G(U)$-conjugate.
\end{thm}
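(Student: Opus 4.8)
The plan is to reduce first to the case where $H_1$ and $H_2$ are semisimple, and then to pin down each $H_i$ from the common data $(U,G)$ using the Borel–Tits theorem. Put $R:=R_u(H_1)=R_u(H_2)$. Since $R$ is a normal unipotent subgroup of each $H_i$ it lies in every maximal unipotent subgroup of $H_i$, so $R\le U$; and $H_1,H_2\le N:=N_G(R)$, which is a linear algebraic group with $R$ normal. Form $\ovl G:=N/R$: the images $\ovl H_i:=H_i/R$ are connected semisimple, and $\ovl U:=U/R$ is a common maximal unipotent subgroup of $\ovl H_1$ and $\ovl H_2$. An element of $N$ normalises $U$ modulo $R$ iff it normalises $UR=U$, so $N_{\ovl G}(\ovl U)$ is the image of $N_N(U)\le N_G(U)$; thus any $\ovl n\in N_{\ovl G}(\ovl U)$ with $\ovl n\ovl H_1\ovl n^{-1}=\ovl H_2$ lifts to $n\in N_N(U)\le N_G(U)$, and since $n$ normalises $R\le H_i$ this lifts to $nH_1n^{-1}=H_2$. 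So we may assume $R=1$, i.e.\ $H_1,H_2$ semisimple (and nontrivial, else there is nothing to prove).

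Next I would observe that $B_i:=N_{H_i}(U)$ is the unique Borel subgroup of $H_i$ containing $U$, with $R_u(B_i)=U$: any Borel $B'$ of $H_i$ containing the connected unipotent group $U$ has $U\le R_u(B')$ and $\dim U=\dim R_u(B')$, so $U=R_u(B')$, and then the parabolic $N_{H_i}(U)\supseteq B'$ has unipotent radical $U=R_u(B')$, forcing $N_{H_i}(U)=B'$. Now apply the Borel–Tits theorem to the unipotent subgroup $U$ of $G$: there is a parabolic subgroup $P$ of $G$ with $U\le R_u(P)$ and $N_G(U)\le P$. Since $B_i\le N_G(U)\le P$, the group $H_i\cap P$ is parabolic in $H_i$ and contains $B_i$; and $R_u(P)\cap H_i$ is a normal unipotent subgroup of $H_i\cap P$, hence lies in $R_u(H_i\cap P)\le R_u(B_i)=U$, while it contains $U$. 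So $R_u(H_i\cap P)=U$, which forces $H_i\cap P=B_i$, and $R_u(P)\cap H_i=U$. Thus $P$ is a proper parabolic of $G$ with $N_G(U)\le P$ and
\[
 H_1\cap P=B_1,\qquad H_2\cap P=B_2,\qquad R_u(P)\cap H_1=R_u(P)\cap H_2=U .
\]

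It remains to produce $n\in N_G(U)$ with $nH_1n^{-1}=H_2$; as $nUn^{-1}=U$ automatically, and as two opposite maximal unipotent subgroups generate a semisimple group, writing $H_i=\langle U,U_i^-\rangle$ with $U_i^-=R_u(B_i^-)$ for a Borel $B_i^-=T_i\ltimes U_i^-$ of $H_i$ opposite to $B_i$, it suffices to find $n\in N_G(U)$ with $nU_1^-n^{-1}=U_2^-$ (then $nH_1n^{-1}=\langle U,nU_1^-n^{-1}\rangle=H_2$). Here I would first choose the maximal tori $T_i$ so that, after replacing $H_2$ by a conjugate under $N_G(U)^\circ$ — which changes neither $U$ nor the hypotheses — we have $T_1,T_2\le S$ for a common maximal torus $S$ of $N_G(U)^\circ$; fixing a Levi decomposition $P=L\ltimes R_u(P)$ with $S\le L$ and an opposite parabolic $P^-$ with $P\cap P^-=L$, one finds $U_i^-\cap P\le U_i^-\cap B_i=1$, so that $U_i^-$ is governed by the transversality of $H_i$ to $P$. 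The remaining point — and the one I expect to be the main obstacle — is to show that the $U_i^-$ are $N_G(U)$-conjugate: the group $U_i^-$ is the unique maximal unipotent subgroup of $H_i$ that is transverse to $P$ and normalised by $T_i\le S$, and one must exploit the shared data $(U,P,S)$, for instance by analysing $N_G(U)$ through its image in $L$ together with its unipotent part $N_{R_u(P)}(U)$, and by an induction (replacing $G$ by a smaller reductive group containing $U,U_1^-,U_2^-$, or by a Borel–Tits analysis of the $U_i^-$ relative to an opposite parabolic chosen to work for both $i$), to move $U_1^-$ onto $U_2^-$ while fixing $U$.
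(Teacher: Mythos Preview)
Your reduction to the semisimple case is fine and matches the paper's. After that, however, the argument is incomplete: you explicitly flag the step of moving $U_1^-$ onto $U_2^-$ inside $N_G(U)$ as ``the main obstacle'' and offer only heuristic suggestions (induction, analysis of $N_G(U)$ via its Levi and unipotent parts). None of these are carried out, and it is not clear that the transversality data you have extracted from Borel--Tits --- namely $H_i\cap P=B_i$ and $R_u(P)\cap H_i=U$ --- are enough to force $U_1^-$ and $U_2^-$ into the same $N_G(U)$-orbit. So as written there is a genuine gap at the heart of the proof. (There is also a smaller issue: the Borel--Tits construction and the subsequent use of Levi decompositions and opposite parabolics tacitly assume $G$ is reductive, which the theorem does not.)

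The paper avoids all of this structural analysis by a short geometric-invariant-theory argument. First, it observes that it is enough to prove $G$-conjugacy: if $gH_1g^{-1}=H_2$ then $gUg^{-1}$ is a maximal unipotent subgroup of $H_2$, hence $H_2$-conjugate to $U$, and correcting $g$ by an element of $H_2$ lands it in $N_G(U)$. This already dissolves your ``main obstacle''. For $G$-conjugacy in the semisimple case, the paper lets $H_1$ act on the affine variety $G/H_2$ and applies Kempf's instability theorem: there is a cocharacter $\lambda\in Y(H_1)$ sending the base point to a limit $x'$ with $H_1\cdot x'$ closed, and a semicontinuity lemma shows $(H_1)_{x'}$ contains a maximal unipotent subgroup of $H_1$. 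Since $(H_1)_{x'}$ is reductive (closed orbit) and a reductive subgroup of a semisimple group containing a maximal unipotent subgroup must be the whole group, one gets $(H_1)_{x'}=H_1$, hence $H_1\le gH_2g^{-1}$; symmetry finishes. This argument uses no Borel--Tits and no tracking of opposite unipotents, and it works for arbitrary $G$.
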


\noindent Here $N_G(U)$ denotes the normaliser of $U$ in $G$ and $R_u(M)$ denotes the unipotent radical of $M$.

Given another linear algebraic group $H$, we define a {\em representation} of $H$ in $G$ to be a homomorphism of algebraic groups from $H$ to $G$; we denote by ${\rm Hom}(H,G)$ the set of representations of $H$ in $G$.  We say that $\rho\in {\rm Hom}(H,G)$ is {\em faithful} if $\rho$ is injective.  If $M\leq G$ then $M$ acts on ${\rm Hom}(H,G)$ by $(m\cdot \rho)(h)= m\rho(h)m^{-1}$ for $h\in H$, $\rho\in {\rm Hom}(H,G)$ and $m\in M$; we call the orbits {\em $M$-conjugacy classes}.  The image of a representation is a subgroup, so understanding subgroups helps us to understand representations (and vice versa).  Here is a counterpart to Theorem~\ref{thm:samesubgp} in terms of representations.

\begin{thm}
\label{thm:main}
 Suppose $H$ is connected and $H/R_u(H)$ is semisimple, and let $U$ be a maximal unipotent subgroup of $H$.  If $\rho_1, \rho_2\colon H\ra G$ are representations such that $\rho_1|_U= \rho_2|_U$ then $\rho_1$ and $\rho_2$ are $C_G(\rho_1(U))$-conjugate.
\end{thm}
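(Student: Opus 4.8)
The plan is to deduce Theorem~\ref{thm:main} from Theorem~\ref{thm:samesubgp} by working inside the product group $G\times G$, comparing the graph of $(\rho_1,\rho_2)$ with a diagonal copy of $\rho_1(H)$. Write $V:=\rho_1(U)$; since $\rho_1|_U=\rho_2|_U$ we have $V=\rho_2(U)$ and $C_G(\rho_1(U))=C_G(V)$, so what must be shown is that $\rho_1$ and $\rho_2$ lie in one $C_G(V)$-orbit.

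First I would introduce the homomorphisms of algebraic groups $\Phi,\Psi\colon H\ra G\times G$ given by $\Phi(h)=(\rho_1(h),\rho_2(h))$ and $\Psi(h)=(\rho_1(h),\rho_1(h))$, and set $\widetilde H:=\Phi(H)$ and $\widetilde H_1:=\Psi(H)$. These are closed connected subgroups of $G\times G$, and each is a quotient of $H$, so, since $H/R_u(H)$ is semisimple, standard facts give that $\widetilde H/R_u(\widetilde H)$ and $\widetilde H_1/R_u(\widetilde H_1)$ are semisimple and that $R_u(\widetilde H)=\Phi(R_u(H))$, $R_u(\widetilde H_1)=\Psi(R_u(H))$. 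Because $R_u(H)\subseteq U$ and $\rho_1|_U=\rho_2|_U$, both radicals equal $\{(x,x):x\in\rho_1(R_u(H))\}$, so in particular $R_u(\widetilde H)=R_u(\widetilde H_1)$. Similarly $\Phi(U)=\Psi(U)=\{(v,v):v\in V\}=:\Delta(V)$, and since $\Phi$ and $\Psi$ are surjections of connected groups they carry the maximal unipotent subgroup $U$ of $H$ onto maximal unipotent subgroups of $\widetilde H$ and of $\widetilde H_1$; hence $\Delta(V)$ is a common maximal unipotent subgroup of $\widetilde H$ and $\widetilde H_1$.

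Next I would apply Theorem~\ref{thm:samesubgp}, with $G\times G$ in place of $G$, to $\widetilde H_1$ and $\widetilde H$: these are connected with semisimple reductive quotients, they have the same unipotent radical, and they share the maximal unipotent subgroup $\Delta(V)$. Thus there is $(a,b)\in N_{G\times G}(\Delta(V))$ with $(a,b)\,\widetilde H_1\,(a,b)^{-1}=\widetilde H$. Unravelling $(a,b)\in N_{G\times G}(\Delta(V))$ gives $a,b\in N_G(V)$ together with $ava^{-1}=bvb^{-1}$ for all $v\in V$; equivalently $\gamma:=b^{-1}a\in C_G(V)$ and $a=b\gamma$. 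Set $c:=b\gamma^{-1}b^{-1}$; using $b\in N_G(V)$ and $\gamma\in C_G(V)$ one checks in one line that $c\in C_G(V)$.

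Finally I would read off the conjugacy. A typical element of $\widetilde H_1$ is $(\rho_1(h),\rho_1(h))$, hence a typical element of $\widetilde H=(a,b)\widetilde H_1(a,b)^{-1}$ is $\bigl(a\rho_1(h)a^{-1},\,b\rho_1(h)b^{-1}\bigr)$; substituting $a=b\gamma$ and $c=b\gamma^{-1}b^{-1}$ and simplifying gives $b\rho_1(h)b^{-1}=c\,(a\rho_1(h)a^{-1})\,c^{-1}$, so every element of $\widetilde H$ has the form $(y,cyc^{-1})$. On the other hand every element of $\widetilde H$ equals $(\rho_1(h),\rho_2(h))$ for some $h\in H$; comparing the two coordinates forces $\rho_2(h)=c\rho_1(h)c^{-1}$ for all $h\in H$, i.e.\ $\rho_2=c\cdot\rho_1$ with $c\in C_G(V)=C_G(\rho_1(U))$, which is the assertion. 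I expect the only point requiring care to be the verification in the second paragraph that $\Delta(V)$ really is a maximal unipotent subgroup of both $\widetilde H$ and $\widetilde H_1$ and that their unipotent radicals coincide, so that Theorem~\ref{thm:samesubgp} applies to them verbatim; once that is in place, the extraction of $c$ above is purely formal.
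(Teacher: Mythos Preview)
Your proof is correct and takes a genuinely different route from the paper's.

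The paper proves Theorem~\ref{thm:main} directly by induction on $\dim(G)$: given a Borel subgroup $B=TU$ of $H$, it studies the tori $\rho_1(T)$ and $\rho_2(T)$ inside $N_G(\rho_1(U))$, shows that after a $C_G(\rho_1(U))$-conjugation both lie in a common maximal torus $T'$, and then either finds a nontrivial torus $S\leq T'\cap C_G(\rho_1(U))$ (which forces both images into the proper subgroup $C_G(S)^0$ and allows induction) or deduces that $\rho_1|_B=\rho_2|_B$ and finishes via Lemma~\ref{lem:Borel_det}. Thus the paper treats Theorems~\ref{thm:samesubgp} and \ref{thm:main} as parallel results with independent proofs.

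You instead deduce Theorem~\ref{thm:main} from Theorem~\ref{thm:samesubgp} by the graph trick in $G\times G$: the graph $\widetilde H=\Phi(H)$ and the diagonal image $\widetilde H_1=\Psi(H)$ share the maximal unipotent subgroup $\Delta(V)$ and the same unipotent radical, so Theorem~\ref{thm:samesubgp} gives an element $(a,b)\in N_{G\times G}(\Delta(V))$ conjugating one to the other, and a short computation extracts $c\in C_G(V)$ with $\rho_2=c\cdot\rho_1$. The verification that $R_u(\widetilde H)=R_u(\widetilde H_1)$ and that $\Delta(V)$ is maximal unipotent in both images is exactly as you describe (for the latter one may cite Proposition~\ref{prop:epi}, though for connected groups it is the classical fact that epimorphisms take Borels to Borels). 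Your final step is also fine: once you know $\widetilde H\subseteq\{(y,cyc^{-1}):y\in G\}$, comparing coordinates of $(\rho_1(h),\rho_2(h))\in\widetilde H$ immediately gives $\rho_2(h)=c\rho_1(h)c^{-1}$.

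Your argument is cleaner and more conceptual, making Theorem~\ref{thm:main} a formal corollary of Theorem~\ref{thm:samesubgp}; it avoids the induction and the delicate torus bookkeeping. The cost is that it leans on the geometric--invariant--theoretic machinery (Lemma~\ref{lem:persistence} and Richardson's affine-quotient criterion) already invested in the proof of Theorem~\ref{thm:samesubgp}, whereas the paper's direct proof of Theorem~\ref{thm:main} uses only elementary structure theory together with Lemma~\ref{lem:Borel_det}.
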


\noindent (Here $C_G(\rho_1(U))$ denotes the centraliser of $\rho_1(U)$ in $G$.)

These results were inspired by work of Burkhard K\"ulshammer \cite{kuls}, which we briefly discuss now.  It is well known that if $G$ is reductive, $F$ is a finite group and either ${\rm char}(k)= 0$ or ${\rm char}(k)> 0$ and $|F|$ is coprime to ${\rm char}(k)$, then ${\rm Hom}(F,G)$ is a finite union of $G$-conjugacy classes (see \cite[I.4, Thm.\ 2]{slodowy} and Lemma~\ref{lem:finitelinred}).  Now suppose ${\rm char}(k)= p> 0$.  If $p$ divides $|F|$ then simple examples show that ${\rm Hom}(F,G)$ can contain infinitely many $G$-conjugacy classes (see \cite[Sec.\ 1]{BMR_kuls}, for example).  To obtain a useful finiteness result, one needs to impose extra restrictions.  Let $F_p$ be a Sylow $p$-subgroup of $F$.  K\"ulshammer asked whether there are only finitely many $G$-conjugacy classes of representations $\rho\in {\rm Hom}(F,G)$ such that $\rho|_{F_p}$ lies in a fixed $G$-conjugacy class \cite[Sec.\ 2]{kuls}.  We give a version of this question that applies to an arbitrary linear algebraic group $H$.

\begin{qn}
\label{qn:algKQ}
 Let $U$ be a maximal unipotent subgroup of $H$.  Is it true that for all $\sigma\in {\rm Hom}(U,G)$, there are only finitely many $G$-conjugacy classes of representations $\rho\in {\rm Hom}(H,G)$ such that $\rho|_U$ is $G$-conjugate to $\sigma$?
\end{qn}

\noindent Theorem~\ref{thm:main} shows that the answer is yes if $H$ is semisimple---in fact, in this case $\rho$ is unique up to $G$-conjugacy if it exists.  Note that maximal unipotent subgroups of $H$ exist and are unique up to conjugacy; this is well known when $H$ is connected, and we give a proof below in the non-connected case (Proposition~\ref{prop:nonconnmaxunipt}).  Because of this, it is easily seen that the answer to Question~\ref{qn:algKQ} for a given pair $(G,H)$ does not depend on the choice of $U$.  If $H$ is finite and ${\rm char}(k)= p> 0$ then maximal unipotent subgroups of $H$ are the same as Sylow $p$-subgroups of $H$ (see Proposition~\ref{prop:nonconnmaxunipt}(a)), so we recover K\"ulshammer's original question.  Our formulation of the question makes sense in characteristic 0 as well.

Assume for the rest of the paragraph that $H$ is finite and $p> 0$.  K\"ulshammer proved using a straightforward representation-theoretic argument that the answer to Question~\ref{qn:algKQ} is yes when $G= {\rm GL}_n(k)$ \cite[Sec.\ 2]{kuls}.  Slodowy showed that the answer is yes for connected reductive $G$ when $p$ is a good prime for $G$ \cite[I.5, Thm.\ 3]{slodowy}: one embeds $G$ in some ${\rm GL}_n(k)$ and studies the behaviour of the induced map ${\rm Hom}(H,G)\ra {\rm Hom}(H,{\rm GL}_n(k))$, applying a celebrated geometric argument of Richardson \cite[Sec.\ 3]{rich2}.  In particular, the answer is yes for any $p> 0$ if every simple component of $G$ is of type $A$.  On the other hand, an example of Cram shows that the answer is no for $H= S_3$, $p= 2$ and $G$ a certain 3-dimensional non-connected group with $G^0$ unipotent \cite{cram}.  Bate, R\"ohrle and the second author recently gave an example for $G$ simple of type $G_2$ in characteristic 2 for which the answer is no \cite{BMR_kuls}.  Uchiyama has constructed further such examples for $G$ of type $E_6$, $E_7$ and $E_8$ in characteristic 2 \cite[Sec.\ 3]{uchiyama2}, \cite[Sec.\ 6.1]{uchiyama3}.

Now suppose $H$ is connected and positive-dimensional.  If $H$ has a nontrivial torus as a quotient and $G$ contains a nontrivial torus $S$ then the answer to Question~\ref{qn:algKQ} is no.  For just take a nontrivial representation $\rho\colon H\ra S$; it is easily seen that the representations $\rho_n\colon H\ra G$ defined by $\rho_n(h)= \rho(h)^n$ for $n\in {\mathbb N}$ are pairwise non-$G$-conjugate.  Note that $\rho_n|_U$ is the trivial representation for each $n$, so the conclusion of Theorem~\ref{thm:main} fails.  This is the reason for the semisimplicity hypothesis in Theorems~\ref{thm:samesubgp} and \ref{thm:main} (and elsewhere in the paper).  By a similar argument, if $G$ contains a torus of dimension at least 2 then Theorem~\ref{thm:samesubgp} can fail without the semisimplicity assumption on $H^0$.  But under suitable hypotheses, Theorem~\ref{thm:main} is a stepping stone which lets us extend results from the case when $H$ is finite to the case when $H$ has positive dimension (cf.\ the paragraph following Theorem~\ref{thm:GcrKQ} below).

The proofs of Theorems~\ref{thm:samesubgp} and \ref{thm:main} are quite short; they are based on geometric invariant theory and standard structure theory of linear algebraic groups.  In some of the subsequent results, the theory of $G$-complete reducibility is important.  Recall \cite{BMR} that if $G$ is connected and reductive then a subgroup $H$ of $G$ is said to be {\em $G$-completely reducible} ($G$-cr) if whenever $H$ is contained in a parabolic subgroup $P$ of $G$, $H$ is contained in a Levi subgroup of $P$ (see Section~\ref{sec:prelim} for the definition when $G$ is non-connected).  We say a representation of $H$ in $G$ is $G$-cr if its image is $G$-cr.  We obtain a result for non-connected groups as well if we restrict ourselves to $G$-cr representations:

\begin{thm}
\label{thm:GcrKQ}
 Let $G$ be reductive, and suppose $H^0/R_u(H)$ is semisimple.  Let $U$ be a maximal unipotent subgroup of $H$ and let $\sigma\in {\rm Hom}(U,G)$.  Then there are only finitely many $G$-conjugacy classes of $G$-cr representations $\rho\in {\rm Hom}(H,G)$ such that $\rho|_U$ is $G$-conjugate to $\sigma$.
\end{thm}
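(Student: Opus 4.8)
The plan is to bootstrap from Theorem~\ref{thm:main}, which settles the connected semisimple-modulo-$R_u$ case, to the general statement, using $G$-complete reducibility to straighten out the structure theory and reducing the residual ambiguity to a problem about the finite group $H/H^0$. First I would make two routine reductions. Since maximal unipotent subgroups of $H$ are conjugate (Proposition~\ref{prop:nonconnmaxunipt}) and the $G$-cr condition is conjugation-invariant, after replacing $\sigma$ by a $G$-conjugate it suffices to bound the number of $G$-conjugacy classes of $G$-cr $\rho$ with $\rho|_U=\sigma$. A $G$-cr subgroup of a reductive group is reductive, so $\rho(R_u(H))$ — being connected, unipotent and normal in $\rho(H)$ — is trivial; hence every such $\rho$ factors through $H/R_u(H)$, and I may assume $R_u(H)=1$, so that $H^0$ is semisimple. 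Write $\Gamma=H/H^0$ (finite), $U_0=U\cap H^0$, which is a maximal unipotent subgroup of $H^0$ by Proposition~\ref{prop:nonconnmaxunipt}, and $\overline U=UH^0/H^0$, which is a Sylow $p$-subgroup of $\Gamma$ when $\mathrm{char}(k)=p>0$ (in characteristic $0$, $\Gamma=1$ and we are already done by Theorem~\ref{thm:main}).

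Next I would pin down $\rho|_{H^0}$. By Theorem~\ref{thm:main} applied to the pair $(H^0,U_0)$, any two representations of $H^0$ in $G$ agreeing on $U_0$ are conjugate; since $\rho|_{U_0}=\sigma|_{U_0}$ is fixed, all the restrictions $\rho|_{H^0}$ that occur lie in a single $G$-conjugacy class, so after conjugating $\rho$ I may assume $\rho|_{H^0}=\tau$ for a fixed $\tau$. Then $\rho(H)$ normalises $N:=\tau(H^0)$, so $\rho(H)\leq M:=N_G(N)$. The group $N$ is the identity component of the reductive group $\rho(H)$ (as $[H:H^0]<\infty$) and is a quotient of the semisimple group $H^0$, hence $N$ is semisimple; being normal in the $G$-cr group $\rho(H)$, it is itself $G$-cr. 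Therefore $C_G(N)$ is reductive, and because $\mathrm{Aut}(N)$ has finite-index identity component $\mathrm{Inn}(N)\cong N/Z(N)$, a short computation shows $M=N_G(N)$ is reductive, with $N\trianglelefteq M$ and $M/N$ again reductive.

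Composing $\rho$ with the quotient map $M\to M/N$ kills $H^0$ and produces $\overline\rho\colon\Gamma\to M/N$; the usual transfer results for $G$-complete reducibility (passage to a reductive overgroup of $\rho(H)$, then to a quotient group) show $\overline\rho(\Gamma)$ is $(M/N)$-cr. By the finite-group case of the theorem — equivalently, by the finiteness of the number of conjugacy classes of $G$-cr subgroups of bounded order — there are only finitely many $(M/N)$-conjugacy classes of such $\overline\rho$, and I fix one. It then remains to bound the lifts $\rho$ of $\overline\rho$ along $N\to M\to M/N$ that restrict to $\tau$ on $H^0$ and to $\sigma$ on $U$. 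This is where the hypothesis on $U$ is decisive: the constraint $\rho|_U=\sigma$ fixes $\rho$ on the preimage of the Sylow $p$-subgroup $\overline U$, so the remaining ambiguity is measured by a twisted nonabelian cocycle of $\Gamma$ whose ``$p$-unipotent'' part is killed on restriction to $\overline U$ (a restriction–corestriction argument, using that $[\Gamma:\overline U]$ is prime to $p$), while the part that behaves reductively again yields only finitely many classes via $G$-complete reducibility and the finite case. Assembling these bounds proves the theorem.

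The main obstacle is this last step: organising the lifting problem for $N\to M\to M/N$ so that the hypothesis on $U$ can be brought to bear, which requires cleanly isolating the piece of the ambiguity that the Sylow-$p$ restriction annihilates from the piece governed by the finitely-many-$G$-cr-classes input, and verifying in turn the hypotheses of each $G$-complete-reducibility transfer lemma (normaliser, reductive overgroup, quotient) in the present non-connected — and possibly non-separable — setting.
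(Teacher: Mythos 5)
Your opening reductions are sound and your first main step --- using Theorem~\ref{thm:main} to pin down $\rho|_{H^0}$ up to conjugacy, so that one may fix $\tau=\rho|_{H^0}$ and work inside $N_G(N)$ with $N=\tau(H^0)$ semisimple and $G$-cr --- is exactly what the paper does. (One small error on the way: in characteristic $0$ the component group $\Gamma=H/H^0$ need not be trivial, so you are not ``already done by Theorem~\ref{thm:main}''; the characteristic $0$ case needs Theorem~\ref{thm:char0KQ}, which is available but is a different argument.)

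The genuine gap is in your final step, bounding the lifts of a fixed $\overline\rho\colon\Gamma\to M/N$. The discrepancy between two such lifts agreeing on $H^0$ is a twisted cocycle of $\Gamma$ with values in the \emph{reductive} group $C_G(N)$, not a unipotent one, so there is no ``$p$-unipotent part'' to kill by a restriction--corestriction argument from $\overline U$ to $\Gamma$; the averaging trick (as in Lemma~\ref{lem:abeliancoh}) requires abelian unipotent coefficients, and if a transfer argument of the kind you describe worked for general nonabelian coefficients it would settle K\"ulshammer's question affirmatively, contradicting the $G_2$ counterexample. In fact the condition on $\rho|_U$ plays no role at all beyond fixing $\rho|_{H^0}$: the paper instead chooses (using $p>0$ and Remark~\ref{rem:reduction}) a \emph{finite} subgroup $F\leq H$ with $FH^0=H$, notes that $\rho$ is determined by $\tau$ together with $\rho|_F$, shows that $\rho(F)$ lies in the reductive group $A=\rho(F)C_G(N)$ and is $A$-cr (via the transfer results of \cite{BMRT} through $N_G(N)/N$ and $A/C_G(N)^0$), and then invokes Theorem~\ref{thm:cr_finite} for ${\rm Hom}(F,A)_{\rm cr}$; since $A$ is a finite extension of $C_G(N)$ this gives finitely many $C_G(N)$-classes. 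So the residual ambiguity is absorbed entirely by the $G$-cr hypothesis applied a second time at the finite level, not by any cohomological vanishing. Your sketch gestures at this ingredient (``the part that behaves reductively'') but does not isolate it, and the proposed decomposition into a cohomologically-killed piece and a reductive piece does not exist as stated.
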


\noindent In the special case when $H$ is finite and $G$ is reductive, Theorem~\ref{thm:cr_finite} shows that there are only finitely many $G$-conjugacy classes of $G$-cr representations of $H$ in $G$.  The proof of Theorem~\ref{thm:GcrKQ} rests on an argument that combines this special case with Theorem~\ref{thm:main}.  A similar argument also allows us to settle the characteristic 0 case:

\begin{thm}
\label{thm:char0KQ}
 Suppose ${\rm char}(k)= 0$ and $H^0/R_u(H)$ is semisimple.  Then the answer to Question~\ref{qn:algKQ} is yes for $H$.
\end{thm}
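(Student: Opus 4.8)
The plan is to reduce the question for $H$ to the corresponding statement for the connected group $H^0$, which is already settled by Theorem~\ref{thm:main}, and then to bound the ways of extending a representation of $H^0$ to $H$ by a finiteness result for the nonabelian $1$-cohomology of the finite group $\Gamma := H/H^0$. First I would record two consequences of ${\rm char}(k)=0$: every unipotent subgroup of $H$ is connected, so the given maximal unipotent subgroup $U$ lies in $H^0$ and is maximal unipotent there; and $R_u(H)=R_u(H^0)$, so $H^0/R_u(H^0)$ is semisimple. Given $\sigma\in{\rm Hom}(U,G)$, Theorem~\ref{thm:main} applied to $H^0$ shows that the set of $\rho\in{\rm Hom}(H^0,G)$ with $\rho|_U$ $G$-conjugate to $\sigma$ is a single $G$-conjugacy class, or empty. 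If it is empty then no $\rho\in{\rm Hom}(H,G)$ can have $\rho|_U$ $G$-conjugate to $\sigma$ (restrict to $H^0$), and we are done; otherwise fix $\rho_0\in{\rm Hom}(H^0,G)$ in this class. A routine bookkeeping argument (conjugate the restriction to $H^0$ into $\rho_0$; two such conjugating elements differ by an element of $C:=C_G(\rho_0(H^0))$) then identifies the set of $G$-conjugacy classes of $\rho\in{\rm Hom}(H,G)$ with $\rho|_U$ $G$-conjugate to $\sigma$ with the set of $C$-conjugacy classes of homomorphisms $\rho\colon H\ra G$ extending $\rho_0$.

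Next I would fix one extension $\rho_*\colon H\ra G$ of $\rho_0$ (if there is none, we are again done). For any extension $\rho$ the function $h\mapsto\rho(h)\rho_*(h)^{-1}$ takes values in $C$ and is trivial on $H^0$. Since $\rho_0(H^0)$ and $C$ centralise one another, conjugation by $\rho_*(h)$ on $C$ depends only on the coset $hH^0$, so this function descends to a $1$-cocycle of $\Gamma$ with values in $C$, where $\Gamma$ acts on $C$ via $\rho_*$. This construction gives a bijection between the set of extensions of $\rho_0$ modulo $C$-conjugacy and $H^1(\Gamma,C)$ (one checks that every cocycle arises, and that, as $H^0$ has finite index in $H$, the resulting abstract homomorphisms are automatically morphisms of algebraic groups). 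So the whole problem comes down to showing that $H^1(\Gamma,C)$ is finite, with $\Gamma$ finite.

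For that I would peel off the unipotent radical. Put $C^{+}=R_u(C)$, a $\Gamma$-stable unipotent subgroup with $C/C^{+}$ reductive. Because $|\Gamma|$ is invertible in $k$, one has $H^1(\Gamma,V)=0$ for every unipotent $\Gamma$-group $V$ (filter $V$ by $\Gamma$-stable subgroups with vector-group quotients and induct), and the same holds for every twist of $C^{+}$; the usual exact sequence of pointed sets together with the twisting argument then shows that $H^1(\Gamma,C)\ra H^1(\Gamma,C/C^{+})$ is injective. Finally $H^1(\Gamma,C/C^{+})$ is finite: its elements correspond to $(C/C^{+})$-conjugacy classes of homomorphic splittings of $(C/C^{+})\rtimes\Gamma\to\Gamma$, hence in particular to homomorphisms $\Gamma\to(C/C^{+})\rtimes\Gamma$ up to conjugacy; the group $(C/C^{+})\rtimes\Gamma$ is reductive, $\Gamma$ is finite, and in characteristic $0$ every representation of a finite group is $G$-cr, so Theorem~\ref{thm:cr_finite} bounds the number of such conjugacy classes. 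This establishes the finiteness asserted in Question~\ref{qn:algKQ}.

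The bulk of the proof is the passage from $H^0$ to $H$, and the genuinely hard step is the finiteness of $H^1(\Gamma,C)$; this is precisely the place where the hypothesis ${\rm char}(k)=0$ is used in an essential way, since in characteristic $p$ dividing $|\Gamma|$ the vanishing $H^1(\Gamma,C^{+})=0$ fails, and that failure is exactly what underlies the known counterexamples in the finite, positive-characteristic case. I would also take care over the bookkeeping in the first paragraph and over the fact that a homomorphism of abstract groups $H\to G$ agreeing with a fixed morphism on the finite-index subgroup $H^0$ is automatically a morphism of algebraic groups, but these are routine.
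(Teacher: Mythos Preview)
Your argument is correct and follows the same overall strategy as the paper: reduce from $H$ to $H^0$ via Theorem~\ref{thm:main}, fix $\rho_0\in{\rm Hom}(H^0,G)$, and then bound the $C_G(\rho_0(H^0))$-conjugacy classes of extensions of $\rho_0$ to $H$ using the finiteness result for homomorphisms of a finite linearly reductive group (here $\Gamma=H/H^0$, or a lift of it) into an algebraic group.

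The implementation of the last step differs. The paper uses Mostow's Levi decomposition to lift $\Gamma$ to a finite subgroup $F\leq H$ with $H=FH^0$, observes that every extension $\rho$ satisfies $\rho(F)\subseteq \rho_*(F)\,C_G(\rho_0(H^0))$, and then applies Lemma~\ref{lem:finitelinred} directly to ${\rm Hom}(F,\rho_*(F)C_G(\rho_0(H^0)))$; no reductivity of the target is needed, so the unipotent radical of $C$ never enters. You instead work with the quotient $\Gamma$ and parametrise extensions by $H^1(\Gamma,C)$, then peel off $R_u(C)$ via vanishing of $H^1(\Gamma,\,\cdot\,)$ on unipotent groups (Lemma~\ref{lem:nonab_lin_red}) plus the standard twisting argument, and finally invoke Theorem~\ref{thm:cr_finite} for the reductive target $(C/R_u(C))\rtimes\Gamma$. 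Your route avoids appealing to Mostow's theorem, at the price of the nonabelian-cohomology exact-sequence and twisting machinery; the paper's route is more elementary once the lift $F$ is in hand. In effect your ``peel off $R_u(C)$'' step is a reproof, in this special case, of Lemma~\ref{lem:finitelinred} along the lines of Remark~\ref{rem:altpf}.
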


An important tool for studying subgroups of, and representations into, a reductive group $G$ is nonabelian 1-cohomology.  Let $\rho\in {\rm Hom}(H,G)$ and let $P$ be a parabolic subgroup of $G$ such that $\rho(H)\subseteq P$.  Then
representations near $\rho$ in an appropriate sense can be understood 
in terms of a certain nonabelian 1-cohomology $H^1(H,V)$, where $V= R_u(P)$ (see Section~\ref{sec:H1}).  In particular, if $H^1(H,V)$ vanishes then $\rho(H)$ is $V$-conjugate to a subgroup of a Levi subgroup $L$ of $P$.  Liebeck and Seitz used this idea to prove results about $G$-complete reducibility for $G$ simple and of exceptional type when $p$ is not too small \cite{liebeckseitz0}.  Stewart investigated \mbox{(non-)$G$-completely} reducible subgroups for small $p$ \cite{stewartG2}, \cite{stewartF4} and proved some general results about the behaviour of the first- and higher nonabelian cohomologies of $V$ \cite{stewartTAMS}.

In our setting we have an extra ingredient: restricting $\rho$ to a maximal unipotent subgroup $U$ of $H$ gives rise to a map of 1-cohomologies $H^1(H,V)\ra H^1(U,V)$, and the fibres of this map give us information relevant to Question~\ref{qn:algKQ}.  In Section~\ref{sec:H1} we study this construction and give a cohomological criterion (Theorem~\ref{main_thm}) which in some cases helps to show that Question~\ref{qn:algKQ} has positive answer---see Section~\ref{sec:appln} and Theorem~\ref{thm:lowrank}.

The paper is set out as follows.  In Section~\ref{sec:prelim} we give some preliminary results on algebraic groups and their actions, and in Section~\ref{sec:maxlunipt} we study maximal unipotent subgroups of non-connected groups.  We prove Theorems~\ref{thm:samesubgp}, \ref{thm:main}, \ref{thm:GcrKQ} and \ref{thm:char0KQ} in Section~\ref{sec:mainproof} (for the latter two, see Theorems~\ref{thm:GcrKQalt} and \ref{thm:linredKQ}, respectively).  In Section~\ref{sec:H1} we describe our cohomological approach and in Section~\ref{sec:appln} we give some applications of it.  In Section~\ref{sec:rank2} we study groups of semisimple rank 2.


\bigskip
\noindent {\bf Acknowledgments}:
Some of the work in this paper was carried out by the first author during his PhD \cite{lond}.  Both authors acknowledge the financial support of Marsden Grants UOC0501, UOC1009 and UOA1021.  We are grateful to Dave Benson and G\"unter Steinke for helpful conversations.  We also thank the referee for their careful reading of the paper.

\section{Preliminaries}
\label{sec:prelim}

We fix an algebraically closed field $k$ of characteristic $p\geq 0$.  All varieties and algebraic groups are defined over $k$ and are affine unless otherwise stated; in particular, all algebraic groups are linear.  By a subgroup of an algebraic group we mean a closed subgroup, and homomorphisms of algebraic groups are understood to be morphisms of varieties.  We assume $G$ and $H$ are possibly non-connected algebraic groups over $k$.  We allow reductive algebraic groups to be non-connected, but we take simple and semisimple algebraic groups to be connected by definition.  If $h\in H$ then we denote by $h_s$ and $h_u$ the semisimple and unipotent part of $h$, respectively.  Given $A_1, A_2\subseteq H$, we write $A_1A_2$ for the product $\{a_1a_2\mid a_1\in A_1, a_2\in A_2\}$.  If $m\in {\mathbb N}$ then we denote by $C_m$ the cyclic group of order $m$ and by $D_{2m}$ the dihedral group of order $2m$.

By an {\em action} of $H$ on a variety $X$, we mean a morphism of varieties $H\times X\ra X$ that is a left action of $H$ on $X$.  Given such an action and given $x\in X$, we denote by $H\cdot x$ the orbit of $x$ and by $H_x$ the stabiliser of $x$.

Recall that $H$ is said to be {\em linearly reductive} if every rational representation of $H$ is completely reducible. If $p= 0$ then $H$ is linearly reductive if and only if $H$ is reductive, while if $p> 0$ then $H$ is linearly reductive if and only if every element of $H$ is semisimple if and only if $H^0$ is a torus and $|H:H^0|$ is coprime to $p$ (see \cite{nagata}).

If $G$ is reductive, $T$ is a maximal torus of $G$ and $M$ is a $T$-stable subgroup of $G$ then we denote by $\Phi_T(M)$ the set of roots of $M$ with respect to $T$.  If $\alpha\in \Phi$ then we denote by $U_\alpha$ the corresponding root group and by $G_\alpha$ the rank 1 semisimple group $\langle U_\alpha\cup U_{-\alpha}\rangle$.

To simplify the statement of our results, we adopt the following convention: if $p= 0$ then by a finite $p$-group we mean the trivial group, and by a Sylow $p$-subgroup of a finite group we mean the trivial subgroup.  Note that unipotent groups are always connected in characteristic 0 (cf.\ \cite[Ex.\ 15.11]{hum}).

By a maximal unipotent subgroup of $H$, we mean a unipotent subgroup $U$---not necessarily proper---that is maximal with respect to inclusion (so $U= H$ if $H$ is unipotent).  If $H$ is connected then the structure of maximal unipotent subgroups is well known: it follows from \cite[30.4]{hum} that every unipotent subgroup of $H$ is contained in a Borel subgroup of $H$ (this is proved for reductive $H$ in {\em loc.\ cit.}, but the general case follows easily).  It now follows from \cite[19.3 Thm.(a) and 21.3, Cor.\ A]{hum} that the maximal unipotent subgroups of connected $H$ are precisely the unipotent radicals of the Borel subgroups of $H$, they are unique up to conjugacy and they are connected; moreover, we see that every unipotent subgroup of $H$ is contained in a maximal unipotent subgroup.  In Section~\ref{sec:maxlunipt}, we establish analogous results for non-connected $H$.

\begin{defn}
 We say that $(G,H)$ is a {\em K\"ulshammer pair} if Question~\ref{qn:algKQ} has positive answer for $G$ and $H$.  We say that $G$ {\em has the K\"ulshammer property} if $(G,H)$ is a K\"ulshammer pair for every $H$ such that $H^0/R_u(H)$ is semisimple.
\end{defn}

\begin{rem}
\label{rem:abstractiso}
 If $H$ is finite then any function $f\colon H\ra G$ is automatically a morphism of varieties.  It follows in this case that if two algebraic groups $G_1$ and $G_2$ are isomorphic as abstract groups then $(G_1, H)$ is a K\"ulshammer pair if and only if $(G_2, H)$ is a K\"ulshammer pair.
\end{rem}

The following result is immediate.

\begin{lem}
\label{lem:prod}
 Let $G_1, G_2$ be algebraic groups.  Then $(G_1\times G_2, H)$ is a K\"ulshammer pair if and only if $(G_1, H)$ and $(G_2, H)$ are K\"ulshammer pairs.
\end{lem}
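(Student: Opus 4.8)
The plan is to exploit the natural identification of $\mathrm{Hom}(H, G_1\times G_2)$ with $\mathrm{Hom}(H,G_1)\times \mathrm{Hom}(H,G_2)$. Writing $\pi_i\colon G_1\times G_2\ra G_i$ for the projections, the map $\rho\mapsto (\pi_1\circ\rho, \pi_2\circ\rho)$ is a bijection onto $\mathrm{Hom}(H,G_1)\times \mathrm{Hom}(H,G_2)$, with inverse $(\rho_1,\rho_2)\mapsto\bigl(h\mapsto(\rho_1(h),\rho_2(h))\bigr)$; the same holds with $U$ in place of $H$. Under this identification, if $\rho=(\rho_1,\rho_2)$ then $\rho|_U=(\rho_1|_U,\rho_2|_U)$, and for $g=(g_1,g_2)\in G_1\times G_2$ one has $g\cdot\rho=(g_1\cdot\rho_1,g_2\cdot\rho_2)$. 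So I would first record that two representations of $H$ (or of $U$) in $G_1\times G_2$ are $(G_1\times G_2)$-conjugate if and only if their two components are $G_1$- and $G_2$-conjugate respectively.

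Next I would fix a maximal unipotent subgroup $U$ of $H$, and for an algebraic group $K$ and $\tau\in\mathrm{Hom}(U,K)$ write $\mathcal{F}_K(\tau)$ for the set of $K$-conjugacy classes of $\rho\in\mathrm{Hom}(H,K)$ with $\rho|_U$ being $K$-conjugate to $\tau$; thus $(K,H)$ is a K\"ulshammer pair exactly when $\mathcal{F}_K(\tau)$ is finite for every $\tau$. The observations above yield, for each $\sigma=(\sigma_1,\sigma_2)\in\mathrm{Hom}(U,G_1\times G_2)$, a bijection $\mathcal{F}_{G_1\times G_2}(\sigma)\to\mathcal{F}_{G_1}(\sigma_1)\times\mathcal{F}_{G_2}(\sigma_2)$, sending $[\rho]$ to $([\pi_1\circ\rho],[\pi_2\circ\rho])$; checking that this is well defined, injective and surjective is routine given the compatibility of the decomposition with conjugation and restriction.

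For the ``if'' direction I would then argue: if $(G_1,H)$ and $(G_2,H)$ are K\"ulshammer pairs, both $\mathcal{F}_{G_1}(\sigma_1)$ and $\mathcal{F}_{G_2}(\sigma_2)$ are finite for all $\sigma_1,\sigma_2$, so $\mathcal{F}_{G_1\times G_2}(\sigma)$ is finite for every $\sigma$, whence $(G_1\times G_2,H)$ is a K\"ulshammer pair. For the converse, suppose $(G_1\times G_2,H)$ is a K\"ulshammer pair and fix $\sigma_1\in\mathrm{Hom}(U,G_1)$. Let $\sigma_2\colon U\ra G_2$ be the trivial homomorphism and put $\sigma=(\sigma_1,\sigma_2)$. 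The trivial representation $H\ra G_2$ restricts to $\sigma_2$ on $U$, so $\mathcal{F}_{G_2}(\sigma_2)\neq\emptyset$; since $\mathcal{F}_{G_1}(\sigma_1)\times\mathcal{F}_{G_2}(\sigma_2)$ is finite and its second factor is nonempty, $\mathcal{F}_{G_1}(\sigma_1)$ is finite. As $\sigma_1$ was arbitrary, $(G_1,H)$ is a K\"ulshammer pair, and by symmetry so is $(G_2,H)$.

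The only point needing any care is that finiteness of a product of two sets does not on its own force each factor to be finite, since one factor could be empty; this is exactly why, in the converse direction, one pads $\sigma_1$ with the trivial homomorphism into $G_2$ so that the complementary fibre is guaranteed nonempty. Beyond that the argument is bookkeeping, which is why the lemma is immediate.
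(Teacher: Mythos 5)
Your proof is correct and is exactly the componentwise argument the paper has in mind when it calls the lemma immediate: the identification $\mathrm{Hom}(H,G_1\times G_2)\cong\mathrm{Hom}(H,G_1)\times\mathrm{Hom}(H,G_2)$ is compatible with restriction to $U$ and with conjugation, so the relevant sets of conjugacy classes decompose as products. Your extra care in the converse direction---padding $\sigma_1$ with the trivial homomorphism into $G_2$ so that the complementary factor is nonempty---is a genuine (if small) point that the paper glosses over, and you handle it correctly.
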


Suppose $H$ is connected, let $B$ be a Borel subgroup of $H$, let $X$ be an affine variety and let $f\colon H\ra X$ be a morphism such that $f(hb)= f(h)$ for all $h\in H$ and all $b\in B$.  Then $H/B$ is projective \cite[21.3\ Thm.]{hum} and $f$ gives rise to a morphism $\ovl{f}$ from $H/B$ to $X$.  Since $H/B$ is connected and $X$ is affine, $\ovl{f}$ must be constant, so $f$ is constant.  In particular, if $V$ is an affine $H$-variety, $v\in V$ and the stabiliser $H_v$ contains $B$ then $H_v= H$: to see this, just apply the argument immediately above to the orbit map $f\colon H\ra V$, $h\mapsto h\cdot v$.

\begin{lem}
\label{lem:common_Borel}
 Let $H_1, H_2$ be connected reductive subgroups of $G$.  Suppose $B$ is a common Borel subgroup of both $H_1$ and $H_2$.  Then $H_1= H_2$.
\end{lem}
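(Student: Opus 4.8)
The plan is to deduce $H_1 = H_2$ directly from the observation recorded in the paragraph immediately preceding Lemma~\ref{lem:common_Borel} (that if a Borel subgroup of a connected group $H_1$ fixes a point of an affine $H_1$-variety, then $H_1$ fixes it), combined with the classical fact that a reductive subgroup of a linear algebraic group is \emph{observable}. By symmetry it suffices to prove $H_1 \subseteq H_2$.

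Since $H_2$ is reductive, it is an observable subgroup of $G$ by the theorem of Bialynicki-Birula, Hochschild and Mostow on extensions of representations of algebraic groups (see also Grosshans's book on algebraic homogeneous spaces); that is, there exist a finite-dimensional rational $G$-module $V$ and a vector $v\in V$ such that $G_v = H_2$ exactly. Regard $V$ as an affine $H_1$-variety via the inclusion $H_1\subseteq G$. The common Borel subgroup $B$ lies in $H_1\cap H_2\subseteq G_v = H_2$, so $B$ fixes $v$; in other words the $H_1$-stabiliser of $v$ contains $B$. As $B$ is a Borel subgroup of the connected group $H_1$ and $V$ is affine, the discussion preceding Lemma~\ref{lem:common_Borel} shows that the $H_1$-stabiliser of $v$ is all of $H_1$, so $H_1\subseteq G_v = H_2$. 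Interchanging the roles of $H_1$ and $H_2$ (now using that $H_1$ is reductive) gives $H_2\subseteq H_1$, and hence $H_1 = H_2$.

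The proof is essentially routine once one invokes observability of reductive subgroups; that single input is really the whole content, and flagging/justifying it is the only genuine obstacle. If one wishes to avoid quoting observability, one can argue geometrically instead: fix an embedding $G\hookrightarrow \mathrm{GL}_n$; since $H_2$ is reductive, $\mathrm{GL}_n/H_2$ is an affine variety by Matsushima's criterion; the $H_1$-orbit of the base point is the image of $H_1/(H_1\cap H_2)$, which is complete because $B\subseteq H_1\cap H_2$ and $H_1/B$ is projective; a complete locally closed (hence closed) subvariety of an affine variety is finite, and the orbit is connected, so it is a single point. Thus $H_1$ fixes the base point, i.e.\ $H_1\subseteq H_2$, and symmetry again yields $H_1 = H_2$.
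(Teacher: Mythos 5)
Your proof is correct, and in fact you give two arguments: the second, ``geometric'' sketch at the end is essentially the paper's own proof, while your primary argument via observability is a genuine (if closely related) variant. The paper simply notes that $G/H_1$ is affine because $H_1$ is reductive (Richardson's theorem on affine coset spaces, i.e.\ the appropriate form of Matsushima's criterion for affine $G$), lets $H_2$ act on $G/H_1$ by left translation, observes that the stabiliser of the base point is $H_1\cap H_2\supseteq B$, and concludes $H_2\subseteq H_1$ by the paragraph preceding the lemma; symmetry finishes it. Your main argument replaces the homogeneous space by a linearisation: you invoke the Bialynicki-Birula--Hochschild--Mostow theorem to realise $H_2$ as the exact stabiliser $G_v$ of a vector in a rational $G$-module, and then run the same ``a $B$-fixed point of an affine $H_1$-variety is $H_1$-fixed'' argument on $V$. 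Both routes rest on the same two inputs (reductivity producing an affine/quasi-affine object, plus the completeness of $H_1/B$), so neither is more general; the paper's version is marginally more economical in that it quotes only the affineness of $G/H_1$ rather than the full observability theorem, whereas yours makes the fixed-vector picture explicit. One small remark on your alternative sketch: the detour through an embedding $G\hookrightarrow \mathrm{GL}_n$ is unnecessary, since $G/H_2$ is already affine for $H_2$ reductive in any affine $G$ (this is exactly the result of Richardson that the paper uses, and cites elsewhere as \cite[Thm.\ A]{rich}).
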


\begin{proof}
 The quotient variety $G/H_1$ is affine since $G$ is an affine variety and $H_1$ is reductive, and $H_2$ acts on $G/H_1$ by left multiplication.  The stabiliser in $H_2$ of the coset $H_1$ is $H_1\cap H_2$, which contains $B$, so it must equal the whole of $H_2$.  Hence $H_2\subseteq H_1$.  The reverse inequality follows similarly, so $H_1= H_2$.
\end{proof}

\noindent Here is the corresponding result for representations.

\begin{lem}
\label{lem:Borel_det}
 Suppose $H$ is connected and let $B$ be a Borel subgroup of $H$.  Let $\rho_1,\rho_2\in {\rm Hom}(H,G)$ such that $\rho_1|_B= \rho_2|_B$.  Then $\rho_1= \rho_2$.
\end{lem}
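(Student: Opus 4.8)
The plan is to run the argument of Lemma~\ref{lem:common_Borel} in the setting of representations: exhibit the locus on which $\rho_1$ and $\rho_2$ agree as the stabiliser of a point in some affine $H$-variety, observe that this stabiliser contains $B$, and then quote the remark made immediately before Lemma~\ref{lem:common_Borel} --- namely that if $V$ is an affine $H$-variety and $v\in V$ has $B\subseteq H_v$, then $H_v=H$ (since $H/B$ is projective and connected while $V$ is affine).

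Concretely, I would set $K=\{h\in H:\rho_1(h)=\rho_2(h)\}$. This is a closed subgroup of $H$: it is a subgroup because $\rho_1$ and $\rho_2$ are homomorphisms, and it is closed because it is the fibre over the identity of the morphism $h\mapsto\rho_1(h)\rho_2(h)^{-1}$. Since $\rho_1|_B=\rho_2|_B$, we have $B\subseteq K$, so it suffices to prove $K=H$.

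The one device needed is the right action. Let $H$ act on $G$ --- which is affine, as all our groups are --- by $h\cdot x=\rho_1(h)\,x\,\rho_2(h)^{-1}$; this is a morphic left action precisely because $\rho_1$ and $\rho_2$ are homomorphisms. The stabiliser of $e\in G$ is exactly $K$, and it contains $B$ by the previous paragraph, so the observation quoted above gives $H_e=H$; that is, $\rho_1(h)\rho_2(h)^{-1}=e$ for all $h\in H$, i.e.\ $\rho_1=\rho_2$.

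I do not expect any genuine difficulty here. The only mildly non-obvious point is the use of the two-sided translation action $h\cdot x=\rho_1(h)x\rho_2(h)^{-1}$: the naive attempt to make $h\mapsto\rho_1(h)^{-1}\rho_2(h)$ invariant under right translation by $B$ only yields a $B$-conjugate of the value rather than the value itself, whereas for the orbit map of the two-sided action, $B$-invariance on the right is immediate from $\rho_1|_B=\rho_2|_B$, and projectivity of $H/B$ then forces the map to be constant.
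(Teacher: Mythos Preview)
Your proof is correct and is essentially the paper's argument in different packaging: your orbit map $h\mapsto h\cdot e=\rho_1(h)\rho_2(h)^{-1}$ is precisely the paper's $f$, and the paper verifies directly that $f(hb)=f(h)$ for $b\in B$ (hence $f$ is constant with value $1$), which is exactly the right $B$-invariance you point out in your final paragraph. The only difference is that you invoke the stabiliser consequence of the ``$H/B$ projective'' remark, whereas the paper invokes the underlying constancy statement for the map $f$ itself.
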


\begin{proof}
 Define $f\colon H\ra G$ by $f(h)= \rho_1(h)\rho_2(h)^{-1}$.  For any $h\in H$ and any $b\in B$, $f(hb)= \rho_1(hb)\rho_2(hb)^{-1}= \rho_1(h)\rho_1(b)\rho_2(b)^{-1}\rho_2(h)^{-1}=  \rho_1(h)\rho_2(h)^{-1}= f(h)$.  So $f$ is constant with value $f(1)= 1$, and the result follows.
\end{proof}

\begin{lem}
\label{lem:redsub}
 Suppose $H$ is semisimple and $U$ is a maximal unipotent subgroup of $H$.  Then the only reductive subgroup of $H$ that contains $U$ is $H$.
\end{lem}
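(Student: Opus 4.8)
The plan is to reduce to the case in which the reductive subgroup $M$ is connected, and then to deduce $M=H$ by comparing two orbit maps. First I would replace $M$ by $M^0$: since $M$ is reductive so is $M^0$, the group $U$ is connected (by the results recalled in Section~\ref{sec:prelim}) and hence $U\subseteq M^0$, and $M^0=H$ forces $M=H$. So assume from now on that $M$ is connected and reductive. Then $U$ is automatically a maximal unipotent subgroup of $M$, since a strictly larger unipotent subgroup of $M$ would be a strictly larger unipotent subgroup of $H$. Hence, by the structure theory recalled in Section~\ref{sec:prelim}, $U=R_u(B_M)$ for some Borel subgroup $B_M$ of $M$; likewise $U$ is contained in some Borel subgroup $B$ of $H$, and maximality of $U$ forces $U=R_u(B)$. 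Since $B_M$ normalises its own unipotent radical $U$, we obtain $B_M\subseteq N_H(U)=B$, the last equality being standard for connected reductive $H$.

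The next step is to prove that $MB=H$. Let $M$ act on the flag variety $H/B$ by left translation and consider the orbit $M\cdot eB$. Its stabiliser is $M\cap B$, a solvable group (being a subgroup of $B$), so $(M\cap B)^0$ is a connected solvable subgroup of $M$ containing the Borel subgroup $B_M$; thus $(M\cap B)^0=B_M$ and $\dim(M\cap B)=\dim B_M$. Therefore
\[
\dim(M\cdot eB)=\dim M-\dim B_M=\dim(M/B_M)=\dim R_u(B_M)=\dim U=\dim R_u(B)=\dim(H/B).
\]
On the other hand, the inclusion $B_M\subseteq B$ induces a morphism from the complete variety $M/B_M$ onto $M\cdot eB$, so $M\cdot eB$ is closed in $H/B$; being a closed subvariety of the irreducible variety $H/B$ of full dimension, it must be all of $H/B$. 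Hence $M\cdot eB=H/B$, that is, $MB=H$, and therefore also $BM=H$.

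Finally I would play the two orbit maps off against each other on $H/M$. Since $BM=H$, the subgroup $B$ acts transitively on $H/M$ by left translation. Given $u\in U\subseteq B$ and an arbitrary coset $xM$, write $xM=bM$ with $b\in B$; then $u\cdot bM=b(b^{-1}ub)M=bM$ since $b$ normalises $U=R_u(B)$ and $U\subseteq M$. Thus $U$ lies in the kernel of the action of $H$ on $H/M$. This kernel is a normal subgroup of $H$, so its identity component is a connected normal subgroup of $H$ containing $U$; but in a semisimple group the only connected normal subgroup containing a maximal unipotent subgroup is the whole group (conjugating $U$ by representatives of the Weyl group produces every root subgroup, and the root subgroups generate $H$). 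Hence $H$ acts trivially on $H/M$, while it also acts transitively; so $H/M$ is a single point, and $M=H$.

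The step I expect to require the most care is the orbit-dimension computation, in particular pinning down $\dim(M\cap B)$: one must remember that $M\cap B$ need not be connected, which is why I pass to its identity component and invoke the conjugacy of Borel subgroups of $M$. The remaining ingredients---the normaliser of a maximal unipotent subgroup, completeness of flag varieties, closedness of images of complete varieties, and generation of a semisimple group by its root subgroups---are all standard.
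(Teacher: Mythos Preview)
Your proof is correct and takes a genuinely different route from the paper's.

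The paper's argument is root-system based: it chooses a maximal torus $S$ of $M$ with $SU$ a Borel of $M$, enlarges $S$ to a maximal torus $T$ of $N_H(U)$ (hence of $H$), and uses Lemma~\ref{lem:common_Borel} to show that $T$ normalises $M$. Then the opposite unipotent subgroup $U^-$ of $M$ with respect to $S$ is $T$-stable, and since $U^-$ is a maximal unipotent subgroup of $H$ (being $M$-conjugate to $U$), it must be the opposite of $U$ in $H$ with respect to $T$. Thus $M$ contains all positive and all negative root groups, so $M=H$.

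Your argument is instead orbit-theoretic: you compute that the $M$-orbit of the base point in $H/B$ is closed (via completeness of $M/B_M$) and of full dimension (via the equality $\dim(M/B_M)=\dim U=\dim(H/B)$), hence $MB=H$; then you use this transitivity to show that $U$ acts trivially on $H/M$, whence the normal closure of $U$ in $H$---which is all of $H$, since a connected normal subgroup of a semisimple group containing a maximal unipotent subgroup is everything---lies in $M$.

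Both reductions to connected $M$ are identical. Your approach is arguably more elementary in that it avoids the opposite-Borel and root-group bookkeeping, trading these for the flag-variety dimension count and the normal-closure fact; the paper's approach is more explicit about the structure of $M$ relative to $T$ and ties in naturally with Lemma~\ref{lem:common_Borel}, which is used elsewhere. Either proof would serve.
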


\begin{proof}
 Let $M$ be a reductive subgroup of $H$ containing $U$.  As $U$ is connected, it is enough to prove the result under the extra hypothesis that $M$ is connected, so we shall assume this.  Now $U$ is a maximal unipotent subgroup of $M$, so there is a maximal torus $S$ of $M$ such that $SU$ is a Borel subgroup of $M$; in particular, $S$ normalises $U$.  Choose a maximal torus $T$ of $N_H(U)$ such that $S\leq T$; then $T$ is a maximal torus of $H$, $TU$ is a Borel subgroup of $H$ and $U$ contains all of the positive root groups of $H$ with respect to the pair $(B,T)$.  Now $T$ normalises the Borel subgroup $SU$ of $M$, so $T$ normalises $M$ by Lemma~\ref{lem:common_Borel}.  As $T$ normalises both $SU$ and $S$, $T$ must normalise the unique unipotent subgroup $U^-$ of $M$ that is opposite to $U$ with respect to $S$.  As $U^-$ is $M$-conjugate to $U$, $U^-$ is also a maximal unipotent subgroup of $H$.  We see that $U^-$ is the unique unipotent subgroup of $H$ that is opposite to $U$ with respect to $T$; in particular, $U^-$ contains all of the negative root groups of $H$ with respect to the pair $(B,T)$.  It follows from \cite[27.5 Thm.(e)]{hum} that $M= H$, as required.
\end{proof}

We now briefly recall the theory of $G$-completely reducible subgroups of a reductive group \cite{serre1}, \cite{serre2}.  Assume $G$ is reductive until the end of this section.  We need the notion of R-parabolic and R-Levi subgroups of $G$ (see \cite[Sec.\ 6]{BMR} for definitions and further details).  Let $f$ be a morphism from $k^*$ to a (not necessarily affine) variety $X$.  We say that {\em $\lim_{a\to 0} f(a)$ exists} if $f$ extends to a (necessarily unique) morphism $\widehat{f}\colon k\ra X$; in this case, we write $\lim_{a\to 0} f(a)= \widehat{f}(0)$.  We write $Y(G)$ for the set of cocharacters of $G$.  Given $\lambda\in Y(G)$, we define $P_\lambda= \{g\in G\mid \lim_{a\to 0} \lambda(a)g\lambda(a)^{-1} \ \mbox{exists}\}$; then $P_\lambda\leq G$ and we call a subgroup of this form an {\em R-parabolic subgroup} of $G$.  We define $L_\lambda= C_G(\lambda(k^*))$, and we call $L_\lambda$ an {\em R-Levi subgroup} of $P_\lambda$; then $P_\lambda= L_\lambda\ltimes R_u(P_\lambda)$.  We denote by $c_\lambda$ the canonical projection from $P_\lambda$ onto $L_\lambda$; we have $c_\lambda(g)= \lim_{a\to 0} \lambda(a)g\lambda(a)^{-1}$ for all $g\in P_\lambda$.  In particular, $R_u(P_\lambda)= \{g\in P_\lambda\mid \lim_{a\to 0} \lambda(a)g\lambda(a)^{-1}= 1\}$.  If $P$ is an R-parabolic subgroup of $G$ then any two R-Levi subgroups of $P$ are $P$-conjugate.

If $G$ is connected then R-parabolic subgroups and R-Levi subgroups correspond to parabolic subgroups and Levi subgroups in the usual sense; for non-connected $G$, if $P$ is an R-parabolic subgroup then $P\cap G^0$ is a parabolic subgroup of $G$.  Any R-parabolic subgroup is contained in a maximal R-parabolic subgroup, and there are only finitely many $G$-conjugacy classes of R-parabolic subgroups.

A subgroup $M$ of $G$ is {\em $G$-completely reducible} ($G$-cr) if whenever $M$ is contained in an R-parabolic subgroup $P$ of $G$, there is an R-Levi subgroup $L$ of $P$ such that $M\leq L$; $M$ is {\em $G$-irreducible} ($G$-ir) if $M$ is not contained in any proper parabolic subgroup of $G$.  Clearly a $G$-ir subgroup is $G$-cr.  If $G= {\rm SL}_n(k)$ or ${\rm GL}_n(k)$ then $M$ is $G$-cr (resp.\ $G$-ir) if and only if the inclusion $M\ra G$ is a completely reducible (resp.\ irreducible) representation of $M$ in the usual sense.  A $G$-cr subgroup is reductive, and any linearly reductive subgroup of $G$ is $G$-cr; in particular, if $p= 0$ then a subgroup of $G$ is $G$-cr if and only if it is reductive.  If $p> 0$, however, then there can exist reductive subgroups of $G$ that are not $G$-cr.  If $G^0$ is a torus then $R_u(P_\lambda)= 1$ and so $P_\lambda= L_\lambda$ for any $\lambda\in Y(G)$; it follows in this case that every subgroup of $G$ is $G$-cr.

If $\rho\in {\rm Hom}(H,G)$ then we say that $\rho$ is $G$-cr if $\rho(H)$ is $G$-cr.  We define
$$ {\rm Hom}(H,G)_{\rm cr}= \{\rho\in {\rm Hom}(H,G)\mid \rho\ \mbox{is $G$-cr}\} $$
and
$$ {\rm Hom}(H,G)_{\rm ir}= \{\rho\in {\rm Hom}(H,G)\mid \rho\ \mbox{is $G$-ir}\}. $$

We recall a useful result.

\begin{thm}[{\cite[Cor.\ 3.8 and Sec.\ 6]{BMR}}]
\label{thm:cr_finite}
 Let $F$ be a finite group.  Then ${\rm Hom}(F,G)_{\rm cr}$ is a finite union of $G$-conjugacy classes.
\end{thm}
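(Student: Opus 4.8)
The plan is to combine geometric invariant theory with a classical finiteness argument of Richardson. First, fix generators $f_1,\dots,f_r$ of $F$ (one may simply take all of its elements). The assignment $\rho\mapsto(\rho(f_1),\dots,\rho(f_r))$ identifies ${\rm Hom}(F,G)$ with a closed---hence affine---subvariety of $G^r$, cut out by the defining relations among the $f_i$, and $G$ acts on ${\rm Hom}(F,G)$ by simultaneous conjugation. Since the tuple $(\rho(f_i))_i$ generates $\rho(F)$, the orbit-theoretic characterisation of $G$-complete reducibility in \cite{BMR} (and its extension to non-connected $G$ in \cite[Sec.~6]{BMR}) shows that $\rho$ is $G$-cr if and only if the $G$-orbit of $\rho$ is closed in ${\rm Hom}(F,G)$. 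As $G$ is reductive, the affine quotient $\pi\colon{\rm Hom}(F,G)\to{\rm Hom}(F,G)/\!/G$ exists, each fibre of $\pi$ contains a unique closed orbit, and $\pi$ separates closed orbits; so it suffices to show that ${\rm Hom}(F,G)/\!/G$ is a finite set.

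I would first treat the case $G=\Gamma:={\rm GL}_n(k)$. Here the points of ${\rm Hom}(F,\Gamma)/\!/\Gamma$ correspond to closed $\Gamma$-orbits, i.e.\ to isomorphism classes of semisimple $kF$-modules of dimension $n$. Since the group algebra $kF$ is finite-dimensional it has only finitely many simple modules $S_1,\dots,S_t$, so a semisimple module of dimension $n$ is one of the finitely many $\bigoplus_i S_i^{\oplus m_i}$ with $\sum_i m_i\dim S_i=n$; hence ${\rm Hom}(F,\Gamma)/\!/\Gamma$ is finite.

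For general reductive $G$, choose a faithful representation realising $G$ as a closed subgroup of some $\Gamma={\rm GL}_n(k)$, so that ${\rm Hom}(F,G)$ is a closed, $G$-stable subvariety of ${\rm Hom}(F,\Gamma)$ (with $G$ acting by conjugation inside $\Gamma$). This yields an induced morphism ${\rm Hom}(F,G)/\!/G\to{\rm Hom}(F,\Gamma)/\!/G$, which is injective because each $\pi_{{\rm Hom}(F,\Gamma)}$-fibre contains only one closed $G$-orbit. By Richardson's theorem \cite[Sec.~3]{rich2}---precisely the geometric input used by Slodowy in \cite[I.5]{slodowy}---the canonical map ${\rm Hom}(F,\Gamma)/\!/G\to{\rm Hom}(F,\Gamma)/\!/\Gamma$ has finite fibres. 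Composing, ${\rm Hom}(F,G)/\!/G$ maps with finite fibres to the finite set ${\rm Hom}(F,\Gamma)/\!/\Gamma$ and is therefore itself finite; so ${\rm Hom}(F,G)$ contains only finitely many closed $G$-orbits, and by the first paragraph ${\rm Hom}(F,G)_{\rm cr}$ is a finite union of $G$-conjugacy classes. The only non-formal step---and the one I expect to be the main obstacle---is Richardson's finite-fibres theorem, which is what allows one to pass from the explicit ${\rm GL}_n$ computation to an arbitrary reductive $G$; for non-connected $G$ one must also use the versions of geometric invariant theory and of the orbit characterisation of $G$-complete reducibility developed in \cite[Sec.~6]{BMR}.
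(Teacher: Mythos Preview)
Your overall plan---identify $G$-cr representations with closed orbits and then show the GIT quotient is a finite set---is exactly the philosophy behind the cited result, and your first two paragraphs are correct.  Note, however, that the present paper does not supply its own proof of this theorem: it simply quotes \cite[Cor.~3.8 and Sec.~6]{BMR}, so there is no ``paper's proof'' to compare with.

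The genuine gap is the step you yourself flag as the main obstacle.  Your assertion that the canonical map
\[
{\rm Hom}(F,\Gamma)/\!/G \longrightarrow {\rm Hom}(F,\Gamma)/\!/\Gamma
\]
has finite fibres is false in general, and it is not what Richardson proves.  Richardson's argument in \cite[Sec.~3]{rich2} requires a $G$-module complement to $\mathfrak g$ inside $\mathfrak{gl}_n(k)$; this is precisely why Slodowy's application \cite[I.5]{slodowy} needs $p$ to be good for $G$, whereas you are invoking the conclusion with no hypothesis on $p$.  Here is a counterexample.  Let $p>0$, $F=\mathbb{Z}/p\mathbb{Z}$, $\Gamma={\rm GL}_2(k)$ and let $G=T\leq\Gamma$ be the diagonal torus.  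Then ${\rm Hom}(F,\Gamma)$ is the variety of unipotent matrices $A=I+N$ with $N^2=0$; writing $N=\left(\begin{smallmatrix}a&b\\c&a\end{smallmatrix}\right)$ (in characteristic $p$) with $a^2=bc$, one computes $k[{\rm Hom}(F,\Gamma)]^{\Gamma}=k$ (the unipotent cone has a single closed $\Gamma$-orbit) but $k[{\rm Hom}(F,\Gamma)]^{T}=k[a]$, so ${\rm Hom}(F,\Gamma)/\!/T\cong\mathbb{A}^1$ while ${\rm Hom}(F,\Gamma)/\!/\Gamma$ is a point.  The fibre is all of $\mathbb{A}^1$.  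Thus the intermediate quotient ${\rm Hom}(F,\Gamma)/\!/G$ is too large to control, even though ${\rm Hom}(F,G)/\!/G$ itself is finite in this example.

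The argument in \cite{BMR} does not pass through an embedding into ${\rm GL}_n$.  Roughly, one reduces from $G$-cr to $L$-irreducible via the finitely many $G$-classes of R-Levi subgroups $L$, and then uses that for an $L$-irreducible $\rho$ one has $C_L(\rho(F))^0=Z(L)^0$, so all $L$-ir orbits have the common dimension $\dim L-\dim Z(L)$; combined with the fact that such orbits are closed, this forces finiteness.  If you want to salvage a ${\rm GL}_n$-style argument, you would need to prove directly that the composite ${\rm Hom}(F,G)/\!/G\to{\rm Hom}(F,\Gamma)/\!/\Gamma$ has finite fibres, bypassing ${\rm Hom}(F,\Gamma)/\!/G$; but this amounts to controlling how $G$-cr subgroups sit inside $\Gamma$-orbits, which is itself delicate in bad characteristic.
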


\section{Maximal unipotent subgroups in non-connected groups}
\label{sec:maxlunipt}

In this section we establish some results on maximal unipotent subgroups of non-connected groups.

\begin{lem}
\label{lem:uniptextnquot}
 \begin{itemize}
 \item[(a)] An extension of unipotent groups is unipotent.
 \item[(b)]  Let $N$ be a unipotent normal subgroup of $H$ and let $\pi_N\colon H\ra H/N$ be the canonical projection.  Then for all $U\leq H$, $U$ is a maximal unipotent subgroup of $H$ if and only if $N\leq U$ and $\pi_N(U)$ is a maximal unipotent subgroup of $H/N$.
 \end{itemize}
\end{lem}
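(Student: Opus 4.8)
The plan is to establish (a) first by a Jordan decomposition argument, and then obtain (b) as essentially formal bookkeeping built on top of (a). Along the way I will record two routine facts that get used repeatedly: a quotient of a unipotent group is unipotent (homomorphisms of algebraic groups preserve Jordan decomposition, so the image of a unipotent element is unipotent, and every element of the quotient is such an image), and if $N$ is a normal subgroup and $U\leq H$ then $UN$ is a closed subgroup equal to $\pi_N^{-1}(\pi_N(U))$, since homomorphic images of algebraic groups are closed.

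For (a), suppose $1\to N\to H\to Q\to 1$ is an extension with $N$ and $Q$ unipotent. Since a linear algebraic group is unipotent precisely when all of its elements are unipotent, it suffices to show each $h\in H$ is unipotent. I would write $h= h_sh_u$ for the Jordan decomposition of $h$; the image of $h_s$ in $Q$ is the semisimple part of the image of $h$, which is unipotent because $Q$ is, so the image of $h_s$ is trivial and $h_s\in N$. But a semisimple element lying in a unipotent group is trivial, so $h_s= 1$ and $h= h_u$ is unipotent.

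For (b) I would argue both implications by passing between subgroups of $H$ containing $N$ and subgroups of $H/N$ via $\pi_N$ and $\pi_N^{-1}$, using (a) to transport unipotence across short exact sequences. For the forward direction: if $U$ is a maximal unipotent subgroup of $H$, then $UN$ is a closed subgroup fitting into a short exact sequence with kernel $N$ and quotient $\cong U/(U\cap N)$, both unipotent, so $UN$ is unipotent by (a); maximality of $U$ forces $UN= U$, i.e.\ $N\leq U$. Moreover $\pi_N(U)$ is unipotent (a quotient of $U$), and if $\bar V\supseteq \pi_N(U)$ is a unipotent subgroup of $H/N$ then $\pi_N^{-1}(\bar V)\supseteq U$ is unipotent by (a), hence equals $U$ by maximality, so $\bar V= \pi_N(\pi_N^{-1}(\bar V))= \pi_N(U)$; thus $\pi_N(U)$ is maximal unipotent. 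For the converse: if $N\leq U$ and $\pi_N(U)$ is a maximal unipotent subgroup of $H/N$, then $U$ is unipotent by (a) (extension of the unipotent groups $N$ and $\pi_N(U)$), and for any unipotent $V\supseteq U$ we have $\pi_N(V)\supseteq \pi_N(U)$ with $\pi_N(V)$ unipotent, hence $\pi_N(V)= \pi_N(U)$; since $N\leq U\leq V$ this gives $V= \pi_N^{-1}(\pi_N(V))= \pi_N^{-1}(\pi_N(U))= U$, so $U$ is maximal unipotent.

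I do not expect a serious obstacle here: the only point of real content is in (a), and even there it reduces to the one-line observation that a semisimple element of a unipotent group is trivial. The remaining care needed in (b) is purely set-theoretic — keeping track of which subgroups contain $N$ so that the correspondence $V\leftrightarrow \pi_N(V)$ between intermediate subgroups is a bijection — together with checking that $UN$ and $\pi_N(V)$ really are closed subgroups, so that the notion of ``maximal unipotent subgroup'' (in the closed-subgroup sense used throughout) applies to them.
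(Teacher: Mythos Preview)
Your proof is correct and follows essentially the same approach as the paper: part (a) is identical (Jordan decomposition, $h_s$ lands in $N$ and hence is trivial), and part (b) is the same correspondence argument via $\pi_N$ and $\pi_N^{-1}$ using (a), with only cosmetic differences in organization (you establish $N\leq U$ first via $UN$ being unipotent, whereas the paper derives it at the end by taking $M=\pi_N(U)$ in the maximality argument). If anything, your write-up is slightly more careful in the converse direction, where you explicitly note that $U$ itself is unipotent by (a).
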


\begin{proof}
 (a) Let $1\ra N\stackrel{i}{\ra} M\stackrel{q}{\ra} Q\ra 1$ be a short exact sequence of algebraic groups such that $N$ and $Q$ are unipotent.  Let $m\in M$.  Since $Q$ is unipotent, $q(m_s)= 1$, so $m_s\in N$.  But $N$ is unipotent, so $m_s$ must be trivial.  Hence $m$ is unipotent.\smallskip\\
 (b) Suppose $U$ is a maximal unipotent subgroup of $H$.  If $M$ is a unipotent subgroup of $H/N$ with $\pi_N(U)\leq M$ then $(\pi_N)^{-1}(M)$ is unipotent by part (a), so $(\pi_N)^{-1}(M)= U$.  If $M$ properly contains $\pi_N(U)$ then $(\pi_N)^{-1}(M)$ properly contains $U$, a contradiction.  It follows that $\pi_N(U)$ is a maximal unipotent subgroup of $H/N$.  Moreover, we see (taking $M= \pi_N(U)$) that $N\leq U$.
 
 Conversely, suppose $N\leq U$ and $\pi_N(U)$ is a maximal unipotent subgroup of $G/N$.  If $M$ is a unipotent subgroup of $H$ that properly contains $U$ then $\pi_N(M)$ properly contains $\pi_N(U)$, contradicting the maximality of $\pi_N(U)$.  We deduce that $U$ is a maximal unipotent subgroup of $H$, as required.
\end{proof}

We can now state the main result of this section.

\begin{prop}
\label{prop:nonconnmaxunipt}
 \begin{itemize}
 \item[(a)] Let $U\leq H$.  Then $U$ is a maximal unipotent subgroup of $H$ if and only if $U^0$ is a maximal unipotent subgroup of $H^0$ and $U/(U\cap H^0)$ is a Sylow $p$-subgroup of $H/H^0$.
 \item[(b)] Every unipotent subgroup of $H$ is contained in a maximal unipotent subgroup of $H$.  In particular, maximal unipotent subgroups of $H$ exist.
 \item[(c)] Maximal unipotent subgroups of $H$ are unique up to conjugacy.
\end{itemize}
\end{prop}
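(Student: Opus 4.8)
The plan is to dispose of (b) immediately, then prove the two directions of (a), and finally deduce (c) from (a); throughout I will lean on Lemma~\ref{lem:uniptextnquot}.

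Part (b) is a formality: $H$ is affine, so $k[H]$ is Noetherian and every strictly ascending chain of closed subgroups of $H$ terminates. Hence any unipotent subgroup of $H$ is contained in a maximal unipotent subgroup, and in particular maximal unipotent subgroups exist. For the ``if'' direction of (a), suppose $U^0$ is a maximal unipotent subgroup of $H^0$ and $U/(U\cap H^0)$ is a Sylow $p$-subgroup of $H/H^0$, and let $V$ be a unipotent subgroup of $H$ with $U\leq V$. Connected unipotent subgroups of $H$ lie in $H^0$, so $V^0\leq H^0$ and hence $V^0=U^0$ by maximality; then $V\cap H^0$ is a unipotent subgroup of $H^0$ containing the maximal one $U^0$, so $V\cap H^0=U^0=U\cap H^0$; finally $V/(V\cap H^0)$ is a finite $p$-subgroup of $H/H^0$ containing the Sylow $p$-subgroup $U/(U\cap H^0)$, hence equals it, whence $V=U$. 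This shows $U$ is maximal unipotent in $H$.

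For the ``only if'' direction, let $U$ be maximal unipotent in $H$. Applying Lemma~\ref{lem:uniptextnquot}(b) to the normal unipotent subgroup $R_u(H)=R_u(H^0)\trianglelefteq H$ we may replace $H$ by $H/R_u(H)$ and so assume $H^0$ is reductive; I then induct on $\dim H^0$, treating the case of toral $H^0$ directly at the end. The driving observation is: if there is a connected unipotent subgroup $V'$ of $H^0$ with $U^0\subsetneq V'$ that is normalized by $U$, then $UV'$ is a unipotent subgroup of $H$ (Lemma~\ref{lem:uniptextnquot}(a), since $V'\trianglelefteq UV'$ and $UV'/V'$ is a quotient of $U$) properly containing $U$, contradicting maximality. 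First I show $U^0$ is maximal unipotent in $H^0$: if not, fix a maximal unipotent subgroup $W$ of $H^0$ with $U^0\subsetneq W$. If $U^0\neq 1$, then $U^0$ is not normal in the reductive group $H^0$, so $N:=N_{H^0}(U^0)^0$ has $\dim N<\dim H^0$; it contains $U^0$ as a normal subgroup so $U^0\leq R_u(N)$, and $R_u(N)$ is normalized by $U$ (being characteristic in the $U$-normalized group $N$). If $U^0\subsetneq R_u(N)$ we take $V'=R_u(N)$; otherwise $U^0=R_u(N)$, and applying the inductive hypothesis to $N_H(U^0)/U^0$ — whose identity component $N/R_u(N)$ is reductive of dimension $<\dim H^0$, and in which $U/U^0$ is maximal unipotent by Lemma~\ref{lem:uniptextnquot}(b) — forces $N/R_u(N)$ to be a torus, i.e.\ $N$ solvable; but then every connected unipotent subgroup of $N$ lies in $R_u(N)=U^0$, contradicting the elementary fact that the normalizer of the proper connected subgroup $U^0$ of the connected nilpotent group $W$ has identity component strictly larger than $U^0$. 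In the remaining case $U^0=1$, $U$ is finite, and here one uses that $U$ normalizes some Borel subgroup $B$ of $H^0$: were $H^0$ not a torus we could take $V'=R_u(B)\neq1$, so $H^0$ is a torus and $U^0=1$ is indeed maximal unipotent in $H^0$. Once $U^0$ is maximal unipotent, write $U^0=R_u(B)$; then $U\cap H^0=U^0$ and $U\leq N_H(U^0)$. Since the maximal unipotent subgroups of $H^0$ form a single $H^0$-orbit we get $N_H(U^0)H^0=H$, so $N_H(U^0)/B\cong H/H^0$; passing to $N_H(U^0)/U^0$, whose identity component is the torus $B/R_u(B)$, Lemma~\ref{lem:uniptextnquot}(b) makes $U/U^0$ maximal unipotent there, and it remains to note that in a group with torus identity component a maximal unipotent subgroup maps isomorphically onto a Sylow $p$-subgroup of the component group. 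This uses that such a group is an extension of a finite group by a torus $A$, that a unipotent subgroup meets $A$ trivially, and that $H^1(S,A)=H^2(S,A)=0$ for every finite $p$-group $S$ — because $A(\bar k)\cong(\bar k^\times)^{\dim A}$ is $p$-divisible and $p$-torsion-free in characteristic $p$ while $H^i(S,-)$ is killed by $|S|$ — so Sylow $p$-subgroups lift to complements of $A$ and any two complements are $A$-conjugate. (When $H^0$ itself is a torus this last argument is exactly the base case.)

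Finally, (c) follows from (a): given maximal unipotent subgroups $U_1,U_2$ of $H$, part (a) makes $U_1^0,U_2^0$ maximal unipotent in $H^0$, hence $H^0$-conjugate, so after conjugating we may assume $U_1^0=U_2^0=:U^0$, and then $U_1,U_2\leq N:=N_H(U^0)$. By Lemma~\ref{lem:uniptextnquot}(b) each $U_i/U^0$ is maximal unipotent in $N/U^0$, and by (a) applied to $N/U^0$ its identity component is a torus $A$. Hence $U_1/U^0$ and $U_2/U^0$ map to Sylow $p$-subgroups of $(N/U^0)/A$, which are conjugate; after a further conjugation their images coincide, so $U_1/U^0$ and $U_2/U^0$ are two complements of $A$ and are $A$-conjugate by the vanishing of $H^1$ above. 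Lifting gives $U_1,U_2$ conjugate in $N\leq H$.

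The step I expect to be the main obstacle is the one used when $U^0=1$: that a finite $p$-subgroup of $H$ normalizes a Borel subgroup of $H^0$ (equivalently, that in a group with reductive identity component a maximal unipotent subgroup with trivial identity component forces the identity component to be a torus). Everything else is bookkeeping with Lemma~\ref{lem:uniptextnquot}, the growth of normalizers of proper connected subgroups in connected nilpotent groups, and the cohomology vanishing for tori recorded above.
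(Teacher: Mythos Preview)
Your argument is largely sound and takes a genuinely different route from the paper's.  The paper interleaves (a)--(c) in a single induction on $\dim H$, with two separate lemmas doing the heavy lifting: Lemma~\ref{lem:torusidcpt} handles the case where $H^0$ is a torus by descending to a $\overline{\mathbb{F}_p}$-structure, writing $H(\overline{\mathbb{F}_p})$ as a union of finite groups, and invoking Sylow theory there (together with closedness of $G$-cr orbits to find conjugating elements over $\overline{\mathbb{F}_p}$); Lemma~\ref{lem:uniptidcpt} shows that when $H^0$ is not a torus, any unipotent subgroup can be enlarged to one meeting $H^0$ nontrivially, by picking a $p$-central element $m$ and exhibiting unipotents in $C_{[H^0,H^0]}(m)^0$ via the structure of centralisers of unipotent automorphisms of simple groups.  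The main proof then feeds this into the Borel--Tits construction.  By contrast, you dispose of (b) immediately by Noetherianity, handle the torus case by the cohomology vanishing $H^1(S,A)=H^2(S,A)=0$ for a finite $p$-group $S$ acting on a torus $A$ (since $\overline{k}^\times$ is uniquely $p$-divisible in characteristic $p$), and run the induction via normaliser growth in $W$ and the structure of $N_{H^0}(U^0)$.  Your cohomology argument for the torus case is cleaner than the paper's and avoids any appeal to $G$-complete reducibility; your deduction of (c) from (a) inside $N_H(U^0)/U^0$ is also more direct.

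The step you flag, however, is a real gap rather than bookkeeping.  The assertion that a finite $p$-subgroup $U$ of $H$ normalises a Borel subgroup of $H^0$ is true, but it is essentially the content of what you are trying to prove when $U^0=1$, and the paper deliberately avoids invoking it.  The standard independent proof is to give everything an $\mathbb{F}_q$-structure for large $q$, observe that the flag variety has $\sum_{w\in W}q^{\ell(w)}\equiv 1\pmod p$ points over $\mathbb{F}_q$, and conclude that a $p$-group acting on this finite set has a fixed point.  So you have not escaped the finite-field reduction the paper uses in Lemma~\ref{lem:torusidcpt}; you have only relocated it.  If you want a self-contained argument, the paper's Lemma~\ref{lem:uniptidcpt} (enlarging $U$ to meet $H^0$ via centralisers of a central $p$-element) is exactly what fills this hole, and is no harder than the fixed-point argument.
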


\begin{lem}
\label{lem:onedirn}
 The implication $\Longleftarrow$ holds in part (a) of Proposition~\ref{prop:nonconnmaxunipt}.
\end{lem}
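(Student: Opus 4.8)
The plan is to reduce to the connected case together with a Sylow argument modulo $H^0$. So assume $U^0$ is a maximal unipotent subgroup of $H^0$ and $U/(U\cap H^0)$ is a Sylow $p$-subgroup of $H/H^0$; I must show $U$ is a maximal unipotent subgroup of $H$. First I would record structure: by the description of maximal unipotent subgroups of connected groups recalled in Section~\ref{sec:prelim}, $U^0 = R_u(B)$ for some Borel subgroup $B$ of $H^0$, and standard structure theory (reducing to the reductive quotient $H^0/R_u(H^0)$) gives $N_{H^0}(U^0) = B = U^0\rtimes T$ for a maximal torus $T$ of $H^0$. The crucial preliminary step is to prove $U\cap H^0 = U^0$. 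Since $U$ normalises its identity component $U^0$ and $(U\cap H^0)^0 = U^0$, we get $U\cap H^0\leq N_{H^0}(U^0) = B$, so $(U\cap H^0)/U^0$ is a finite subgroup of the torus $B/U^0\cong T$; one then has to argue this finite group is trivial, after which Lemma~\ref{lem:uniptextnquot}(a) shows $U$ is unipotent, being an extension of the finite $p$-group $U/U^0$ by the unipotent group $U^0$.

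With $U\cap H^0 = U^0$ in hand, the maximality is formal. Given any unipotent subgroup $\widetilde U$ of $H$ with $U\leq\widetilde U$, its identity component $\widetilde U^0$ is a connected unipotent subgroup of $H^0$ containing $U^0$, hence equals $U^0$; and $\widetilde U\cap H^0$ is a unipotent subgroup of $H^0$ containing the maximal unipotent subgroup $U^0$, hence also equals $U^0$. So the projection $H\ra H/H^0$ restricts on $\widetilde U$ to a homomorphism with kernel $U^0$, identifying $\widetilde U/U^0$ with a subgroup of $H/H^0$; as $\widetilde U$ is unipotent, $\widetilde U/U^0 = \widetilde U/\widetilde U^0$ is a finite $p$-group. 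Thus the image of $\widetilde U$ in $H/H^0$ is a finite $p$-subgroup containing the image of $U$, which is $U/(U\cap H^0) = U/U^0$, a Sylow $p$-subgroup by hypothesis; a finite $p$-subgroup containing a Sylow $p$-subgroup equals it, so $U$ and $\widetilde U$ have the same image in $H/H^0$. Since moreover $\widetilde U\cap H^0 = U^0\leq U$, a one-line coset computation shows $\widetilde U = U$, so $U$ is maximal unipotent.

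The main obstacle is the preliminary step $U\cap H^0 = U^0$ --- equivalently, that $U$ is unipotent at all --- since everything after it is bookkeeping. The content is to show the finite toral piece $(U\cap H^0)/U^0\leq T$ is trivial; when $p>0$ this should come down to the fact that a torus has no nontrivial $p$-torsion once one knows this quotient is a $p$-group, and in characteristic $0$ one invokes the convention that the relevant $p$-group is trivial together with the fact that unipotent groups are connected. Getting this step to run cleanly from the stated hypotheses, rather than circularly assuming $U$ is unipotent, is where the care is needed.
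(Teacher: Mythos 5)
Your second paragraph---the deduction of maximality once $U$ is known to be unipotent with $U\cap H^0=U^0$---is exactly the paper's argument: any unipotent $\widetilde U\supseteq U$ has $\widetilde U\cap H^0=U^0$ by maximality of $U^0$ in $H^0$, so its image in $H/H^0$ is a finite $p$-group containing the Sylow $p$-subgroup $U/U^0$, hence equal to it, whence $\widetilde U=U$. That part is complete and correct.

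The gap is the one you flag yourself, and the route you sketch for closing it cannot work. You hope to show that the finite group $(U\cap H^0)/U^0$, viewed inside the torus $T\cong B/U^0$, is trivial by first showing it is a $p$-group; but nothing in the two stated hypotheses constrains $U\cap H^0$ at all---they only speak about $U^0$ and about the image $UH^0/H^0$. Taken literally, the implication you are trying to prove is false without an extra hypothesis: for $H=H^0={\rm SL}_2(k)$ with $p\neq 2$ and $U=Z(H)R_u(B)$ (or even $H=H^0=k^*$ and $U=\mu_n$ with $n>1$ prime to $p$), one checks that $U^0$ is a maximal unipotent subgroup of $H^0$ and $U/(U\cap H^0)=1$ is the (trivial) Sylow $p$-subgroup of $H/H^0=1$, yet $U$ is not unipotent. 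The paper's proof simply asserts $U\cap H^0=U^0$ and then deduces unipotence of $U$ from Lemma~\ref{lem:uniptextnquot}(a); that assertion is justified precisely when one already knows $U$ is unipotent, for then $U\cap H^0$ is a unipotent subgroup of $H^0$ containing the maximal unipotent subgroup $U^0$ and so equals it by maximality---and Proposition~\ref{prop:nonconnmaxunipt}(a) is only ever invoked in the paper for subgroups already known to be unipotent. So the missing step is supplied not by your Borel/normaliser computation (which is both insufficient and unnecessary) but by reading the hypothesis as including $U\cap H^0=U^0$, equivalently the unipotence of $U$; once that is granted, your second paragraph is the entire proof.
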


\begin{proof}
 Suppose $U^0$ is a maximal unipotent subgroup of $H^0$ and $U/(U\cap H^0)$ is a Sylow $p$-subgroup of $H/H^0$.  Then the subgroups $U\cap H^0= U^0$ and $U/(U\cap H^0)$ are unipotent, so $U$ is unipotent by Lemma~\ref{lem:uniptextnquot}.  Now suppose $V\leq H$ is unipotent and $U\leq V$.  Then $V\cap H^0= U^0$ by the maximality of $U^0$ in $H^0$, so we may identify $U/(U\cap H^0)$ with a subgroup of $V/(V\cap H^0)$ inside $H/H^0$.  Now $V/(V\cap H^0)$, being finite and unipotent, is a finite $p$-group; but $U/(U\cap H^0)$ is a Sylow $p$-subgroup of $H/H^0$, so we must have $V/(V\cap H^0)= U/(U\cap H^0)$.  It follows that $V= U$.  Hence $U$ is a maximal unipotent subgroup of $H$.
\end{proof}

Next we prove a special case of Proposition~\ref{prop:nonconnmaxunipt}.

\begin{lem}
\label{lem:torusidcpt}
 Proposition~\ref{prop:nonconnmaxunipt} holds if $H^0$ is a torus.
\end{lem}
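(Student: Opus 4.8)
The plan is to exploit two features of the case $H^0=T$ a torus. First, a torus has no nontrivial unipotent subgroups, connected or not: in characteristic $p>0$ the group $k^\ast$ has no nontrivial $p$-torsion, and in characteristic $0$ unipotent groups are connected. Second, $T(k)\iso(k^\ast)^{\dim T}$ is a uniquely $p$-divisible abelian group (the $p$-power map on $k^\ast$ is bijective), so $H^i(\bar P,T(k))=0$ for every finite $p$-group $\bar P$ and every $i\geq 1$. Write $\pi\colon H\ra H/H^0$ for the projection onto the finite group $H/H^0$. I would dispose of the case $p=0$ at once (every unipotent subgroup of $H$ is connected, hence contained in $H^0=T$, hence trivial) and assume $p>0$ from now on.

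Next I would record two observations. \emph{(i)} If $U\leq H$ is unipotent, then $U\cap H^0$ is a finite $p$-subgroup of the torus $T$, hence trivial; so $U^0=1$, $U\cap H^0=1$, and $\pi$ restricts to an isomorphism of $U$ onto a $p$-subgroup of $H/H^0$. Since $U^0=1$ is trivially a maximal unipotent subgroup of the torus $H^0$, Lemma~\ref{lem:onedirn} shows that whenever $\pi(U)$ is a Sylow $p$-subgroup of $H/H^0$ the group $U$ is maximal unipotent in $H$; this is the $\Longleftarrow$ direction of part~(a). \emph{(ii)} Lifting: for every $p$-subgroup $\bar P$ of $H/H^0$ there is a unipotent $U\leq H$ with $\pi(U)=\bar P$, and any two such are conjugate by an element of $H^0$. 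Indeed, $\pi^{-1}(\bar P)$ is an extension $1\ra T\ra\pi^{-1}(\bar P)\ra\bar P\ra 1$; since $H^2(\bar P,T(k))=0$ this extension splits, and since $H^1(\bar P,T(k))=0$ its complements form a single $T$-conjugacy class (one works with abstract groups throughout, which is harmless because $\bar P$ is finite, so any complement is automatically a closed subgroup). A complement $U$ is a finite group of $p$-power order, hence unipotent, and $\pi(U)=\bar P$.

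Granting \emph{(i)} and \emph{(ii)}, I would then prove (b), the $\Longrightarrow$ direction of (a), and (c), in that order. For (b): given a unipotent $V\leq H$, pick a Sylow $p$-subgroup $\bar S$ of $H/H^0$ with $\pi(V)\leq\bar S$, and by \emph{(ii)} a unipotent $U_0\leq\pi^{-1}(\bar S)$ with $\pi(U_0)=\bar S$; then $U_0\cap\pi^{-1}(\pi(V))$ and $V$ are two complements to $T$ in $\pi^{-1}(\pi(V))$, so by the uniqueness in \emph{(ii)} some $H^0$-conjugate $U_0'$ of $U_0$ contains $V$, and $U_0'$ is maximal unipotent by \emph{(i)}. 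Taking $V=1$ gives existence. For the $\Longrightarrow$ direction of (a): if $U$ is maximal unipotent, then $U^0=1=U\cap H^0$ by \emph{(i)}, and it remains to see $\pi(U)$ is Sylow; if not, $\pi(U)$ lies properly in some $p$-subgroup $\bar Q$ of $H/H^0$, and running the argument for (b) with $\bar Q$ in place of $\bar S$ produces an $H^0$-conjugate of a complement over $\bar Q$ that properly contains $U$, contradicting maximality. For (c): if $U_1,U_2$ are maximal unipotent then $\pi(U_1),\pi(U_2)$ are Sylow $p$-subgroups of $H/H^0$ by part~(a), so after conjugating $U_1$ by a $\pi$-preimage of a suitable element of $H/H^0$ (Sylow's theorem) we may assume $\pi(U_1)=\pi(U_2)$; then $U_1,U_2$ are complements to $T$ in $\pi^{-1}(\pi(U_1))$, hence $H^0$-conjugate by \emph{(ii)}.

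The only real content is the lifting statement \emph{(ii)}, and within it the vanishing of $H^1$ and $H^2$ of $\bar P$ with coefficients in $T(k)$; this is where the hypothesis that $H^0$ is a torus is essential, and it is the step I would be most careful about — in particular one must check that passing between algebraic and abstract extensions (and complements) causes no trouble, which holds because $\bar P$ is finite. If one prefers to sidestep cohomology, the splitting and conjugacy for a $\mathbb{Z}/p$-extension can be done by hand — choose a lift $x$ of a generator, note $x^p$ lies in the fixed-point group $T^{\bar P}$, and solve a norm equation in the $p$-divisible group $T^{\bar P}$ — and then induct up a central chain of $\bar P$.
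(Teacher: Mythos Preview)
Your argument is correct and takes a genuinely different route from the paper.

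The paper proves conjugacy of maximal unipotent subgroups by invoking the machinery of $G$-complete reducibility: it notes that every subgroup of $H$ is $H$-cr when $H^0$ is a torus, applies Theorem~\ref{thm:cr_finite} and \cite[Cor.\ 3.7]{BMR} to conclude that ${\rm Hom}(F_i,H)$ is a finite union of closed $H$-orbits for each finite $p$-group $F_i$, descends to the algebraic closure $k_0$ of $\mathbb{F}_p$ via a $k_0$-structure, and then reduces to Sylow's theorem inside one of the finite groups in an exhaustion of $H(k_0)$. Part~(a) is then deduced from part~(c) together with the existence of a finite subgroup $F$ with $H=FH^0$.

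You instead exploit directly that $T(k)\cong(k^*)^{\dim T}$ is uniquely $p$-divisible, so $H^1(\bar P,T(k))=H^2(\bar P,T(k))=0$ for every finite $p$-group $\bar P$; this gives splitting and $T$-conjugacy of complements in $\pi^{-1}(\bar P)$ by the standard Schur--Zassenhaus mechanism, and the passage between abstract and algebraic complements is harmless because $\bar P$ is finite. Everything then reduces cleanly to Sylow's theorem in $H/H^0$. Your approach is more elementary and self-contained: it avoids the forward appeal to the $G$-cr finiteness theorem, the $k_0$-structure argument, and the ascending chain of finite subgroups. The paper's approach, by contrast, illustrates techniques (closed orbits, reduction to $k_0$-points) that recur elsewhere in the paper. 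One minor wording point: in your observation~\emph{(i)} the claim ``$U\cap H^0$ is a finite $p$-subgroup of $T$'' is true but the more direct reason $U\cap T=1$ is simply that elements of $T$ are semisimple while elements of $U$ are unipotent.
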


\begin{proof}
 If ${\rm char}(k)= 0$ then any unipotent group is connected, so the lemma is true by the results on maximal unipotent subgroups of connected groups.  So suppose ${\rm char}(k)= p>0$.  Set $t= |H/H^0|$.  If $U$ is a unipotent subgroup of $H$ then $U\cap H^0= 1$, so the canonical projection $\psi\colon H\ra H/H^0$ maps $U$ injectively to $H/H^0$.  Hence $|U|\leq t$.  Part (b) now follows immediately.
 
 We now prove that any two maximal unipotent subgroups of $H$ are conjugate.  Let $k_0$ be the algebraic closure of ${\mathbb F}_p$; we wish to reduce to the case when $k= k_0$.  We do this as follows.  By \cite[Prop.\ 3.2]{martin}, $H$ admits a $k_0$-structure.
  Let $F_1,\ldots, F_r$ be representatives of the isomorphism classes of finite $p$-groups of order at most $t$.  For each $i$, we can choose $\gamma_1^{(i)}, \ldots, \gamma_t^{(i)}\in F_i$ such that the $\gamma_j^{(i)}$ generate $F_i$.  Let $C_i= {\rm Hom}(F_i,H)$; we can identify $C_i$ with a closed $k_0$-defined subvariety of $H^t$ via the map $C_i\ra H^t$, $\rho\mapsto (\rho(\gamma_1^{(i)}),\ldots, \rho(\gamma_t^{(i)}))$.  We let $H$ act on $H^t$ by simultaneous conjugation \cite[Sec.\ 1]{BMR}.  Each subset $C_i$ is $H$-stable.  Let $C= \bigcup_{i= 1}^r C_i$.  Then $C$ is $k_0$-defined.
 
 Every subgroup of $H$ is $H$-cr as $H^0$ is a torus.  It follows from \cite[Cor.\ 3.7 and Sec.\ 6]{BMR} that $H\cdot (h_1,\ldots, h_t)$ is closed for every $(h_1,\ldots, h_t)\in H^t$.  By Theorem~\ref{thm:cr_finite}, each $C_i$ is a finite union of $H$-conjugacy classes.  Hence $C$ is a union of finitely many $H^0$-conjugacy classes, each of which is closed.  This means that these $H^0$-conjugacy classes are precisely the irreducible components of $C$, so each such class is $k_0$-defined.  In particular, each $H^0$-conjugacy class in $C$ contains a $k_0$-point.
 
 So let $U_1$ and $U_2$ be maximal unipotent subgroups of $H$.  Then $U_1= {\rm Im}(\rho)$ for some $i$ and some $\rho\in C_i$.  Hence we can assume by the previous paragraph---after conjugating $U_1$ by some element of $H^0$ if necessary---that $U_1\leq H(k_0)$.  Likewise, we can assume that $U_2\leq H(k_0)$.  There is an ascending sequence $H_1\leq H_2\leq \cdots$ of finite subgroups of $H(k_0)$ such that $\ds H(k_0)= \bigcup_{m= 1}^\infty H_m$ (see Remark~\ref{rem:reduction} below).  Since $U_1$ and $U_2$ are finite, there exists $m\in {\mathbb N}$ such that $U_1, U_2\leq H_m$.  Then $U_1$ and $U_2$ are maximal unipotent subgroups of the finite group $H_m$, so $U_1$ and $U_2$ are Sylow $p$-subgroups of $H_m$ and hence are $H_m$-conjugate to each other.  This proves part (c).
 
To finish, we prove part (a).  By the proof of \cite[Prop.\ 3.2]{martin}, there is a finite subgroup $F$ of $H$ such that $H= FH^0$.  Let $F_p$ be a Sylow $p$-subgroup of $F$.  It is easily seen that $\psi(F_p)$ is a Sylow $p$-subgroup of $H/H^0$, so $F_p$ is a maximal unipotent subgroup of $H$ by Lemma~\ref{lem:onedirn}.  Part (a) now follows from part (c) and Lemma~\ref{lem:onedirn}.
\end{proof}

\begin{lem}
\label{lem:uniptidcpt}
 Suppose $H^0$ is not a torus, and let $M$ be any unipotent subgroup of $H$. Then there is a unipotent subgroup $U$ of $H$ such that $M\leq U$ and $U\cap H^0$ is nontrivial.
\end{lem}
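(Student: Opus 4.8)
The plan is to reduce everything to finding a single Borel subgroup of $H^0$ that is normalised by $M$, and then to write down $U$ explicitly. We may assume $M$ is nontrivial, since otherwise $H^0$---not being a torus---contains a nontrivial unipotent subgroup, which we may take for $U$. We may also assume $M\cap H^0=1$, since otherwise $M\cap H^0$ is a nontrivial unipotent subgroup of $H^0$ and $U=M$ works. Then $M$ embeds in the finite group $H/H^0$, so $M$ is finite; being unipotent and nontrivial, it forces ${\rm char}(k)=p>0$, and $M$ is then a finite $p$-group. Suppose we have found a Borel subgroup $B$ of $H^0$ with $MBM^{-1}=B$. Since $H^0$ is not a torus, $R_u(B)\neq 1$: if $R_u(H^0)\neq 1$ this is clear as $R_u(H^0)\subseteq R_u(B)$, and otherwise $H^0$ is reductive of positive semisimple rank, so $B$ is a proper Borel subgroup of $H^0$ and hence has nontrivial unipotent radical. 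As $R_u(B)$ is characteristic in $B$, it is normalised by $M$, so $U:=M\,R_u(B)$ is a subgroup of $H$; it is unipotent by Lemma~\ref{lem:uniptextnquot}(a), since $R_u(B)$ is unipotent and normal in $U$ and $U/R_u(B)\iso M/(M\cap R_u(B))=M$ (because $M\cap R_u(B)\subseteq M\cap H^0=1$) is unipotent. Finally $M\leq U$ and $U\cap H^0\supseteq R_u(B)\neq 1$, as required.

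It remains to produce the $M$-stable Borel subgroup. As $H^0\trianglelefteq H$, the group $M$ acts by conjugation on the projective variety $\mathcal{B}$ of Borel subgroups of $H^0$, and I claim $\mathcal{B}^M\neq\emptyset$. Every Borel subgroup of $H^0$ contains $R_u(H^0)$, so $\mathcal{B}$ is $M$-equivariantly isomorphic to the flag variety of the connected reductive group $H^0/R_u(H^0)$. By a routine descent argument reducing to the case $k=\ovl{\mathbb F}_p$ (using \cite[Prop.\ 3.2]{martin} to descend $H$, and then spreading out the finitely many points of $M$ over a finitely generated $\ovl{\mathbb F}_p$-subalgebra of $k$), it is enough to show $\mathcal{B}^M\neq\emptyset$ when $k=\ovl{\mathbb F}_p$; in that case $H$, the subgroup $M$ and the conjugation action are all defined over some finite subfield $\mathbb F_q$, so $M$ acts on the finite set $\mathcal{B}(\mathbb F_q)$. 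By Lang's theorem $H^0/R_u(H^0)$ is quasi-split over $\mathbb F_q$, so the Bruhat decomposition writes $\mathcal{B}$ as a disjoint union of $\mathbb F_q$-cells with exactly one of dimension $0$; hence $|\mathcal{B}(\mathbb F_q)|$ is a polynomial in $q$ with constant term $1$, so $|\mathcal{B}(\mathbb F_q)|\equiv 1\pmod p$. A finite $p$-group acting on a finite set of order prime to $p$ has a fixed point, so $M$ normalises some $\mathbb F_q$-rational Borel subgroup of $H^0$.

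The one delicate point is the nonemptiness of $\mathcal{B}^M$. Because $M$ is finite, hence disconnected, the Borel fixed point theorem does not apply, so one genuinely needs the characteristic-$p$ input: the reduction to a finite field together with the congruence $|\mathcal{B}(\mathbb F_q)|\equiv 1\pmod p$ coming from the Bruhat decomposition. Everything else is routine---the passage from the $M$-stable Borel to $U$ uses only Lemma~\ref{lem:uniptextnquot}(a), and the fixed-point principle for $p$-groups acting on finite sets is elementary.
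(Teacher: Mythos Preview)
Your proof is correct and takes a genuinely different route from the paper's. The paper argues by induction on $\dim(H)$: it reduces to $H$ reductive, picks a central element $m$ of order $p$ in $M/C_M([H^0,H^0])$, and shows (invoking \cite[Rem.~2.9]{liebeckseitz1} for outer automorphisms of simple factors) that $C_{[H^0,H^0]}(m)^0$ contains nontrivial unipotent elements, then applies the inductive hypothesis to $M\,C_{[H^0,H^0]}(m)^0$. Your argument instead goes straight for an $M$-stable Borel subgroup of $H^0$ and takes $U=M\,R_u(B)$. This is cleaner and more conceptual: it replaces the induction and the case analysis on automorphisms of simple groups with a single fixed-point statement, and it avoids the external citation to Liebeck--Seitz. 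The cost is that you import a bit of arithmetic (descent to $\overline{\mathbb F}_p$, Lang's theorem, Bruhat cell counting), whereas the paper's proof stays entirely over the fixed algebraically closed field.

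One remark on exposition: the phrase ``it is enough to show $\mathcal{B}^M\neq\emptyset$ when $k=\overline{\mathbb F}_p$'' is slightly imprecise, since your original $M$ lives in $H(k)$. What you actually need is that $(\mathcal{B}^M)_{\mathfrak m}\neq\emptyset$ for every closed point $\mathfrak m$ of $\mathrm{Spec}(A)$ (which your finite-field count gives), and then that properness of $\mathcal{B}^M\to\mathrm{Spec}(A)$ together with $A$ being Jacobson forces the generic fibre to be nonempty. This is indeed routine, but spelling out that it is density of closed points plus closedness of the image, rather than a single specialisation, would make the step watertight.
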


\begin{proof}
 If ${\rm char}(k)= 0$ then any unipotent group is connected, and the result is immediate.  So suppose ${\rm char}(k)= p>0$. If $M$ is trivial then we can take $U$ to be any nontrivial unipotent subgroup of $H^0$, so assume $M$ is nontrivial.  If ${\rm dim}(M)> 0$ then $M\cap H^0$ is nonempty, so we can take $U= M$; hence we can assume without loss that $M$ is finite.  We use induction on ${\rm dim}(H)$.  Clearly if ${\rm dim}(H)= 0$ then there is nothing to prove, since this case cannot occur under our assumption on $H^0$.  So let ${\rm dim}(H)= n> 0$ and suppose the result holds for any group of dimension less than $n$.  If $H$ is non-reductive then we can take $U$ to be $MR_u(H)$ by Lemma~\ref{lem:uniptextnquot}(a), so without loss we assume $H$ is reductive; in particular, $H_1:= [H^0, H^0]$ is nontrivial and semisimple.  If $M$ centralises $H_1$ then we can take $U$ to be $MM_1$ by Lemma~\ref{lem:uniptextnquot}(a), where $M_1$ is any nontrivial unipotent subgroup of $H_1$, and we are done.  We assume, therefore, that $Z:= C_M(H_1)$ is a proper subgroup of $M$.  Clearly $Z$ is normal in $M$.
 
 As $M/Z$ is nontrivial, we can choose $m\in M$ such that the image of $m$ in $M/Z$ belongs to the centre $Z(M/Z)$ and has order $p$.  Let $\phi\in {\rm Aut}(H_1)$ be conjugation by $m$; note that $\phi$ has order $p$.  Consider the centraliser $N_1:= C_{H_1}(m)^0= C_{H_1}(\phi)^0$.  By construction, $N_1$ is an $M$-stable proper subgroup of $H_1$, so $MN_1$ has dimension less than $n$ (recall that $M$ is finite).  It is enough to prove that $N_1$ contains a nontrivial unipotent element---for then we are done by our induction hypothesis applied to $MN_1$.
  
 First suppose $\phi$ stabilises every simple component of $H_1$.  Fix a simple component $A$ of $H_1$.  Then $C_A(\phi)^0$ contains nontrivial unipotent elements; this is clear if $\phi$ is an inner automorphism of $A$ since $\phi$ is unipotent, while if $\phi$ is an outer automorphism of $A$ then it follows from \cite[Rem.~2.9]{liebeckseitz1}.  Now suppose $\phi$ does not stabilise every simple component of $H_1$.  Then there are simple components $A_1,\ldots, A_p$ of $H_1$ that are cyclically permuted by $\phi$.  Let $V$ be a nontrivial connected unipotent subgroup of $A_1$: then $\phi$ centralises the nontrivial connected unipotent subgroup $\{u\phi(u)\phi^2(u)\cdots \phi^{p-1}(u)\mid u\in V\}$ of $H_1$.  In both cases, $N_1$ contains nontrivial unipotent elements, so we are done.
\end{proof}

\begin{proof}[Proof of Proposition~\ref{prop:nonconnmaxunipt}] 
 Suppose $H$ has a nontrivial connected unipotent normal subgroup $N$; let $\pi_N\colon H\ra H/N$ be the canonical projection.  By Lemma~\ref{lem:uniptextnquot}(b), a unipotent subgroup $M$ of $H$ (resp.\ $H^0$) is maximal in $H$ (resp.\ $H^0$) if and only if $M\geq N$ and $\pi_N(M)$ is maximal in $H/N$ (resp.\ $H^0/N$), and if $V$ is a maximal unipotent subgroup of $H$ then $V= M$ if and only if $\pi_N(V)= \pi_N(M)$.  The map from $H/H^0$ to $(H/N)/(H/N)^0$ induced by $\pi_N$ is an isomorphism of finite groups.  It now follows that parts (a)--(c) hold for $H$ if they hold for $H/N$.
 
 To complete the proof, we use induction on ${\rm dim}(H)$.  By the preceding paragraph, we can assume $H$ is reductive.  We have shown the result holds if $H^0$ is a torus (Lemma~\ref{lem:torusidcpt})---in particular, it holds if ${\rm dim}(H)= 0$.  So suppose $H^0$ is not a torus.  Let $U$ be a unipotent subgroup of $H$.  By Lemma~\ref{lem:uniptidcpt}, there is a unipotent subgroup $U_1$ of $H$ such that $U\leq U_1$ and $U_1\cap H^0$ is nontrivial.  The well-known Borel-Tits construction \cite[Sec.\ 30.3]{hum} yields a parabolic subgroup $P$ of $H^0$ such that $U_1\leq N_H(P)$ and $U_1\cap H^0\leq R_u(P)$.  The latter condition implies that $P$ is proper in $H^0$, so ${\rm dim}(N_H(P))< {\rm dim}(H)$.  By our induction hypotheses, parts (a)--(c) hold for $N_H(P)$.  So there is a maximal unipotent subgroup $U_2$ of $N_H(P)$ such that $U_1\leq U_2$.  Set $U_0= U_2^0$.  By part (a), $U_0$ is a maximal unipotent subgroup of the connected group $N_H(P)^0= N_{H^0}(P)$.  It follows that $U_0$ is a maximal unipotent subgroup of $H^0$.
 
 Consider the group $N_H(U_0)$.  Clearly $U\leq N_H(U_0)$ and $U_0$ is a maximal unipotent subgroup of $N_H(U_0)^0= N_{H^0}(U_0)$.  As all maximal unipotent subgroups of $H^0$ are $H^0$-conjugate, $N_H(U_0)$ meets every connected component of $H$, so the induced map from $N_H(U_0)/N_H(U_0)^0$ to $H/H^0$ is a bijection.  The construction of the previous paragraph shows that if $\widetilde{U}$ is another unipotent subgroup of $H$ then $\widetilde{U}\leq N_H(\widetilde{U_0})$, where $\widetilde{U_0}$ is some maximal unipotent subgroup of $H^0$.  Since $\widetilde{U_0}$ is $H^0$-conjugate to $U_0$, $\widetilde{U}$ is $H^0$-conjugate to a subgroup of $N_H(U_0)$.  Parts (a)--(c) hold for $N_H(U_0)$ by our induction hypothesis as ${\rm dim}(N_H(U_0))< {\rm dim}(H)$.  To complete the proof, it is now enough to show that if $U_3$ is any unipotent subgroup of $N_H(U_0)$ then $U_3$ is a maximal unipotent subgroup of $H$ if and only if it is a maximal unipotent subgroup of $N_H(U_0)$.  The forward implication is immediate.  Conversely, suppose $U_3$ is a maximal unipotent subgroup of $N_H(U_0)$.  Let $U_4$ be a unipotent subgroup of $H$ such that $U_3\leq U_4$.  The argument of the preceding paragraph shows that $U_3\leq U_4\leq N_H(\widetilde{U_0})$ for some $H^0$-conjugate $\widetilde{U_0}$ of $U_0$.  The characterisation of maximal unipotent subgroups of $N_H(U_0)$ (resp.\ $N_H(\widetilde{U_0}))$ provided by (a) implies that $U_3$ is a maximal unipotent subgroup of $N_H(\widetilde{U_0})$, so we must have $U_3= U_4$.  Hence $U_3$ is a maximal unipotent subgroup of $H$.  This completes the proof.
\end{proof}
 
\begin{cor}
\label{cor:normaluniptmax}
 Let $N$ be a normal unipotent subgroup of $H$.  Then every maximal unipotent subgroup of $H$ contains $N$.
\end{cor}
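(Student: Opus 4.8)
The plan is to read this off directly from Lemma~\ref{lem:uniptextnquot}(b). Let $U$ be a maximal unipotent subgroup of $H$; such subgroups exist by Proposition~\ref{prop:nonconnmaxunipt}(b), but in any case the assertion only concerns the maximal unipotent subgroups that do exist. Since $N$ is a unipotent normal subgroup of $H$, the hypotheses of Lemma~\ref{lem:uniptextnquot}(b) are satisfied for this $N$. Applying the ``only if'' direction of that lemma to the subgroup $U$ then yields immediately that $N\leq U$ (together with the extra information that $\pi_N(U)$ is a maximal unipotent subgroup of $H/N$, which we do not need). Since $U$ was an arbitrary maximal unipotent subgroup of $H$, this is exactly the claim.

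Because the statement is an immediate consequence of a lemma already established, I do not expect any genuine obstacle here; the one thing worth noting is that Lemma~\ref{lem:uniptextnquot}(b) was deliberately phrased for an arbitrary unipotent normal subgroup $N$, which is precisely the hypothesis of the corollary, so no additional argument is required. (One could alternatively argue via Proposition~\ref{prop:nonconnmaxunipt}(a), using that $U^0\supseteq R_u(H)\cap N^0$ and handling the component group via Sylow theory, but this is strictly more work and offers no advantage over the one-line deduction above.)
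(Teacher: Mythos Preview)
Your argument is correct, and it is in fact more direct than the paper's own proof. The paper does not invoke Lemma~\ref{lem:uniptextnquot}(b); instead it argues via Proposition~\ref{prop:nonconnmaxunipt}(b) and (c): given a maximal unipotent subgroup $U$, the unipotent group $N$ is contained in \emph{some} maximal unipotent subgroup, which by conjugacy is $hUh^{-1}$ for some $h\in H$, and then normality of $N$ gives $N=h^{-1}Nh\leq U$. Your route is shorter and uses only the elementary quotient lemma, avoiding the heavier existence/conjugacy result of Proposition~\ref{prop:nonconnmaxunipt}; the paper's route, on the other hand, illustrates a standard ``conjugacy plus normality'' trick that works in settings where one has conjugacy of maximal objects but no analogue of Lemma~\ref{lem:uniptextnquot}(b). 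Either approach is perfectly adequate here.
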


\begin{proof}
 Let $U$ be a maximal unipotent subgroup of $H$.  Then some $H$-conjugate of $U$ contains $N$, by Proposition~\ref{prop:nonconnmaxunipt}(b) and (c).  As $N\unlhd H$, $U$ contains $N$.
\end{proof}

\begin{prop}
\label{prop:epi}
 Let $\phi\colon H\ra M$ be an epimorphism of algebraic groups and let $U$ be a maximal unipotent subgroup of $H$.  Then $\phi(U)$ is a maximal unipotent subgroup of $M$.
\end{prop}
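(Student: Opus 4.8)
The plan is to reduce to the connected case and to the case of a finite quotient, each of which follows from standard structure theory together with Proposition~\ref{prop:nonconnmaxunipt}. First I would observe that $\phi(U)$ is certainly unipotent (a quotient of a unipotent group is unipotent, by Lemma~\ref{lem:uniptextnquot}(a) applied to the kernel of $U\to\phi(U)$, or simply because homomorphic images of unipotent elements are unipotent), so the issue is maximality. By Proposition~\ref{prop:nonconnmaxunipt}(a), it suffices to check two things: that $\phi(U)^0$ is a maximal unipotent subgroup of $M^0$, and that $\phi(U)/(\phi(U)\cap M^0)$ is a Sylow $p$-subgroup of $M/M^0$.

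For the connected part, note that $\phi$ maps $H^0$ onto $M^0$, and $U^0$ is a maximal unipotent subgroup of $H^0$ by Proposition~\ref{prop:nonconnmaxunipt}(a). So the connected statement reduces to: if $H$ is connected and $\phi\colon H\to M$ is an epimorphism, then $\phi$ carries a maximal unipotent subgroup of $H$ to a maximal unipotent subgroup of $M$. Here $U$ is $R_u(B)$ for a Borel subgroup $B$ of $H$; since $\phi$ is surjective, $\phi(B)$ is a Borel subgroup of $M$ (images of Borel subgroups under surjections are Borel, by \cite[21.3]{hum} and \cite[22.3]{hum}), and one checks $\phi(R_u(B))=R_u(\phi(B))$: the inclusion $\subseteq$ is clear since $\phi(R_u(B))$ is a connected normal unipotent subgroup of $\phi(B)$, and equality follows by comparing dimensions via the quotient $B/R_u(B)\to \phi(B)/R_u(\phi(B))$ of maximal tori. (One must be slightly careful about $\phi(U)$ possibly being disconnected a priori, but $U$ connected forces $\phi(U)$ connected, so $\phi(U)=\phi(U)^0$ here.) This gives the maximal-unipotent statement for connected groups, hence handles the identity component in general: $\phi(U)^0\supseteq\phi(U^0)$ is a maximal unipotent subgroup of $M^0$, and being maximal it equals $\phi(U)^0$.

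For the component-group part, consider the induced epimorphism $\bar\phi\colon H/H^0\to M/M^0$ of finite groups (it is surjective since $\phi(H)=M$ and $\phi(H^0)\subseteq M^0$). By Proposition~\ref{prop:nonconnmaxunipt}(a), $U(H^0\cap U)^{-1}$—more precisely the image of $U$ in $H/H^0$—is a Sylow $p$-subgroup of $H/H^0$; and a surjective homomorphism of finite groups carries a Sylow $p$-subgroup onto a Sylow $p$-subgroup of the target. The image of $U$ in $M/M^0$ is exactly the image under $\bar\phi$ of the image of $U$ in $H/H^0$, so it is a Sylow $p$-subgroup of $M/M^0$; and this image of $U$ in $M/M^0$ is identified with $\phi(U)M^0/M^0\cong \phi(U)/(\phi(U)\cap M^0)$. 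Combining the two parts with Proposition~\ref{prop:nonconnmaxunipt}(a) shows $\phi(U)$ is a maximal unipotent subgroup of $M$.

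I expect the only genuinely delicate point to be the verification that $\phi$ maps $R_u(B)$ onto $R_u(\phi(B))$ in the connected case—i.e.\ that surjectivity of $\phi$ is compatible with taking unipotent radicals of Borel subgroups—together with keeping track of the interaction between the identity-component statement and the component-group statement (the image $\phi(U)$ need not satisfy $\phi(U)\cap M^0=\phi(U^0)$ on the nose, only up to the combinatorics encoded in Proposition~\ref{prop:nonconnmaxunipt}(a)). Everything else is a routine assembly of the facts already established.
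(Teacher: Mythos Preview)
Your argument is correct and takes a genuinely different route from the paper. The paper reduces to the case where $M$ is unipotent (by passing to $\phi^{-1}(V)$ for $V$ a maximal unipotent subgroup of $M$ containing $\phi(U)$) and then proceeds by induction on the nilpotency class of $M$: for abelian $M$ one uses that every element of $M$ is the image of a unipotent element of $H$, and conjugacy of maximal unipotent subgroups (Proposition~\ref{prop:nonconnmaxunipt}(c)) together with abelianness to realise it as the image of something in $U$; the general case then follows via $[M,M]$. By contrast, you exploit the structural characterisation in Proposition~\ref{prop:nonconnmaxunipt}(a) directly, splitting into the connected part (handled by the standard Borel-subgroup fact that epimorphisms carry Borels to Borels and hence $R_u(B)$ to $R_u(\phi(B))$) and the component-group part (handled by the elementary Sylow fact for finite groups). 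Your approach is arguably more transparent once Proposition~\ref{prop:nonconnmaxunipt}(a) is in hand, and avoids the somewhat delicate nilpotency induction; the paper's approach, on the other hand, uses only parts (b) and (c) of Proposition~\ref{prop:nonconnmaxunipt} and is independent of the characterisation in (a). The one point you flag as delicate---that $\phi(U)\cap M^0$ might not equal $\phi(U^0)$ on the nose---is not actually an obstruction: you have shown $\phi(U)^0$ is maximal unipotent in $M^0$, and the component-group image is Sylow, which is exactly what Proposition~\ref{prop:nonconnmaxunipt}(a) requires.
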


\begin{proof}
 Since $\phi(U)$ is unipotent, there is a maximal unipotent subgroup $V$ of $M$ such that $\phi(U)\leq V$ (Proposition~\ref{prop:nonconnmaxunipt}(b)).  Replacing $M$ with $V$ and $H$ with $\phi^{-1}(V)$ if necessary, we can assume without loss that $M$ is unipotent.  We use induction on the nilpotency class of $M$ to prove that $\phi(U)= M$.  First suppose $M$ is abelian.  Let $m\in M$.  There exists $h\in H$ such that $\phi(h)= m$.  Then $\phi(h_s)= 1$, so $m= \phi(h_u)$.  By Proposition~\ref{prop:nonconnmaxunipt}(c), there exist $u\in U$ and $h_1\in H$ such that $u= h_1h_uh_1^{-1}$.   Then $\phi(u)= \phi(h_u)= m$ as $M$ is abelian, so we are done.
 
 Now assume $M$ is an arbitrary unipotent group.  Let $H_1= \phi^{-1}([M,M])$.  By our induction hypothesis, there is a maximal unipotent subgroup $U_1$ of $H_1$ such that $\phi(U_1)= [M,M]$.  We can choose a maximal unipotent subgroup $U_2$ of $H$ such that $U_1\leq U_2$ (Proposition~\ref{prop:nonconnmaxunipt}(b)).  Then $\phi(U_2)$ surjects onto $M/[M,M]$ by the abelian case above, and we deduce that $\phi(U_2)= M$.  But $U$ is $H$-conjugate to $U_2$ (Proposition~\ref{prop:nonconnmaxunipt}(c)), so $\phi(U)= M$.  The result now follows by induction.
\end{proof}

We give an application of Proposition~\ref{prop:epi} to K\"ulshammer's question.

\begin{cor}
\label{cor:unipttarget}
 Let $G$ be unipotent.  Let $U$ be a maximal unipotent subgroup of $H$ and let $\rho_1, \rho_2\in {\rm Hom}(H,G)$ such that $\rho_1|_U= \rho_2|_U$.  Then $\rho_1= \rho_2$.  In particular, $(G,H)$ is a K\"ulshammer pair.
\end{cor}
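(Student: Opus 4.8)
The plan is to induct on the nilpotency class of $G$, using that every unipotent algebraic group is nilpotent. Two elementary observations will be used repeatedly. First, since $G$ is unipotent, for any $\rho\in{\rm Hom}(H,G)$ and $h\in H$ the element $\rho(h_s)$ is simultaneously semisimple and unipotent, hence trivial, so $\rho(h)=\rho(h_u)$; in particular $\rho$ is determined by its restriction to the unipotent elements of $H$. Second, every unipotent element $v\in H$ is $H$-conjugate to an element of $U$: the Zariski closure of the cyclic group generated by $v$ is a commutative group all of whose elements are unipotent, hence a unipotent subgroup of $H$, and so by Proposition~\ref{prop:nonconnmaxunipt}(b) it is contained in a maximal unipotent subgroup of $H$, which by Proposition~\ref{prop:nonconnmaxunipt}(c) is $H$-conjugate to $U$.

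For the induction, if $G$ is trivial there is nothing to prove, so suppose $G$ is nontrivial and the result holds for unipotent groups of smaller nilpotency class. Then $Z:=Z(G)$ is a nontrivial closed central subgroup, and $\overline{G}:=G/Z$ is unipotent of strictly smaller nilpotency class; let $\pi\colon G\ra\overline{G}$ be the projection. Since $\pi\circ\rho_1$ and $\pi\circ\rho_2$ agree on $U$, the inductive hypothesis gives $\pi\circ\rho_1=\pi\circ\rho_2$, i.e.\ $f(h):=\rho_1(h)\rho_2(h)^{-1}$ lies in $Z$ for all $h\in H$. I would then verify that $f\colon H\ra Z$ is a homomorphism of algebraic groups: from $f(h_1h_2)=\rho_1(h_1)f(h_2)\rho_2(h_1)^{-1}$ and the centrality of $f(h_2)\in Z$ one gets $f(h_1h_2)=\rho_1(h_1)\rho_2(h_1)^{-1}f(h_2)=f(h_1)f(h_2)$. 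Now $f|_U$ is trivial because $\rho_1|_U=\rho_2|_U$; hence for any unipotent $v\in H$, writing $v=gug^{-1}$ with $g\in H$, $u\in U$ gives $f(v)=f(g)f(u)f(g)^{-1}=1$. Combined with the first observation ($f(h)=f(h_u)$ since $f$ takes values in a unipotent group), this shows $f\equiv 1$, i.e.\ $\rho_1=\rho_2$, closing the induction.

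The only subtle point is the choice of central quotient: one needs $f$ to be a genuine homomorphism, which is exactly why one passes to $G/Z(G)$ rather than to some other unipotent quotient such as $G/[G,G]$ (whose kernel is normal but not central, so that the cocycle-type identity for $f$ would not collapse to multiplicativity). The remaining ingredients --- nontriviality of $Z(G)$ for nontrivial nilpotent $G$, that $G/Z(G)$ is again unipotent with smaller nilpotency class, and the conjugacy-into-$U$ reduction --- are routine given Proposition~\ref{prop:nonconnmaxunipt}. Finally, the ``in particular'' statement follows at once: given $\sigma\in{\rm Hom}(U,G)$ and $\rho\in{\rm Hom}(H,G)$ with $\rho|_U$ $G$-conjugate to $\sigma$, after replacing $\rho$ by a suitable $G$-conjugate we may assume $\rho|_U=\sigma$, and the first part then shows such a $\rho$ is unique; so there is at most one $G$-conjugacy class of representations $\rho$ with $\rho|_U$ $G$-conjugate to $\sigma$, whence $(G,H)$ is a K\"ulshammer pair.
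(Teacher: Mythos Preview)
Your proof is correct, but it takes a different route from the paper's. The paper argues as follows: let $N$ be the subgroup of $H$ generated by all semisimple elements; then both $\rho_1$ and $\rho_2$ factor through the unipotent quotient $H_1:=H/N$, and by Proposition~\ref{prop:epi} the image of $U$ in $H_1$ is a maximal unipotent subgroup of the unipotent group $H_1$, hence all of $H_1$. Since $\rho_1|_U=\rho_2|_U$, the induced maps $H_1\ra G$ agree, so $\rho_1=\rho_2$.

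The contrast is this: the paper packages the inductive work into Proposition~\ref{prop:epi} (whose proof is itself an induction on the nilpotency class of the target), and then the corollary becomes a two-line consequence. You instead bypass Proposition~\ref{prop:epi} entirely and carry out the induction on the nilpotency class of $G$ directly, using only the existence and conjugacy statements of Proposition~\ref{prop:nonconnmaxunipt}. Your argument is more self-contained but longer; the paper's is shorter because it has already done the inductive work upstream. Conceptually, the paper's approach isolates the ``universal unipotent quotient'' $H/N$ of $H$ and shows $U$ surjects onto it, whereas yours shows more directly that the difference $\rho_1\rho_2^{-1}$ collapses by working modulo successive centres of $G$. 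Both are valid; your approach would be preferable in a context where Proposition~\ref{prop:epi} is not already available.
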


\begin{proof}
 Let $N$ be the subgroup of $H$ generated by the semisimple elements and let $H_1= H/N$.  Then $\rho_1$ and $\rho_2$ factor through $H_1$.  As $\rho_1|_U= \rho_2|_U$ and $U$ surjects onto $H_1$ (Proposition~\ref{prop:epi}), $\rho_1= \rho_2$.  The second assertion follows immediately.
\end{proof}

\section{Proof of main results}
\label{sec:mainproof}

\begin{lem}
\label{lem:persistence}
 Let $M$ be a reductive group acting on an affine variety $X$ and let $x\in X$.
 
 (a) Let $\lambda\in Y(M)$ such that $x':= \lim_{a\to 0} \lambda(a)\cdot x$ exists.  Then ${\rm dim}(M_{x'}\cap R_u(P_\lambda))\geq {\rm dim}(M_x\cap R_u(P_\lambda))$.
 
 (b) There exists $\lambda\in Y(M)$ such that $x':= \lim_{a\to 0} \lambda(a)\cdot x$ exists, $M\cdot x'$ is closed and $M_x\leq P_\lambda$.  Moreover, if $U$ is a maximal unipotent subgroup of $M_x$ and $U'$ is a maximal unipotent subgroup of $M_{x'}$ then ${\rm dim}(U')\geq {\rm dim}(U)$.
\end{lem}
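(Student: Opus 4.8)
The plan is to prove (a) by an upper‑semicontinuity argument and then to bootstrap to (b) using the projection $c_\lambda$ and the structure theory of maximal unipotent subgroups from Section~\ref{sec:maxlunipt}; so I would dispose of (a) first. Set $R := R_u(P_\lambda)$, a connected unipotent group normalised by $\lambda(k^*)$. Since $\lambda(k^*)$ normalises $R$, for each $a\in k^*$ one has $M_{\lambda(a)\cdot x}\cap R = \lambda(a)(M_x\cap R)\lambda(a)^{-1}$, so $\dim(M_{\lambda(a)\cdot x}\cap R) = \dim(M_x\cap R)$ for all $a\neq 0$. Put $d := \dim(M_x\cap R)$. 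The orbit‑dimension function $y\mapsto\dim(R\cdot y)$ is lower semicontinuous on $X$ (a standard fact about algebraic group actions), hence, by orbit–stabiliser, $S_d := \{y\in X\mid \dim(M_y\cap R)\geq d\} = \{y\in X\mid \dim(R\cdot y)\leq \dim R - d\}$ is closed. As $S_d$ contains $\lambda(a)\cdot x$ for every $a\neq 0$, it contains the closure of $\{\lambda(a)\cdot x\mid a\neq 0\}$, and $x' = \lim_{a\to 0}\lambda(a)\cdot x$ lies in that closure; therefore $\dim(M_{x'}\cap R)\geq d$, which is (a).

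For the first assertion of (b) I would invoke the sharp form of the Hilbert--Mumford--Kempf theorem: $\overline{M\cdot x}$ contains a unique closed $M$‑orbit, and among the cocharacters of $M$ carrying $x$ into that orbit one may choose $\lambda$ optimal (with respect to a fixed $M$‑invariant norm on $Y(M)$), whereupon Kempf's optimality theorem guarantees in addition that $M_x\subseteq P_\lambda$ (see \cite{BMR} and the references therein). This is the one genuinely external ingredient.

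Now fix such a $\lambda$ and again write $R := R_u(P_\lambda)$. Since $M_x\subseteq P_\lambda$, the projection $c_\lambda$ is defined on $M_x$; passing to the limit in $\bigl(\lambda(a)g\lambda(a)^{-1}\bigr)\cdot(\lambda(a)\cdot x) = \lambda(a)\cdot x$ for $g\in M_x$ shows $c_\lambda(g)\cdot x' = x'$, so $c_\lambda(M_x)\subseteq M_{x'}\cap L_\lambda$. Let $U$ be a maximal unipotent subgroup of $M_x$. Then $c_\lambda(U)$ is a unipotent subgroup of $M_{x'}$ lying in $L_\lambda$, and $\ker(c_\lambda|_U) = U\cap R$, so $\dim U = \dim c_\lambda(U) + \dim(U\cap R)$. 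Because $c_\lambda(U)\subseteq L_\lambda$ normalises $R$ and lies in $M_{x'}$, it normalises $M_{x'}\cap R$, and $c_\lambda(U)\cap R\subseteq L_\lambda\cap R = 1$; hence $V := c_\lambda(U)\cdot(M_{x'}\cap R)$ is a subgroup of $M_{x'}$ which is an extension of the unipotent group $M_{x'}\cap R$ by the unipotent group $c_\lambda(U)$, so $V$ is unipotent by Lemma~\ref{lem:uniptextnquot}(a) and $\dim V = \dim c_\lambda(U) + \dim(M_{x'}\cap R)$. By Proposition~\ref{prop:nonconnmaxunipt}(b) the group $V$ lies in some maximal unipotent subgroup of $M_{x'}$, and by Proposition~\ref{prop:nonconnmaxunipt}(c) every maximal unipotent subgroup of $M_{x'}$ (in particular $U'$) has the same dimension; therefore
\[
 \dim U' \;\geq\; \dim V \;=\; \dim c_\lambda(U) + \dim(M_{x'}\cap R) \;\geq\; \dim c_\lambda(U) + \dim(M_x\cap R) \;\geq\; \dim c_\lambda(U) + \dim(U\cap R) \;=\; \dim U ,
\]
where the middle inequality is part (a) applied to the chosen $\lambda$. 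This yields the "moreover" statement.

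Apart from the cited Hilbert--Mumford--Kempf input, the argument is bookkeeping with Section~\ref{sec:maxlunipt}. The step I expect to need the most care is checking that $c_\lambda(M_x)\subseteq M_{x'}$ — i.e.\ that the limit computation really is legitimate — and that $V$ is an honest closed unipotent subgroup (rather than a merely constructible subset), so that Proposition~\ref{prop:nonconnmaxunipt}(b),(c) genuinely apply; in (a) the point to watch is the semicontinuity statement and the verification that $x'$ lies in the closure used.
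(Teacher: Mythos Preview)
Your argument is correct and follows essentially the same route as the paper: semicontinuity of stabiliser dimension for (a), Kempf's optimal destabilising cocharacter for the first assertion of (b), and then the product $c_\lambda(U)\cdot(M_{x'}\cap R)$ combined with (a) for the dimension inequality. The only cosmetic difference is that the paper observes $M_x\cap R = U\cap R$ (via Lemma~\ref{lem:uniptextnquot}(b), since $M_x\cap R$ is normal unipotent in $M_x$), which gives equality where you use the containment $U\cap R\subseteq M_x\cap R$; either version suffices. You might also handle the trivial case $M\cdot x$ closed (take $\lambda=0$) explicitly, since Kempf's theorem as usually stated assumes the orbit is not closed.
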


\begin{proof}
 (a) Set $r= {\rm dim}(M_x\cap R_u(P_\lambda))$.  Let $S_\lambda= \lambda(k^*)R_u(P_\lambda)$, let $C= S_\lambda\cdot x$ and let $D$ be the closure of $C$; then $D$ is $R_u(P_\lambda)$-stable and $x'\in D$.
Now ${\rm dim}(R_u(P_\lambda))_y= r$ for all $y\in C$, so ${\rm dim}(R_u(P_\lambda))_y\geq r$ for all $y\in D$ by \cite[Lem.~3.7(c)]{newstead}.  Hence ${\rm dim}(M_{x'}\cap R_u(P_\lambda))\geq r$.

(b) If $M\cdot x$ is closed then we can take $\lambda= 0$ and there is nothing to prove, so suppose $M\cdot x$ is not closed.  By \cite[Cor.\ 3.5]{kempf}, there exists $\lambda\in Y(M)$ such that $x':= \lim_{a\to 0} \lambda(a)\cdot x$ exists, $M\cdot x'$ is closed and $M_x\leq P_\lambda$.  Let $U$ be a maximal unipotent subgroup of $M_x$.  By Lemma~\ref{lem:uniptextnquot}(b), $U$ contains $M_x\cap R_u(P_\lambda)$; in particular, $M_x\cap R_u(P_\lambda)= U\cap R_u(P_\lambda)$.  Let $s= {\rm dim}(U)$ and let $r= {\rm dim}(M_x\cap R_u(P_\lambda))$.  For any $u\in U$ and any $a\in k^*$, $\lambda(a)u\lambda(a)^{-1}$ belongs to $M_{\lambda(a)\cdot x}$.  It follows easily that $c_\lambda(u)$ belongs to $M_{x'}$.  Hence $c_\lambda$ gives a homomorphism from $U$ to $M_{x'}$ with kernel $M_x\cap R_u(P_\lambda)$, which implies that $c_\lambda(U)$ is an $(s-r)$-dimensional unipotent subgroup of $M_{x'}\cap L_\lambda$.  By part (a), $M_{x'}\cap R_u(P_\lambda)$ has dimension at least $r$, so the unipotent subgroup $c_\lambda(U)(M_{x'}\cap R_u(P_\lambda))$ of $M_{x'}$ has dimension at least $s$.  This completes the proof.
\end{proof}

\begin{proof}[Proof of Theorem~\ref{thm:samesubgp}]
 Since any two maximal unipotent subgroups of $H_1$ are $H_1$-conjugate, it is enough to show that $H_1$ and $H_2$ are $G$-conjugate.  First we consider the special case when $H_1$ and $H_2$ are semisimple.  Then $G/H_2$ is affine and $H_1$ acts on $G/H_2$ by left multiplication (cf.\ the proof of Lemma~\ref{lem:common_Borel}).  Let $\pi_2\colon G\ra G/H_2$ be the canonical projection and set $x= \pi_2(1)$.  By Lemma~\ref{lem:persistence}, there exists $\lambda\in Y(H_1)$ such that $x':= \lim_{a\to 0} \lambda(a)\cdot x$ exists, $H_1\cdot x'$ is closed  and $(H_1)_{x'}$ has a unipotent subgroup $U'$ of dimension at least as large as ${\rm dim}(U)$.  Since $U$ is a maximal unipotent subgroup of $H_1$, $U'$ is also.  But $(H_1)_{x'}$ is reductive by \cite[Thm.\ A]{rich}, so we deduce from Lemma~\ref{lem:redsub} that $(H_1)_{x'}= H_1$.  We can write $x'= \pi_2(g)$ for some $g\in G$; then $H_1= (H_1)_{x'}= H_1\cap gH_2g^{-1}$, so $H_1$ is $G$-conjugate to a subgroup of $H_2$.  By symmetry, $H_2$ is $G$-conjugate to a subgroup of $H_1$, so $H_2$ is $G$-conjugate to $H_1$.
 
 For the general case, let $N= R_u(H_1)= R_u(H_2)$.  Replacing $G$ with $N_G(N)$, we can assume without loss that $N\unlhd G$.  Let $\pi_N\colon G\ra G/N$ be the canonical projection.  Then $\pi_N(H_1)$ and $\pi_N(H_2)$ are semisimple, and $\pi_N(U)$ is a maximal unipotent subgroup of $\pi_N(H_1)$ and $\pi_N(H_2)$ (Proposition~\ref{prop:epi}).  By the semisimple case, $\pi_N(H_2)$ is $G/N$-conjugate to $\pi_N(H_1)$.  Since $N\leq H_1$ and $N\leq H_2$, we deduce that $H_2$ is $G$-conjugate to $H_1$, as required.
\end{proof}

\begin{rem}
 Theorem~\ref{thm:samesubgp} is false without the assumption that $R_u(H_1)= R_u(H_2)$: for instance, just take $H_1$ to be a nontrivial semisimple group and $H_2$ to be a maximal unipotent subgroup of $H_1$.
\end{rem}

\begin{ex}
 We cannot replace $N_G(U)$-conjugacy with $C_G(U)$-conjugacy in Theorem~\ref{thm:samesubgp}.  For instance, let $G= {\rm SL}_3(k)$, let $T$ be the maximal torus of diagonal matrices in $G$, let $U=\left\{\left.
 \left(
\begin{array}{ccc}
 1 & 0 & a \\
 0 & 1 & b \\
 0 & 0 & 1
\end{array}
\right)
\ \right| \ a,b\in k\right\}$ and let $H= TU$.  Set $g_c=
 \left(
\begin{array}{ccc}
 1 & c & 0 \\
 0 & 1 & 0 \\
 0 & 0 & 1
\end{array}
\right)
$ for $c\in k$.  Then the subgroups $H_c:= g_cHg_c^{-1}$ all have $U$ as a maximal unipotent subgroup as each $g_c$ normalises $U$, but a short calculation shows that $H_c$ and $H_d$ are not $C_G(U)$-conjugate unless $c= d$.
\end{ex}

\begin{proof}[Proof of Theorem~\ref{thm:main}]
 It is enough to prove the theorem when $G$ is connected, so we assume this.  We use induction on ${\rm dim}(G)$.  The result is trivial if $G= 1$.  Let $\rho_1, \rho_2\colon H\ra G$ be representations such that $\rho_1|_U= \rho_2|_U$.  Let $N= [G,G]R_u(G)$ and let $\pi_N\colon G\ra G/N$ be the canonical projection.  Note that $\rho_1(H)$ is contained in $N$: for otherwise the composition $\pi_N\circ \rho_1$ is a nontrivial homomorphism from $H$ to a torus, which is impossible as $H/R_u(H)$ is semisimple.  Likewise, $\rho_2(H)$ is contained in $N$.
 
 Let $B$ be a Borel subgroup of $H$ containing $U$ and let $T$ be a maximal torus of $B$.  Set $U'= \rho_1(U)= \rho_2(U)$.  Then $S_1:= \rho_1(T)$ and $S_2:= \rho_2(T)$ are tori of $N_G(U')$.  Let $u\in U$ and set $u'= \rho_1(u)= \rho_2(u)$.  Then for all $t\in T$,
 $$ \rho_2(t)u'\rho_2(t)^{-1}= \rho_2(tut^{-1})= \rho_1(tut^{-1})= \rho_1(t)u'\rho_1(t)^{-1}. $$
Hence there is a morphism $\mu\colon T\ra C_N(U')$ such that for all $t\in T$, $\rho_2(t)= \rho_1(t)\mu(t)$.

 Let $A= C_N(U')S_1$ (note that $S_1$ normalises $C_N(U')$ as $S_1$ normalises $N$ and $U'$).  Then $S_1, S_2\leq A$.  Choose a maximal torus $T'$ of $A$ such that $S_2\leq T'$.  By conjugacy of maximal tori of $A$, there exists $a\in A$ such that $aS_1a^{-1}\leq T'$.  Write $a= zs$ for some $z\in C_N(U')$ and some $s\in S_1$.  Set $\rho_1'= z\cdot \rho_1$ and $\rho_2'= \rho_2$.  Then $\rho_1'|_U= \rho_2'|_U$ and both $\rho_1'(T)= zS_1z^{-1}= zsS_1s^{-1}z^{-1}= aS_1a^{-1}$ and $\rho_2'(T)= S_2$ are subtori of $T'$.
Define $\mu'(t)= \rho_1'(t)^{-1}\rho_2'(t)= z\rho_1(t)^{-1} z^{-1}\rho_1(t)\mu(t)$ for $t\in T$.  Then $\mu'(t)\in C_N(U')$ for all $t\in T$ as $N_N(U')$ normalises $C_N(U')$.  Moreover, $\mu'(t)\in T'$ for all $t\in T$.  It follows that $\mu'$ is a morphism from $T$ to $T'\cap C_N(U')$. 
 
 Suppose $T'$ contains a nontrivial torus $S$ of $C_N(U')$.
Let $H$ act on $G$ by $h\cdot g= \rho_1'(h)g\rho_1'(h)^{-1}$.  If $g\in S$ then $B\leq H_g$ since $\rho_1'(B)\leq C_N(S)$, so $H_g= H$ by the paragraph following Lemma~\ref{lem:prod}.  Hence $\rho_1'(H)\leq C_N(S)^0$.
 Likewise, $\rho_2'(H)\leq C_G(S)^0$.  Let $\nu\colon N\ra N/R_u(G)$ be the canonical projection.  As $S$ is nontrivial, $\nu(S)$ is nontrivial, so $S$ is not central in the semisimple group $N/R_u(G)$.  But this implies that $C_N(S)^0$ is a proper subgroup of $N$, and hence of $G$, so $\rho_1'$ and $\rho_2'$ are $C_{C_N(S)^0}(U')$-conjugate by induction.  Thus $\rho_1$ and $\rho_2$ are $C_G(U')$-conjugate, and we're done.
 
 So suppose $T'$ does not contain a nontrivial torus $S$ of $C_N(U')$.  Then $\mu'(t)= 1$ for all $t\in T$, so $\rho_1'|_T= \rho_2'|_T$, so $\rho_1'|_B= \rho_2'|_B$.  It follows that $\rho_1'= \rho_2'$ by Lemma~\ref{lem:Borel_det}, so $\rho_1$ and $\rho_2$ are $C_G(U')$-conjugate.  This completes the proof.
\end{proof}

We deduce a useful result, which allows us to reduce to the case of finite groups (see Remark~\ref{rem:reduction}).

\begin{prop}
\label{prop:subgrpcrit}
 Let $H^0/R_u(H)$ be semisimple.  Suppose there are subgroups $H_1\leq H_2\leq\cdots$ of $H$ such that $\ds \bigcup_{m= 1}^\infty H_m$ is dense in $H$ and $(G,H_m)$ is a K\"ulshammer pair for every $m$.  Then $(G,H)$ is a K\"ulshammer pair.
\end{prop}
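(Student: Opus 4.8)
The plan is to fix $\sigma\in{\rm Hom}(U,G)$, where $U$ is a maximal unipotent subgroup of $H$, and to show that the set $R$ of $G$-conjugacy classes of representations $\rho\in{\rm Hom}(H,G)$ with $\rho|_U$ $G$-conjugate to $\sigma$ is finite. The key point is that restriction along the inclusions $H_m\hookrightarrow H$ should let us push the finiteness down from the pieces $H_m$, provided we can recover a representation of $H$ from compatible data on the $H_m$ and control how much ambiguity the limit introduces. First I would use Proposition~\ref{prop:nonconnmaxunipt} together with Proposition~\ref{prop:epi} (or a direct intersection argument) to arrange that, after conjugating, each $H_m$ meets $U$ in a maximal unipotent subgroup $U_m:=U\cap H_m$ of $H_m$, and that $\bigcup_m U_m$ is dense in $U$; since $\bigcup_m H_m$ is dense in $H$ and the $H_m$ ascend, replacing the $H_m$ by their closures if necessary is harmless. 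Then for $\rho$ as above, $\rho|_{H_m}$ restricted to $U_m$ is $G$-conjugate to $\sigma|_{U_m}$, so $\rho|_{H_m}$ lies in one of finitely many $G$-conjugacy classes by the hypothesis that $(G,H_m)$ is a K\"ulshammer pair.

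Next I would bound the number of classes of $\rho$ using the fact that $\rho$ is determined by the compatible family $(\rho|_{H_m})_m$: since $\bigcup_m H_m$ is dense in $H$ and $G$ is separated, two representations of $H$ agreeing on every $H_m$ agree on the dense subgroup $\bigcup_m H_m$ and hence everywhere. The subtlety is that conjugacy is only tracked class-by-class: we know $\rho|_{H_m}$ is $G$-conjugate to one of finitely many fixed representations $\tau_{m,1},\dots,\tau_{m,N_m}$, but we need a single $g\in G$ that simultaneously conjugates a given $\rho$ to a "standard form" on all of $H$ at once. This is where Theorem~\ref{thm:main} enters: because $H^0/R_u(H)$ is semisimple, a maximal unipotent subgroup of $H^0$ is a maximal unipotent subgroup of the semisimple group $H^0/R_u(H)$ pulled back, and Theorem~\ref{thm:main} says a representation of a connected group with semisimple reductive quotient is determined up to $C_G(\rho(U))$-conjugacy by its restriction to a maximal unipotent subgroup. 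So I would first reduce to controlling $\rho|_{H^0}$: by Theorem~\ref{thm:main} applied to $H^0$, the restriction $\rho|_{H^0}$ is determined up to $G$-conjugacy by $\rho|_{U^0}=\sigma|_{U^0}$, hence lies in a single $G$-conjugacy class; after conjugating, we may assume $\rho|_{H^0}=\rho_0$ is fixed, and the remaining ambiguity is by elements of $C_G(\rho_0(H^0))$. Now $H$ is generated by $H^0$ together with any set of representatives of $H/H^0$; since $H/H^0$ is finite, choosing such representatives $h_1,\dots,h_r$ inside some $H_{m_0}$, the values $\rho(h_1),\dots,\rho(h_r)$ together with $\rho_0$ determine $\rho$, and each $\rho(h_j)$ lies in the fibre over $\rho_0$ of the restriction map, i.e. in a set governed by $(G,H_{m_0})$ being a K\"ulshammer pair (or, directly, by the finite group $H_{m_0}/H_{m_0}^0$ acting). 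Combining: $\rho$ is determined by $\rho_0$ (finitely many classes) and by $\rho|_{H_{m_0}}$ (finitely many classes by hypothesis), compatibly, so $R$ is finite.

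The main obstacle is the bookkeeping of conjugating elements: the statements "$(G,H_m)$ is a K\"ulshammer pair" give finiteness of classes on each $H_m$ separately, but there is a priori no coherence between the conjugating elements as $m$ grows, and $G$ is not finite so one cannot simply take a compatible limit of conjugators naively. I expect the clean route is to fix a single sufficiently large $m_0$ with $h_1,\dots,h_r\in H_{m_0}$, use Theorem~\ref{thm:main} on $H^0$ to pin down $\rho|_{H^0}$ up to $G$-conjugacy once and for all, and then observe that $\rho$ is recovered from $\rho|_{H^0}$ and $\rho|_{H_{m_0}}$ — so only \emph{two} finiteness inputs are needed, not infinitely many, which sidesteps the coherence problem entirely. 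A minor technical point to handle carefully is ensuring, via Proposition~\ref{prop:nonconnmaxunipt}(a) and Proposition~\ref{prop:epi}, that $U\cap H_m$ really is a maximal unipotent subgroup of $H_m$ for $m$ large (so that the hypothesis on $(G,H_m)$ applies to the relevant $\sigma|_{U\cap H_m}$), and that density of $\bigcup_m H_m$ passes to density of $\bigcup_m H_m^0$ in $H^0$ so the connected-case input is legitimate.
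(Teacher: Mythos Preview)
Your overall strategy matches the paper's: reduce to a single large $m_0$, use Theorem~\ref{thm:main} to pin down $\rho|_{H^0}$ up to $G$-conjugacy, and then use the K\"ulshammer property of $(G,H_{m_0})$ to control $\rho|_{H_{m_0}}$. You also correctly identify that $\rho$ is determined by $\rho|_{H^0}$ together with $\rho|_{H_{m_0}}$ once $H_{m_0}$ meets every component of $H$. However, there is a genuine gap at the step where you write ``$\rho$ is determined by $\rho_0$ (finitely many classes) and by $\rho|_{H_{m_0}}$ (finitely many classes by hypothesis), compatibly, so $R$ is finite.''

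The problem is a mismatch of acting groups. After you conjugate so that $\rho|_{H^0}=\rho_0$ is fixed, the residual ambiguity is by $C_G(\rho_0(H^0))$, exactly as you say. But the K\"ulshammer hypothesis for $H_{m_0}$ only gives finitely many \emph{$G$}-conjugacy classes of restrictions $\rho|_{H_{m_0}}$, and a single $G$-class can split into infinitely many $C_G(\rho_0(H^0))$-classes. So knowing that each of the two pieces of data lies in finitely many classes does not, by itself, bound the number of classes of pairs; the word ``compatibly'' is carrying weight you have not justified. Using only one $m_0$ does sidestep the problem of coherence \emph{across different $m$}, but it does not eliminate the coherence problem between the $H^0$-data and the $H_{m_0}$-data.

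The paper closes this gap by imposing a further condition on $m$: it chooses $m$ large enough that, in addition to $H_m$ meeting every component of $H$, one has
\[
C_G\bigl(\rho_0(H_m\cap H^0)\bigr)=C_G\bigl(\rho_0(H^0)\bigr),
\]
which is possible since $\bigcup_m(H_m\cap H^0)$ is dense in $H^0$ and centralizers satisfy the descending chain condition. Now suppose $\rho,\rho'$ both satisfy $\rho|_{H^0}=\rho'|_{H^0}=\rho_0$ and their restrictions to $H_m$ are $G$-conjugate, say $\rho'|_{H_m}=g\cdot\rho|_{H_m}$. Restricting further to $H_m\cap H^0$ gives $\rho_0|_{H_m\cap H^0}=g\cdot\rho_0|_{H_m\cap H^0}$, so $g\in C_G(\rho_0(H_m\cap H^0))=C_G(\rho_0(H^0))$. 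Thus $G$-conjugacy of the $H_m$-restrictions upgrades automatically to $C_G(\rho_0(H^0))$-conjugacy, and now your determinacy statement finishes the proof. This centralizer condition is the missing ingredient in your proposal.

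A smaller point: you propose to arrange that $U\cap H_m$ is a maximal unipotent subgroup of $H_m$. This is not obvious and is not how the paper proceeds; instead it first chooses maximal unipotents $U_1\leq U_2\leq\cdots$ of the $H_m$ (possible by Proposition~\ref{prop:nonconnmaxunipt}(b)) and then takes $U$ to be any maximal unipotent of $H$ containing $\overline{\bigcup_m U_m}$. All that is needed is $U_m\subseteq U$, so that $\rho|_U=\sigma$ forces $\rho|_{U_m}=\sigma|_{U_m}$; one never needs $U_m=U\cap H_m$.
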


\begin{proof}
 By Proposition~\ref{prop:nonconnmaxunipt}(b), we can choose a maximal unipotent subgroup $U_m$ of $H_m$ for each $m$ in such a way that $U_1\leq U_2\leq\cdots$.  Set $\ds V= \ovl{\bigcup_{m= 1}^\infty U_m}$.  Then $V$ is a subgroup of $H$, and $V$ is unipotent as the set of unipotent elements of $H$ is closed, so $V$ is contained in a maximal unipotent subgroup $U$ of $H$ (Proposition~\ref{prop:nonconnmaxunipt}(b)).
 
 Let $\sigma\in {\rm Hom}(U,G)$.  Fix $\rho\in {\rm Hom}(H,G)$ such that $\rho|_U= \sigma$.  Clearly $\ds \bigcup_{m= 1}^\infty (H_m\cap H^0)$ is dense in $H^0$, so we can pick $m\in {\mathbb N}$ such that $C_G(\rho(H_m\cap H^0))= C_G(\rho(H^0))$ and $H_m$ meets every connected component of $H$.  Let $\tau\in {\rm Hom}(H,G)$ such that $\tau|_U= \sigma$.  Since $(G,H_m)$ is a K\"ulshammer pair, it is enough to show that if $\tau|_{H_m}= \rho|_{H_m}$ then $\tau= \rho$.  So suppose $\tau|_{H_m}= \rho|_{H_m}$.  By Theorem~\ref{thm:main}, there exists $g\in G$ such that $(g\cdot \tau)|_{H^0}= \rho|_{H^0}$.  So $(g\cdot \tau)|_{H_m\cap H^0}= \rho|_{H_m\cap H^0}= \tau|_{H_m\cap H^0}$, which implies that $g\in C_G(\rho(H_m\cap H^0))$.  But then $g\in C_G(\rho(H^0))$, so $\tau|_{H^0}= \rho|_{H^0}$.  As $H_m$ meets every connected component of $H$, we have $\tau= \rho$, as required.
\end{proof}

\begin{rem}
\label{rem:reduction}
 If $p> 0$ then by \cite[Lem.\ 2.3]{BMRT}, there exist finite subgroups $H_1\leq H_2\leq \cdots$ of $H$ such that $\ds \bigcup_{m= 1}^\infty H_m$ is dense in $H$.
\end{rem}

Next we prove Theorem~\ref{thm:char0KQ}.  In fact, we prove a more general version which makes sense in any characteristic.

\begin{thm}
\label{thm:linredKQ}
 Suppose $H^0/R_u(H)$ is semisimple, and suppose moreover that $H$ has a normal unipotent subgroup $N$ such that $H/N$ is linearly reductive.  Then $(G,H)$ is a K\"ulshammer pair.
\end{thm}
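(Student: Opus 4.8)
The plan is to reduce to the case where $H$ is finite and then to settle K\"ulshammer's question for a finite group with a normal Sylow $p$-subgroup and an arbitrary target. Two preliminary observations: since $R_u(H)N/N$ is a unipotent normal subgroup of the linearly reductive group $H/N$ it is trivial, so $R_u(H)\leq N$; and by Corollary~\ref{cor:normaluniptmax} every maximal unipotent subgroup of $H$ contains $N$. Suppose first $p>0$. Since $H/N$ is linearly reductive it has no nontrivial unipotent elements, so for any finite subgroup $F\leq H$ the subgroup $F\cap N$ is a normal $p$-subgroup of $F$ with $F/(F\cap N)\hookrightarrow H/N$ of order prime to $p$; hence $F\cap N$ is a normal Sylow $p$-subgroup of $F$. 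By Remark~\ref{rem:reduction} there are finite subgroups $H_1\leq H_2\leq\cdots$ of $H$ with dense union, so by Proposition~\ref{prop:subgrpcrit} (its hypothesis that $H^0/R_u(H)$ be semisimple being assumed) it suffices to prove that $(G,F)$ is a K\"ulshammer pair whenever $F$ is finite with a normal Sylow $p$-subgroup.

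So let $F$ be such a group, with normal Sylow $p$-subgroup $P$ --- which is then the maximal unipotent subgroup of $F$ --- and write $F=P\rtimes Q$ via Schur--Zassenhaus, $Q\cong F/P$ of order prime to $p$. Fix $\sigma\in{\rm Hom}(P,G)$. Restriction to $P$ carries $\{\rho\in{\rm Hom}(F,G):\rho|_P\ \text{is}\ G\text{-conjugate to}\ \sigma\}$ $G$-equivariantly onto the $G$-orbit of $\sigma$, whose stabiliser is $C:=C_G(\sigma(P))$; so it is enough to show that ${\rm Hom}_\sigma:=\{\rho\in{\rm Hom}(F,G):\rho|_P=\sigma\}$ meets only finitely many $C$-conjugacy classes. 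Fix a base point $\rho_0\in{\rm Hom}_\sigma$ (we may assume ${\rm Hom}_\sigma\neq\varnothing$), and for $\rho\in{\rm Hom}_\sigma$ put $z_\rho(f)=\rho(f)\rho_0(f)^{-1}$. One checks that $z_\rho$ takes values in $C$ (using $P\unlhd F$ and $\rho|_P=\rho_0|_P=\sigma$ one finds that $z_\rho(f)$ centralises $\sigma(P)$), that $z_\rho|_P\equiv 1$, and that $z_\rho$ is a $1$-cocycle of $F$ on $C$ for the action $f\cdot c=\rho_0(f)c\rho_0(f)^{-1}$, which is well defined since $\rho_0(F)\subseteq N_G(\sigma(P))$; as $P$ acts trivially on $C$, $z_\rho$ descends to an element of $Z^1(Q,C)$. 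The map $\rho\mapsto z_\rho$ is a bijection ${\rm Hom}_\sigma\to Z^1(Q,C)$ taking $C$-conjugacy to cohomologous cocycles, so ${\rm Hom}_\sigma/C$-conjugacy is in bijection with the nonabelian cohomology set $H^1(Q,C)$.

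It remains to prove $H^1(Q,C)$ is finite. Via $z\mapsto\{(z(q),q):q\in Q\}$ this set is identified with the set of $C$-conjugacy classes of complements to $C$ in the linear algebraic group $C\rtimes Q$; since $C\rtimes Q=C\cdot\widetilde Q$ for any complement $\widetilde Q$, $C$-conjugacy of complements coincides with $(C\rtimes Q)$-conjugacy. As $|Q|$ is prime to $p$, a complement is linearly reductive, and the complements form a closed subset of ${\rm Hom}(Q,C\rtimes Q)$; since the latter is a finite union of $(C\rtimes Q)$-conjugacy classes --- Theorem~\ref{thm:cr_finite} when the target is reductive, and the general case by descending through a $Q$-stable Levi decomposition of $C$ and using $H^1(Q,R_u(C))=0$ --- it follows that $H^1(Q,C)$ is finite. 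This settles the case $p>0$. When $p=0$ one argues the same way but uses Theorem~\ref{thm:main} in place of the reduction to finite subgroups: here $N=R_u(H)$ and the maximal unipotent subgroup $U$ lies in $H^0$, so after a $G$-conjugation every representation $\rho$ with $\rho|_U$ $G$-conjugate to $\sigma$ may be taken to restrict on $H^0$ to a fixed $\rho_0$, and those with $\rho|_{H^0}=\rho_0$ are parametrised up to $C_G(\rho_0(H^0))$-conjugacy by $H^1(H/H^0,C_G(\rho_0(H^0)))$, which is finite in characteristic zero.

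I expect the main obstacle to be the finiteness of $H^1(Q,C)$ for a non-reductive target $C$: with $G$ arbitrary one cannot quote Theorem~\ref{thm:cr_finite} or Lemma~\ref{lem:finitelinred} directly, and the descent to the reductive case must be organised so as to keep the action of $Q$ under control (via an equivariant Levi decomposition together with the vanishing of the cohomology of unipotent radicals for a group of order prime to $p$).
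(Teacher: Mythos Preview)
Your overall strategy is sound and, in the $p>0$ case, takes a genuinely different route from the paper. After the common reduction to finite $H$ via Proposition~\ref{prop:subgrpcrit} and Remark~\ref{rem:reduction}, the paper argues by \emph{quotienting}: it replaces $G$ by $N_G(M)$ (where $M=\sigma(N)$), passes from $G$-conjugacy to $C_G(M)$-conjugacy using that $|N_G(M):C_G(M)|<\infty$, and then observes that every $\rho$ with $\rho|_N=\sigma$ descends modulo $M$ to a representation of the linearly reductive group $H/N$ in $G/M$, so Lemma~\ref{lem:finitelinred} finishes at once. Your argument instead parametrises $\{\rho:\rho|_P=\sigma\}$ modulo $C_G(\sigma(P))$-conjugacy by the nonabelian set $H^1(Q,C)$ and identifies the latter with conjugacy classes of complements in $C\rtimes Q$. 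Both approaches ultimately hinge on Lemma~\ref{lem:finitelinred}; the paper's quotient trick is shorter, while your cocycle formulation makes the structure of the fibre over $\sigma$ more transparent and connects naturally with the cohomological viewpoint of Section~\ref{sec:H1}. Your $p=0$ sketch is close in spirit to the paper's, which also fixes $\rho|_{H^0}$ via Theorem~\ref{thm:main} and then handles the component group using a finite subgroup $F$ with $H=FH^0$ and Lemma~\ref{lem:finitelinred}.

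There is one point you should correct. Your proposed justification that $\mathrm{Hom}(Q,C\rtimes Q)$ is a finite union of conjugacy classes when $C$ is non-reductive---via a $Q$-equivariant Levi decomposition of $C$---does not go through in positive characteristic, since Levi complements need not exist there. Fortunately no such device is needed: Lemma~\ref{lem:finitelinred} is stated and proved for an \emph{arbitrary} linear algebraic target (it embeds the target in $\mathrm{GL}_n$ and uses linear reductivity of the source, not of the target), so it applies directly to $C\rtimes Q$ and gives the finiteness of $H^1(Q,C)$ immediately. Your remark that one ``cannot quote Lemma~\ref{lem:finitelinred} directly'' is therefore mistaken; once you invoke it, your argument is complete. (The alternative proof sketched in Remark~\ref{rem:altpf} gives exactly the reduction from arbitrary $G$ to reductive $G$ via $H^1(Q,R_u(G))=0$ that you were reaching for, but you don't need to redo it.)
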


\begin{rem}
\label{rem:Nclassfn}
 Let $N$ be a unipotent normal subgroup of $H$.  If $H/N$ is linearly reductive then $H/N$ is reductive, so $N$ must contain $R_u(H)$.  If $p= 0$ then the converse also holds: if $N$ contains $R_u(H)$ then $H/N$ is reductive and hence linearly reductive.  In particular, if $p= 0$ and we set $N= R_u(H)$ then the hypotheses of Theorem~\ref{thm:linredKQ} hold, and we obtain Theorem~\ref{thm:char0KQ}.  On the other hand, if $p> 0$ then an algebraic group is linearly reductive if and only if all its elements are semisimple; hence the second hypothesis of Theorem~\ref{thm:linredKQ} holds if and only if the set $N'$ of unipotent elements of $H$ forms a normal subgroup of $H$, and in this case, $N$ must equal $N'$.  In particular, if $p> 0$ and $H$ is finite then $H$ has a unipotent normal subgroup $N$ such that $H/N$ is linearly reductive if and only if $H$ has a unique Sylow $p$-subgroup.
\end{rem}

First we return to the result \cite[I.4, Thm.\ 2]{slodowy} discussed in Section~\ref{sec:intro}.  A complete proof is not given in \cite{slodowy}, so we provide one for the convenience of the reader (see also Remark~\ref{rem:altpf}).

\begin{lem}
\label{lem:finitelinred}
 Let $F$ be a finite linearly reductive group.  Then ${\rm Hom}(F,G)$ is a finite union of $G$-conjugacy classes.
\end{lem}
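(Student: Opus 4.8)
The plan is to reduce to the case where $G$ is reductive, in which the lemma is essentially immediate from Theorem~\ref{thm:cr_finite}. Suppose first that $G$ is reductive and let $\rho\in {\rm Hom}(F,G)$. Then $\rho(F)$ is a homomorphic image of $F$, and a homomorphic image of a linearly reductive group is linearly reductive (any rational representation of the quotient inflates to a completely reducible representation of the original group, and submodules for the quotient are the same as submodules for the group); by the remarks in Section~\ref{sec:prelim}, a linearly reductive subgroup of a reductive group is $G$-cr. Hence ${\rm Hom}(F,G)= {\rm Hom}(F,G)_{\rm cr}$, and Theorem~\ref{thm:cr_finite} finishes this case.

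For general $G$, set $R= R_u(G)$ and let $\pi\colon G\ra \ovl G:= G/R$ be the quotient map, so $\ovl G$ is reductive. The heart of the argument is the claim that the map $\pi_*\colon {\rm Hom}(F,G)\ra {\rm Hom}(F,\ovl G)$, $\rho\mapsto \pi\circ\rho$, has the property that any two elements of a single fibre are $R$-conjugate. Granting this, suppose $\rho_1,\rho_2\in {\rm Hom}(F,G)$ with $\pi\circ\rho_1$ and $\pi\circ\rho_2$ being $\ovl G$-conjugate, and choose $g\in G$ with $\pi(g)\cdot(\pi\circ\rho_1)= \pi\circ\rho_2$; then $g\cdot\rho_1$ and $\rho_2$ lie in the same fibre of $\pi_*$, hence are $R$-conjugate, so $\rho_1$ and $\rho_2$ are $G$-conjugate. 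Thus the induced map from $G$-conjugacy classes in ${\rm Hom}(F,G)$ to $\ovl G$-conjugacy classes in ${\rm Hom}(F,\ovl G)$ is injective, and since ${\rm Hom}(F,\ovl G)$ is a finite union of $\ovl G$-conjugacy classes by the reductive case, ${\rm Hom}(F,G)$ is a finite union of $G$-conjugacy classes.

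It remains to prove the fibre claim, which is where the work lies. If $\pi\circ\rho_1= \pi\circ\rho_2$, put $c(f)= \rho_1(f)\rho_2(f)^{-1}\in R$; a direct check shows that $c$ is a $1$-cocycle of $F$ with values in $R$, where $F$ acts via $f\cdot r= \rho_2(f)\,r\,\rho_2(f)^{-1}$, and that $\rho_1$ is $R$-conjugate to $\rho_2$ precisely when $c$ is a coboundary, i.e.\ $c(f)= u\,(f\cdot u^{-1})$ for some $u\in R$. I would verify that every such cocycle is a coboundary by induction on the nilpotency class of $R$, using the exact sequence of pointed sets attached to a central extension $1\ra Z\ra R\ra R/Z\ra 1$, where $Z$ is the last nontrivial term of the lower central series of $R$ (so $Z$ is central in $R$ and normal in $G$, hence stable under all the twisted $F$-actions in play): the base case is that $R$ is abelian, where the averaging element $u= \frac{1}{|F|}\sum_{f\in F}c(f)$ is well-defined and exhibits $c$ as a coboundary. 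The one genuine subtlety is exactly the point where linear reductivity of $F$ enters: $|F|$ is invertible in $k$ (this is precisely the condition that the finite group $F$ be linearly reductive, vacuously so when $p= 0$), so multiplication by $|F|$ is an automorphism of each abelian unipotent group occurring in the induction, which is what makes the averaging step legitimate. This reduction is essentially the geometric argument of Richardson referred to in Section~\ref{sec:intro}, and as an alternative to reproving it one could simply cite \cite[Sec.\ 3]{rich2} together with \cite[I.4, Thm.\ 2]{slodowy} for the case of reductive $G$.
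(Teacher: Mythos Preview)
Your argument is correct. It differs from the proof the paper gives immediately after the lemma, but it coincides almost exactly with the alternative proof the authors record later in Remark~\ref{rem:altpf}.

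The paper's primary proof does not separate the reductive and non-reductive cases. Instead it embeds $G$ in some ${\rm GL}_n(k)$, observes that $\rho(F)$ being linearly reductive forces ${\rm Lie}(G)$ to have a $\rho(F)$-complement in $\mathfrak{gl}_n(k)$, and then invokes the Richardson--Slodowy tangent-space argument to conclude that each ${\rm GL}_n(k)$-orbit meets ${\rm Hom}(F,G)$ in only finitely many $G$-orbits; finiteness of ${\rm GL}_n(k)$-orbits on ${\rm Hom}(F,{\rm GL}_n(k))$ is Maschke. Your route---handle reductive $G$ via Theorem~\ref{thm:cr_finite}, then kill the unipotent radical by showing $H^1(F,R_u(G))$ vanishes---is exactly what the authors do in Remark~\ref{rem:altpf}, except that they quote Richardson's vanishing result (Lemma~\ref{lem:nonab_lin_red}) rather than reprove it by your filtration-and-averaging induction. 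Your direct proof of the vanishing is a nice self-contained addition.

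One small misattribution: the ``geometric argument of Richardson'' alluded to in Section~\ref{sec:intro} is the orbit-intersection/tangent-space argument of \cite{rich2}, which is what underlies the paper's \emph{main} proof, not your cohomological reduction. The cohomological vanishing you use is a separate result of Richardson, from \cite{rich3}. This does not affect the mathematics, but you should adjust the citation in your final sentence accordingly.
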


\begin{proof}
 Choose an embedding of $G$ in some ${\rm GL}_n(k)$.  Let $\rho\in {\rm Hom}(F,G)$.  Then $\rho(F)$ is linearly reductive, so the $\rho(F)$-submodule ${\mathfrak g}= {\rm Lie}(G)$ of ${\mathfrak{gl}}_n(k)$ has a $\rho(F)$-module complement.  It follows from a slight modification of the proof of \cite[I.2, Thm.\ 1]{slodowy} that ${\rm GL}_n(k)\cdot \rho\cap {\rm Hom}(F,G)$ is a finite union of $G$-orbits.  By Maschke's Theorem, ${\rm Hom}(F,{\rm GL}_n(k))$ is a finite union of ${\rm GL}_n(k)$-conjugacy classes.  The result now follows.
\end{proof}

\noindent Lemma~\ref{lem:finitelinred} yields an immediate corollary.

\begin{cor}
 If $p= 0$ and $H$ is finite then $(G,H)$ is a K\"ulshammer pair.
\end{cor}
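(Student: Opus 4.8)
The plan is to observe that, for a finite group in characteristic $0$, being a K\"ulshammer pair is literally the statement that ${\rm Hom}(H,G)$ is a finite union of $G$-conjugacy classes, and then to invoke Lemma~\ref{lem:finitelinred}. First I would note that since $p=0$ the finite group $H$ is linearly reductive: this is Maschke's theorem, or equivalently the criterion recalled in Section~\ref{sec:prelim} holds trivially, as $H^0=1$ is a torus and the index $|H:H^0|=|H|$ is coprime to $p=0$. Second, since unipotent algebraic groups are connected when $p=0$, the only unipotent subgroup of the finite group $H$ is the trivial one, so any maximal unipotent subgroup $U$ of $H$ equals $1$ (consistently with the convention that a Sylow $p$-subgroup of a finite group is trivial in characteristic $0$, together with Proposition~\ref{prop:nonconnmaxunipt}(a)). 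Hence ${\rm Hom}(U,G)$ consists of the single trivial homomorphism $\sigma$, and every $\rho\in{\rm Hom}(H,G)$ satisfies $\rho|_U=\sigma$; so Question~\ref{qn:algKQ} for the pair $(G,H)$ reduces to asking whether ${\rm Hom}(H,G)$ is a finite union of $G$-conjugacy classes.

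That last assertion is exactly Lemma~\ref{lem:finitelinred} applied to the finite linearly reductive group $F=H$ (recall that in that lemma the target $G$ is an arbitrary linear algebraic group), which completes the argument. I do not expect any obstacle here: all of the real work is already contained in Lemma~\ref{lem:finitelinred}, and the corollary merely records that in characteristic $0$ the part of the K\"ulshammer condition involving the restriction to $U$ is vacuous for a finite $H$. In other words, the only thing to be careful about is the convention that the maximal unipotent subgroup of a finite group degenerates to $\{1\}$ when $p=0$, so that no constraint relating $\rho|_U$ to a prescribed $\sigma$ survives.
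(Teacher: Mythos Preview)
Your proposal is correct and follows the same route as the paper: the corollary is stated there as an immediate consequence of Lemma~\ref{lem:finitelinred}, and you have simply spelled out why it is immediate (namely that $U=1$ in characteristic~$0$, so the K\"ulshammer condition reduces to finiteness of $G$-conjugacy classes in ${\rm Hom}(H,G)$). One tiny quibble: the criterion ``$H^0$ a torus and $|H:H^0|$ coprime to $p$'' is stated in Section~\ref{sec:prelim} only for $p>0$, so your parenthetical appeal to it is misplaced---but your primary justification via Maschke (or via ``$p=0$ and $H$ reductive implies $H$ linearly reductive'') is fine.
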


\begin{proof}[Proof of Theorem~\ref{thm:linredKQ}]
 First suppose $p= 0$.  Let $U$ be a maximal unipotent subgroup of $H$.  Then $U$ is connected, so $U\leq H^0$.  If $\rho_1, \rho_2\in {\rm Hom}(H,G)$ and $\rho_1|_U= \rho_2|_U$ then $\rho_1|_{H^0}$ and $\rho_2|_{H^0}$ are $C_G(U)$-conjugate by Theorem~\ref{thm:main}.  So fix $\sigma\in {\rm Hom}(H^0,G)$ and set $M= \sigma(H^0)$.  Let $C= \{\rho\in {\rm Hom}(H,G)\mid \rho|_{H^0}= \sigma\}$.  By the above discussion, it is enough to show that $C$ is a finite union of $C_G(M)$-conjugacy classes.  We can assume $C$ is nonempty, for otherwise there is nothing to prove.
 
 We claim that $H$ has a finite subgroup $F$ such that $H= FH^0$.  To see this, note that $H$ has a Levi factorisation $H= H_1\ltimes R_u(H)$ \cite[Thm.\ 7.1]{mostow}.  By the proof of \cite[Prop.\ 3.2]{martin}, we can choose a finite subgroup $F$ of $H_1$ such that $H_1= FH_1^0$; then $H= FH^0$, as required.  Let $C'= \{\rho|_F\mid \rho\in C\}$.  It is enough to show that $C'$ is a finite union of $C_G(M)$-conjugacy classes.  Fix $\rho\in C$.  Let $\rho_1\in C$.  For any $h\in H$ and any $n\in H^0$,
 
  $$ \rho_1(h)\rho_1(n)\rho_1(h)^{-1}= \rho_1(hnh^{-1})= \rho(hnh^{-1})= \rho(h)\rho(n)\rho(h)^{-1}, $$
 so $\rho(h)^{-1}\rho_1(h)$ centralises $M$.  Hence $\rho_1(F)\leq \rho(F)C_G(M)$.  As $\rho(F)$ is finite, it is enough to show that $C'$ is a finite union of $\rho(F)C_G(M)$-conjugacy classes.  But $F$ is linearly reductive because it is finite, so this follows from Lemma~\ref{lem:finitelinred}.
 
 Now suppose $p> 0$.  By Remark~\ref{rem:Nclassfn}, $N$ is the unique maximal unipotent subgroup of $H$.  By Proposition~\ref{prop:subgrpcrit} and Remark~\ref{rem:reduction}, it is enough to prove the result when $H$ is finite.  Let $\sigma\in {\rm Hom}(N,G)$, and set $M= \sigma(N)$.  Let $C= \{\rho\in {\rm Hom}(H,G)\mid \rho|_N= \sigma\}$.  It is enough to show that $C$ is a finite union of $C_G(M)$-conjugacy classes.  For all $\rho_1\in C$, $\rho_1(H)\leq N_G(M)$, so without loss we can assume that $M\unlhd G$.  As $M$ is finite, $G$ is a finite extension of $C_G(M)$ \cite[Lem.\ 6.8]{martin}, so it is enough to show that $C$ is a finite union of $G$-conjugacy classes.  Let $\pi_M\colon G\ra G/M$ be the canonical projection; set $\ovl{G}= G/M$ and $\ovl{C}= \{\pi_M\circ \rho_1\mid \rho_1\in C\}$.   As $M$ is finite, it is enough to show that $\ovl{C}$ is a finite union of $\ovl{G}$-conjugacy classes.  This follows from Lemma~\ref{lem:finitelinred} as representations in $\ovl{C}$ factor through the finite linearly reductive group $H/N$, so we are done.
\end{proof}

\begin{cor}
\label{cor:normalunipt}
 Suppose $H^0/R_u(H)$ is semisimple, and let $U$ be a maximal unipotent subgroup of $H$.  Let $N$ be a normal unipotent subgroup of $G$.  Let $\sigma\in {\rm Hom}(U, N)$.  Let $C= \{\rho\in {\rm Hom}(H,G)\mid \rho|_U\in G\cdot \sigma\}$.  Then $C$ is a finite union of $G$-conjugacy classes.
\end{cor}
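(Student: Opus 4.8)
The plan is to reduce to the case of finite $H$ and there to exploit that, because $\sigma(U)\subseteq N$, the subgroup $\sigma(U)$ becomes a \emph{normal} Sylow subgroup of $\rho(H)$. First I would pass to the cleaner set $C_0=\{\rho\in{\rm Hom}(H,G)\mid\rho|_U=\sigma\}$: one has $C=\bigcup_{\rho\in C_0}G\cdot\rho$ (if $\rho\in C$ with $\rho|_U=g\cdot\sigma$ then $g^{-1}\cdot\rho\in C_0$), and $C$ is $G$-stable, so it is enough to show that $C_0$ meets only finitely many $G$-conjugacy classes. When $p=0$ this is immediate from Theorem \ref{thm:char0KQ}, which asserts that $(G,H)$ is a K\"ulshammer pair; so from now on I assume $p>0$.

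Next I would pin down the restriction of $\rho\in C_0$ to $H^0$. Let $\pi_N\colon G\to G/N$ be the quotient map. Since $\sigma(U)\subseteq N$, the composite $\pi_N\circ\rho$ is trivial on $U$, hence on the maximal unipotent subgroup $U^0$ of $H^0$ (Proposition \ref{prop:nonconnmaxunipt}(a)); as $R_u(H^0)=R_u(H)$, the group $H^0/R_u(H^0)$ is semisimple, so comparing $(\pi_N\circ\rho)|_{H^0}$ with the trivial representation of $H^0$ via Theorem \ref{thm:main} shows that $(\pi_N\circ\rho)|_{H^0}$ is trivial; hence $\rho(H^0)\subseteq N$. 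Then $\rho|_{H^0}$ is a homomorphism from $H^0$ into the unipotent group $N$ whose restriction to $U^0$ is $\sigma|_{U^0}$, so by Corollary \ref{cor:unipttarget} it is determined by $\sigma|_{U^0}$; in particular every $\rho\in C_0$ has one and the same restriction $\tau_0$ to $H^0$.

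I would then reduce the case $p>0$ to that of finite $H$ by the argument proving Proposition \ref{prop:subgrpcrit}. Using Remark \ref{rem:reduction}, choose finite subgroups $H_1\leq H_2\leq\cdots$ with $\bigcup_m H_m$ dense in $H$, compatible maximal unipotent subgroups $U_m$ of the $H_m$ contained in a common maximal unipotent subgroup of $H$, which without loss we take to be $U$, and note that each $\sigma|_{U_m}$ still maps into $N$. The content of the proof of Proposition \ref{prop:subgrpcrit} is that, for suitably large $m$, a representation in $C_0$ is determined by its restrictions to $H^0$ and to $H_m$; since by the previous paragraph every $\rho\in C_0$ has the single restriction $\tau_0$ to $H^0$, one such $m$ serves for all of $C_0$ at once, and the required finiteness for $(G,H)$ follows from the same statement for each finite group $H_m$ (with $\sigma|_{U_m}$ in place of $\sigma$), Theorem \ref{thm:main} entering just as it does in that proof. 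I expect this passage---keeping careful track of $G$-conjugacy while moving between $C_0$, $C$, and the finite approximations---to be the most delicate part of the argument.

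Finally I would treat finite $H$, where $U$ is a Sylow $p$-subgroup of $H$ (Proposition \ref{prop:nonconnmaxunipt}(a)). For $\rho\in C_0$, $\rho(U)=\sigma(U)$ is a Sylow $p$-subgroup of $\rho(H)$, and $\rho(H)\cap N$ is a normal unipotent subgroup of $\rho(H)$ containing it, so $\rho(H)\cap N=\sigma(U)$; hence $\sigma(U)\unlhd\rho(H)$ and $\rho(H)\subseteq N_G(\sigma(U))$. Replacing $G$ by $N_G(\sigma(U))$ I may therefore assume that $M:=\sigma(U)$ is a finite normal subgroup of $G$. Put $\bar G=G/M$ and let $\pi_M\colon G\to\bar G$ be the quotient map. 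For $\rho\in C_0$ the representation $\pi_M\circ\rho$ is trivial on $U$, hence on every conjugate of $U$, hence on $O^{p'}(H)$, so it factors through the finite linearly reductive group $H/O^{p'}(H)$; by Lemma \ref{lem:finitelinred} there are only finitely many $\bar G$-conjugacy classes among the $\pi_M\circ\rho$ with $\rho\in C_0$. For each fixed such class, after a suitable $G$-conjugation two elements of $C_0$ inducing the same representation into $\bar G$ differ by a function $H\to M$, and there are only finitely many such functions since $H$ and $M$ are finite. Hence $C_0$ meets only finitely many $G$-conjugacy classes, as required.
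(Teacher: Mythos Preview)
Your proof is correct, but it takes a noticeably different route from the paper's. The paper observes uniformly (no case split on $p$, no reduction to finite $H$) that every unipotent $h\in H$ is $H$-conjugate into $U$, so for any $\rho\in C$ one has $\rho(h)\in N$; hence the subgroup $M$ generated by the unipotent elements of $H$ satisfies $\rho(M)\subseteq N$ for all $\rho\in C$. Setting $M_1=\bigcap_{\rho\in C}\rho^{-1}(N)$ and $K=\bigcap_{\rho\in C}\ker\rho$, one checks that in $H_2:=H/K$ the image of $M_1$ is a normal unipotent subgroup with linearly reductive quotient (the latter since $H/M$ already consists of semisimple elements), and then Theorem~\ref{thm:linredKQ} applied to $(G,H_2)$ finishes in one stroke.

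Your approach instead settles $p=0$ by Theorem~\ref{thm:char0KQ}, and for $p>0$ first pins down $\rho|_{H^0}$ (via Theorem~\ref{thm:main} and Corollary~\ref{cor:unipttarget}), then uses the Proposition~\ref{prop:subgrpcrit} machinery to reduce to finite $H$; there you show $\sigma(U)\unlhd\rho(H)$ and pass to $N_G(\sigma(U))$ and then to $G/\sigma(U)$, landing on Lemma~\ref{lem:finitelinred} via the $p'$-quotient $H/O^{p'}(H)$. Two comments: first, your remark that ``Theorem~\ref{thm:main} enters just as it does'' in the reduction step is slightly off---because you have already fixed $\rho|_{H^0}=\tau_0$ for every $\rho\in C_0$, you do not need Theorem~\ref{thm:main} there, but you \emph{do} need to check that $G$-conjugacy of restrictions to $H_m$ implies $G$-conjugacy in $C_0$; this follows because any conjugating element must centralise $\tau_0(H_m\cap H^0)=$ (by choice of $m$) centralise $\tau_0(H^0)$. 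Second, the paper's argument is both shorter and characteristic-free, while yours makes the mechanism for finite $H$ (normality of the Sylow image inside $\rho(H)$) more explicit.
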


\begin{proof}
 Let $M$ be the subgroup of $H$ generated by all the unipotent elements.  If $\rho\in C$ and $h\in H$ is unipotent then $h$ is $H$-conjugate to an element of $U$ by Proposition~\ref{prop:nonconnmaxunipt}, so $\rho(h)\in N$.  It follows that $M\leq \rho^{-1}(N)$.  Set $\ds M_1= \bigcap_{\rho\in C} \rho^{-1}(N)\unlhd H$ and set $H_1= H/M_1$.  Now $H/M$ is linearly reductive as it consists of semisimple elements; since $M\leq M_1$, $H_1$ is a quotient of $H$, so $H_1$ is also linearly reductive.
 
 Now let $K= \ds \bigcap_{\rho\in C} {\rm ker}(\rho)\unlhd H$, let $H_2= H/K$ and let $\pi_2\colon H\ra H_2$ be the canonical projection.  Then every $\rho\in C$ factors through $H_2$.  If $m\in M_1$ and $\pi_2(m)$ is nontrivial and semisimple then there exists $\rho\in C$ such that $1\neq \rho(m)\in N$; then $\rho(m)$ is both semisimple and unipotent, a contradiction.  This shows that $\pi_2(M_1)$ is unipotent.  Moreover, $H_2/\pi_2(M_1)$ is linearly reductive since it is a quotient of $H_1$, and it is clear that $H_2^0/R_u(H_2)$ is semisimple.  The result now follows from Theorem~\ref{thm:linredKQ} applied to $(G, H_2)$.
\end{proof}

\begin{cor}
\label{cor:abelian}
 Suppose $H/R_u(H)$ is finite and abelian.  Then $(G,H)$ is a K\"ulshammer pair.
\end{cor}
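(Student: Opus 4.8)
The plan is to reduce this to Theorem~\ref{thm:linredKQ}: I claim $H$ has a normal unipotent subgroup $N$ with $H/N$ linearly reductive, and then that theorem (whose other hypothesis, that $H^0/R_u(H)$ be semisimple, is automatic here) gives the conclusion directly.

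First I would record that $H^0= R_u(H)$: the quotient $H^0/R_u(H)$ is a connected algebraic group embedding in the finite group $H/R_u(H)$, hence is trivial; in particular $H^0/R_u(H)$ is semisimple and $H^0$ is unipotent. Next I would build $N$. Write $A= H/R_u(H)$, a finite abelian group, let $\pi\colon H\ra A$ be the canonical projection, and let $A_p\leq A$ be the Sylow $p$-subgroup of $A$ (the trivial subgroup if $p= 0$, following our convention). Because $A$ is abelian, $A_p$ is a characteristic subgroup of $A$, so $N:= \pi^{-1}(A_p)$ is normal in $H$. Now $N$ sits in a short exact sequence $1\ra R_u(H)\ra N\ra A_p\ra 1$ with $R_u(H)$ unipotent and $A_p$ a finite $p$-group, hence unipotent; so $N$ is unipotent by Lemma~\ref{lem:uniptextnquot}(a). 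Finally $H/N\iso A/A_p$ is finite and has order coprime to $p$, so it is linearly reductive---and in characteristic $0$ this holds simply because $H/N$ is finite. Theorem~\ref{thm:linredKQ} now applies and shows that $(G,H)$ is a K\"ulshammer pair.

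There is essentially no obstacle here beyond identifying the right construction: the whole content is that commutativity of $H/R_u(H)$ forces its Sylow $p$-subgroup to be normal, so that its preimage is a normal unipotent subgroup with linearly reductive quotient. (Equivalently, when $p> 0$ this is what makes the set of unipotent elements of $H$ a subgroup, which by Remark~\ref{rem:Nclassfn} is precisely the extra hypothesis of Theorem~\ref{thm:linredKQ}.) Without the abelian assumption neither statement need hold.
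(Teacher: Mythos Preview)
Your proof is correct and follows essentially the same route as the paper: both reduce to Theorem~\ref{thm:linredKQ} by exhibiting a normal unipotent subgroup $N$ with $H/N$ linearly reductive, and in fact your $N=\pi^{-1}(A_p)$ coincides with the paper's $N$, which is described there as the unique maximal unipotent subgroup of $H$ via Proposition~\ref{prop:nonconnmaxunipt}(a) and Corollary~\ref{cor:normaluniptmax}. The only cosmetic difference is that you build $N$ directly from the Sylow $p$-subgroup of the abelian quotient, whereas the paper appeals to its earlier structural results on maximal unipotent subgroups to reach the same conclusion.
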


\begin{proof}
 By Corollary~\ref{cor:normaluniptmax}, any maximal unipotent subgroup of $H$ contains $R_u(H)$.  It follows from Proposition~\ref{prop:nonconnmaxunipt}(a) and our hypotheses on $H$ that $H$ has a unique maximal unipotent subgroup $N$ and that every unipotent element of $H$ belongs to $N$.  This implies that $H/N$ is linearly reductive, so the result follows from Theorem~\ref{thm:linredKQ}.
\end{proof}

\begin{cor}
\label{cor:connsolv}
 Suppose $G$ is connected and solvable.  Then $G$ has the K\"ulshammer property.
\end{cor}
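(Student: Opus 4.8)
The plan is to obtain this as a short consequence of Corollary~\ref{cor:normalunipt}, using the structure of connected solvable groups.

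First I would fix a group $H$ with $H^0/R_u(H)$ semisimple, together with a maximal unipotent subgroup $U$ of $H$; the task is then to show that $(G,H)$ is a K\"ulshammer pair.  So let $\sigma\in {\rm Hom}(U,G)$ be arbitrary.  The key structural point is that, since $G$ is connected and solvable, $G$ is its own Borel subgroup, so by the discussion of maximal unipotent subgroups in Section~\ref{sec:prelim} the unipotent radical $R_u(G)$ is the unique maximal unipotent subgroup of $G$; in particular every unipotent element of $G$ lies in $R_u(G)$, and of course $R_u(G)$ is a normal unipotent subgroup of $G$.  Now $\sigma(U)$, being a homomorphic image of the unipotent group $U$, is itself unipotent, so $\sigma(U)\subseteq R_u(G)$ and we may regard $\sigma$ as an element of ${\rm Hom}(U,R_u(G))$.

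Next I would apply Corollary~\ref{cor:normalunipt} with $N:= R_u(G)$.  Its hypotheses all hold --- $H^0/R_u(H)$ is semisimple, $N$ is a normal unipotent subgroup of $G$, and $\sigma\in {\rm Hom}(U,N)$ --- so the corollary gives that $C:= \{\rho\in {\rm Hom}(H,G)\mid \rho|_U\in G\cdot\sigma\}$ is a finite union of $G$-conjugacy classes.  This is exactly the statement that there are only finitely many $G$-conjugacy classes of representations $\rho\in {\rm Hom}(H,G)$ with $\rho|_U$ being $G$-conjugate to $\sigma$.  As $\sigma$ was arbitrary, $(G,H)$ is a K\"ulshammer pair, and as $H$ was an arbitrary group with $H^0/R_u(H)$ semisimple, $G$ has the K\"ulshammer property.

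There is essentially no obstacle here, since the substantive work is carried out in Corollary~\ref{cor:normalunipt}.  The only step needing a little care is the identification of the set of unipotent elements of $G$ with $R_u(G)$, and this is the sole place where connectedness and solvability of $G$ are used.  Both hypotheses are genuinely needed: the conclusion fails, for instance, when $G$ is a nontrivial finite $p$-group (there $R_u(G)= 1$, yet every element is unipotent), and, by the negative results recalled in Section~\ref{sec:intro}, it can also fail for connected non-solvable $G$.
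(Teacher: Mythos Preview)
Your proof is correct and follows essentially the same route as the paper: observe that in a connected solvable group every unipotent element lies in $R_u(G)$, so $\sigma(U)\subseteq R_u(G)$, and then apply Corollary~\ref{cor:normalunipt} with $N=R_u(G)$. The paper's proof is just a terser version of exactly this argument.
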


\begin{proof}
 Suppose $H^0/R_u(H)$ is semisimple, and let $U$ be a maximal unipotent subgroup of $H$.  If $u\in U$ and $\rho\in {\rm Hom}(H,G)$ then $\rho(u)$ is unipotent, so $\rho(u)$ belongs to $R_u(G)$ as $G$ is connected and solvable.  The result now follows from Corollary~\ref{cor:normalunipt}.
\end{proof}

\begin{cor}
\label{cor:Borel_image}
 Suppose $H^0/R_u(H)$ is semisimple, and let $C$ be as in Corollary~\ref{cor:normalunipt}.  Suppose there exists $\rho\in C$ such that $\rho$ is faithful and $\rho(H)$ is contained in a connected solvable subgroup $M$ of $G$.  Then $(G,H)$ is a K\"ulshammer pair.
\end{cor}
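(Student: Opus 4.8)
The plan is to read off from the hypotheses just enough structure on $H$ to apply Theorem~\ref{thm:linredKQ}: that result gives that $(G,H)$ is a K\"ulshammer pair as soon as (i) $H^0/R_u(H)$ is semisimple and (ii) $H$ has a normal unipotent subgroup $N_0$ with $H/N_0$ linearly reductive. Since (i) is assumed, everything reduces to producing such an $N_0$. It is worth noting at the outset that the subgroup $N$ and representation $\sigma$ defining $C$, and indeed the hypothesis $\rho\in C$, play no role in the argument: all that is used is the existence of \emph{some} faithful $\rho\in{\rm Hom}(H,G)$ with $\rho(H)$ contained in a connected solvable subgroup $M\leq G$. (The result is phrased in terms of $C$ only because that is the shape in which it will be applied, alongside Corollary~\ref{cor:normalunipt}.)

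I would fix a Levi decomposition $M= T\ltimes R_u(M)$ with $T$ a maximal torus and let $q\colon M\ra M/R_u(M)$ be the projection, so that $M/R_u(M)$ is a torus, and set
$$ N_0:= \rho^{-1}(R_u(M))= {\rm ker}(q\circ\rho)\leq H. $$
Since $\rho(H)\subseteq M$ and $R_u(M)\unlhd M$, conjugation inside $M$ shows that $N_0\unlhd H$. Next I would check that $N_0$ is precisely the set of unipotent elements of $H$: on one hand $\rho(N_0)= \rho(H)\cap R_u(M)$ is a closed subgroup of the unipotent group $R_u(M)$, hence unipotent, so $N_0$ is unipotent since $\rho$ is faithful; on the other hand, if $h\in H$ is unipotent then $q(\rho(h))$ is a unipotent element of the torus $M/R_u(M)$ and so is trivial, giving $h\in N_0$. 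It follows that $H/N_0$ has no nontrivial unipotent element: a unipotent coset $\bar x\in H/N_0$ lifts to some $x\in H$, and since the projection $H\ra H/N_0$ respects Jordan decomposition, the image of $x_s$ is the semisimple part of $\bar x$, which is trivial, so $x_s\in N_0$; as $N_0$ consists of unipotent elements this forces $x_s= 1$, whence $x$ is unipotent, $x\in N_0$ and $\bar x= 1$. Therefore $R_u(H/N_0)= 1$, so $H/N_0$ is reductive; if $p= 0$ it is then linearly reductive, while if $p> 0$ then every element of $H/N_0$ is semisimple and $H/N_0$ is linearly reductive by the criterion recalled in Section~\ref{sec:prelim}. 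Now both (i) and (ii) hold, and Theorem~\ref{thm:linredKQ} gives that $(G,H)$ is a K\"ulshammer pair.

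I do not expect a serious obstacle; the proof is a short structural reduction, and the main points to check are the normality and unipotence of $N_0$, which genuinely use that $\rho(H)$ lies \emph{inside} $M$ so that $R_u(M)$ is normalised by it. As a side remark, the hypotheses in fact pin down $H^0$: since $\rho$ is faithful, $H^0\iso\rho(H^0)= \rho(H)^0\subseteq M$ is connected solvable, so $H^0/R_u(H)$ is at once a torus and, by assumption, semisimple, hence trivial; that is, $H^0= R_u(H)$ is unipotent.
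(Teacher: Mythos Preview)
Your proof is correct and follows essentially the same approach as the paper: take $N_0=\rho^{-1}(R_u(M))$ (the paper writes $V$ for the set of unipotent elements of $M$, which is $R_u(M)$ since $M$ is connected solvable), check it is a normal unipotent subgroup of $H$ with linearly reductive quotient, and apply Theorem~\ref{thm:linredKQ}. The only difference is cosmetic: the paper observes in one line that $H/N_0$ embeds (via the faithful $\rho$) in the torus $M/R_u(M)$ and is therefore linearly reductive, whereas you reach the same conclusion by a short Jordan-decomposition argument showing $H/N_0$ has no nontrivial unipotent elements.
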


\begin{proof}
 The set $V$ of unipotent elements of $M$ is a normal subgroup of $M$.  Since $\rho$ is faithful, $\rho^{-1}(V)$ is a normal unipotent subgroup of $H$ and $H/\rho^{-1}(V)$ is isomorphic to a subgroup of $M/V$, which is a torus, so $H/\rho^{-1}(V)$ is linearly reductive.  The result follows from Theorem~\ref{thm:linredKQ}.
\end{proof}

Next we prove Theorem~\ref{thm:GcrKQ}.  The proof is similar to that of Theorem~\ref{thm:linredKQ}, but there are some extra complications: for instance, when $p> 0$ we cannot apply Proposition~\ref{prop:subgrpcrit} directly to reduce to the finite case.  In fact, we prove a slight strengthening of Theorem~\ref{thm:GcrKQ}: the latter follows immediately from Theorem~\ref{thm:GcrKQalt} below because any maximal unipotent subgroup of $H^0$ is contained in a maximal unipotent subgroup of $H$ (Proposition~\ref{prop:nonconnmaxunipt}(b)).

\begin{thm}
\label{thm:GcrKQalt}
 Let $G$ be reductive, and suppose $H^0/R_u(H)$ is semisimple.  Let $U$ be a maximal unipotent subgroup of $H^0$ and let $\sigma\in {\rm Hom}(U,G)$.  Then there are only finitely many $G$-conjugacy classes of $G$-cr representations $\rho\in {\rm Hom}(H,G)$ such that $\rho|_U$ is $G$-conjugate to $\sigma$.
\end{thm}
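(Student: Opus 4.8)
The plan is to combine Theorem~\ref{thm:main} with Theorem~\ref{thm:cr_finite}, much as Theorem~\ref{thm:linredKQ} combines Theorem~\ref{thm:main} with Lemma~\ref{lem:finitelinred}. First dispose of the case ${\rm char}(k)=0$: then a maximal unipotent subgroup $U$ of $H^0$ is connected and lies in $H^0$, so by Proposition~\ref{prop:nonconnmaxunipt}(a) (and our convention that Sylow $p$-subgroups are trivial) it is also a maximal unipotent subgroup of $H$, and Theorem~\ref{thm:linredKQ} applied with $N=R_u(H)$ shows that \emph{all} $\rho\in{\rm Hom}(H,G)$ with $\rho|_U$ $G$-conjugate to $\sigma$ fall into finitely many $G$-conjugacy classes. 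So assume from now on that $p>0$. If no $G$-cr $\rho$ has $\rho|_U$ $G$-conjugate to $\sigma$ there is nothing to prove; otherwise fix one, $\rho_0$, and (after conjugating) assume $\rho_0|_U=\sigma$. If $\rho$ is another such $G$-cr representation, first conjugate so that $\rho|_U=\sigma$, then apply Theorem~\ref{thm:main} to the connected group $H^0$ (noting $R_u(H^0)=R_u(H)$, so $H^0/R_u(H^0)$ is semisimple, and that $U$ is a maximal unipotent subgroup of $H^0$): the representations $\rho|_{H^0}$ and $\rho_0|_{H^0}$ are $C_G(\sigma(U))$-conjugate, and conjugating $\rho$ by the relevant element of $C_G(\sigma(U))$ fixes $\rho|_U=\sigma$ and arranges $\rho|_{H^0}=\tau_0:=\rho_0|_{H^0}$. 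It therefore suffices to prove that
\[
 C:=\{\rho\in{\rm Hom}(H,G)\ :\ \rho\text{ is }G\text{-cr and }\rho|_{H^0}=\tau_0\}
\]
is a finite union of $G$-conjugacy classes.

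Next put $M:=\tau_0(H^0)=\rho_0(H^0)$. As $M$ is a connected normal subgroup of the $G$-cr subgroup $\rho_0(H)$ it is itself $G$-cr (\cite{BMR}), so $C_G(M)$ is reductive, and hence so is $N_G(M)$ (its identity component is $M\,C_G(M)^0$). For $\rho\in C$ we have $\rho(H)\subseteq N_G(M)$ and, since $M$ is $G$-cr, $\rho$ is $G$-cr if and only if it is $N_G(M)$-cr (\cite{BMR}); also any element of $G$ conjugating one member of $C$ to another fixes $\tau_0$, hence centralises $M$, so $G$-conjugacy and $C_G(M)$-conjugacy agree on $C$. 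Replacing $G$ by $N_G(M)$, we may thus assume $M\unlhd G$. Now fix $\rho_0\in C$ as a base point; for $\rho\in C$ the element $c_\rho(h):=\rho_0(h)^{-1}\rho(h)$ lies in $C_G(M)$ because $\rho(h)$ and $\rho_0(h)$ induce the same automorphism of $M$ (namely the one obtained from conjugation by $h$ via $\tau_0$, using $\ker\tau_0\unlhd H$). One checks that $c_\rho$ vanishes on $H^0$ and so factors through the finite group $Q:=H/H^0$ (here one uses that $M=\rho_0(H^0)$ centralises $C_G(M)$), and that $c_\rho$ is a nonabelian $1$-cocycle for the $Q$-action on $C_G(M)$ induced by $\rho_0$-conjugation. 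Thus $\rho\mapsto c_\rho$ is an injection of $C$ into $Z^1(Q,C_G(M))$ whose image $\{c\ :\ \rho_c\text{ is }G\text{-cr}\}$ is a union of cohomology classes, and $C_G(M)$-conjugacy on $C$ corresponds to the cohomology relation; so it suffices to show that only finitely many cohomology classes in $H^1(Q,C_G(M))$ contain a cocycle $c$ with $\rho_c$ $G$-cr.

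A cocycle $c\in Z^1(Q,C_G(M))$ is the same thing as a splitting $q\mapsto(c(q),q)$ of the projection $\widetilde G\to Q$, where $\widetilde G:=C_G(M)\rtimes Q$ (with $Q$ acting via $\rho_0$-conjugation) is a reductive, possibly non-connected, group; cohomologous cocycles give $C_G(M)$-conjugate, hence $\widetilde G$-conjugate, splittings. By Theorem~\ref{thm:cr_finite} there are only finitely many $\widetilde G$-conjugacy classes of $\widetilde G$-cr homomorphisms from the finite group $Q$ to $\widetilde G$. The crux of the argument --- and the step I expect to be the main obstacle --- is the claim that if $\rho_c$ is $G$-cr then the associated splitting $Q\to\widetilde G$ is $\widetilde G$-cr. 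I would establish this by analysing R-parabolic subgroups: if $P_\lambda\supseteq\rho_c(H)$ is an R-parabolic subgroup of $G$ then $M\subseteq P_\lambda$, and since $M\unlhd G$ is $G$-cr and reductive one has $M\cap R_u(P_\lambda)=1$ while $M$ normalises $R_u(P_\lambda)$, so $[M,R_u(P_\lambda)]=1$; choosing the R-Levi subgroup of $P_\lambda$ that contains $M$, one may take $\lambda\in Y(C_G(M))$, whence $M\subseteq L_\lambda$ and $R_u(P_\lambda)\subseteq C_G(M)$. This sets up a matching between the R-parabolic and R-Levi subgroups of $G$ containing $M$ and those of $\widetilde G$, under which a $G$-complete-reducibility witness for $\rho_c(H)$ transports to a $\widetilde G$-complete-reducibility witness for the splitting. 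Granting the claim, the $G$-cr representations in $C$ account for only finitely many $\widetilde G$-conjugacy classes of splittings; and since $\widetilde G=C_G(M)\cdot\Gamma$ and $\Gamma$ normalises itself for any splitting $\Gamma$ of $\widetilde G\to Q$, the $\widetilde G$-conjugacy class of $\Gamma$ coincides with its $C_G(M)$-conjugacy class. Hence $C$ is a finite union of $C_G(M)$- (equivalently $G$-) conjugacy classes, completing the proof.
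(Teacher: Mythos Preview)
Your overall strategy matches the paper's: use Theorem~\ref{thm:main} to reduce to the set $C$ of $G$-cr representations $\rho$ with $\rho|_{H^0}=\tau_0$ fixed, then show $C$ meets only finitely many $C_G(M)$-classes by invoking Theorem~\ref{thm:cr_finite} for a suitable finite group. The difference lies in how that finite group enters. The paper chooses a finite subgroup $F\leq H$ meeting every component of $H$ (via Remark~\ref{rem:reduction}) and works with the concrete reductive subgroup $A:=\rho_0(F)\,C_G(M)$ of $G$; the crux is to show $\rho_1(F)$ is $A$-cr for every $\rho_1\in C$, which is done by appeal to \cite[Prop.~6.1]{BMRT}. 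You instead pass to the quotient $Q=H/H^0$ and form the abstract semidirect product $\widetilde G=C_G(M)\rtimes Q$, so the crux becomes showing that the associated splitting is $\widetilde G$-cr.

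There is a genuine gap at the reduction ``replace $G$ by $N_G(M)$''. The claimed equivalence ``$\rho$ is $G$-cr iff it is $N_G(M)$-cr'' is not a result in \cite{BMR} and you give no argument for it, yet your sketch of the parabolic matching (specifically $[M,R_u(P_\lambda)]=1$) relies on the normality of $M$ in $G$ that this replacement is meant to supply. Fortunately the replacement is unnecessary. Working in the original $G$, any $\lambda\in Y(C_G(M))$ already satisfies $M\leq L_\lambda$, and one checks directly that $s(Q)\leq\widetilde P_\lambda$ iff $\rho(H)\leq P_\lambda$. If $G$-complete reducibility gives $\rho(H)\leq vL_\lambda v^{-1}$ with $v\in R_u(P_\lambda)$, then $M\leq\rho(H)$ forces both $M\leq L_\lambda$ and $v^{-1}Mv\leq L_\lambda$, so for each $m\in M$ the commutator $[m,v]$ lies in $L_\lambda\cap R_u(P_\lambda)=1$; hence $v\in C_G(M)$, and conjugating by $(v^{-1},1)\in\widetilde G$ carries $s(Q)$ into $\widetilde L_\lambda$. (A minor side issue: with your convention $c_\rho(h)=\rho_0(h)^{-1}\rho(h)$ the resulting identity is $c(xy)=(y^{-1}\!\cdot c(x))\,c(y)$ rather than the cocycle equation of Section~\ref{sec:H1}; taking $c'_\rho(h)=\rho(h)\rho_0(h)^{-1}$ instead gives the standard identity and makes $q\mapsto(c'(q),q)$ a genuine splitting.) With these repairs your argument works and is arguably more self-contained than the paper's, which trades the parabolic computation for a citation of \cite{BMRT}.
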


\begin{proof}[Proof of Theorem~\ref{thm:GcrKQ}]
 By Theorem~\ref{thm:char0KQ}, we can assume that $p> 0$.    Let $U$ be a maximal unipotent subgroup of $H^0$.  If $\rho_1, \rho_2\in {\rm Hom}(H^0,G)$ and $\rho_1|_U= \rho_2|_U$ then $\rho_1$ and $\rho_2$ are $C_G(\rho_1(U))$-conjugate by Theorem~\ref{thm:main}.  So fix $\tau\in {\rm Hom}(H^0,G)$ and set $M= \rho(H^0)$.  Let $C= \{\rho\in {\rm Hom}(H,G)_{\rm cr}\mid \rho|_{H^0}= \tau\}$.  By the above discussion, it is enough to show that $C$ is a finite union of $C_G(M)$-conjugacy classes.  We can assume $C$ is nonempty, for otherwise there is nothing to prove.
 
 So fix $\rho\in C$.  As $\rho(H)$ is $G$-cr and $M\unlhd \rho(H)$, $M$ is also $G$-cr \cite[Thm.\ 3.10]{BMR}.  By \cite[Cor.\ 3.17]{BMR}, $C_G(M)$ is $G$-cr, so $C_G(M)$ is reductive.  By Remark~\ref{rem:reduction}, there is a finite subgroup $F$ of $H$ such that $F$ meets every connected component of $H$.  Let $A= \rho(F)C_G(M)$, a reductive subgroup of $N_G(M)$, and let $C'= \{\rho_1|_F\mid \rho_1\in C\}$.  By the same argument as in the proof of Theorem~\ref{thm:char0KQ}, we can regard $C'$ as a subset of ${\rm Hom}(F,A)$.  Note that if $\rho_1, \rho_2\in C$ then $\rho_1$ and $\rho_2$ are $C_G(M)$-conjugate if and only if $\rho_1|_F$ and $\rho_2|_F$ are $C_G(M)$-conjugate.

Let $\rho_1\in C$.  We claim that $K:= \rho_1(F)\leq A$ is $A$-cr.  Now $\rho_1(H)= KM\leq N_G(M)$ is $G$-cr by hypothesis, so $KM/M$ is $N_G(M)/M$-cr by \cite[Prop.\ 6.1(b)]{BMRT}.  Let $\psi\colon A\ra A/C_G(M)^0$ be the canonical projection.  Then $\psi(K)$ is $A/C_G(M)^0$-cr as $A/C_G(M)^0$ is finite.  It follows from \cite[Prop.\ 6.1(c)]{BMRT} that $K$ is $A$-cr, as claimed.

 The claim implies that $C'\subseteq {\rm Hom}(F,A)_{\rm cr}$.  But $F$ is finite, so ${\rm Hom}(F,A)_{\rm cr}$ is a finite union of $A$-conjugacy classes by Theorem~\ref{thm:cr_finite}.  Since $A$ is a finite extension of $C_G(M)$, ${\rm Hom}(F,A)_{\rm cr}$ is a finite union of $C_G(M)$-conjugacy classes.  Hence $C'$ is a finite union of $C_G(M)$-conjugacy classes.  It now follows that $C$ is a finite union of $C_G(M)$-conjugacy classes, as required.
\end{proof}

With the aid of the above results, we can now settle Question~\ref{qn:algKQ} when $G$ is a simple group of rank 1.  (In the special case when $H$ is finite and $p> 0$, this follows already from results described in Section~\ref{sec:intro}, since a simple group of rank 1 is of type $A_1$.)

\begin{cor}
\label{cor:rank1}
 Suppose $G$ is simple and of rank 1.  Then $G$ has the K\"ulshammer property.
\end{cor}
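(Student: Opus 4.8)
The plan is to use that a simple group of rank $1$ is of type $A_1$, so that its only proper parabolic subgroups are its Borel subgroups, which are connected and solvable. Fix $H$ with $H^0/R_u(H)$ semisimple, a maximal unipotent subgroup $U$ of $H$, and $\sigma\in {\rm Hom}(U,G)$; set $\mathcal C=\{\rho\in {\rm Hom}(H,G)\mid \rho|_U\in G\cdot\sigma\}$, which we must show is a finite union of $G$-conjugacy classes. Every subgroup of $G$ is either $G$-irreducible or contained in a Borel subgroup, so I would split $\mathcal C=\mathcal C^{{\rm ir}}\sqcup\mathcal C^{P}$ accordingly. Since a $G$-irreducible subgroup is $G$-cr, $\mathcal C^{{\rm ir}}$ is contained in the set of $G$-cr members of $\mathcal C$, which is a finite union of $G$-conjugacy classes by Theorem~\ref{thm:GcrKQ}; being $G$-stable, $\mathcal C^{{\rm ir}}$ is then itself a finite union of $G$-conjugacy classes.

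The work is in handling $\mathcal C^{P}$. First I would fix a Borel subgroup $B$ of $G$. As all Borel subgroups are $G$-conjugate and $\mathcal C^{P}$ is $G$-stable, we have $\mathcal C^{P}=G\cdot\bigl(\mathcal C^{P}\cap {\rm Hom}(H,B)\bigr)$, so it suffices to prove that $\mathcal C^{P}\cap {\rm Hom}(H,B)$ is a finite union of $B$-conjugacy classes. Since every unipotent subgroup of $G$ lies in $R_u(B')$ for some Borel subgroup $B'$, after replacing $\sigma$ by a $G$-conjugate (which changes neither $\mathcal C$ nor the decomposition above) we may assume $\sigma(U)\leq R_u(B)$.

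The key step is the claim
\[
 G\cdot\sigma\cap {\rm Hom}(U,R_u(B))= B\cdot\sigma .
\]
Here $\supseteq$ is immediate because $R_u(B)\unlhd B$. For $\subseteq$, one uses that $G$ has rank $1$, so $G/B\cong\mathbb P^1$ and a nontrivial unipotent element of $G$ has a unique fixed point on $G/B$; hence a nontrivial unipotent subgroup of $G$ is contained in a unique Borel subgroup. Thus if $g\in G$ satisfies $g\sigma(U)g^{-1}\leq R_u(B)$ and $\sigma(U)\neq 1$, then the nontrivial unipotent subgroup $g\sigma(U)g^{-1}$ lies in both $gBg^{-1}$ and $B$, forcing $gBg^{-1}=B$, hence $g\in B$; the case $\sigma(U)=1$ is trivial.

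Granting the claim, any $\rho'\in {\rm Hom}(H,B)$ has $\rho'(U)\leq R_u(B)$, so $\rho'|_U\in G\cdot\sigma$ if and only if $\rho'|_U\in B\cdot\sigma$; therefore $\mathcal C^{P}\cap {\rm Hom}(H,B)$ is a $B$-stable subset of $\mathcal D:=\{\rho'\in {\rm Hom}(H,B)\mid \rho'|_U\in B\cdot\sigma\}$. I would finish by applying Corollary~\ref{cor:normalunipt} with $G$ replaced by the connected solvable group $B$ and $N$ by the normal unipotent subgroup $R_u(B)$ (noting $\sigma\in {\rm Hom}(U,R_u(B))$): this yields that $\mathcal D$, and hence its $B$-stable subset $\mathcal C^{P}\cap {\rm Hom}(H,B)$, is a finite union of $B$-conjugacy classes, so $\mathcal C^{P}$ is a finite union of $G$-conjugacy classes. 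The main obstacle is the displayed claim, which is exactly where rank $1$ is essential: in higher rank a unipotent subgroup typically lies in many Borel subgroups, and one would instead have to control the double coset space $B\backslash\{g\in G\mid g\sigma(U)g^{-1}\leq R_u(B)\}/C_G(\sigma(U))$, which is considerably more delicate.
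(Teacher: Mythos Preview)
Your proof is correct and follows essentially the same strategy as the paper: handle the $G$-cr representations via Theorem~\ref{thm:GcrKQ} and the remaining ones, which land in a Borel, via the connected solvable case (the paper invokes Corollary~\ref{cor:connsolv}, which is just your application of Corollary~\ref{cor:normalunipt} to $B$). The paper streamlines your argument by working with $\{\rho\mid \rho|_U=\sigma\}$ rather than $\{\rho\mid \rho|_U\in G\cdot\sigma\}$: when $\sigma(U)\neq 1$ the unique Borel $B$ containing $\sigma(U)$ is fixed once and for all, so any non-$G$-cr $\rho$ with $\rho|_U=\sigma$ automatically has image in $B$, and your displayed claim about $G\cdot\sigma\cap{\rm Hom}(U,R_u(B))$ is bypassed entirely.
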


\begin{proof}
 Suppose $H^0/R_u(H)$ is semisimple.  Let $U$ be a maximal unipotent subgroup of $H$ and let $\sigma\in {\rm Hom}(U,G)$.  Let $C= \{\rho\in {\rm Hom}(H,G)\mid \rho|_U= \sigma\}$ and let $C_{\rm cr}= \{\rho\in C\mid \rho\ \mbox{is $G$-cr}\}$.  Then $C_{\rm cr}$ is a finite union of $G$-conjugacy classes by Theorem~\ref{thm:GcrKQ}; in particular, if $\sigma(U)= 1$ then $C= C_{\rm cr}$ and we are done.  So assume $\sigma(U)\neq 1$ and let $B$ be the unique Borel subgroup of $G$ such that $\sigma(U)\leq B$.  Let $\rho\in C$ such that $\rho(H)$ is not $G$-cr.  The image $\rho(H)$ must lie in a Borel subgroup $P$ of $G$, and it is clear that $P= B$.  But $C\cap {\rm Hom}(H,B)$ is a finite union of $B$-conjugacy classes by Corollary~\ref{cor:connsolv}, so the desired result follows.
\end{proof}

We now prove a kind of complementary result to Theorem~\ref{thm:linredKQ} for reductive $G$: rather than having a normal unipotent subgroup and a linearly reductive quotient, we have a normal linearly reductive subgroup and a unipotent quotient.

\begin{prop}
\label{prop:uniptquot}
 Let $G$ be reductive and let $H$ be finite.  Suppose $p> 0$ and $H$ has a normal linearly reductive subgroup $N$ such that $H/N$ is cyclic and of $p$-power order.  Then ${\rm Hom}(H,G)$ is a finite union of $G$-conjugacy classes.  In particular, $(G,H)$ is a K\"ulshammer pair.
\end{prop}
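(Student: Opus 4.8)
The plan is to parametrise $\mathrm{Hom}(H,G)$ by restricting first to $N$ and then recording the image of a generator of a complement to $N$, and to reduce the finiteness statement to the (known) finiteness of the number of unipotent conjugacy classes of a reductive group.

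First I would exploit the structure of $H$. Since $p>0$ and $N$ is finite linearly reductive, $|N|$ is prime to $p$; as $H/N$ is a $p$-group, the Schur--Zassenhaus theorem gives a splitting $H=N\rtimes P$ with $P=\langle x\rangle$ cyclic, of order $p^e$ say. By Lemma~\ref{lem:finitelinred}, $\mathrm{Hom}(N,G)$ is a finite union of $G$-conjugacy classes, so it is enough to fix $\sigma\in\mathrm{Hom}(N,G)$, set $M=\sigma(N)$, and prove that $C:=\{\rho\in\mathrm{Hom}(H,G)\mid\rho|_N=\sigma\}$ is a finite union of $C_G(M)$-conjugacy classes; this suffices because any element of $G$ conjugating one member of $C$ to another must centralise $\sigma(N)=M$. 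We may assume $C\neq\emptyset$; fix $\rho_0\in C$ and put $y_0=\rho_0(x)$.

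Next I would parametrise $C$ inside $G$. Since $H$ is generated by $N$ and $x$, the map $\rho\mapsto\rho(x)$ is a bijection, $C_G(M)$-equivariant for the conjugation actions, from $C$ onto the set of $y\in G$ with $y^{p^e}=1$ and $y\sigma(n)y^{-1}=\sigma(xnx^{-1})$ for all $n\in N$. The latter condition says $y$ conjugates $\sigma$ to $\sigma\circ\alpha$, where $\alpha\in\mathrm{Aut}(N)$ is conjugation by $x$; since $\alpha(N)=N$ this forces $yMy^{-1}=M$, so the solution set of this condition is the two-sided coset $y_0C_G(M)=C_G(M)y_0$, and $y_0$ normalises $C_G(M)$. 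The condition $y^{p^e}=1$ forces $y$ to be unipotent: the semisimple part $y_s$ satisfies $y_s^{p^e}=1$, and a semisimple element of $p$-power order is trivial in characteristic $p$. Finally, $M$ is a $p'$-group, hence linearly reductive, hence $G$-cr, so $C_G(M)$ is reductive by \cite[Cor.\ 3.17]{BMR} (as in the proof of Theorem~\ref{thm:GcrKQ}).

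The finiteness then follows by passing to $\widehat G:=\langle C_G(M),y_0\rangle=C_G(M)\langle y_0\rangle$. Since $y_0$ has finite order, $C_G(M)$ is normal of finite index in $\widehat G$, so $\widehat G^0=C_G(M)^0$; any connected normal unipotent subgroup of $\widehat G$ is normal in $C_G(M)^0$ and hence trivial, so $\widehat G$ is reductive. A reductive group has only finitely many unipotent conjugacy classes, so the unipotent elements of $\widehat G$ lie in finitely many $\widehat G$-conjugacy classes, hence in finitely many $C_G(M)$-conjugacy classes because $C_G(M)\trianglelefteq\widehat G$ has finite index. By the previous paragraph $C$ is identified with a set of unipotent elements of $\widehat G$ that is stable under conjugation by $C_G(M)$, so it is a finite union of $C_G(M)$-conjugacy classes, and we are done; the ``in particular'' clause is then immediate. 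The one point requiring care is the finiteness of the number of unipotent classes of the possibly non-connected group $\widehat G$: the connected case is standard in all characteristics, and the disconnected case reduces to it using that every unipotent element of $\widehat G$ normalises a Borel subgroup of $\widehat G^0$.
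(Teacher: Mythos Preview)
Your proof is correct and follows essentially the same route as the paper: write $H=N\rtimes\langle x\rangle$ with $\langle x\rangle$ a cyclic $p$-group, use Lemma~\ref{lem:finitelinred} to reduce to fixing $\sigma=\rho|_N$, observe that $\rho(x)$ then ranges over unipotent elements in a single coset of $C_G(M)$ inside some reductive group, and invoke finiteness of unipotent conjugacy classes there. The paper works in $N_G(\sigma(N))$ (reductive by \cite[Cor.~3.16]{BMR}, and a finite extension of $C_G(M)$ by \cite[Lem.~6.8]{martin}) rather than your $\widehat G=C_G(M)\langle y_0\rangle$, but this is a cosmetic difference.

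The one point that is not quite closed off is your final sentence. The reduction ``every unipotent element of $\widehat G$ normalises a Borel subgroup of $\widehat G^0$'' does bring you down to counting $B$-conjugacy classes of unipotent elements in $N_{\widehat G}(B)$, but that is not the connected case: you still need finiteness of $B$-classes of unipotent elements in each non-identity coset $gB$, which is precisely the non-connected content (twisted unipotent classes in the sense of Spaltenstein). The paper avoids this by citing \cite[Thm.~3.3]{guralnick} directly for the non-connected reductive group $N_G(\sigma(N))$; you should do likewise, or supply the extra step.
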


\begin{proof}
 Let $U$ be a Sylow $p$-subgroup of $H$.  Clearly $U$ is a cyclic $p$-group and $H= UN$.  By Lemma~\ref{lem:finitelinred}, ${\rm Hom}(N,G)$ is a finite union of $G$-conjugacy classes.  Fix $\sigma\in {\rm Hom}(N,G)$ and set $C= \{\rho\in {\rm Hom}(H,G)\mid \rho|_N= \sigma\}$; it is enough to show that $C$ is a finite union of $C_G(\sigma(N))$-conjugacy classes (cf.\ the proof of Theorem~\ref{thm:GcrKQalt}).  Now $\sigma(N)$ is $G$-cr as $N$ is linearly reductive, so $N_G(\sigma(N))$ is reductive \cite[Cor.\ 3.16]{BMR} and $N_G(\sigma(N))$ is a finite extension of $C_G(\sigma(N))$ \cite[Lem.\ 6.8]{martin}.  By \cite[Thm.\ 3.3]{guralnick}, $N_G(\sigma(N))$ has only finitely many conjugacy classes of unipotent elements.  This implies that ${\rm Hom}(U,N_G(\sigma(N)))$ is a finite union of $N_G(\sigma(N))$-conjugacy classes, and is therefore a finite union of $C_G(\sigma(N))$-conjugacy classes.  This shows that $C$ is a finite union of $C_G(\sigma(N))$-conjugacy classes, so we are done.
\end{proof}

\begin{rem}
 (a) The result fails if we allow $H/N$ to be a non-cyclic abelian $p$-group \cite[Thm.\ 1.2]{BMR_kuls}, or if we allow $G$ to be non-reductive \cite{cram}.
 
 (b) Conversely, suppose $G$ is reductive, $(G,H)$ is a K\"ulshammer pair and $H$ has a finite cyclic maximal unipotent subgroup $U$.  Then ${\rm Hom}(H,G)$ must be finite, since ${\rm Hom}(U,G)$ is finite by \cite[Thm.\ 3.3]{guralnick}.
\end{rem}

\begin{cor}
 Let $H$ be the dihedral group $D_{2l}$.  Then $(G,H)$ is a K\"ulshammer pair if one of the following holds:\smallskip\\
 (a) $p\neq 2$;\smallskip\\
 (b) $p= 2$, $l$ is odd and $G$ is reductive.
\end{cor}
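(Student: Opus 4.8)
The plan is to treat the two cases separately, reducing each to a result already established in the paper. For case (a), $p\neq 2$: if $p=0$ then $D_{2l}$ is a finite group and Theorem~\ref{thm:linredKQ} (via its corollary for $p=0$, finite $H$) applies immediately, so assume $p>0$ and $p\neq 2$. If $p\nmid l$ then $|D_{2l}|=2l$ is coprime to $p$, so $D_{2l}$ is linearly reductive (it is finite with all elements semisimple) and Lemma~\ref{lem:finitelinred} shows ${\rm Hom}(D_{2l},G)$ is a finite union of $G$-conjugacy classes; in particular $(G,H)$ is a K\"ulshammer pair. If $p\mid l$, write $l=p^a m$ with $p\nmid m$. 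The cyclic subgroup $C_l\unlhd D_{2l}$ has a unique Sylow $p$-subgroup $C_{p^a}$, which is then normal in $D_{2l}$ (being characteristic in $C_l$); since $p\neq 2$ this $C_{p^a}$ is precisely the set of unipotent elements of $D_{2l}$, hence a normal unipotent subgroup with linearly reductive quotient $D_{2l}/C_{p^a}\cong D_{2m}$ (order coprime to $p$). So Theorem~\ref{thm:linredKQ} applies and $(G,H)$ is a K\"ulshammer pair.

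For case (b), $p=2$, $l$ odd, $G$ reductive: here $|D_{2l}|=2l$ with $l$ odd, so a Sylow $2$-subgroup of $D_{2l}$ has order $2$, i.e. is cyclic of $2$-power order. Take $N=C_l$, the cyclic normal subgroup of order $l$; since $l$ is odd and $p=2$, $N$ is linearly reductive, and $D_{2l}/N\cong C_2$ is cyclic of $p$-power order. Thus the hypotheses of Proposition~\ref{prop:uniptquot} are met, and that proposition gives that ${\rm Hom}(D_{2l},G)$ is a finite union of $G$-conjugacy classes, hence $(G,H)$ is a K\"ulshammer pair.

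The only mild subtlety — not really an obstacle — is bookkeeping in case (a) when $p\mid l$: one must check that the chosen normal subgroup $C_{p^a}$ really does consist of all the unipotent elements of $D_{2l}$ (so that the quotient is linearly reductive), which uses that the reflections have order $2$ and $p\neq 2$, so reflections and all elements of $C_l$ of order prime to $p$ are semisimple. Once that observation is in place, everything follows formally from Theorem~\ref{thm:linredKQ} and Proposition~\ref{prop:uniptquot}; no geometric input beyond what those results already package is needed.
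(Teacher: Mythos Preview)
Your proof is correct and follows essentially the same approach as the paper: case (a) is reduced to Theorem~\ref{thm:linredKQ} via the normal unipotent subgroup $C_{p^a}\leq C_l$, and case (b) to Proposition~\ref{prop:uniptquot} via the normal linearly reductive subgroup $C_l$. The only difference is cosmetic: the paper handles all of case~(a) at once by letting $q$ be the largest power of $p$ dividing $l$ (with $q=1$ when $p=0$), which absorbs your three subcases into a single sentence, and does not pause to verify that $C_q$ is exactly the set of unipotent elements (only that the quotient has order coprime to $p$, which suffices).
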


\begin{proof}
 If $p\neq 2$ then let $q$ be the largest power of $p$ that divides $l$ (taking $q= 1$ if $p= 0$).  The subgroup $C_q$ of $C_l$ is unipotent and normal in $H$, and $H/C_q$ is linearly reductive (having order coprime to $p$), so the result follows from Theorem~\ref{thm:linredKQ}.   If $p= 2$ and $l$ is odd then $C_l$ is a linearly reductive normal subgroup of $H$ and $H/C_l$ is a cyclic 2-group, so the result follows from Proposition~\ref{prop:uniptquot} if $G$ is reductive.
\end{proof}

We finish with some results we will need in Section~\ref{sec:rank2}.

\begin{lem}
\label{lem:allconj}
 Let $G$ be connected and reductive and of semisimple rank at most 2.
Let $M$ be a subgroup of $G$ such that $M$ is not $G$-cr and $M$ is not contained in any Borel subgroup of $G$.  Then there is exactly one proper parabolic subgroup of $G$ that contains $M$.
\end{lem}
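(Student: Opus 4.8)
The plan is to argue by contradiction. Suppose $M$ is contained in two distinct proper parabolic subgroups $P_1$ and $P_2$ of $G$. Since $G$ has semisimple rank at most $2$, every proper parabolic subgroup is either a Borel subgroup or a maximal parabolic; by hypothesis $M$ lies in no Borel, so both $P_1$ and $P_2$ are maximal parabolics. I would first pass to $G$ modulo its radical (or equivalently work with $\bar G = G/R(G)$ and $\bar M$), which reduces us to the case where $G$ is semisimple of rank $\le 2$; the possible root systems are then $A_1\times A_1$, $A_2$, $B_2$ and $G_2$ (and we may assume the rank is exactly $2$, since in rank $\le 1$ a non-$G$-cr subgroup not in a Borel cannot exist). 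The intersection $Q = P_1 \cap P_2$ then contains $M$, and $Q$ contains a maximal torus $T$ of $G$ (after conjugating, $P_1$ and $P_2$ share a common Borel only if $P_1 = P_2$ in rank $2$, so instead I use that $P_1 \cap P_2$ contains some maximal torus — this needs a small argument, see below).

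Next I would use the structure of $P_1 \cap P_2$. The key point: if $P_1$ and $P_2$ are two maximal parabolics containing a common maximal torus $T$, then either $P_1 \cap P_2 = B$ is a Borel subgroup (when $P_1$, $P_2$ correspond to the two distinct maximal subsets of simple roots relative to the same choice of positive system), or $P_1 \cap P_2$ still contains a Borel. In the rank $2$ case the Weyl chambers make this explicit: two distinct standard maximal parabolics $P_{\alpha}$ and $P_{\beta}$ (the stabilisers of the two fundamental weights) intersect in the Borel $B = B_T$, and any two maximal parabolics containing $T$ are conjugate under $N_G(T)$ to a pair of standard ones or coincide. So $M \le P_1 \cap P_2$ forces $M$ into a Borel subgroup of $G$ — contradicting the hypothesis. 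Hence at most one proper parabolic contains $M$, and since $M$ is not $G$-cr it lies in at least one proper parabolic (indeed in a proper R-parabolic, but in connected $G$ these coincide with parabolics), giving exactly one.

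The step I expect to be the main obstacle is establishing that $P_1 \cap P_2$ contains a common maximal torus, equivalently that two parabolics containing the non-$G$-cr subgroup $M$ can be taken in "standard position." The clean way is: $M^0 \cdot R_u(P_1)$ is contained in $P_1$, and a maximal torus $S$ of a maximal reductive (i.e. Levi-type) complement of $R_u(P_1)$ inside $P_1$ can be chosen inside $P_1 \cap P_2$ by a conjugacy argument within $P_1$, using that $P_1 \cap P_2$ is itself a parabolic of the Levi of $P_1$ intersected with... — this is where a careful but routine application of the Bruhat decomposition for $G$ (relative double cosets $P_1 \backslash G / P_2$) is needed. Concretely, write $G = \bigsqcup_{w} P_1 w P_2$; then $P_1 \cap {}^g P_2$ for $g$ in the cell of $w$ contains a conjugate of $T$, and one checks case-by-case over $w$ in the (small) Weyl group of type $A_1 \times A_1$, $A_2$, $B_2$, $G_2$ that unless $P_1 = {}^g P_2$, the intersection $P_1 \cap {}^g P_2$ is contained in a Borel subgroup of $G$. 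Running the whole argument with $M \le P_1 \cap {}^g P_2$ then yields $M$ inside a Borel, the contradiction. Since the Weyl groups here have order $4$, $6$, $8$, $12$, the case analysis is short; this is the only place genuine computation enters, and it is the crux of the proof.
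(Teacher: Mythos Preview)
Your proposal has a genuine gap. The case-by-case claim you make --- that unless $P_1 = {}^gP_2$, the intersection $P_1 \cap {}^gP_2$ is contained in a Borel subgroup of $G$ --- is false. The counterexample is the case where $P_1$ and ${}^gP_2$ are \emph{opposite} maximal parabolics (e.g.\ $P_1 = P_\alpha$ and ${}^gP_2 = {}^{w_0}P_\alpha = P_\alpha^-$ in type $B_2$ or $G_2$, where $w_0 = -1$). Their intersection is then the common Levi subgroup $L$, which has semisimple rank $1$ and is certainly not contained in any Borel. So your Bruhat case-check, as stated, will not go through: you will find $M \le L$ and be stuck, since nothing so far rules this out.

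What rescues this case is precisely the hypothesis you have not yet used: that $M$ is not $G$-cr. If $M \le L$ for a Levi $L$, then $M$ is $G$-cr if and only if $M$ is $L$-cr (Serre), so $M$ is not $L$-cr; hence $M$ lies in a proper parabolic $Q$ of $L$. Since $L$ has semisimple rank $1$, $Q$ is a Borel of $L$, hence connected solvable, so $M$ lies in a Borel of $G$ --- contradiction. The paper's proof makes exactly this move. It also avoids your Bruhat case analysis altogether: having chosen a common maximal torus $T \le P_1 \cap P_2$ (this is standard --- any two parabolics of a connected reductive group share a maximal torus), one writes $P_i = P_{\lambda_i}$ with $\lambda_i \in Y(T)$ and observes that $M \le P_{n_1\lambda_1 + n_2\lambda_2}$ for all non-negative integers $n_1, n_2$. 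Since $Y(T)$ has rank $2$, if $\lambda_1, \lambda_2$ are ${\mathbb Z}$-independent one can choose $n_1, n_2$ so that the resulting parabolic is a Borel, a contradiction. So $a_1\lambda_1 = a_2\lambda_2$ for nonzero integers $a_i$: same sign gives $P_1 = P_2$, opposite sign gives the opposite-parabolic case just discussed.
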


\begin{proof}
 By the argument of \cite[Lem.\ 2.12]{BMR}, we can assume that $G$ is semisimple.  As $M$ is not $G$-cr, $M$ is contained in at least one proper parabolic subgroup of $G$.  Let $P_1$, $P_2$ be proper parabolic subgroups of $G$ containing $M$.  Then $P_1\cap P_2$ contains a maximal torus $T$ of $G$, and we can write $P_1= P_{\lambda_1}$ and $P_2= P_{\lambda_2}$ for some $\lambda_1, \lambda_2\in Y(T)$.  By the argument of \cite[Prop.\ 6.7]{martin}, $M$ is contained in $P_{n_1\lambda_1+ n_2\lambda_2}$ for any non-negative integers $n_1$ and $n_2$.  Now $Y(T)$ has rank 2 as a ${\mathbb Z}$-module, so if $\lambda_1$ and $\lambda_2$ are linearly independent over ${\mathbb Z}$ then there exist $n_1,n_2\in {\mathbb N}$ such that $Q:= P_{n_1\lambda_1+ n_2\lambda_2}$ is a Borel subgroup of $G$ (again by the argument of {\em loc.\ cit.}: we just have to choose $n_1$ and $n_2$ in such a way that $\langle n_1\lambda_1+ n_2\lambda_2, \gamma\rangle\neq 0$ for every root $\gamma$ of $G$).  But this contradicts our assumption on $M$, so we must have $a_1\lambda_1= a_2\lambda_2$ for some nonzero integers $a_1$ and $a_2$.
 
 If $a_1$ and $a_2$ have opposite signs then $P_1$ and $P_2$ are opposite to each other, so $M$ is contained in a Levi subgroup $L_1$ of $P_1$.  Since $M$ is not $G$-cr, $M$ is not $L_1$-cr by \cite[Prop.\ 3.2]{serre2}, so $M$ is contained in a proper parabolic subgroup $P_3$ of $L_1$.  Now $L_1$ has semisimple rank 1, so $P_3$ is connected and solvable.  But this implies that $M$ is contained in a Borel subgroup of $G$, a contradiction.  We conclude that $a_1$ and $a_2$ have the same sign, so $P_1= P_2$, as required.
\end{proof}

\begin{prop}
\label{prop:global_to_local}
 Let $G$ be connected and reductive and of semisimple rank at most 2.  Suppose $G$ has the K\"ulshammer property.  Then $P$ has the K\"ulshammer property for every parabolic subgroup $P$ of $G$.
\end{prop}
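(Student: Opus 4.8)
The plan is to reduce to a proper parabolic subgroup $P$ (if $P=G$ there is nothing to prove) and to use that, since $G$ has semisimple rank at most $2$, $P$ is either a Borel subgroup or a maximal parabolic subgroup whose Levi subgroup $L$ has semisimple rank at most $1$. If $P$ is a Borel subgroup then $P$ is connected and solvable, so $P$ has the K\"ulshammer property by Corollary~\ref{cor:connsolv}; so assume $P$ is a maximal parabolic, write $P=L\ltimes R_u(P)$, and note that $L$, being connected reductive of semisimple rank at most $1$, has the K\"ulshammer property (this is the special case of the result in semisimple rank $\leq 1$, where no non-Borel proper parabolic subgroups occur and the argument below degenerates; alternatively it follows from Corollary~\ref{cor:rank1} and Corollary~\ref{cor:connsolv}). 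Now fix $H$ with $H^0/R_u(H)$ semisimple, a maximal unipotent subgroup $U$ of $H$, and $\sigma\in{\rm Hom}(U,P)$; set $C_P=\{\rho\in{\rm Hom}(H,P)\mid \rho|_U\ \mbox{is $P$-conjugate to}\ \sigma\}$, which is stable under $P$-conjugation. Since $C_P\subseteq C_G:=\{\rho\in{\rm Hom}(H,G)\mid \rho|_U\ \mbox{is $G$-conjugate to}\ \sigma\}$, and $C_G$ is a finite union of $G$-conjugacy classes because $G$ has the K\"ulshammer property, it is enough to show that $C_P$ lies in a finite union of $P$-conjugacy classes. I would do this by covering $C_P$ by three pieces according to whether $\rho(H)$ is $G$-completely reducible, not $G$-cr and contained in no Borel subgroup of $G$, or not $G$-cr but contained in a Borel subgroup of $G$.

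On the $G$-cr piece: a $G$-cr subgroup contained in the R-parabolic subgroup $P$ lies in some R-Levi subgroup of $P$, all R-Levi subgroups of $P$ are $P$-conjugate, and replacing $\sigma$ by a $P$-conjugate leaves $C_P$ unchanged; so (if this piece is nonempty) I may assume $\rho(H)\subseteq L$ and $\sigma\in{\rm Hom}(U,L)$. Since the canonical projection $P\to L$ is the identity on $L$, two homomorphisms $U\to L$ that are $P$-conjugate are already $L$-conjugate, so $\rho|_U$ is $L$-conjugate to $\sigma$. Hence this piece lies in $P\cdot\{\tau\in{\rm Hom}(H,L)\mid \tau|_U\ \mbox{is $L$-conjugate to}\ \sigma\}$, a finite union of $L$-conjugacy classes — and hence of $P$-conjugacy classes — because $L$ has the K\"ulshammer property.

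On the piece where $\rho(H)$ is not $G$-cr and contained in no Borel subgroup of $G$: by Lemma~\ref{lem:allconj}, $P$ is then the unique proper parabolic subgroup of $G$ containing $\rho(H)$. If $g\in G$ and $g\cdot\rho$ again lies in this piece, then $g\rho(H)g^{-1}$ is contained in both $P$ and $gPg^{-1}$, is again not $G$-cr and in no Borel subgroup, so $gPg^{-1}=P$ by uniqueness and $g\in N_G(P)=P$. Thus every $G$-conjugacy class meets this piece in at most one $P$-conjugacy class, and since $C_P$ meets only finitely many $G$-conjugacy classes, this piece lies in a finite union of $P$-conjugacy classes.

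The third piece — $\rho(H)$ not $G$-cr but contained in a Borel subgroup of $G$ — will be the hard part. Such a $\rho$ has solvable image, so $\rho(H^0)=\rho(H)^0$ is connected and solvable, and in fact unipotent: a surjection from $H^0$ onto its maximal torus quotient factors through the semisimple (hence perfect) group $H^0/R_u(H)$ and so is trivial. Applying Proposition~\ref{prop:epi} to $\rho|_{H^0}\colon H^0\to\rho(H^0)$, the maximal unipotent subgroup $U\cap H^0$ of $H^0$ maps onto the maximal unipotent subgroup of the unipotent group $\rho(H^0)$, so $\rho(H^0)=\rho(U\cap H^0)\subseteq\rho(U)$. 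I would then show that $\rho$ can be $P$-conjugated so that its image lies in a fixed Borel subgroup $B_0$ of $P$ (which is also a Borel subgroup of $G$), and finish with Corollary~\ref{cor:connsolv} applied to the connected solvable group $B_0$. The obstacle is exactly this reduction: one must show that a subgroup of $P$ that lies in some Borel subgroup of $G$ lies in some Borel subgroup of $P$ (for which the fact that its identity component is unipotent and that $L$ has semisimple rank $\leq 1$ should suffice), and one must keep the restriction to $U$ under control so that only finitely many $B_0$-conjugacy classes of homomorphisms $U\to B_0$ arise; the latter I would handle by tracking the exact homomorphism $\rho|_U$, using that an element conjugating $\rho$ and fixing $\rho|_U$ centralises $\rho(U)\supseteq\rho(H^0)$ and hence fixes $\rho|_{H^0}$. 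Since $C_P$ is $P$-stable and covered by three pieces each lying in a finite union of $P$-conjugacy classes, $C_P$ is a finite union of $P$-conjugacy classes, so $(P,H)$ is a K\"ulshammer pair.
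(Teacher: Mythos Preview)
Your three-way split into $G$-cr, ``not $G$-cr and in no Borel of $G$'', and ``not $G$-cr but in a Borel of $G$'' matches the paper's decomposition, and your treatment of the first two pieces is essentially the paper's argument (the paper uses Theorem~\ref{thm:cr_finite} for the $G$-cr piece after reducing to finite $H$, whereas you invoke the K\"ulshammer property of $L$; both work). The genuine gap is exactly the third piece, as you yourself flag: you have not shown that a subgroup of $P$ which lies in some Borel of $G$ can be $P$-conjugated into a fixed Borel of $P$, nor that only finitely many $B_0$-classes of restrictions $\rho|_U$ arise after doing so. Neither of these is automatic, and your sketch does not close the gap.

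The paper avoids this difficulty entirely by a preliminary reduction you omitted. First reduce to $p>0$ (Theorem~\ref{thm:char0KQ}) and then to $H$ finite (Proposition~\ref{prop:subgrpcrit} and Remark~\ref{rem:reduction}); now one may, by induction on $|H|$, assume every $\rho$ under consideration is faithful. The point is that if \emph{any} faithful $\rho$ lands in a Borel subgroup of $G$, then $H$ itself has a normal unipotent subgroup with linearly reductive quotient (the preimage of the unipotent radical), and Theorem~\ref{thm:linredKQ} (equivalently Corollary~\ref{cor:Borel_image}) gives that $(P,H)$ is a K\"ulshammer pair outright---not just that $R_2$ is finite up to $P$-conjugacy, but that the entire question is settled. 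Hence one may simply assume $R_2=\emptyset$ and treat only the other two pieces. The missing reductions (to $p>0$, to $H$ finite, and to faithful $\rho$) are what make this clean resolution possible.
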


\begin{proof}
 Let $P$ be a parabolic subgroup of $G$; we show that $P$ has the K\"ulshammer property.  Clearly we can assume $P$ is proper.  By Theorem~\ref{thm:char0KQ} we can assume $p> 0$, and by Proposition~\ref{prop:subgrpcrit} and Remark~\ref{rem:reduction} we can assume $H$ is finite.  If $P$ is a Borel subgroup of $G$ then the result follows from Lemma~\ref{cor:connsolv}, so we can assume that $G$ has semisimple rank 2 and $P$ is a maximal parabolic subgroup of $G$.  Let $U$ be a maximal unipotent subgroup of $H$.  Choose $\sigma\in {\rm Hom}(U,P)$ and let $R\subseteq \{\rho\in {\rm Hom}(H,P)\mid \rho|_U= \sigma\}$.  By induction on $|H|$, we can assume $R$ consists of faithful representations.  Set
 $$ R_1= \{\rho\in R\mid \rho\ \mbox{is $G$-cr}\}, $$
 $$ R_2= \{\rho\in R\mid \rho(H)\ \mbox{is contained in a Borel subgroup of $G$}\}, $$
 $$ R_3= R\backslash (R_1\cup R_2). $$
 If $R_2$ is non-empty then we are done by Corollary~\ref{cor:Borel_image}, so we can assume that $R_2= \emptyset$.  Fix a Levi subgroup $L$ of $P$.  Every representation in $R_1$ is $G$-cr and is therefore $R_u(P)$-conjugate to a subgroup of $L$.  So without loss, we can assume that $R_1\subseteq {\rm Hom}(H,L)$ (note that if $\sigma_1, \sigma_2\in {\rm Hom}(U,L)$ are $R_u(P)$-conjugate then they are equal).  If $M$ is any subgroup of $L$ then $M$ is $G$-cr if and only if $M$ is $L$-cr by \cite[Prop.\ 3.2]{serre2}, so $R_1\subseteq {\rm Hom}(H,L)_{\rm cr}$.  It follows from Theorem~\ref{thm:cr_finite} that $R_1$ is contained in a finite union of $L$-conjugacy classes.
 
 So without loss we can assume $R= R_3$.  By hypothesis, $R_3$ is contained in a finite union of $G$-conjugacy classes.  To finish the proof, it is enough to show that if $\rho_1, \rho_2\in R_3$ are $G$-conjugate then they are $P$-conjugate.  So suppose $\rho_2= g\cdot \rho_1$.  Lemma~\ref{lem:allconj} implies that $P$ is the only proper parabolic subgroup of $G$ that contains $\rho_1(H)$ and $\rho_2(H)$, so $gPg^{-1}= P$.  It follows that $g\in P$, so we are done.
 \end{proof}

\begin{rem}
 In Section~\ref{sec:rank2} we prove a kind of converse to Proposition~\ref{prop:global_to_local} for $G$ a simple group of type $B_2$ when $p= 2$ (see Proposition~\ref{prop:B2_p=2}).  In general, the relationship between Question~\ref{qn:algKQ} for R-parabolic subgroups of $G$ and Question~\ref{qn:algKQ} for $G$ itself is very complicated.  In the latter case, we can deal with $G$-ir representations by Theorem~\ref{thm:GcrKQ}, so it is enough to consider representations $\rho\in {\rm Hom}(H,G)$ with image lying in a proper R-parabolic subgroup $P$ of $G$.  The basic problem with passing between $G$ and $P$ is the following: we can have a subset $R$ of ${\rm Hom}(H,P)$ such that the representations in $R$ are all $G$-conjugate to each other but fall into infinitely many $P$-conjugacy classes.
 
 \begin{ex}
  Suppose ${\rm char}(k)= p> 0$, let $H= C_p\times C_p= \langle h_1, h_2\mid h_1^p= h_2^p= [h_1, h_2]= 1\rangle$ and let $G= {\rm SL_3}(k)$.  Let $B\leq G$ be the parabolic subgroup of upper triangular matrices.  Define $\rho_a\colon H\ra G$ for each $a\in k$ by $\rho_a(h_1)=
\left(
\begin{array}{ccc}
 1 & 0 & 1 \\
 0 & 1 & a \\
 0 & 0 & 1
\end{array}
\right)
$, $\rho_a(h_2)=
\left(
\begin{array}{ccc}
 1 & 0 & 1 \\
 0 & 1 & 1+ a \\
 0 & 0 & 1
\end{array}
\right)
$.  Then $\rho_a= g_a\cdot \rho_0$, where $g_a:=
\left(
\begin{array}{ccc}
 1 & 0 & 0 \\
 a & 1 & 0 \\
 0 & 0 & 1
\end{array}
\right)
$, so the $\rho_a$ are pairwise $G$-conjugate; but it is easily checked that if $a\neq b$ then $\rho_a$ is not $B$-conjugate to $\rho_b$.
 \end{ex}
 
\noindent Above we showed that this phenomenon cannot occur for representations whose images satisfy the hypotheses of Lemma~\ref{lem:allconj}.  For further discussion, see \cite[Sec.\ 3.5]{stewart_PhD}.
\end{rem}

\section{1-cohomology}
\label{sec:H1}

Throughout this section we assume that $G$ is reductive (but not necessarily connected).  Let $P$ be an R-parabolic subgroup of $G$ and let $L$ be an R-Levi subgroup of $P$.  We present an approach to Question~\ref{qn:algKQ} and related problems for $P$ and $L$ using nonabelian 1-cohomology; cf.\ \cite{BMR_kuls}.

We recall some basic material (see \cite[Sec.\ 6]{rich3} for more details).  Let $K$ be an algebraic group, let $V$ be a unipotent group and suppose $K$ acts on $V$ by group automorphisms: that is, suppose $K$ acts on $V$ in the sense of Section~\ref{sec:prelim} and for every $x\in K$, the map $v\mapsto x\cdot v$ is a group automorphism of $V$.  We call a morphism of varieties $\mu:K\rightarrow V$ a \emph{1-cocycle} if $\mu(xy) = \mu(x) (x\cdot\mu(y))$ for all $x,y\in K$ (we refer to this condition as the {\em cocycle equation}).  We call the 1-cocycle given by $\mu(x)= 1$ for all $x\in K$ the {\em trivial 1-cocycle}.  We denote by $Z^1(K, V)$ the set of all 1-cocycles.  For any $\mu\in Z^1(K,V)$ and any $x\in K$, we have $\mu(1)= 1$ and

\begin{equation}
\label{eqn:inverse}
 \mu(x^{-1})= (x^{-1}\cdot \mu(x))^{-1}.
\end{equation}

If $\mu_1, \mu_2\in Z^1(K,V)$ then we define $\mu_1\sim \mu_2$ if there exists $v\in V$ such that $\mu_2(x) = v\mu_1(x)(x \cdot v^{-1})$ for all $x\in K$.  This gives an equivalence relation on $Z^1(K,V)$; we call the equivalence classes {\em 1-cohomology classes} and denote the set of equivalence classes by $H^1(K,V)$.  Given $\mu\in Z^1(K,V)$, we denote by $\ovl{\mu}$ the image of $\mu$ in $H^1(K,V)$.  We define the {\em trivial 1-cohomology class} to be $\ovl{\mu}$, where $\mu$ is the trivial 1-cocycle, and we say that $H^1(K,V)$ is {\em trivial} if the only element of $H^1(K,V)$ is the trivial class.

Consider the special case when $V$ is abelian.  It is easily checked that $Z^1(K,V)$ is an abelian group with respect to pointwise addition of 1-cocycles, and the set $B^1(K,V)$ of 1-coboundaries---that is, the morphisms $\chi_v\colon K\ra V$ given by $\chi_v(h)= v- h\cdot v$ for some fixed $v\in V$---is a subgroup of $Z^1(K,V)$; moreover, we can identify $H^1(K,V)$ with the quotient $Z^1(K,V)/B^1(K,V)$, so $H^1(K,V)$ also has the structure of an abelian group.  (Here we are using additive notation for $V$, $Z^1(K,V)$ and $H^1(K,V)$.)   In this case we refer to ``abelian cohomology".  In general we use the terminology ``non-abelian cohomology'' to signify that $V$ need not be abelian.

Our main source of examples comes from the set-up in the following lemma (cf.\ \cite[Lem.\ 3.2.2]{stewartTAMS}), the proof of which is obtained by straightforward calculation.

\begin{lem}
\label{lem:localdefmn}
 Let $M$ be an algebraic group and let $V\unlhd M$ be unipotent.  Let $\rho\in {\rm Hom}(K,M)$ and let $\mu\colon K\ra V$ be a morphism of varieties.  We let $K$ act on $V$ by $x\cdot v= \rho(x)v\rho(x)^{-1}$.  Define $\rho_\mu\colon K\ra M$ by $\rho_\mu(x)= \mu(x)\rho(x)$.  Then $\rho_\mu$ belongs to ${\rm Hom}(K,M)$ if and only if $\mu$ belongs to $Z^1(K,V)$.  Moreover, if $\mu, \mu'\in Z^1(K,V)$ then $\rho_\mu$ is $V$-conjugate to $\rho_{\mu'}$ if and only if $\ovl{\mu}, \ovl{\mu'}\in H^1(K,V)$ are equal. 
\end{lem}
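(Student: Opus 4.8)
The plan is to verify the equivalence in \mbox{Lemma~\ref{lem:localdefmn}} by direct computation, checking each of the three assertions in turn. First I would observe that $\rho_\mu(1)= \mu(1)\rho(1)= \mu(1)$, so for $\rho_\mu$ to be a homomorphism we certainly need $\mu(1)= 1$; more substantially, for $x,y\in K$ we compute
\[ \rho_\mu(xy)= \mu(xy)\rho(xy)= \mu(xy)\rho(x)\rho(y) \]
and
\[ \rho_\mu(x)\rho_\mu(y)= \mu(x)\rho(x)\mu(y)\rho(y)= \mu(x)\bigl(\rho(x)\mu(y)\rho(x)^{-1}\bigr)\rho(x)\rho(y)= \mu(x)(x\cdot \mu(y))\rho(x)\rho(y). \]
Comparing these two expressions and cancelling the common factor $\rho(x)\rho(y)$ on the right (valid since we are in a group), we see that $\rho_\mu(xy)= \rho_\mu(x)\rho_\mu(y)$ for all $x,y\in K$ if and only if $\mu(xy)= \mu(x)(x\cdot \mu(y))$ for all $x,y\in K$, which is exactly the cocycle equation. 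Since $\mu$ takes values in $V\leq M$ and $\rho$ is already a morphism of varieties, $\rho_\mu$ is automatically a morphism of varieties, and it lands in $M$; so $\rho_\mu\in {\rm Hom}(K,M)$ precisely when $\mu\in Z^1(K,V)$. (One should note the cocycle equation forces $\mu(1)= 1$ by taking $x= y= 1$, so there is no separate condition to check at the identity.)

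For the second assertion, suppose $\mu, \mu'\in Z^1(K,V)$. If $\rho_\mu$ and $\rho_{\mu'}$ are $V$-conjugate, say $\rho_{\mu'}= v\cdot \rho_\mu$ for some $v\in V$, then for every $x\in K$ we have
\[ \mu'(x)\rho(x)= \rho_{\mu'}(x)= v\rho_\mu(x)v^{-1}= v\mu(x)\rho(x)v^{-1}= v\mu(x)\bigl(\rho(x)v^{-1}\rho(x)^{-1}\bigr)\rho(x)= v\mu(x)(x\cdot v^{-1})\rho(x), \]
so cancelling $\rho(x)$ on the right gives $\mu'(x)= v\mu(x)(x\cdot v^{-1})$ for all $x\in K$, which is exactly the relation $\mu\sim \mu'$; hence $\ovl{\mu}= \ovl{\mu'}$ in $H^1(K,V)$. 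Conversely, if $\ovl{\mu}= \ovl{\mu'}$ then there is $v\in V$ with $\mu'(x)= v\mu(x)(x\cdot v^{-1})$ for all $x$, and running the computation above in reverse shows $\rho_{\mu'}(x)= v\rho_\mu(x)v^{-1}$ for all $x$, i.e.\ $\rho_{\mu'}= v\cdot \rho_\mu$, so $\rho_\mu$ and $\rho_{\mu'}$ are $V$-conjugate.

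Both parts are entirely formal, being a matter of inserting $1= \rho(x)^{-1}\rho(x)$ at the right places and using that $V$ is normal in $M$ so that conjugation by $\rho(x)$ preserves $V$ (which is what makes the action $x\cdot v= \rho(x)v\rho(x)^{-1}$ well-defined and the expression $x\cdot v^{-1}$ meaningful). There is no real obstacle here; the only points requiring a word of care are (i) recording that the cocycle equation automatically yields $\mu(1)= 1$, so $\rho_\mu(1)= 1$ comes for free, and (ii) noting that everything in sight is a morphism of varieties because $\rho$ and the multiplication and inversion maps of $M$ are, so the homomorphism $\rho_\mu$ is automatically a morphism and we only need to check the algebraic identity. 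I would therefore simply present the two displayed chains of equalities above and remark that the lemma follows.
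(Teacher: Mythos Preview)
Your proposal is correct and is precisely the ``straightforward calculation'' the paper alludes to; the paper does not spell out the details but simply states that the lemma follows by direct computation, which is exactly what you have supplied.
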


The next result is  \cite[Lemma 6.2.6]{rich3}.

\begin{lem}
\label{lem:nonab_lin_red}
  Suppose $K$ acts on a unipotent group by group automorphisms and $K$ is linearly reductive. Then $H^1(K, V)$ is trivial.
\end{lem}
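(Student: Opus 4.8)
The plan is to reduce first to the case that $V$ is abelian, and then to appeal to standard vanishing results. Since $K$ acts on $V$ by automorphisms, every term of the lower central series of $V$ is $K$-stable, and I would argue by induction on the nilpotency class $c$ of $V$. The base case $c\le 1$, i.e.\ $V$ abelian, is handled below. For $c\ge 2$, let $W$ be the last nontrivial term of the lower central series of $V$; then $W$ is $K$-stable, central in $V$, and $V/W$ has class $c-1$. Given $\mu\in Z^1(K,V)$, the cocycle it induces in $Z^1(K,V/W)$ is cohomologous to the trivial cocycle by the inductive hypothesis, so after replacing $\mu$ by a $V$-equivalent cocycle (conjugating by a lift to $V$ of the relevant element of $V/W$) I may assume $\mu(K)\subseteq W$. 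As $W$ is central in $V$, the cocycle identity for $\mu$ inside $V$ is literally the cocycle identity for $\mu$ regarded as an element of $Z^1(K,W)$; since $W$ is abelian, $H^1(K,W)$ is trivial by the base case, so $\mu$ is cohomologous to the trivial cocycle via an element of $W\subseteq V$. Every equivalence used here is realised by conjugation by an element of $V$, so these equivalences compose and the inductive step follows.

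Now suppose $V$ is abelian, written additively. If ${\rm char}(k)=0$, then $V$ is a vector group and the $K$-action makes it a rational $K$-module; under this identification the sets $Z^1(K,V)$ and $B^1(K,V)$ defined above are exactly the rational $1$-cocycles and $1$-coboundaries, so $H^1(K,V)$ is the degree-one rational (Hochschild) cohomology of the algebraic group $K$ with coefficients in $V$. Since $K$ is linearly reductive, every rational $K$-module is semisimple, so the fixed-point functor $M\mapsto M^K$ is exact on rational $K$-modules; as $H^i(K,-)$ is its $i$-th right derived functor, $H^i(K,-)=0$ for all $i\ge 1$, and in particular $H^1(K,V)$ is trivial.

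If ${\rm char}(k)=p>0$, I would instead exploit a dense union of finite subgroups. By \cite[Lem.\ 2.3]{BMRT} (cf.\ Remark~\ref{rem:reduction}) there are finite subgroups $K_1\le K_2\le\cdots$ of $K$ whose union is dense in $K$; since $K$ is linearly reductive every element of $K$, hence of each $K_m$, is semisimple, and so $|K_m|$ is prime to $p$. For each $m$, multiplication by $|K_m|$ is invertible on the abelian unipotent group $V$ (it has trivial kernel and is \'etale), so the standard averaging argument gives that $H^1(K_m,V)$ is trivial. Now fix $\mu\in Z^1(K,V)$ and set $S_m=\{v\in V:\mu(x)=v-x\cdot v \text{ for all } x\in K_m\}$. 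Each $S_m$ is a closed subvariety of $V$ (a fibre of a morphism $V\to V^{|K_m|}$), it is nonempty because $H^1(K_m,V)$ is trivial, and $S_1\supseteq S_2\supseteq\cdots$. By Noetherianity this descending chain stabilises, so $\bigcap_m S_m$ is nonempty; picking $v$ in it, the morphisms $x\mapsto\mu(x)$ and $x\mapsto v-x\cdot v$ from $K$ to $V$ agree on the dense subset $\bigcup_m K_m$ and hence agree on all of $K$. Therefore $\mu\in B^1(K,V)$, so $H^1(K,V)$ is trivial.

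I expect the difficulty to lie in bookkeeping rather than in a single hard step. In the reduction to abelian $V$ one must check that every equivalence is implemented by conjugation by an element of $V$, so that triviality genuinely propagates from the subquotients back to $V$; and in the positive-characteristic argument one must verify that each $S_m$ is closed and that agreement of the two morphisms on the dense set $\bigcup_m K_m$ forces agreement on all of $K$, which uses separatedness of $V$. The ingredients themselves --- the dense union of finite subgroups, semisimplicity of rational modules over a linearly reductive group, and the averaging argument for a finite group whose order is prime to $p$ --- are all standard.
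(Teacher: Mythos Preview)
Your proof is correct. The paper does not prove this lemma at all: it simply quotes it as \cite[Lemma~6.2.6]{rich3}. Your argument --- reducing to the abelian case by induction on the nilpotency class via the lower central series, then handling the abelian case in characteristic~$0$ by identifying $V$ with a rational $K$-module and using exactness of the fixed-point functor, and in characteristic~$p>0$ by passing to a dense union of finite subgroups of order prime to~$p$, averaging, and a Noetherian descending-chain argument --- is a standard and valid self-contained proof. The bookkeeping points you flag (that each equivalence in the inductive step is realised by conjugation by an element of $V$, and that two morphisms from $K$ to $V$ agreeing on a dense subset agree everywhere) are routine and your treatment of them is fine. One minor amplification: in justifying that multiplication by $|K_m|$ is invertible on the abelian unipotent group $V$, note that the kernel is trivial because every element of $V$ has $p$-power order, and surjectivity follows since the map is \'etale (hence open), so its image contains $V^0$, while on the finite $p$-group $V/V^0$ multiplication by an integer prime to $p$ is bijective.
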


\begin{rem}
\label{rem:altpf}
 As an application of this formalism, we give an alternative proof of Lemma~\ref{lem:finitelinred}.  Let $F$ be finite and linearly reductive.  Suppose first that $G$ is reductive.  Then every $\rho\in {\rm Hom}(F,G)$ is $G$-cr, so ${\rm Hom}(F,G)$ is a finite union of $G$-conjugacy classes (Theorem~\ref{thm:cr_finite}).  Now let $G$ be arbitrary, let $V= R_u(G)$ and let $\nu\colon G\ra G/V$ be the canonical projection.  Fix $\tau\in {\rm Hom}(F,G/V)$ and let $C= \{\rho\in {\rm Hom}(F,G)\mid \nu\circ\rho= \tau\}$.  Now ${\rm Hom}(F,G/V)$ is a finite union of $G/V$-conjugacy classes by the reductive case, so it is enough to prove that $C$ is a finite union of $G$-conjugacy classes.
 Fix $\rho\in C$; we let $F$ act on $V$ by the rule $x\cdot v= \rho(x)v\rho(x)^{-1}$.  By Lemma~\ref{lem:nonab_lin_red}, $H^1(F,V)$ is trivial.  The result now follows from Lemma~\ref{lem:localdefmn}.
\end{rem}

We are interested in the following special case of this construction (cf.\ \cite{slodowy}, \cite{BMR_kuls}).  We omit some of the proofs below, as they follow from straightforward diagram-chasing; see \cite[Ch.\ 4]{lond} for further details.  For the rest of the section, we fix an R-parabolic subgroup $P$ of $G$ and an R-Levi subgroup $L$ of $P$.  Set $V= R_u(P)$.  Let $\pi_L\colon P\ra L$ be the canonical projection.  Given $\rho\in {\rm Hom}(K,P)$, set $\rho^L= \pi_L\circ \rho\in {\rm Hom}(K,L)$.  If $R\subseteq {\rm Hom}(K,P)$ then we set $R_L= \{\rho^L\mid \rho\in R\}\subseteq {\rm Hom}(K,L)$.

Fix $\omega\in {\rm Hom}(K,L)$.  Given $R\subseteq {\rm Hom}(K,P)$, set $R_\omega= \{\rho\in R\mid \rho^L= \omega\}$.  We allow $K$ to act on $V$ by the formula
$$ x\cdot v= \omega(x)v\omega(x)^{-1}. $$
We write $Z^1(K,V)_\omega$ and $H^1(K,V)_\omega$ to denote the associated sets of 1-cocycles and 1-cohomology classes.

Now let $\mu\colon K\ra V$ be a morphism of varieties, and define $\rho_\mu\colon K\ra P$ by $\rho_\mu(x)= \mu(x)\omega(x)$.  By Lemma~\ref{lem:localdefmn}, $\rho_\mu$ belongs to ${\rm Hom}(K,P)$ if and only if $\mu$ belongs to $Z^1(K,V)_\omega$, and we get a bijection $z_\omega\colon {\rm Hom}(K,P)_\omega\ra Z^1(K,V)_\omega$ given by $\rho_\mu\mapsto \mu$.  Since $\omega$ is fixed below, we suppress the $\omega$ subscript and write $z$ instead of $z_\omega$; likewise for the maps $h$, $\widetilde{h}$, etc., below.  Below we apply this construction to a second group $K'$ and we write $z'$, $h'$ and $\widetilde{h}'$ for the corresponding maps.  By Lemma~\ref{lem:localdefmn}, $z$ descends to a bijection $h\colon {\rm Hom}(K,P)_\omega/V\ra H^1(K,V)_\omega$.  The group $C_L(\omega(K))$ acts on ${\rm Hom}(K,P)_\omega$ by conjugation and acts on $Z^1(K,V)_\omega$ by the formula $(c\cdot \mu)(x)= c\mu(x)c^{-1}$.  This action descends to give an action of $C_L(\omega(K))$ on $H^1(K,V)_\omega$, and it is straightforward to show that $h$ descends to a bijection $\widetilde{h}\colon {\rm Hom}(K,P)_\omega/C_L(\omega(K))V$ $\ra H^1(K,V)_\omega/C_L(\omega(K))$.

Let $\zeta\colon K'\ra K$ be a homomorphism of algebraic groups.  We allow $K'$ to act on $V$ as above via the representation $\omega':= \omega\circ \zeta\in {\rm Hom}(K',V)$.  We have a map $Z^1(\zeta)\colon Z^1(K,V)_\omega\ra Z^1(K',V)_{\omega'}$ given by $\mu\mapsto \mu\circ \zeta$, and this descends to give maps $H^1(\zeta)\colon H^1(K,V)_\omega\ra H^1(K',V)_{\omega'}$ and $\widetilde{H}^1(\zeta)\colon H^1(K,V)_\omega/C_L(\omega(K))\ra H^1(K',V)_{\omega'}/C_L(\omega'(K'))$.  We have a map $\mathcal{Z}(\zeta)\colon {\rm Hom}(K,P)_\omega\ra {\rm Hom}(K',P)_{\omega'}$ given by $\mathcal{Z}(\zeta)(\rho) = \rho\circ\zeta$.  This descends to give maps $\mathcal{H}(\zeta)\colon {\rm Hom}(K,P)_\omega/V\ra {\rm Hom}(K',P)_{\omega'}/V$ and $\widetilde{\mathcal{H}}(\zeta)\colon {\rm Hom}(K,P)_\omega/C_L(\omega(K))V\ra {\rm Hom}(K',P)_{\omega'}/C_L(\omega'(K'))V$.

The next result, which follows easily from the definitions, summarises the constructions above and says that the various maps involved are all compatible with each other.

\begin{figure}
\par\nobreak
{\small
\setlength{\abovedisplayskip}{6pt}
\setlength{\belowdisplayskip}{\abovedisplayskip}
\setlength{\abovedisplayshortskip}{3pt}
\setlength{\belowdisplayshortskip}{\abovedisplayshortskip}
\begin{align*}
\xymatrix@R=40pt{
\mathrm{Hom}(K, P)_\omega \ar[r]^{z} \ar[d] \ar[dddr]^{\mathcal{Z}(\zeta)} & Z^1(K, V)_\omega \ar[d] \ar[dddr]^{Z^1(\zeta)} & \\
\mathrm{Hom}(K, P)_\omega/V \ar[r]^{h} \ar[d] \ar[dddr]^{\mathcal{H}(\zeta)} & H^1(K, V)_\omega \ar[d] \ar[dddr]^{H^1(\zeta)} & \\
\mathrm{Hom}(K, P)_\omega/C_L(\omega(K))V \ar[r]^{\widetilde{h}} \ar[dddr]^{\widetilde{\mathcal{H}}(\zeta)} & H^1(K, V)_\omega/C_L(\omega(K)) \ar[dddr]^{\widetilde{H^1}(\zeta)} & \\
& \mathrm{Hom}(K', P)_{\omega'} \ar[r]^{\ \ z'} \ar[d] & Z^1(K', V)_{\omega'} \ar[d] \\
& \mathrm{Hom}(K', P)_{\omega'}/V \ar[r]^{\ \ h'} \ar[d] & H^1(K', V)_{\omega'} \ar[d] \\
& \mathrm{Hom}(K', P)_{\omega'}/C_L(\omega'(K'))V \ar[r]^{\ \ \widetilde{h'}} & H^1(K', V)_{\omega'}/C_L(\omega'(K')) \\
}
\end{align*}
}
\caption{Commutative diagram}
\label{fig:cube}
\end{figure}
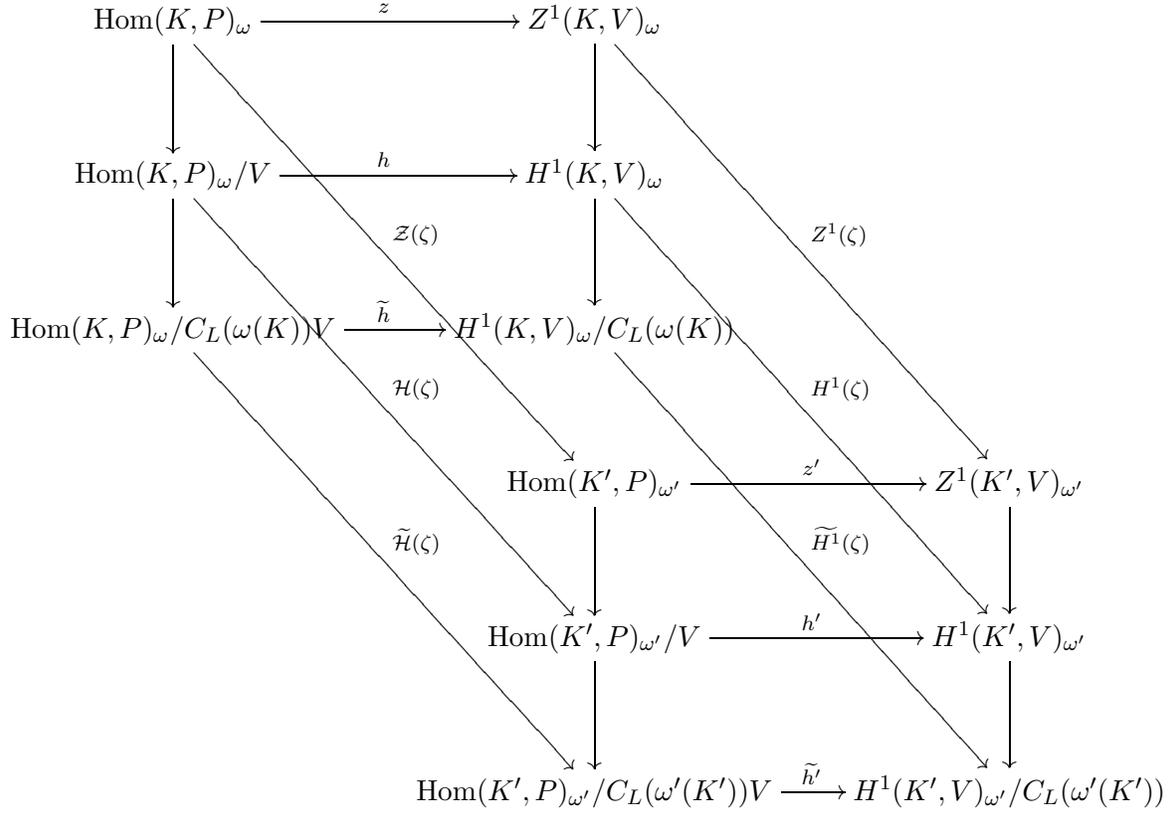

\begin{prop}
\label{prop:cube}
 We have a commutative diagram as in Figure~\ref{fig:cube}, where the vertical maps are the obvious canonical projections.  Moreover, $z$, $h$, $\widetilde{h}$, $z'$, $h'$ and $\widetilde{h'}$ are all bijections.
\end{prop}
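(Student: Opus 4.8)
The plan is to unwind the definitions and verify, in turn, three things: that every arrow in Figure~\ref{fig:cube} is well defined, that every face of the diagram commutes, and that the six horizontal maps $z,h,\widetilde{h},z',h',\widetilde{h'}$ are bijective. All of this is a diagram-chase once the relevant actions are identified correctly.

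First I would deal with the ``$K$-layer'' (and identically the ``$K'$-layer''). The map $z$ is $\rho_\mu\mapsto\mu$, and by Lemma~\ref{lem:localdefmn} it is a bijection ${\rm Hom}(K,P)_\omega\to Z^1(K,V)_\omega$ with inverse $\mu\mapsto\rho_\mu$. Lemma~\ref{lem:localdefmn} also shows that $z$ carries the $V$-conjugation action on ${\rm Hom}(K,P)_\omega$ to the equivalence relation $\sim$ on $Z^1(K,V)_\omega$; a one-line computation shows it carries the conjugation action of $C_L(\omega(K))$ to the action $(c\cdot\mu)(x)=c\mu(x)c^{-1}$. Hence $z$ descends to bijections $h$ on the quotients by $V$ and $\widetilde{h}$ on the quotients by $C_L(\omega(K))V$, and the two squares making up the $K$-layer commute by construction (the vertical maps being the canonical projections). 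The same applies verbatim to $z',h',\widetilde{h'}$, giving the bottom layer and its commutativity.

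Next I would treat the arrows induced by $\zeta$. Since $\pi_L\circ(\rho\circ\zeta)=(\pi_L\circ\rho)\circ\zeta=\omega\circ\zeta=\omega'$, precomposition with $\zeta$ sends ${\rm Hom}(K,P)_\omega$ into ${\rm Hom}(K',P)_{\omega'}$, defining $\mathcal{Z}(\zeta)$; a direct check of the cocycle equation shows $\mu\mapsto\mu\circ\zeta$ sends $Z^1(K,V)_\omega$ into $Z^1(K',V)_{\omega'}$, defining $Z^1(\zeta)$. The identity $z'\circ\mathcal{Z}(\zeta)=Z^1(\zeta)\circ z$ then holds because both composites send $\rho_\mu$ to $\mu\circ\zeta$, using $(\rho_\mu\circ\zeta)(x)=\mu(\zeta(x))\omega'(x)$. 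Because precomposition with $\zeta$ respects $V$-conjugacy (the same $v\in V$ conjugates $\rho\circ\zeta$ to $\rho'\circ\zeta$ if it conjugates $\rho$ to $\rho'$) and respects $\sim$, it descends to $\mathcal{H}(\zeta)$ and $H^1(\zeta)$; and since $\omega'(K')=\omega(\zeta(K'))\subseteq\omega(K)$, we get $C_L(\omega(K))\subseteq C_L(\omega'(K'))$, so a $C_L(\omega(K))V$-conjugacy pushes forward to a $C_L(\omega'(K'))V$-conjugacy, which makes $\widetilde{\mathcal{H}}(\zeta)$ and $\widetilde{H}^1(\zeta)$ well defined. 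Commutativity of the remaining faces follows from the same one-line computations, since all maps in sight are either ``precompose with $\zeta$'' or ``pass to the quotient''.

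Finally, bijectivity: $z,z'$ are bijections by Lemma~\ref{lem:localdefmn}; $h$ (resp.\ $h'$) is then a bijection, surjectivity being inherited from $z$ together with the vertical surjections and injectivity being exactly the statement $\rho_\mu\sim_V\rho_{\mu'}\iff\ovl{\mu}=\ovl{\mu'}$ of Lemma~\ref{lem:localdefmn}; and $\widetilde{h}$ (resp.\ $\widetilde{h'}$) is a bijection because $h$ is a $C_L(\omega(K))$-equivariant bijection and hence descends to a bijection of the quotients by that action. Assembling the pieces yields the commutative diagram and the bijectivity assertions. I expect no real mathematical obstacle here; the only point that needs genuine (if minor) care is the well-definedness of the descended maps $\widetilde{H}^1(\zeta)$ and $\widetilde{\mathcal{H}}(\zeta)$, whose source and target are quotients by actions of the \emph{two different} groups $C_L(\omega(K))$ and $C_L(\omega'(K'))$, and for this the inclusion $C_L(\omega(K))\subseteq C_L(\omega'(K'))$ coming from $\omega'(K')\subseteq\omega(K)$ should be stated explicitly.
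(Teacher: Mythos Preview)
Your proposal is correct and follows exactly the approach the paper indicates: the authors explicitly omit the proof, noting that it ``follows easily from the definitions'' via ``straightforward diagram-chasing,'' and your argument is precisely that diagram-chase carried out in full. The only point the paper might be said to add is the reference to \cite[Ch.~4]{lond} for details, but your write-up already supplies them.
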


The proof of the next result is immediate.

\begin{lem}
\label{lem:conjcrit}
 Let $\omega\in {\rm Hom}(K,L)$ and let $\rho_1, \rho_2\in {\rm Hom}(K, P)_\omega$.  Then $\rho_1$ and $\rho_2$ are $P$-conjugate if and only if they are $C_L(\omega(K))V$-conjugate.
\end{lem}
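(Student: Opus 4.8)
The plan is to prove the two directions separately, the non-trivial one being that $P$-conjugacy forces $C_L(\omega(K))V$-conjugacy. One direction is immediate: since $C_L(\omega(K))V \leq P$, any two representations that are $C_L(\omega(K))V$-conjugate are certainly $P$-conjugate. So the real content is the forward implication.

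For the forward direction, suppose $\rho_1, \rho_2 \in {\rm Hom}(K,P)_\omega$ with $\rho_2 = g \cdot \rho_1$ for some $g \in P$. First I would use the Levi decomposition $P = L \ltimes V$ to write $g = lv$ with $l \in L$ and $v \in V$. Applying the projection $\pi_L\colon P \ra L$ to the equation $\rho_2(x) = g\rho_1(x)g^{-1}$ and using $\pi_L|_L = \mathrm{id}$, $\pi_L|_V$ trivial, together with the hypothesis $\rho_1^L = \rho_2^L = \omega$, one gets $\omega(x) = l\,\omega(x)\,l^{-1}$ for all $x \in K$; that is, $l \in C_L(\omega(K))$. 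Hence $g = lv$ with $l \in C_L(\omega(K))$ and $v \in V$, so $g \in C_L(\omega(K))V$, and $\rho_1$ and $\rho_2$ are $C_L(\omega(K))V$-conjugate as claimed.

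The main (very minor) obstacle is just checking that $C_L(\omega(K))V$ really is a subgroup of $P$, so that ``$C_L(\omega(K))V$-conjugate'' makes sense: since $C_L(\omega(K)) \leq L$ normalises $V = R_u(P)$ (as $V$ is normal in $P$), the product $C_L(\omega(K))V$ is indeed a subgroup. Everything else is the routine diagram-chase with the Levi projection that the paper already flags as immediate; I would write it out in the two short steps above.
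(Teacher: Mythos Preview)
Your proof is correct and is exactly the routine verification the paper has in mind; the paper itself just says ``The proof of the next result is immediate'' without giving details, and what you have written is the natural unpacking of that remark using the Levi decomposition $P = L \ltimes V$ and the projection $\pi_L$.
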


We can now state our main result of the section.  We have in mind the special case when $K= H$, $K'= U$ is a maximal unipotent subgroup of $H$ and $\zeta$ is the inclusion of $U$ in $H$.

\begin{thm}
\label{main_thm}
 Let $\zeta\colon K'\ra K$ be a homomorphism.  Let $R \subseteq \mathrm{Hom}(K, P)$ and let $S = \{\rho\circ \zeta\mid \rho\in R\}\subseteq {\rm Hom}(K',P)$. Suppose:
\begin{itemize}
\item[(i)] $R_L$ is contained in a finite union of $L$-conjugacy classes;
\item[(ii)] for all $\omega \in {\rm Hom}(K, L)$ such that $R_\omega \neq \emptyset$, the map
\begin{align*}
	\widetilde{H^1}(\zeta):H^1(K, V)_\omega/C_L(\omega(K)) \rightarrow H^1(K', V)_{\omega'}/C_L(\omega'(K'))
\end{align*}
has finite fibres (where $\omega':= \omega\circ \zeta$);
\item[(iii)] $S$ is contained in a finite union of $P$-conjugacy classes.
\end{itemize}
Then $R$ is contained in a finite union of $P$-conjugacy classes.
\end{thm}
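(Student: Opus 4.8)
The plan is to decompose $R$ according to the $L$-conjugacy class of the $L$-part of its members, reduce each piece to the situation of a single fixed $L$-part $\omega$, and then transport the counting problem into nonabelian $1$-cohomology via the commutative diagram of Figure~\ref{fig:cube}; hypothesis~(ii) is what makes the transport effective. First I would invoke hypothesis~(i): only finitely many $L$-conjugacy classes meet $R_L$, so one can choose $\rho_1,\dots,\rho_n\in R$ with $\omega_j := \rho_j^L$ such that every $\rho\in R$ has $\rho^L$ lying in $L\cdot\omega_j$ for some $j$. Writing $R^{(j)} = \{\rho\in R\mid \rho^L\in L\cdot\omega_j\}$ we get $R = \bigcup_j R^{(j)}$, so it is enough to bound each $R^{(j)}$. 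For $\rho\in R^{(j)}$ choose $l_\rho\in L$ with $l_\rho^{-1}\cdot\rho^L = \omega_j$; then $l_\rho^{-1}\cdot\rho\in{\rm Hom}(K,P)_{\omega_j}$ and is $P$-conjugate to $\rho$. Hence, setting $R'_j = \{l_\rho^{-1}\cdot\rho\mid\rho\in R^{(j)}\}\subseteq{\rm Hom}(K,P)_{\omega_j}$, we have $R^{(j)}\subseteq L\cdot R'_j\subseteq P\cdot R'_j$, and it suffices to show $R'_j$ lies in finitely many $P$-conjugacy classes. Since $\rho_j\in R$ with $\rho_j^L=\omega_j$, we have $R_{\omega_j}\neq\emptyset$, so hypothesis~(ii) is available for $\omega = \omega_j$. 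Fix $j$ and abbreviate $\omega = \omega_j$, $\omega' = \omega\circ\zeta$, $R' = R'_j$.

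Next I would push $R'$ down to $K'$ and bound the resulting cohomology. Each $\sigma\in R'$ satisfies $\sigma^L = \omega$, so $\sigma\circ\zeta\in{\rm Hom}(K',P)_{\omega'}$; moreover $\sigma\circ\zeta = l_\rho^{-1}\cdot(\rho\circ\zeta)$ for the corresponding $\rho\in R$, so the set $S' = \{\sigma\circ\zeta\mid\sigma\in R'\}$ is contained in $L\cdot S\subseteq P\cdot S$. By hypothesis~(iii), $S'$ meets only finitely many $P$-conjugacy classes; since $S'\subseteq{\rm Hom}(K',P)_{\omega'}$, Lemma~\ref{lem:conjcrit} turns this into finitely many $C_L(\omega'(K'))V$-conjugacy classes. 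Thus the image of $S'$ in ${\rm Hom}(K',P)_{\omega'}/C_L(\omega'(K'))V$ is finite, and by the bijection $\widetilde{h'}$ of Proposition~\ref{prop:cube} its image in $H^1(K',V)_{\omega'}/C_L(\omega'(K'))$ is finite.

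Now I would run the diagram of Figure~\ref{fig:cube} backwards. Because $\widetilde{\mathcal H}(\zeta)$ sends the class of $\sigma$ to the class of $\sigma\circ\zeta$, it carries the image of $R'$ in ${\rm Hom}(K,P)_\omega/C_L(\omega(K))V$ into the finite image of $S'$ found above; transporting through the bijections $\widetilde h$ and $\widetilde{h'}$, the map $\widetilde{H^1}(\zeta)$ carries the image of $R'$ in $H^1(K,V)_\omega/C_L(\omega(K))$ into a finite subset of $H^1(K',V)_{\omega'}/C_L(\omega'(K'))$. By hypothesis~(ii) this map has finite fibres, so the image of $R'$ in $H^1(K,V)_\omega/C_L(\omega(K))$ is finite; pulling back along $\widetilde h$, the image of $R'$ in ${\rm Hom}(K,P)_\omega/C_L(\omega(K))V$ is finite, i.e.\ $R'$ lies in finitely many $C_L(\omega(K))V$-conjugacy classes. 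Within ${\rm Hom}(K,P)_\omega$ this is the same as finitely many $P$-conjugacy classes by Lemma~\ref{lem:conjcrit}, so $R' = R'_j$ lies in finitely many $P$-conjugacy classes, whence so does $R^{(j)}\subseteq P\cdot R'_j$. Taking the union over $j$ completes the argument.

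I expect no genuinely hard step: the analytic and structural content is already packaged into Theorem~\ref{thm:cr_finite} (which is what one uses to verify~(i) and~(iii) in the applications), into Lemma~\ref{lem:conjcrit}, and into the bijections of Proposition~\ref{prop:cube}. The only place needing care is the bookkeeping in the first paragraph: one must normalise the $L$-part by conjugating with elements of $L$ while checking that the three hypotheses survive the normalisation---which they do precisely because $L\subseteq P$, so $L$-conjugacy is coarser than $P$-conjugacy---and one must remember throughout that Lemma~\ref{lem:conjcrit} allows $P$-conjugacy to be traded for $C_L(\omega(\cdot))V$-conjugacy fibrewise, which is the bridge between representations into $P$ and classes in $H^1$.
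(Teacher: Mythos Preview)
Your proposal is correct and follows essentially the same line as the paper's proof. The paper compresses your first paragraph into a single ``without loss'' assertion---assuming at once that $R\subseteq{\rm Hom}(K,P)_\omega$ for one fixed $\omega$ and that all elements of $S$ are already $P$-conjugate---and then argues exactly as you do via Lemma~\ref{lem:conjcrit} and the bijections of Proposition~\ref{prop:cube}; your version simply makes the bookkeeping behind that reduction explicit.
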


\begin{proof}
 Without loss we can assume that (a) all representations in $S$ are $P$-conjugate to each other, and (b) there exists $\omega\in {\rm Hom}(K,L)$ such that $R\subseteq {\rm Hom}(K,P)_\omega$.  Then $S\subseteq {\rm Hom}(K',P)_{\omega'}$, so $\widetilde{h}'(S/C_L(\omega'(K'))V)$ is a single point by Proposition~\ref{prop:cube} and Lemma~\ref{lem:conjcrit} (applied to $K'$).  Let $\widetilde{R}$ be the image of $R$ in ${\rm Hom}(K,P)_\omega/C_L(\omega(K))V$.  Hypothesis (ii) and Proposition~\ref{prop:cube} imply that $\widetilde{h}(\widetilde{R})$ is finite.  It follows from Proposition~\ref{prop:cube} and Lemma~\ref{lem:conjcrit} that $R$ is contained in a finite union of $P$-conjugacy classes, as required.
\end{proof}

\section{Some applications}
\label{sec:appln}

We keep our assumption from Section~\ref{sec:H1} that $G$ is reductive.  We would like to use Theorem~\ref{main_thm} as a tool to answer Question~\ref{qn:algKQ} for an R-parabolic subgroup of $G$; we take $P$ and $V$ to be as above, $K$ to be $H$, $K'$ to be a maximal unipotent subgroup $U$ of $H$ and $\zeta$ to be the inclusion of $U$ in $H$.  The trouble is that in general, hypothesis (ii) of Theorem~\ref{main_thm} is difficult to check.  There are situations when the map $H^1(\zeta)\colon H^1(H,V)_\omega\ra H^1(U,V)_{\omega'}$ is injective.  For instance, Stewart showed that when $H$ is connected and reductive, $H^1(\zeta)$ is injective \cite[Cor.\ 3.4.3 and Thm.\ 3.5.2]{stewartTAMS}, and $H^1(\zeta)$ is also injective if $V$ is abelian (Lemma~\ref{lem:abeliancoh}).  In general, however, the subgroup $C_L(\omega(K))$ from Theorem~\ref{main_thm} is properly contained in $C_L(\omega'(K'))$, so we cannot deduce that the fibres of $\widetilde{H}^1(\zeta)$ are finite without further information.  In particular, we do not know of a cohomological proof of Theorem~\ref{thm:main}, notwithstanding the result of Stewart cited above.  Below we give some situations where the cohomological approach yields fruit.

The first is based on the following observation: if a reductive group $H$ acts by group automorphisms on a unipotent group $V$ and $p= 0$ then $H$ is linearly reductive, so $H^1(H,V)$ is trivial (Lemma~\ref{lem:nonab_lin_red}).  The following result---a variation on Theorem~\ref{thm:linredKQ}---extends this idea.

\begin{prop}
\label{prop:linred}
 Let $P$, $L$, $V$ and $\pi_L$ be as in Section~\ref{sec:H1}, and let $U$ be a maximal unipotent subgroup of $H$.  Let $\omega\in {\rm Hom}(H,L)$, let $\rho_1,\rho_2\in {\rm Hom}(H,P)_\omega$ such that $\rho_1|_U= \rho_2|_U$, and set $N= {\rm ker}(\omega)$.  Suppose $H/N$ is linearly reductive.  Then $\rho_2$ is $C_V(\rho_1(U))$-conjugate to $\rho_1$.
\end{prop}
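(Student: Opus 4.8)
The plan is to recast the statement in the $1$-cohomology language of Section~\ref{sec:H1} and reduce it to the rigidity of $H^1$ for linearly reductive groups (Lemma~\ref{lem:nonab_lin_red}); the new ingredient is an averaging over $H/N$ applied to a suitably twisted cocycle. First I would note that it is enough to find \emph{some} $v\in V$ with $\rho_2= v\rho_1v^{-1}$: restricting such an identity to $U$ and using $\rho_1|_U= \rho_2|_U$ forces $v\in C_V(\rho_1(U))$, whence $\rho_2= v\cdot\rho_1$ is automatically $C_V(\rho_1(U))$-conjugate to $\rho_1$. Writing $\rho_i(x)= \mu_i(x)\omega(x)$ with $\mu_i\in Z^1(H,V)_\omega$ as in Section~\ref{sec:H1}, the hypothesis becomes $\mu_1|_U= \mu_2|_U$, and by Lemma~\ref{lem:localdefmn} the task is to prove $\overline{\mu_1}= \overline{\mu_2}$ in $H^1(H,V)_\omega$.

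Next I would pin down the restrictions to $N$. Since $\omega|_N$ is trivial, $\rho_1|_N$ and $\rho_2|_N$ are homomorphisms from $N$ into the unipotent group $V$, and they agree on $U\cap N$. A short argument using Proposition~\ref{prop:nonconnmaxunipt}(b),(c) shows that $U\cap N$ is a maximal unipotent subgroup of $N$: extend a maximal unipotent subgroup $W_0$ of $N$ to a maximal unipotent subgroup $U_0$ of $H$, observe $U_0\cap N= W_0$, and use that $U$ is $H$-conjugate to $U_0$ together with $N\unlhd H$. Hence Corollary~\ref{cor:unipttarget} gives $\rho_1|_N= \rho_2|_N$, and so $\mu_1|_N= \mu_2|_N$.

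Now for the main step. Let $H$ act on $V$ by $x\ast v= \rho_2(x)v\rho_2(x)^{-1}$ (legitimate since $V\unlhd P$) and set $\nu(x)= \mu_1(x)\mu_2(x)^{-1}$. A routine computation---the standard twisting of $1$-cocycles by $\mu_2$---shows that $\nu$ is a $1$-cocycle for the $\ast$-action, and that $\overline{\mu_1}= \overline{\mu_2}$ in $H^1(H,V)_\omega$ if and only if $\nu$ is a coboundary for the $\ast$-action. By the previous paragraph $\nu|_N= 1$. Evaluating the cocycle identity for $\nu$ on $xn$ via the two factorisations $x\cdot n$ and $(xnx^{-1})\cdot x$, and using $\nu(N)= \{1\}$, yields $\nu(x)= \rho_2(xnx^{-1})\,\nu(x)\,\rho_2(xnx^{-1})^{-1}$ for all $x\in H$ and $n\in N$; since $N\unlhd H$ this says $\nu$ takes values in $W:= C_V(\rho_2(N))$. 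The group $W$ is unipotent, is stable under the $\ast$-action of $H$, and $N$ acts trivially on it, so this $\ast$-action of $H$ on $W$ factors through $H/N$; one also checks $\nu(xn)= \nu(nx)= \nu(x)$, so $\nu$ descends to a $1$-cocycle $\bar\nu\in Z^1(H/N,W)$. As $H/N$ is linearly reductive, $H^1(H/N,W)$ is trivial by Lemma~\ref{lem:nonab_lin_red}, so $\bar\nu$, and hence $\nu$, is a coboundary. Unravelling this gives $\rho_1= w\rho_2w^{-1}$ for some $w\in V$, and the first paragraph finishes the proof.

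The crux is the observation that the twisted cocycle $\nu$ takes values in the $N$-fixed subgroup $W= C_V(\rho_2(N))$ and is bi-invariant under $N$; this is precisely what allows one to apply Lemma~\ref{lem:nonab_lin_red} to the linearly reductive group $H/N$ rather than to $H$ itself, which need not be linearly reductive. The remaining steps are the standard cocycle bookkeeping of Section~\ref{sec:H1} together with short applications of results already established for maximal unipotent subgroups.
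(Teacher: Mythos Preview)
Your proof is correct and follows essentially the same route as the paper: both form the difference cocycle $\rho_1(\,\cdot\,)\rho_2(\,\cdot\,)^{-1}$ (the paper uses $\rho_2\rho_1^{-1}$ with the $\rho_1$-action, which is the same up to swapping indices), show it is trivial on $N$, observe it therefore lands in $C_V(\rho_i(N))$ and descends to a cocycle for $H/N$, and finish with Lemma~\ref{lem:nonab_lin_red}. Your justification of $\rho_1|_N=\rho_2|_N$ via the fact that $U\cap N$ is a maximal unipotent subgroup of $N$ together with Corollary~\ref{cor:unipttarget} is in fact more explicit than the paper's corresponding step, which derives $N\le U\ker(\rho_i)$ from Corollary~\ref{cor:normaluniptmax} and Proposition~\ref{prop:epi} and then asserts $\rho_1|_N=\rho_2|_N$.
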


\begin{proof}
 We allow $H$ to act on $V$ by the rule $h\cdot v= \rho_1(h)v\rho_1(h)^{-1}$.  Define $\mu(h)= \rho_2(h)\rho_1(h)^{-1}$.  Then $\mu\in Z^1(H,V)$ by Lemma~\ref{lem:localdefmn}.  For $i= 1,2$, let $\psi_i\colon H\ra H/{\rm ker}(\rho_i)$ be the canonical projection.  Then $\psi_i(N)$ is isomorphic to a subgroup of $V$, so $\psi_i(N)$ is unipotent, and $\psi_i(N)\unlhd H/{\rm ker}(\rho_i)$.  By Corollary~\ref{cor:normaluniptmax} and Proposition~\ref{prop:epi}, $\psi_i(N)\leq\psi_i(U)$, so $N\leq U\,{\rm ker}(\rho_i)$.  It follows that $\rho_1(n)= \rho_2(n)$ for any $n\in N$, and this implies that $\mu(n)= 1$ for all $n\in N$.
 
 For any $h\in H$ and any $n\in N$,
 $$ 1= \mu(hnh^{-1})= \mu(h)(hn\cdot \mu(h^{-1}))= \mu(h)(hnh^{-1}\cdot \mu(h)^{-1}) $$
 by the cocycle equation and (\ref{eqn:inverse}).  It follows that $\mu(H)\subseteq \widetilde{V}:= C_{\rho_1(N)}(V)$.  We may, therefore, regard $\mu$ as a 1-cocycle for the $H$-module $\widetilde{V}$.  In fact, since $N$ acts trivially on $\widetilde{V}$ and $\mu|_N$ is trivial, we may regard $\mu$ as an element of $Z^1(H/N,\widetilde{V})$.  As $H/N$ is linearly reductive, $H^1(H/N,\widetilde{V})$ is trivial.  It follows that $\ovl{\mu}$ is the trivial element of $H^1(H,V)$.  Lemma~\ref{lem:localdefmn} now implies that $\rho_2= g\cdot \rho_1$ for some $g\in V$.  As $\rho_1$ and $\rho_2$ agree on $U$, $g$ belongs to $C_V(\rho_1(U))$, so we are done.
\end{proof}

\begin{rem}
 If $H$ is a nontrivial torus then $H$ is linearly reductive.  In fact, $H^1(H,V)$ is trivial for any action of $H$ by group automorphisms on a unipotent group $V$ (Lemma~\ref{lem:nonab_lin_red}), so the fibres of $\widetilde{H}^1(\zeta)$ in Theorem~\ref{main_thm} are automatically trivial for $K= H$.  But $(G,H)$ is usually not a K\"ulshammer pair for the reasons described in Section~\ref{sec:intro}; this is not detected by the 1-cohomology. 
\end{rem} 

Our next application concerns the case when $V$ is abelian.   We need a preliminary lemma.

\begin{lem}
\label{lem:abeliancoh}
 Let $H$ act by group automorphisms on an abelian unipotent group $V$ and let $U$ be a maximal unipotent subgroup of $H$.  Then the map $H^1(\zeta)\colon H^1(H,V)\ra H^1(U,V)$ is injective.
\end{lem}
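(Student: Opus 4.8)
The plan is to prove the equivalent assertion: if $\mu\in Z^1(H,V)$ and the class $\ovl{\mu|_U}\in H^1(U,V)$ is trivial, then $\ovl{\mu}\in H^1(H,V)$ is trivial. Since $V$ is abelian, if $\mu|_U=\chi_v|_U$ for some $v\in V$ we may replace $\mu$ by the cohomologous cocycle $\mu-\chi_v$ (where $\chi_v\colon H\ra V$, $\chi_v(h)=v-h\cdot v$), so it suffices to show: if $\mu|_U=0$ then $\mu$ is a $1$-coboundary.

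First I would dispose of the case when $H$ is connected. Choose a Borel subgroup $B=TU$ of $H$ with $T$ a maximal torus normalising $U$. For $t\in T$ and $u\in U$, writing $tu=(tut^{-1})t$ and using $\mu|_U=0$ with the cocycle equation gives $\mu(t)=\mu(tu)=(tut^{-1})\cdot\mu(t)$; since $tUt^{-1}=U$ this shows $\mu(t)\in V^U$. Hence $\mu|_T$ is a $1$-cocycle for $T$ acting on the $T$-submodule $V^U$, and as $T$ is a torus, hence linearly reductive, $H^1(T,V^U)$ is trivial by Lemma~\ref{lem:nonab_lin_red}; so $\mu(t)=w-t\cdot w$ for all $t\in T$, for some $w\in V^U$. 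Replacing $\mu$ by $\mu-\chi_w$ we still have $\mu|_U=0$ (because $w\in V^U$) and now also $\mu|_T=0$, hence $\mu|_B=0$. Then $\mu(hb)=\mu(h)+h\cdot\mu(b)=\mu(h)$ for all $h\in H$ and $b\in B$, so by the argument in the paragraph following Lemma~\ref{lem:prod} ($H/B$ projective and connected, $V$ affine) $\mu$ is constant, whence $\mu\equiv 0$. This settles the connected case.

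For general $H$ I would combine the connected case with the standard behaviour of cohomology under restriction to a Sylow $p$-subgroup. If $p=0$ then $U$ is connected, so $U\leq H^0$ and $U$ is a maximal unipotent subgroup of $H^0$; the connected case makes $H^1(H^0,V)\ra H^1(U,V)$ injective, while the kernel of $H^1(H,V)\ra H^1(H^0,V)$ is $H^1(H/H^0,V^{H^0})$, which vanishes since $H/H^0$ is finite and $V^{H^0}$ is a $\mathbb{Q}$-vector space. If $p>0$, I would first record the finite case: for a finite group $F$ with a maximal unipotent (equivalently, by Proposition~\ref{prop:nonconnmaxunipt}(a), Sylow $p$-)subgroup $S$, the transfer gives $\mathrm{cor}\circ\mathrm{res}=[F:S]\cdot\mathrm{id}$ on $H^1(F,V)$, and $[F:S]$, being prime to $p$, acts invertibly on the commutative unipotent group $V$; so $H^1(F,V)\ra H^1(S,V)$ is injective. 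Then, as in the proof of Proposition~\ref{prop:subgrpcrit} together with Remark~\ref{rem:reduction}, I would write $H$ as the closure of an increasing union of finite subgroups $H_1\leq H_2\leq\cdots$ carrying compatible maximal unipotent subgroups $U_1\leq U_2\leq\cdots$ whose union has closure inside a maximal unipotent subgroup of $H$; after conjugating (Proposition~\ref{prop:nonconnmaxunipt}(c)) this may be taken to be $U$, so $\mu|_{U_m}=0$ for all $m$, and the finite case yields $w_m\in V$ with $\mu|_{H_m}=\chi_{w_m}|_{H_m}$. The sets $F_m=\{w\in V:\mu|_{H_m}=\chi_w|_{H_m}\}$ form a descending chain of nonempty closed subvarieties of $V$, so their intersection is nonempty; any $w$ in it has $\mu=\chi_w$ on the dense subgroup $\bigcup_m H_m$, hence on $H$, so $\mu\in B^1(H,V)$.

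The connected case is the substantive step, but it is short and exploits the projectivity of $H/B$ exactly as in the proof of Lemma~\ref{lem:Borel_det}; I do not anticipate trouble there. The part that needs the most care is the non-connected case in characteristic $p$: one must set up the finite subgroups $H_m$ with maximal unipotent subgroups lying inside $U$ (the bookkeeping of Remark~\ref{rem:reduction} and Proposition~\ref{prop:subgrpcrit}) and invoke the transfer. One could instead run the Lyndon--Hochschild--Serre sequence for $H^0\unlhd H$ and reduce to injectivity of $H^1(H/H^0,V^{H^0})\ra H^1(P,V^{H^0})$ with $P$ the image of $U$, but then care is needed because the coefficient module changes on also restricting to $U$, so the finite-subgroup route seems cleaner.
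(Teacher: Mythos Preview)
Your argument is correct. In characteristic $p>0$ you follow the paper's proof essentially verbatim: the finite case via the transfer/averaging identity $\mathrm{cor}\circ\mathrm{res}=[F:S]\cdot\mathrm{id}$ is exactly the paper's averaging argument, and the passage to general $H$ via an ascending chain of finite subgroups and the descending chain condition on the closed sets $F_m$ is the paper's proof with only cosmetic changes.

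The genuine difference is in characteristic $0$. The paper invokes Mostow's Levi decomposition $H=M\ltimes R_u(H)$ and uses linear reductivity of the (possibly non-connected) Levi factor $M$ to kill the cocycle on $M$, handling connected and non-connected $H$ in one stroke. You instead first treat connected $H$ by reducing to the Borel $B=TU$: the key step $\mu(t)\in V^U$ together with $H^1(T,V^U)=0$ forces $\mu|_B=0$, and then the projectivity of $H/B$ (as in the argument after Lemma~\ref{lem:prod}) kills $\mu$ globally. For non-connected $H$ you then feed this into the inflation--restriction sequence for $H^0\unlhd H$ and observe that $H^1(H/H^0,V^{H^0})=0$ since $V^{H^0}$ is a $k$-vector space and $|H/H^0|$ is invertible. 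Your route avoids Mostow's theorem entirely and has the pleasant feature that the connected-case argument via $H/B$ works verbatim in any characteristic (even though you only invoke it for $p=0$); the paper's route is shorter for non-connected $H$ in characteristic $0$ because it does not need the separate inflation--restriction step.
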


\begin{proof}
 One checks easily that $H^1(\zeta)$ is a homomorphism of abelian groups.  It is therefore enough to prove that if $\mu\in Z^1(H,V)$ and $\mu|_U$ is a 1-coboundary then $\mu$ is a 1-coboundary.
 
 Suppose first that $p= 0$.  We have a Levi factorisation $H= M\ltimes R_u(H)$ \cite[Thm.\ 7.1]{mostow}.  Let $\mu\in Z^1(H,V)$ such that $\mu|_U$ is a 1-coboundary.  By adding a 1-coboundary to $\mu$ if necessary, we can assume that $\mu|_U= 0$.  For any $u\in R_u(H)$ and any $m\in M$,
 $$ 0= \mu(mum^{-1})= \mu(m)- mum^{-1}\cdot \mu(m) $$
 by (\ref{eqn:inverse}).  Hence $R_u(H)$ centralises $\mu(m)$.  It follows that we may regard $\mu|_M$ as an element of $Z^1(M, V_1)$, where $V_1$ is the fixed point subgroup $V^{R_u(H)}$.  Now $M$ is linearly reductive, so $H^1(M, V_1)= 0$ by Lemma~\ref{lem:nonab_lin_red}.  We deduce that there exists $v\in V_1$ such that $\mu(m)= v- m\cdot v$ for all $m\in M$.  As $v$ centralises $R_u(H)$ and $\mu|_{R_u(H)}= 0$, it follows that $\mu$ is the 1-coboundary $\chi_v$, so we are done.
 
 Now suppose $p> 0$.  Suppose first that $H$ is finite, and let $U$ be a Sylow $p$-subgroup of $H$.  The proof in this case proceeds by a standard averaging argument (cf.\ \cite[III.10]{brown}).  Let $\mu\in Z^1(H,V)$ such that $\mu|_U$ is a 1-coboundary; as in the previous paragraph, we can assume that $\mu|_U= 0$.  Let $r= |H:U|$.  Now $V$ is unipotent, so $V$ has exponent $q$ for some power $q$ of $p$.  As $r$ is coprime to $p$, there exists $s\in {\mathbb N}$ such that $srv= v$ for all $v\in V$.  Let $t_1,\ldots, t_r$ be a set of representatives for the coset space $H/U$.  Set $\ds v= s\sum_{i= 1}^r \mu(t_i)$.  If $h\in H$ then there is a permutation $\sigma$ of $\{1,\ldots, r\}$ such that for any $i$, $ht_i= t_{\sigma(i)}u_i$ for some $u_i\in U$.  As $\mu$ is constant on each coset in $H/U$ and $U$ is abelian, we deduce that $\ds v= s\sum_{i= 1}^r \mu(ht_i)$.  But $\ds s\sum_{i= 1}^r \mu(ht_i)= s\sum_{i=1}^r (\mu(h)+ h\cdot \mu(t_i))= sr\mu(h)+ h\cdot v$, so $\mu(h)= v- h\cdot v$.  Hence $\mu$ is a 1-coboundary.

 Now let $H$ be arbitrary.  By Remark~\ref{rem:reduction}, there exist finite subgroups $H_1\leq H_2\leq \cdots$ of $H$ such that $\ds \bigcup_{m= 1}^\infty H_m$ is dense in $H$.  By Proposition~\ref{prop:nonconnmaxunipt}, we can choose a maximal unipotent subgroup $U_m$ of $H_m$ in such a way that $U_1\leq U_2\leq\cdots$.  Then the closure of $\ds \bigcup_{m\in {\mathbb N}} U_m$ is contained in some maximal unipotent subgroup $U$ of $H$ (cf.\ the proof of Proposition~\ref{prop:subgrpcrit}).
 
 Let $\mu\in Z^1(H,V)$ such that $\mu|_U$ is a 1-coboundary.  Then for each $m$, $\mu|_{U_m}$ is a 1-coboundary, so $\mu|_{H_m}$ is a 1-coboundary by the finite case.  Set
 $$ C_m= \{v\in V\mid \mu(h)= v- h\cdot v \ \mbox{for all $h\in H_m$}\}. $$
Then $C_1, C_2,\ldots$ is a descending sequence of nonempty closed subsets of $V$.  By the descending chain condition, this sequence must eventually become constant, so there exists $v\in V$ such that $\mu(h)= v- h\cdot v$ for all $m\in {\mathbb N}$ and all $h\in H_m$.  But $\ds \bigcup_{m= 1}^\infty H_m$ is dense in $H$ and $\mu$ is a morphism, so $\mu(h)= v- h\cdot v$ for all $h\in H$.  This completes the proof.
\end{proof}

\begin{rem}
 The conclusion of Lemma~\ref{lem:abeliancoh} can fail if we don't assume that $V$ is abelian: see \cite[Sec.\ 3]{BMR_kuls}.
\end{rem}

\begin{prop}
\label{prop:Levicont}
 Let $P$ be an R-parabolic subgroup of $G$ and suppose $V:= R_u(P)$ is abelian.  Let $H\leq P$ and let $U$ be a maximal unipotent subgroup of $H$.  If $U$ is contained in an R-Levi subgroup $L$ of $P$ then $H$ is contained in a $C_V(U)$-conjugate of $L$.  In particular, $H$ is also contained in an R-Levi subgroup of $P$.
\end{prop}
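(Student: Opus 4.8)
The plan is to translate the inclusion into the cohomological formalism of Section~\ref{sec:H1} and then invoke the abelian case of the injectivity of restriction. Let $\rho\in{\rm Hom}(H,P)$ denote the inclusion and put $\omega=\rho^L=\pi_L\circ\rho\in{\rm Hom}(H,L)$, so that $\rho\in{\rm Hom}(H,P)_\omega$. Let $H$ act on $V$ by $h\cdot v=\omega(h)v\omega(h)^{-1}$ and let $\mu=z(\rho)\in Z^1(H,V)_\omega$ be the corresponding $1$-cocycle, so that $\rho(h)=\mu(h)\omega(h)$ and hence $\mu(h)=\rho(h)\omega(h)^{-1}\in V$ for all $h\in H$.

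Next I would use the hypothesis $U\subseteq L$. Since $\pi_L$ restricts to the identity on $L$, for $u\in U$ we have $\omega(u)=\pi_L(\rho(u))=\rho(u)$, and therefore $\mu(u)=\rho(u)\omega(u)^{-1}=1$; that is, $\mu|_U$ is the trivial $1$-cocycle, in particular a $1$-coboundary. As $V$ is abelian, Lemma~\ref{lem:abeliancoh} applies and says the restriction map $H^1(\zeta)\colon H^1(H,V)\ra H^1(U,V)$ is injective. Since $H^1(\zeta)(\ovl{\mu})=\ovl{\mu|_U}$ is the trivial class, $\ovl{\mu}$ is trivial in $H^1(H,V)$, i.e.\ $\mu$ is cohomologous to the trivial $1$-cocycle. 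By Lemma~\ref{lem:localdefmn}, $\rho=\rho_\mu$ is then $V$-conjugate to $\omega$ (viewed as a homomorphism into $P$ via $L\leq P$): there is $v\in V$ with $\rho(h)=v\,\omega(h)\,v^{-1}$ for all $h\in H$, and so $H=\rho(H)\subseteq vLv^{-1}$.

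It remains to check that $v$ can be chosen in $C_V(U)$, and in fact the same $v$ works: for $u\in U$ we have $\omega(u)=\rho(u)$, whence $\rho(u)=v\,\rho(u)\,v^{-1}$, so $v$ centralises $\rho(u)$ for every $u\in U$ and thus $v\in C_V(\rho(U))=C_V(U)$. Therefore $H$ lies in the $C_V(U)$-conjugate $vLv^{-1}$ of $L$; and since $v\in R_u(P)$, this conjugate is again an R-Levi subgroup of $P$, which gives the final assertion. I do not anticipate a genuine obstacle here — essentially all the content is carried by Lemma~\ref{lem:abeliancoh} — and the only point demanding care is the bookkeeping: keeping straight which $H$-action on $V$ is in force, recognising $\mu|_U$ as the trivial cocycle (not merely a coboundary), and verifying that the single coboundary element $v$ simultaneously witnesses both $H\subseteq vLv^{-1}$ and $v\in C_V(U)$.
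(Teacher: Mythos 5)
Your proposal is correct and follows essentially the same route as the paper: write the inclusion as $\rho(h)=\mu(h)\omega(h)$ with $\mu\in Z^1(H,V)_\omega$, observe that $U\leq L$ forces $\mu|_U$ to be trivial, apply Lemma~\ref{lem:abeliancoh} to conclude $\ovl{\mu}$ is trivial, and then use Lemma~\ref{lem:localdefmn} together with $\rho|_U=\omega|_U$ to see that the conjugating element lies in $C_V(U)$. The only difference is that you spell out the final bookkeeping (that the same $v$ witnesses both conjugacy and membership in $C_V(U)$) more explicitly than the paper does.
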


\begin{proof}
 Let $\rho\colon H\ra P$ be the inclusion of $H$ in $P$.  Suppose $U\leq L$, where $L$ is an R-Levi subgroup of $P$.  Set $\omega= \rho^L\in {\rm Hom}(H,L)$.  By Lemma~\ref{lem:localdefmn}, there exists $\mu\in Z^1(H,V)_\omega$ such that $\rho(h)= \mu(h)\omega(h)$ for all $h\in H$.  Now $\mu(u)= 1$ for all $u\in U$, and by Lemma~\ref{lem:abeliancoh}, $H^1(\zeta)$ is injective.  It follows that $\ovl{\mu}$ is trivial in $H^1(H,V)$, so $\rho= g\cdot \sigma$ for some $g\in V$ by Lemma~\ref{lem:localdefmn}.  As $\rho$ and $\sigma$ agree on $U$, $g$ must belong to $C_V(U)$, so we are done.
\end{proof}

\begin{rem}
 If $H$ is connected and reductive then Proposition~\ref{prop:Levicont} holds without the assumption that $V$ is abelian: see \cite[Cor.\ 3.6.2]{stewartTAMS}.
\end{rem}

We finish with a result very similar to Proposition~\ref{prop:Levicont}, but formulated for representations instead of subgroups.

\begin{lem}
\label{lem:abelianpar}
 Let $G$ be reductive and let $P$ be an R-parabolic subgroup of $G$ such that $V:= R_u(P)$ is abelian.  Fix an R-Levi subgroup $L$ of $P$ and let $\omega\in {\rm Hom}(H,L)$.  Let $\rho_1, \rho_2\in {\rm Hom}(H,P)_\omega$.  If $\rho_1|_U$ and $\rho_2|_U$ are $C_L(\omega(H))V$-conjugate then $\rho_1$ and $\rho_2$ are $C_L(\omega(H))V$-conjugate.
\end{lem}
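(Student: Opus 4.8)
The plan is to run the whole statement through the cohomological dictionary of Lemma~\ref{lem:localdefmn}, combined with the injectivity afforded by Lemma~\ref{lem:abeliancoh}. Write $\zeta\colon U\ra H$ for the inclusion and $\omega'=\omega\circ\zeta$, and let $H$ (resp.\ $U$) act on $V$ via $\omega$ (resp.\ $\omega'$). By Lemma~\ref{lem:localdefmn} I would fix $\mu_1,\mu_2\in Z^1(H,V)_\omega$ with $\rho_i=\rho_{\mu_i}$; then $\rho_i|_U=\rho_{\mu_i|_U}$ with $\mu_i|_U\in Z^1(U,V)_{\omega'}$.

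The first step is to reduce to the case where $\rho_1|_U$ and $\rho_2|_U$ are $V$-conjugate. By hypothesis $\rho_2|_U=(cv)\cdot(\rho_1|_U)$ for some $c\in C_L(\omega(H))$ and $v\in V$, where I use that $C_L(\omega(H))$ normalises $V$ so that $C_L(\omega(H))V$ is a subgroup of $P$. Since $c$ centralises $\omega(H)$, a short computation shows $c\cdot\rho_1$ again lies in ${\rm Hom}(H,P)_\omega$ (its associated cocycle is $h\mapsto c\mu_1(h)c^{-1}$), and writing $cv=(cvc^{-1})c$ with $cvc^{-1}\in V$ one sees $\rho_2|_U$ is $V$-conjugate to $(c\cdot\rho_1)|_U$. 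So after replacing $\rho_1$ by $c\cdot\rho_1$—which moves it only within its $C_L(\omega(H))$-orbit—I may assume $\rho_2|_U$ is $V$-conjugate to $\rho_1|_U$, and it then suffices to prove that $\rho_1$ and $\rho_2$ are $V$-conjugate.

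Next, by Lemma~\ref{lem:localdefmn} applied to $U$, the reduction just made says $\ovl{\mu_1|_U}=\ovl{\mu_2|_U}$ in $H^1(U,V)_{\omega'}$. Since $V$ is abelian and $U$ is a maximal unipotent subgroup of $H$, Lemma~\ref{lem:abeliancoh} gives that $H^1(\zeta)\colon H^1(H,V)_\omega\ra H^1(U,V)_{\omega'}$ is injective; as this map sends $\ovl{\mu_i}$ to $\ovl{\mu_i|_U}$, I conclude $\ovl{\mu_1}=\ovl{\mu_2}$ in $H^1(H,V)_\omega$. Applying Lemma~\ref{lem:localdefmn} once more yields that $\rho_1$ and $\rho_2$ are $V$-conjugate, and undoing the earlier replacement of $\rho_1$ shows that the original $\rho_1$ and $\rho_2$ are $C_L(\omega(H))V$-conjugate, as required.

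Given Lemmas~\ref{lem:localdefmn} and \ref{lem:abeliancoh}, the argument is essentially bookkeeping. The one place requiring care—the main obstacle, such as it is—will be keeping the two centralisers straight: the hypothesis and conclusion are phrased with $C_L(\omega(H))$, whereas restriction to $U$ naturally brings in the larger group $C_L(\omega'(U))\supseteq C_L(\omega(H))$, so I must check both that conjugating $\rho_1$ by $c\in C_L(\omega(H))$ does not leave ${\rm Hom}(H,P)_\omega$ and that it does not enlarge the conjugating element beyond $C_L(\omega(H))V$. (Note that the stronger use of the maximal unipotent subgroup, rather than an arbitrary one, is exactly what makes Lemma~\ref{lem:abeliancoh} available.)
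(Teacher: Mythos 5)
Your proposal is correct and follows essentially the same route as the paper: reduce via the $C_L(\omega(H))$-part of the conjugating element to the case where the restrictions to $U$ agree up to $V$-conjugacy, translate to cocycles via Lemma~\ref{lem:localdefmn}, and invoke the injectivity of $H^1(\zeta)$ from Lemma~\ref{lem:abeliancoh} to conclude $V$-conjugacy of $\rho_1$ and $\rho_2$. The only cosmetic difference is that the paper normalises so that $\rho_1|_U=\rho_2|_U$ exactly (replacing $\rho_2$ by $g\cdot\rho_2$), whereas you only arrange $V$-conjugacy of the restrictions before passing to cohomology; both versions of the bookkeeping are fine.
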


\begin{proof}
 Suppose $\rho_1|_U= g\cdot \rho_2|_U$ for some $g\in C_L(\omega(H))V$.  Replacing $\rho_2$ with $g\cdot \rho_2$, we can assume that $\rho_1|_U= \rho_2|_U$ (note that $g\cdot \rho_2$ belongs to ${\rm Hom}(H,P)_\omega$ by our assumption on $g$).  By Lemma~\ref{lem:localdefmn}, we can write $\rho_1(h)= \mu_1(h)\omega(h)$ and $\rho_2(h)= \mu_2(h)\omega(h)$ for some $\mu_1, \mu_2\in Z^1(H,V)_\omega$.  Since $\mu_1|_U= \mu_2|_U$ and $V$ is abelian, Lemma~\ref{lem:abeliancoh} implies that the elements $\ovl{\mu_1}$ and $\ovl{\mu_2}$ of $H^1(H,V)_\omega$ are equal.  Hence $\rho_1$ and $\rho_2$ are $V$-conjugate by Lemma~\ref{lem:localdefmn}, so we are done.
\end{proof}

\section{Groups of semisimple rank 2}
\label{sec:rank2}

In \cite{BMR_kuls}, Bate, Martin, and R\"ohrle produce a finite subgroup $H$ of a simple group $G$ of type $G_2$ with $p= 2$ such that $(G, H)$ is not a K\"ulshammer pair.  We show that this is the smallest example possible, in the sense that there are no other such examples for any semisimple $G\neq G_2$ of rank 1 or 2.  Our proof uses the cohomological formalism from Section~\ref{sec:H1}, as well as various results from Sections~\ref{sec:mainproof} and \ref{sec:appln}.

\begin{thm}
\label{thm:lowrank}
 Suppose $H^0/R_u(H)$ is semisimple.  Let $G$ be a semisimple group of rank at most 2, and assume that if $p= 2$ or 3 then $G$ is not of type $G_2$.  Then $(G,H)$ is a K\"ulshammer pair.
\end{thm}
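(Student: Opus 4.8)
The plan is to reduce to the case of a finite group $H$ and a simple group $G$ of rank~$2$, and then to treat the resulting low-rank cases, the genuinely difficult one being $G$ of type $B_2$ in characteristic~$2$. By Theorem~\ref{thm:char0KQ} we may assume $p>0$, and by Proposition~\ref{prop:subgrpcrit} together with Remark~\ref{rem:reduction} we may assume $H$ is finite. Fix a maximal unipotent subgroup $U$ of $H$ and $\sigma\in{\rm Hom}(U,G)$; after conjugating $\sigma$ it is enough to show that $C:=\{\rho\in{\rm Hom}(H,G)\mid\rho|_U=\sigma\}$ is a finite union of $G$-conjugacy classes. The root system of $G$ is one of $A_1$, $A_1\times A_1$, $A_2$, $B_2$, $G_2$; for the first three every prime is good for $G$, for $B_2$ every $p\neq 2$ is good, and for $G_2$ every $p\neq 2,3$ is good. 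So, under the hypotheses of the theorem, $p$ is a good prime for the connected reductive group $G$ unless $G$ is simple of type $B_2$ with $p=2$, and in the good-prime case \cite[I.5, Thm.~3]{slodowy} shows that ${\rm Hom}(H,G)$ is a finite union of $G$-conjugacy classes, so $(G,H)$ is a K\"ulshammer pair. From now on we assume $G$ is simple of type $B_2$ and $p=2$.

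Next I would split $C$ according to the position of the image of $\rho$. The $G$-completely reducible representations in $C$ form a finite union of $G$-conjugacy classes by Theorem~\ref{thm:GcrKQ}, so it remains to handle the $\rho\in C$ that are not $G$-cr; each such $\rho$ has image in a proper parabolic subgroup of $G$. Those whose image lies in a Borel subgroup of $G$ can be controlled using the K\"ulshammer property of connected solvable groups (Corollary~\ref{cor:connsolv}), by a routine argument since $G$ has rank~$2$. For those whose image lies in no Borel subgroup, Lemma~\ref{lem:allconj} provides a \emph{unique} proper parabolic subgroup $P=P(\rho)$ containing it, necessarily maximal; and if two such representations are $G$-conjugate, then their associated parabolics are $G$-conjugate while both containing the fixed common image, which forces the conjugating element to lie in that parabolic, so that $G$-conjugacy among these representations coincides with $P$-conjugacy (exactly as in the proof of Proposition~\ref{prop:global_to_local}). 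Hence it suffices to prove, for each of the two $G$-conjugacy classes of maximal parabolic subgroups $P$ of $G$, that $R:=\{\rho\in{\rm Hom}(H,P)\mid\rho|_U=\sigma\}$ is a finite union of $P$-conjugacy classes.

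For this I would invoke the cohomological machinery of Section~\ref{sec:H1}, applying Theorem~\ref{main_thm} with $K=H$, $K'=U$ and $\zeta$ the inclusion $U\hookrightarrow H$. Fix an R-Levi subgroup $L$ of $P$ and set $V=R_u(P)$. Then $S=\{\sigma\}$, so hypothesis~(iii) of Theorem~\ref{main_thm} is automatic; hypothesis~(i) holds because $L$ is connected reductive of semisimple rank at most~$1$ and $2$ is a good prime for type~$A_1$, so that ${\rm Hom}(H,L)$, hence $R_L$, is a finite union of $L$-conjugacy classes by \cite[I.5, Thm.~3]{slodowy}. Everything then comes down to hypothesis~(ii): for every $\omega\in{\rm Hom}(H,L)$ with $R_\omega\neq\emptyset$, the map $\widetilde{H^1}(\zeta)\colon H^1(H,V)_\omega/C_L(\omega(H))\to H^1(U,V)_{\omega'}/C_L(\omega'(U))$ should have finite fibres. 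For one of the two maximal parabolics of $G$ the radical $V$ is abelian; then $H^1(\zeta)\colon H^1(H,V)_\omega\to H^1(U,V)_{\omega'}$ is injective by Lemma~\ref{lem:abeliancoh}, and the required finiteness of the fibres of $\widetilde{H^1}(\zeta)$ follows (indeed Lemma~\ref{lem:abelianpar}, together with hypothesis~(i), already gives directly that $R$ is a finite union of $P$-conjugacy classes in this case, and the same remark disposes of the $A$-type cases cohomologically, since there every maximal parabolic has abelian unipotent radical).

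The main obstacle is the remaining maximal parabolic, for which $V$ is a three-dimensional, two-step nilpotent unipotent group whose centre coincides with its one-dimensional commutator subgroup $[V,V]$ (a Heisenberg-type group). Here $V$ is non-abelian, and because $p=2$ the squaring map on $V$ is a non-trivial quadratic map into $[V,V]$, so Lemma~\ref{lem:abeliancoh} does not apply and hypothesis~(ii) must be verified by hand. The plan would be to analyse $Z^1(H,V)_\omega$ and $Z^1(U,V)_{\omega'}$ directly through the filtration $1\leq[V,V]\leq V$: on each of the abelian subquotients $[V,V]$ and $V/[V,V]$ the restriction map on $1$-cohomology \emph{is} injective by Lemma~\ref{lem:abeliancoh}, and one must control how a $1$-cocycle valued in $V$ is reconstructed from its images in $V/[V,V]$ and in $[V,V]$ via the explicit commutator pairing $V/[V,V]\times V/[V,V]\to[V,V]$ and the squaring map, so as to conclude that only finitely many $C_L(\omega(H))$-orbits of classes in $H^1(H,V)_\omega$ can restrict into a single $C_L(\omega'(U))$-orbit in $H^1(U,V)_{\omega'}$ (with, if necessary, a separate direct argument for exceptional $\omega$). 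Granting hypothesis~(ii), Theorem~\ref{main_thm} yields that $R$ is a finite union of $P$-conjugacy classes, and combining the two maximal-parabolic cases with the $G$-cr case gives the theorem. I expect this characteristic-$2$, non-abelian cohomology analysis for the Heisenberg-type radical to be the hard step; it is also precisely where type~$G_2$ differs, since there both maximal parabolics have non-abelian (five-dimensional) radicals and, for $p=2$, the analogue of the theorem actually fails by \cite{BMR_kuls} --- which is why $G_2$ is excluded in the statement for $p=2$ and $3$.
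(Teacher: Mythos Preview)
Your reduction to $G$ simple of type $B_2$ with $p=2$ and $H$ finite is correct and matches the paper. The $B_2$ analysis, however, contains a factual error and a real gap.

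The factual error: in characteristic $2$, \emph{both} maximal parabolics of $B_2$ have abelian unipotent radical. The only pair of roots of $R_u(P_\beta)$ summing to a root is $(\alpha,\alpha+\beta)$, and the Chevalley structure constant $N_{\alpha,\alpha+\beta}=\pm 2$ vanishes mod~$2$; the paper records explicitly that $[G_\alpha,G_{\alpha+\beta}]=1$ when $p=2$ and that $R_u(P_\beta)$ is abelian. So there is no Heisenberg case, and your ``main obstacle'' paragraph attacks a non-problem. (The paper instead uses the bijective special isogeny of $B_2$ in characteristic~$2$, swapping long and short roots, to reduce both maximal parabolics to the single one $P=P_\beta$.)

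The gap: even with $V$ abelian, injectivity of $H^1(\zeta)$ (equivalently, Lemma~\ref{lem:abelianpar}) does \emph{not} yield hypothesis~(ii) of Theorem~\ref{main_thm}. Since $C_L(\omega(H))$ can be properly contained in $C_L(\omega'(U))$, an injective $C_L(\omega(H))$-equivariant map $H^1(\zeta)$ still allows a single $C_L(\omega'(U))$-orbit in the target to meet the image in infinitely many $C_L(\omega(H))$-orbits. This is exactly the obstruction flagged at the start of Section~\ref{sec:appln} and in the Remark after Proposition~\ref{prop:B2}. The paper's proof of Proposition~\ref{prop:B2} does substantial extra work to get around it: it passes to the quotient $Q=P/U_{2\alpha+\beta}$, shows that $Q$ is abstractly isomorphic to a maximal parabolic of ${\rm SL}_3(k)$ so that $(Q,H)$ is a K\"ulshammer pair (Lemma~\ref{lem:almost_par}), and then uses a regular/singular case analysis on $\sigma$ (Lemma~\ref{lem:regular}) to upgrade $P$-conjugacy of the restrictions $\tau|_U$ to $Z(L)R_u(P)$-conjugacy, which is what Lemma~\ref{lem:abelianpar} actually requires. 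The passage back from $P$-conjugacy to $G$-conjugacy (Proposition~\ref{prop:B2_localtoglobal}) also needs more than your sketch supplies, including a direct bound on $H^1(H,R_u(P))_\omega$ modulo $Z(L)$ for faithful $L$-irreducible $\omega$ (Lemma~\ref{lem:faithful}).
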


We need some preliminary work.  The key case to deal with is that of type $B_2$ in characteristic 2.  Until the end of the proof of Proposition~\ref{prop:B2_localtoglobal}, we assume that $p= 2$, $G$ is a simply connected simple group of type $B_2$ (so $G= {\rm Spin}_5$) and $H$ is finite.  Fix a maximal unipotent subgroup $U$ of $H$.  We fix a maximal torus $T$ of $G$ and a Borel subgroup $B$ of $G$ such that $T\leq B$.  We label the roots of $B$ with respect to $T$ as $\alpha, \beta, \alpha+ \beta$ and $2\alpha+ \beta$, where $\alpha$ is short and $\beta$ is long.  The hypothesis of simple connectedness ensures that the canonical epimorphism from ${\rm SL}_2(k)$ to $G_\beta$ is an isomorphism.  The commutation relations of the root groups are given in \cite[33.4]{hum}.  If $p= 2$ then the root groups $U_\gamma$ and $U_\delta$ commute with each other for any roots $\gamma, \delta$ of $B$ except for when $\{\gamma, \delta\}= \{\alpha, \beta\}$, whereas for any $1\neq u_\alpha\in U_\alpha$ and $1\neq u_\beta\in U_\beta$, we have

\begin{equation}
\label{eqn:noncomm}
 [u_\alpha, u_\beta]= u_{\alpha+ \beta} u_{2\alpha+ \beta}
\end{equation}

\noindent for some $1\neq u_{\alpha+ \beta}\in U_{\alpha+ \beta}$ and some $1\neq u_{2\alpha+ \beta}\in U_{2\alpha+ \beta}$.  Moreover, $[G_{\alpha}, G_{\alpha+ \beta}]= [G_\beta, G_{2\alpha+ \beta}]= 1$.

There are two $G$-conjugacy classes of maximal parabolic subgroups of $G$, represented by $P_\alpha:= \langle B\cup U_{-\alpha}\rangle$ and $P_\beta:= \langle B\cup U_{-\beta}\rangle$.  Below we need only consider $P_\beta$ (cf.\ the proof of Proposition~\ref{prop:B2_p=2}).  Set $P= P_\beta$.  Define $L= \langle T\cup G_\beta\rangle$, a Levi subgroup of $P$.

We have $R_u(P)= U_\alpha U_{\alpha+ \beta} U_{2\alpha+ \beta}$ and $R_u(B)= U_\beta U_\alpha U_{\alpha+ \beta} U_{2\alpha+ \beta}= U_\beta R_u(P)$.  Note that $R_u(P)$ is abelian.  The subgroup $U_{2\alpha+ \beta}$ is normal in $P$; we set $Q= P/U_{2\alpha+ \beta}$ and denote by $\xi$ the canonical projection from $P$ to $Q$.  Set $V= \xi(R_u(P))$.

\begin{lem}
\label{lem:almost_par}
 $(Q,H)$ is a K\"ulshammer pair.
\end{lem}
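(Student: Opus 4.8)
The plan is to apply the cohomological formalism of Section~\ref{sec:H1} to $Q$, even though $Q$ is not, on the face of it, a parabolic subgroup of a reductive group. First I record the structure of $Q$. A Levi subgroup of a parabolic meets its unipotent radical trivially, so $L\cap U_{2\alpha+\beta}=1$; since $U_{2\alpha+\beta}\unlhd P$, the projection $\xi$ restricts to an isomorphism of $L$ onto $\bar L:=\xi(L)$, and $Q=\bar L\ltimes V$, where $V=\xi(U_\alpha)\xi(U_{\alpha+\beta})\cong\mathbb{G}_a^2$ is an abelian unipotent group on which $\bar L$ acts by group automorphisms. Here $\bar L\cong L$ is connected reductive with derived group $G_\beta\cong\mathrm{SL}_2$, and as a $G_\beta$-module $V$ is (a Frobenius twist of) the natural $2$-dimensional module. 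Since $H$ is finite, a maximal unipotent subgroup $U$ of $H$ is a Sylow $p$-subgroup.

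The point is that the results of Section~\ref{sec:H1} that I need --- Lemmas~\ref{lem:localdefmn}, \ref{lem:abeliancoh} and \ref{lem:conjcrit}, Proposition~\ref{prop:cube} and Theorem~\ref{main_thm} --- use only the semidirect product structure $Q=\bar L\ltimes V$ with $V$ abelian unipotent, and so apply verbatim with $P$, $L$, $R_u(P)$ replaced by $Q$, $\bar L$, $V$. Fix $\sigma\in\mathrm{Hom}(U,Q)$ and put $R=\{\rho\in\mathrm{Hom}(H,Q)\mid\rho|_U\in Q\cdot\sigma\}$; I must show $R$ meets only finitely many $Q$-conjugacy classes. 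Let $\zeta\colon U\hookrightarrow H$ be the inclusion, $S=\{\rho\circ\zeta\mid\rho\in R\}$, and $\pi\colon Q\to\bar L$ the projection, and I verify the three hypotheses of the $Q$-version of Theorem~\ref{main_thm}. Hypothesis~(iii) is immediate, as $S\subseteq Q\cdot\sigma$ is a single conjugacy class. For~(i): if $\rho\in R$ then $(\pi\circ\rho)|_U$ is $\bar L$-conjugate to $(\pi\circ\sigma)|_U$, so $R_{\bar L}=\{\pi\circ\rho\mid\rho\in R\}$ is contained in the set of $\omega\in\mathrm{Hom}(H,\bar L)$ whose restriction to $U$ lies in a fixed $\bar L$-conjugacy class; as $\bar L\cong L$ is connected reductive with all simple factors of type $A$ and $H$ is finite, $(\bar L,H)$ is a K\"ulshammer pair (see Section~\ref{sec:intro}: this is due to K\"ulshammer~\cite{kuls} and Slodowy~\cite{slodowy}), so $R_{\bar L}$ lies in finitely many $\bar L$-conjugacy classes.

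The crux, and what I expect to be the main obstacle, is hypothesis~(ii): for each $\omega\in\mathrm{Hom}(H,\bar L)$ with $R_\omega\neq\emptyset$, the map $\widetilde{H^1}(\zeta)\colon H^1(H,V)_\omega/C_{\bar L}(\omega(H))\to H^1(U,V)_{\omega'}/C_{\bar L}(\omega'(U))$ has finite fibres (here $\omega'=\omega|_U$). As $V$ is abelian, Lemma~\ref{lem:abeliancoh} shows that the underlying map $H^1(\zeta)\colon H^1(H,V)_\omega\to H^1(U,V)_{\omega'}$ of finite-dimensional vector spaces is injective and $C_{\bar L}(\omega(H))$-equivariant; but, as noted just before Proposition~\ref{prop:linred}, this is not enough on its own, because $C_{\bar L}(\omega(H))$ may be properly contained in $C_{\bar L}(\omega'(U))$. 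Here one must use the smallness of $Q$. I would split into two cases: if $\omega(U)=1$, then each $\rho\in R_\omega$ maps $U$ into the vector group $V$ and one studies the linear $\bar L$-action on $\mathrm{Hom}(U,V)$ directly; if $\omega(U)\neq 1$, then $\omega(U)$ is a nontrivial unipotent subgroup of the rank-$1$ reductive group $\bar L$, so $C_{\bar L}(\omega'(U))$ is connected solvable of dimension at most two, and in this case one can arrange that $\rho(H)$ lies in a connected solvable subgroup of $Q$ and finish via Corollaries~\ref{cor:connsolv} and~\ref{cor:Borel_image}.

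A cleaner alternative, which would sidestep analysing~(ii) by hand, is to exhibit an abstract isomorphism of $Q$ with a parabolic subgroup of a semisimple group of type $A$ --- its Levi is $\bar L$, with $V$ the abelian unipotent radical --- and then invoke Remark~\ref{rem:abstractiso} (valid since $H$ is finite) together with Proposition~\ref{prop:global_to_local} applied to that group, which has the K\"ulshammer property by~\cite{kuls},~\cite{slodowy}. The delicate point there is to check that $Q$ really does admit such a presentation, in particular to match (up to an abstract Frobenius twist) the central character by which $Z(\bar L)^0$ acts on $V$; this is essentially the same difficulty as verifying~(ii), repackaged.
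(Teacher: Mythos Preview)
Your ``cleaner alternative'' is exactly the paper's proof. The paper carries out the computation you flag as delicate: using the coroots $\beta^\vee$ and $(2\alpha+\beta)^\vee$ together with the fundamental dominant weights $\chi_1=\alpha+\tfrac12\beta$ and $\chi_2=\alpha+\beta$, one checks that $L\cong Z(L)^0\times[L,L]\cong k^*\times\mathrm{SL}_2(k)$ as algebraic groups, and that $\langle\alpha,-(2\alpha+\beta)^\vee\rangle=\langle\alpha+\beta,-(2\alpha+\beta)^\vee\rangle=1$, so $Z(L)^0$ acts on $V$ with weight~$1$. Hence the multiplication map $k^*\times\mathrm{SL}_2(k)\to\mathrm{GL}_2(k)$ extends to an abstract group isomorphism $Q\cong\mathrm{GL}_2(k)\ltimes k^2$, the latter being a maximal parabolic of $\mathrm{SL}_3(k)$. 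One then finishes exactly as you say, via Proposition~\ref{prop:global_to_local} and Remark~\ref{rem:abstractiso}. So the ``delicate point'' is genuinely resolvable, and not merely a repackaging of hypothesis~(ii).

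Your primary cohomological approach, however, has a gap in the case $\omega(U)\neq 1$. You assert that ``one can arrange that $\rho(H)$ lies in a connected solvable subgroup of $Q$'', but this is false in the main case of interest: when $\omega$ is $\bar L$-irreducible (for instance when $\omega(H)$ projects onto a copy of $\mathrm{SL}_2(q)$ inside $[\bar L,\bar L]$), the image $\omega(H)$ lies in no Borel of $\bar L$, hence $\rho(H)$ lies in no connected solvable subgroup of $Q$, and Corollaries~\ref{cor:connsolv} and~\ref{cor:Borel_image} do not apply. The fact that $C_{\bar L}(\omega'(U))$ is solvable tells you nothing about $\rho(H)$ itself. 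This is precisely the case that the paper handles later (in Lemma~\ref{lem:faithful}) by a direct cocycle computation exploiting the transitive action of a torus normalising $\omega(U)$; your outline does not supply any substitute for that argument. So while your second route is the right one and matches the paper, your first route as written does not go through.
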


\begin{proof}
 The centre $Z(L)$ of $L$ is the image of the coroot $(2\alpha+ \beta)^\vee$.  Define $f\colon (k^*)^2\ra T$ by $f(x,y)= \beta^\vee(x) (2\alpha+ \beta)^\vee(y)$ and $h\colon T\ra (k^*)^2$ by $h(t)= (\chi_1(t), \chi_2(t))$, where $\chi_1$ and $\chi_2$ are the fundamental dominant weights given by $\chi_1= \alpha+ \frac{1}{2}\beta$ and $\chi_2= \alpha+ \beta$.  A short calculation shows that $h\circ f$ is an isomorphism, so $f$ is an isomorphism onto $T$.  It follows that $L\iso Z(L)\times [L,L]\iso k^*\times {\rm SL}_2(k)$.
 
 The multiplication map $k^*\times {\rm SL}_2(k)\ra {\rm GL}(2,k)$ gives an isomorphism of abstract groups from $L$ to ${\rm GL}(2,k)$.  It is easily checked that this extends to an isomorphism of abstract groups from $L\ltimes V$ to ${\rm GL}_2(k)\ltimes V$, where ${\rm GL}_2(k)$ acts on $V= k^2$ by the natural representation (note that $\langle \alpha, -(2\alpha+ \beta)^\vee\rangle= \langle \alpha+ \beta, -(2\alpha+ \beta)^\vee\rangle= 1$, so $Z(L)$ acts on $V$ with weight 1).  Now ${\rm GL}_2(k)\ltimes V$ is isomorphic to a maximal parabolic subgroup of ${\rm SL}_3(k)$; as ${\rm SL}_3(k)$ has the K\"ulshammer property (see Section~\ref{sec:intro}), it follows from Lemma~\ref{prop:global_to_local} that ${\rm GL}_2(k)\ltimes V$ has the K\"ulshammer property.  We conclude from Remark~\ref{rem:abstractiso} that $(Q,H)$ is a K\"ulshammer pair, as required.
\end{proof}.

We adopt the following notation: given a set $S$ and a function $f\colon S\ra P$, we write
$$ f(s)= f_L(s) f_\alpha(s) f_{\alpha+ \beta}(s) f_{2\alpha+ \beta}(s) $$
for $s\in S$, where $f_L\colon S\ra L$ is a function and $f_\gamma\colon S\ra U_\gamma$ is a function for all $\gamma\in \{\alpha, \alpha+ \beta, 2\alpha+ \beta\}$.  If $f_L(S)\subseteq U_\beta$ then we write $f_\beta(s)$ for $f_L(s)$.

Define
$$ {\rm Hom}(U, P)'= \{\sigma\in {\rm Hom}(U, P)\mid \mbox{$\sigma$ is faithful, $\sigma(U)\leq U_\beta R_u(P)$ and $\sigma_\beta(U)\neq 1$}\}, $$
$$ {\rm Hom}(H, P)'= \{\rho\in {\rm Hom}(H, P)\mid \mbox{$\rho$ is faithful, $\rho|_U\in {\rm Hom}(U, P)'$ and $\rho^L$ is $L$-ir}\} $$
and
$$ {\rm Hom}(H,L)'= \{\omega\in {\rm Hom}(H, L)\mid \mbox{$\omega$ is faithful, $1\neq \omega(U)\leq U_\beta$ and $\omega$ is $L$-ir}\}. $$
Note that if $\rho\in {\rm Hom}(H,P)$ and $\rho^L$ belongs to ${\rm Hom}(H,L)'$ then $\rho$ belongs to ${\rm Hom}(H, P)'$, but the converse need not hold because $\rho^L$ need not be faithful.  Given $\sigma\in {\rm Hom}(U, P)'$, we call $\sigma$ {\em regular} if $\sigma_\alpha(u)\neq 1$ for some $u\in U$.  Otherwise we call $\sigma$ {\em singular}.  If $\rho\in {\rm Hom}(H, P)'$ then we call $\rho$ {\em regular} if $\rho|_U$ is regular, and {\em singular} otherwise.

\begin{lem}
\label{lem:regular}
 (a) Let $v= v_\beta v_\alpha v_{\alpha+ \beta} v_{2\alpha+ \beta}\in R_u(B)$ with $v_\gamma\in U_\gamma$ for each $\gamma\in \{\beta, \alpha, \alpha+ \beta,  2\alpha+ \beta\}$.  Then $v$ is a regular unipotent element of $G$ if and only $v_\beta\neq 1$ and $v_\alpha\neq 1$.\smallskip\\
 (b) Let $\sigma\in {\rm Hom}(U, P)'$.  Then $\sigma$ is regular if and only if $\sigma(u_1)$ is regular for some $u_1\in U$.\smallskip\\
 (c) Let $\rho\in {\rm Hom}(H, P)'$.  Then $\rho$ is regular if and only if $\rho(u_1)$ is regular for some $u_1\in U$.\smallskip\\
 (d) Let $\rho\in {\rm Hom}(H, P)'$.  Suppose there exists $u_2\in U$ such that $\rho_\beta(u_2)= \rho_\alpha(u_2)= 1$ and $\rho_{\alpha+ \beta}(u_2)\neq 1$.  Then there exists $u_3\in U$ such that $\rho_\beta(u_3)= 1$ and $\rho_\alpha(u_3)\neq 1$.  In particular, $\rho$ is regular.\smallskip\\
 (e) Let $\rho\in {\rm Hom}(H, P)'$.  Then $\rho$ is regular if and only if there exists $u_3\in U$ such that $\rho_\beta(u_3)= 1$ and $\rho_\alpha(u_3)\neq 1$.
\end{lem}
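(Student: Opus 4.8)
\medskip
\noindent\textbf{Proof proposal.} The plan is to prove part (a) first, since parts (b)--(e) then follow by bookkeeping together with the characteristic-$2$ commutation relations. For part (a) I would use that an element is regular unipotent exactly when its centraliser in $G$ has dimension ${\rm rank}(G)=2$. For ``$\Longrightarrow$'' I argue the contrapositive: if $v_\beta=1$ then $v\in U_\alpha U_{\alpha+\beta}U_{2\alpha+\beta}=R_u(P)$, and if $v_\alpha=1$ then $v\in U_\beta U_{\alpha+\beta}U_{2\alpha+\beta}$; both are abelian subgroups of $G$ (the root sets are closed, and all the listed root groups commute by the commutation relations), of dimension $3$, so $\dim C_G(v)\geq 3>2$ and $v$ is not regular. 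For ``$\Longleftarrow$'', suppose $v_\beta\neq 1\neq v_\alpha$. Since $T$ acts on $U_\alpha$ and on $U_\beta$ through the $\mathbb Z$-independent characters $\alpha,\beta$, a suitable $T$-conjugate of $v$ has $v_\alpha,v_\beta$ equal to prescribed nontrivial elements $u_\alpha^{0},u_\beta^{0}$; then, using $(\ref{eqn:noncomm})$ and that $U_{\alpha+\beta}U_{2\alpha+\beta}$ is central in $R_u(B)$, conjugating by suitable elements of $U_\alpha$ and $U_\beta$ clears the $U_{\alpha+\beta}$- and $U_{2\alpha+\beta}$-coordinates (which comes down to solving a quadratic over the algebraically closed field $k$), so $v$ is $B$-conjugate to $v_0:=u_\beta^{0}u_\alpha^{0}$. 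The elements with $v_\alpha,v_\beta\neq 1$ form a dense open subset of $R_u(B)$, so $B\cdot v_0$ is dense in $R_u(B)$; since the map $G\times^{B}R_u(B)\to\mathcal U_G$ onto the unipotent variety is generically finite, $\dim(G\cdot v_0)=\dim G-2$, so $v_0$, and hence $v$, is regular. I expect this reverse implication to be the main obstacle, as it is where the structure theory of $G$ in characteristic $2$ is really used; it can alternatively be quoted as the standard description of regular unipotent elements via their simple-root coordinates.

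\medskip
For part (b), I would first note that $\sigma_\beta\colon U\to U_\beta$ and $\sigma_\alpha\colon U\to U_\alpha$ are \emph{homomorphisms}: $R_u(P)=U_\alpha U_{\alpha+\beta}U_{2\alpha+\beta}$ and $U_\beta U_{\alpha+\beta}U_{2\alpha+\beta}$ are normal subgroups of $R_u(B)$ (the root sets are closed under adding positive roots) with quotients $U_\beta$ and $U_\alpha$ respectively, and $\sigma_\beta,\sigma_\alpha$ are $\sigma$ composed with the quotient maps. Both are nontrivial: $\sigma_\beta$ by the definition of ${\rm Hom}(U,P)'$, and $\sigma_\alpha$ because $\sigma$ is regular. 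As a group is never the union of two proper subgroups, there is $u_1\in U$ with $\sigma_\alpha(u_1)\neq 1$ and $\sigma_\beta(u_1)\neq 1$; then $\sigma(u_1)\in R_u(B)$ is regular by part (a). The converse is immediate from (a). Part (c) is just part (b) applied to $\sigma=\rho|_U$, given the definition of regularity of $\rho$.

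\medskip
For part (d): by hypothesis $\rho(u_2)\in U_{\alpha+\beta}U_{2\alpha+\beta}\subseteq R_u(P)$, with nontrivial $U_{\alpha+\beta}$-component. Since $\rho^{L}$ is $L$-irreducible, $\rho^{L}(H)$ is not contained in the Borel subgroup $B_L:=B\cap L=TU_\beta$ of $L$, so choose $h\in H$ with $\rho^{L}(h)\notin B_L$. In the natural $\mathrm{SL}_2$-module $U_\alpha U_{\alpha+\beta}$ for $G_\beta$, the line $U_{\alpha+\beta}$ is stabilised precisely by $B_L$ (because $U_\beta$ centralises $U_{\alpha+\beta}$ and $s_\beta$ interchanges $\alpha$ and $\alpha+\beta$), so $\rho^{L}(h)$ moves it off itself. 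Put $u_3:=hu_2h^{-1}$. As $R_u(P)$ is an abelian normal subgroup of $P$ and $\rho(u_2)\in R_u(P)$, we get $\rho(u_3)=\rho^{L}(h)\,\rho(u_2)\,\rho^{L}(h)^{-1}\in R_u(P)$, whence $\rho_\beta(u_3)=1$; reducing modulo $U_{2\alpha+\beta}$, $\rho(u_3)$ is $\rho^{L}(h)$ applied to a nonzero vector on the $U_{\alpha+\beta}$-line, hence lies off that line, so $\rho_\alpha(u_3)\neq 1$. Finally $u_3\in U$: the subgroup $M:=\rho^{-1}(\rho(H)\cap R_u(P))$ is normal in $H$ and unipotent (it embeds into $R_u(P)$ since $\rho$ is faithful), so $M$ lies in every maximal unipotent subgroup of $H$ by Corollary~\ref{cor:normaluniptmax}; in particular $M\leq U$, and $u_3\in M$ because $\rho(u_3)\in R_u(P)$. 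The ``in particular'' follows since $\sigma_\alpha(u_3)=\rho_\alpha(u_3)\neq 1$.

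\medskip
For part (e), ``$\Longleftarrow$'' is immediate, since $\sigma_\alpha(u_3)\neq 1$ forces $\sigma=\rho|_U$ to be regular. For ``$\Longrightarrow$'', if $\rho$ is regular then $\sigma$ is regular, so by part (b) there is $u^{*}\in U$ with $\rho(u^{*})\in R_u(B)$ regular, hence $\rho_\beta(u^{*})\neq 1\neq\rho_\alpha(u^{*})$ by (a). A short computation using $(\ref{eqn:noncomm})$ and that every element of a root group $U_\gamma$ has order $2$ shows that $\rho((u^{*})^{2})=\rho(u^{*})^{2}$ lies in $U_{\alpha+\beta}U_{2\alpha+\beta}$ with nontrivial $U_{\alpha+\beta}$-component. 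Thus $u_2:=(u^{*})^{2}$ satisfies the hypothesis of part (d), and (d) produces the required $u_3\in U$.
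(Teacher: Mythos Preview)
Your proposal is correct and follows essentially the same route as the paper. The paper dismisses (a) as ``standard'' with a citation to \cite[Ch.\ 4]{hum_conj} and calls (b) and (c) ``straightforward'', while for (d) it uses irreducibility of the $H$-action on $V=R_u(P)/U_{2\alpha+\beta}$ directly to produce $h$ with $\rho_\alpha(hu_2h^{-1})\neq 1$ (your version, picking $h$ with $\rho^L(h)\notin B_L$ and then reading off the conclusion in the $2$-dimensional module, is an equivalent phrasing), and for (e) it argues exactly as you do via $u_1^2$ and part (d); your write-up simply supplies the details the paper omits.
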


\begin{proof}
 Part (a) is standard (see, e.g., \cite[Ch.\ 4]{hum_conj}), and parts (b) and (c) are straightforward.\smallskip\\
  (d) Since $\rho^L$ is $L$-irreducible by hypothesis, $H$ acts irreducibly on $V$ via $\xi\circ \rho^L$, so there exists $h\in H$ such that $\rho_\alpha(hu_2h^{-1})\neq 1$.  Set $u_3= hu_2h^{-1}$.  Then $u_3$ belongs to $\rho^{-1}(R_u(P_\alpha))$, which is a normal unipotent subgroup of $H$ as $\rho$ is faithful, so $u_3\in U$ by Corollary~\ref{cor:normaluniptmax}.  Clearly $u_3$ has the desired properties.\smallskip\\ 
 (e) If $\rho_\alpha(u_3)\neq 1$ for some $u_3\in U$ then $\rho$ is regular by definition.  Conversely, suppose $\rho$ is regular.  By parts (a) and (c), there exists $u_1\in U$ such that $\rho_\beta(u_1)\neq 1$ and $\rho_\alpha(u_1)\neq 1$.  Eqn.\ (\ref{eqn:noncomm}) implies that $u_2:= u_1^2$ satisfies the hypotheses of (d).  Part (d) applied to $u_2$ yields $u_3$ with the desired properties.
\end{proof}

\begin{lem}
\label{lem:faithful}
 Let $\omega\in {\rm Hom}(H,L)'$.  Then ${\rm Hom}(H, P)_\omega$ is a union of at most two $Z(L)R_u(P)$-conjugacy classes.
\end{lem}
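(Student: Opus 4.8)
The plan is to recast everything in terms of the (abelian, since $R_u(P)$ is abelian) $1$-cohomology of $R_u(P)$ and then count $Z(L)$-orbits. Writing $\rho(h)=\mu(h)\omega(h)$, Lemma~\ref{lem:localdefmn} gives a bijection $\mathrm{Hom}(H,P)_\omega\to Z^1(H,R_u(P))_\omega$ (where $H$ acts on $R_u(P)$ through $\omega$) under which $R_u(P)$-conjugacy becomes equality of classes in $H^1(H,R_u(P))_\omega$. Since $Z(L)\le C_L(\omega(H))$, the group $Z(L)$ preserves $\mathrm{Hom}(H,P)_\omega$, its action is compatible with the scalar action on $H^1(H,R_u(P))_\omega$, and a short check shows that $Z(L)R_u(P)$-conjugacy classes in $\mathrm{Hom}(H,P)_\omega$ correspond bijectively to $Z(L)$-orbits on $H^1(H,R_u(P))_\omega$. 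So it suffices to prove $Z(L)$ has at most two orbits there.

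Next I would pin down the $L$-module structure of $R_u(P)$. The root subgroups $U_\alpha$, $U_{\alpha+\beta}$, $U_{2\alpha+\beta}$ commute pairwise (by the commutation relations), so $R_u(P)$ is a vector group on which $Z(L)=(2\alpha+\beta)^\vee(k^*)$ acts diagonally, with a single weight $w$ on $V=U_\alpha U_{\alpha+\beta}$ and weight $2w$ on $U_{2\alpha+\beta}$ (by additivity of $\langle\,\cdot\,,(2\alpha+\beta)^\vee\rangle$), while $[L,L]=G_\beta$ acts on $V$ as the natural $\mathrm{SL}_2$-module and trivially on $U_{2\alpha+\beta}$. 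Because $[L,L]$ commutes with $Z(L)$ it preserves the $Z(L)$-weight spaces, so as $L$-modules---hence as $H$-modules via $\omega$---we get a splitting $R_u(P)=V\oplus U_{2\alpha+\beta}$, and therefore $H^1(H,R_u(P))_\omega=H^1(H,V)\oplus H^1(H,U_{2\alpha+\beta})$, with $Z(L)\iso k^*$ acting by scalars of weight $w$ on the first summand and of weight $2w$ on the second. As $c\mapsto c^{2w}$ is a surjective endomorphism of $k^*$ in characteristic $2$, a one-dimensional summand of either weight contributes exactly the two orbits $\{0\}$ and its complement, whereas if both summands are nonzero, or one of them has dimension $\ge 2$, there are at least three (in fact infinitely many) orbits. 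Hence it is enough to prove that at most one of $H^1(H,V)$ and $H^1(H,U_{2\alpha+\beta})$ is nonzero, and that the nonzero one, if any, is at most one-dimensional.

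The crux is this dichotomy, and it is proved by identifying $H$. Since $\omega$ is faithful and $L$-irreducible with $1\ne\omega(U)\le U_\beta$, the image $\omega(H)\le L\iso k^*\times\mathrm{SL}_2(k)$ projects onto a finite subgroup of $\mathrm{SL}_2(k)$ acting irreducibly on the natural module and with elementary abelian Sylow $2$-subgroup contained in a root subgroup; by Dickson's classification of finite subgroups of $\mathrm{SL}_2(k)$ (using that every $2$-subgroup here is elementary abelian, which rules out $A_4$, $S_4$, etc.) this projection is $\mathrm{SL}_2(2^f)$ or dihedral $D_{2m}$ with $m$ odd, and Goursat's lemma gives $H\iso C_l\times H'$ with $l$ odd and $H'\in\{\mathrm{SL}_2(2^f),\ D_{2m}\ (m\ \mbox{odd})\}$. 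As $C_l$ has order prime to $2$ we have $H^1(C_l,-)=0$, so the inflation--restriction sequence gives $H^1(H,M)=H^1(H',M^{C_l})$ for every $H$-module $M$. If $l>1$, then $C_l$ acts on $V$ and on $U_{2\alpha+\beta}$ (the $Z(L)$-weight $w$ and $2w$ modules) without nonzero fixed points, so both cohomology groups vanish. If $l=1$ and $H'=D_{2m}$, then $U_{2\alpha+\beta}$ is the trivial module, so $H^1(H,U_{2\alpha+\beta})=\mathrm{Hom}(D_{2m},k)$ is one-dimensional, while $H^1(H,V)=0$ since $V$ restricts to the free (regular) module over the cyclic Sylow $2$-subgroup $C_2$ of $D_{2m}$, and restriction---injective on $H^1$---lands in $H^1(C_2,\mbox{free})=0$. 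If $l=1$ and $H'=\mathrm{SL}_2(2^f)$ with $f\ge 2$, then $H'$ is perfect, so $H^1(H,U_{2\alpha+\beta})=\mathrm{Hom}(\mathrm{SL}_2(2^f),k)=0$, while restricting to a Borel subgroup $T(2^f)\ltimes U(2^f)$ and noting that $H^1(U(2^f),V)$ is a direct sum of distinct one-dimensional $T(2^f)$-weight spaces of which exactly one survives taking $T(2^f)$-invariants gives $\dim H^1(H,V)=\dim H^1(\mathrm{SL}_2(2^f),V)\le 1$ (the case $f=1$ being the dihedral case $D_6$). In each case the dichotomy holds, completing the proof.

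The step I expect to be the main obstacle is the last paragraph: organising the reduction to the handful of possible groups $H$ via Dickson's theorem, and then the explicit cohomology computations---above all the bound $\dim H^1(\mathrm{SL}_2(2^f),V)\le 1$ obtained through the Borel subgroup. Everything before it (the translation to $H^1(H,R_u(P))_\omega$, the weight-space splitting $R_u(P)=V\oplus U_{2\alpha+\beta}$, and the orbit count for $k^*$ acting with weights $w$ and $2w$) is formal.
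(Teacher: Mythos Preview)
Your argument is essentially correct and reaches the same conclusion, but by a genuinely different route from the paper.

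The paper, like you, first classifies the image $\psi(F)\le\mathrm{PGL}_2(k)$ as either $\mathrm{SL}_2(q_1)$ or $D_{2s}$ with $s$ odd. After that the two arguments diverge. The paper does \emph{not} split $R_u(P)$ as an $H$-module; instead it works directly with cocycles $\mu\in Z^1(H,R_u(P))_\omega$. It picks a linearly reductive subgroup $D\le H$ (the preimage of the diagonal torus in the $\mathrm{SL}_2$ case, the odd cyclic subgroup in the dihedral case), normalises so that $\mu|_D=0$, and then shows by hand---using the cocycle equation, Eqn.~(\ref{eqn:noncomm}) and the transitivity of $D$ on the nontrivial elements of the Sylow $2$-subgroup---that $\mu$ is determined on a generating set by a single value lying in $U_{\alpha+\beta}$ (in the $\mathrm{SL}_2$ case) or in $U_{2\alpha+\beta}$ (in the dihedral case). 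Since $Z(L)^0$ is transitive on the nontrivial elements of each root group, this yields at most two classes.

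Your approach is more module-theoretic: you exploit the $L$-module splitting $R_u(P)=V\oplus U_{2\alpha+\beta}$, identify $H$ up to isomorphism via Goursat, and then invoke standard group-cohomology computations---the vanishing of $H^1$ on free $C_2$-modules, perfectness of $\mathrm{SL}_2(2^f)$ for $f\ge 2$, and the bound $\dim H^1(\mathrm{SL}_2(2^f),V_{\mathrm{nat}})\le 1$. What this buys you is a clean dichotomy (exactly one of the two summands can contribute, and at most one dimension), at the cost of relying on the last cohomology bound, which you sketch via the Borel but which needs some care (your assertion that the $T(2^f)$-weights on $H^1(U(2^f),V)$ are distinct with exactly one trivial is the heart of it, and is correct for $f\ge 2$). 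The paper's approach, by contrast, is entirely self-contained: it never needs to know $\dim H^1(\mathrm{SL}_2(2^f),V)$, only the concrete cocycle identities.

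Two small points worth tightening. First, your ``Goursat gives $H\cong C_l\times H'$'' is true here but not automatic: one needs that $H'$ has no nontrivial cyclic quotient of odd order, which holds because $D_{2m}$ (for $m$ odd) has abelianisation $C_2$ and $\mathrm{SL}_2(2^f)$ is perfect for $f\ge 2$. Second, the inflation--restriction step $H^1(H,M)\cong H^1(H',M^{C_l})$ already suffices and does not require the direct-product splitting, so you could bypass Goursat entirely.
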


\begin{proof}
 Let $F= \omega(H)\leq L\iso {\rm GL}_2(k)$.  Since $F$ is ${\rm GL}_2(k)$-ir, the inclusion of $F$ in ${\rm GL}_2(k)$ is an irreducible representation of $F$; in particular, $F$ is not cyclic.  Standard representation-theoretic results imply that $F\leq {\rm GL}_2(q)$ for some power $q$ of 2.  Let $\psi\colon {\rm GL}_2(q)\ra PGL_2(q)\iso {\rm SL}_2(q)$ be the canonical projection: then $\psi(F)\leq {\rm SL}_2(q)$.  Let $q_1$ be the smallest power of 2 such that $\psi(F)$ is isomorphic to a subgroup of ${\rm SL}_2(q_1)$.  By \cite[Cor.\ 2.2]{king}, any absolutely irreducible maximal proper subgroup of ${\rm SL}_2(q_1)$ is isomorphic to ${\rm SL}_2(q_0)$ for some power $q_0$ of 2 with $q_0\leq q_1$ or to a dihedral group $D_{2r}$ of order $2r$ for some odd $r$.  Minimality of $q_1$ implies that $\psi(F)\iso {\rm SL}_2(q_1)$ or $\psi(F)$ is a dihedral group $D_{2s}$ for some odd $s$.
 
 Suppose $\psi(F)\iso {\rm SL}_2(q_1)$.  Let $A$ be a Sylow $2$-subgroup of $H$; we can choose $A$ so that $\omega(\psi(A))$ is the group of upper unitriangular matrices in ${\rm SL}_2(q_1)$.  Choose $D\leq H$ such that $\omega(\psi(D))$ is the group of diagonal matrices in ${\rm SL}_2(q_1)$.  Then $D$ is abelian and consists of semisimple elements, $D$ normalises $A$ and $D$ acts transitively on the set of nontrivial elements of $A$.  There is no harm in replacing $\omega$ with an $L$-conjugate of $\omega$, so we may assume that $\psi(D)\leq T$ and $\psi(A)\leq U_\beta$.
 
 Recall from Section~\ref{sec:H1} that $R_u(P)$-conjugacy classes of elements of ${\rm Hom}(H,P)_\omega$ correspond bijectively to elements of $H^1(H,R_u(P))_\omega$.  Set $Z^1(H,R_u(P))_\omega^0= \{\mu\in Z^1(H,R_u(P))_\omega\mid \mu|_D\ \mbox{is trivial}\}$.  We claim that $Z^1(H,R_u(P))_\omega^0$ surjects onto $H^1(H,R_u(P))_\omega$.  To see this, let $\mu\in Z^1(H,R_u(P))_\omega$.  Since $H^1(D,R_u(P))_\omega$ vanishes (Lemma~\ref{lem:nonab_lin_red}), the restriction $\mu|_D$ is a 1-coboundary, so there exists $v\in R_u(P)$ such that the 1-cocycle $\mu'$ given by $\mu'(h)= \mu(h)- v+ h\cdot v$ belongs to $Z^1(H,R_u(P))_\omega^0$.  We have $\ovl{\mu'}= \ovl{\mu}$, so the claim is proved.
 
 So let $\mu\in Z^1(H, R_u(P))_\omega^0$.  It follows from the cocycle equation that for any $x\in D$,
 \begin{equation}
 \label{eqn:determined}
  \mu(xa_0x^{-1})= x\cdot \mu(a_0).
 \end{equation}
Fix $1\neq a_0\in A$.  The cocycle equation together with (\ref{eqn:noncomm}) applied to $a_0^2= 1$ implies that $\mu(a_0)$ belongs to $U_{\alpha+ \beta}U_{2\alpha+ \beta}$.  It follows that $A$ acts trivially on $\mu(A)$, so $\mu$ is a homomorphism from $A$ to $U_{\alpha+ \beta}U_{2\alpha+ \beta}$.  Eqn.\ (\ref{eqn:determined}) implies that $\mu_{2\alpha+ \beta}(a)= 1$ for all $a\in A$ as $D$ centralises $U_{2\alpha+ \beta}$; in particular, $\mu(a_0)\in U_{\alpha+ \beta}$.   Since any two nontrivial elements of $A$ are $D$-conjugate, Eqn.\ (\ref{eqn:determined}) implies that $\mu|_A$ is completely determined by $\mu(a_0)$.  If $\mu(a_0)= 1$ then $\mu|_A$ is trivial, so $\ovl{\mu}\in H^1(H,R_u(P))_\omega$ is trivial by Lemma~\ref{lem:abeliancoh}.  Finally, the conjugation action of $Z(L)^0$ is transitive on the set of nontrivial elements of $U_{\alpha+ \beta}$.  Putting these facts together, we deduce that if $H^1(H,R_u(P))_\omega$ is nontrivial then the conjugation action of $Z(L)^0$ is transitive on the set of nontrivial elements of $H^1(H,R_u(P))_\omega$.  The result follows.
 
 Now suppose $\psi(F)= D_{2s}$ where $s> 1$ is odd.  Choose $D\leq H$ such that $\omega(\psi(D))$ is the cyclic subgroup $C_s$ of $D_{2s}$ and let $a\in H$ be an involution; then $a$ normalises $D$.  Set $Z^1(H,R_u(P))_\omega^0= \{\mu\in Z^1(H,R_u(P))_\omega\mid \mu|_D\ \mbox{is trivial}\}$.  For any $\mu\in Z^1(H,R_u(P))_\omega^0$ and any $x\in D$,
 $$ \mu(a)= \mu(ax)= \mu(x^{-1}a)= x^{-1}\cdot \mu(a). $$
 It follows that $\mu(a)\in U_{2\alpha+ \beta}$.  But the conjugation action of $Z(L)^0$ is transitive on the set of nontrivial elements of $U_{2\alpha+ \beta}$, so the desired result follows as in the previous case. 
\end{proof}

\begin{prop}
\label{prop:B2}
 $(P,H)$ is a K\"ulshammer pair.
\end{prop}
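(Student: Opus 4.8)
The goal is to show that for every $\sigma\in{\rm Hom}(U,P)$ there are only finitely many $P$-conjugacy classes of $\rho\in{\rm Hom}(H,P)$ with $\rho|_U\in P\cdot\sigma$. First I would make the usual reductions: by induction on $|H|$ we may assume the relevant $\rho$ are faithful (a non-faithful $\rho$ factors through a proper quotient $H/N$, which is a K\"ulshammer pair with $P$ by induction, and $U$ surjects onto a maximal unipotent subgroup of $H/N$ by Proposition~\ref{prop:epi}); and after replacing $\sigma$ by a $P$-conjugate we may assume $\sigma(U)\subseteq R_u(B)$, since $R_u(B)=U_\beta R_u(P)$ is up to $P$-conjugacy the maximal unipotent subgroup of $P$. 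If $\sigma(U)\subseteq R_u(P)$, then $R_u(P)$ is a normal unipotent subgroup of $P$ with $\sigma\in{\rm Hom}(U,R_u(P))$, and Corollary~\ref{cor:normalunipt} (applied with target $P$) already finishes. So I may assume $\sigma(U)\not\subseteq R_u(P)$; then the $U_\beta$-component $\sigma_\beta=\pi_L\circ\sigma$ is nontrivial and $\sigma$ is faithful (we may assume the set of relevant $\rho$ is nonempty), so $\sigma\in{\rm Hom}(U,P)'$.

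Next I would split the remaining $\rho$ (normalised so that $\rho|_U=\sigma$) according to $\rho^L:=\pi_L\circ\rho$. Either $\rho^L$ is $L$-irreducible, or $\rho^L(H)$ is contained in a Borel subgroup of $L$ (since $L$ has semisimple rank $1$, these are its only proper parabolic subgroups). In the Borel case $\rho(H)$ lies in a Borel subgroup of $P$; but $\sigma_\beta(U)$ is a nontrivial \emph{finite} unipotent subgroup of $L\cong{\rm GL}_2(k)$, hence lies in a unique Borel subgroup of $L$, so $\sigma(U)$ lies in a unique Borel subgroup $B$ of $P$, forcing $\rho(H)\subseteq B$. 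As $R_u(B)\unlhd B$ is unipotent and contains $\sigma(U)$, Corollary~\ref{cor:normalunipt} (applied with target $B$) shows these $\rho$ fall into finitely many $B$-conjugacy classes, hence finitely many $P$-classes.

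The main case, and the hard part, is $\rho^L$ $L$-irreducible, i.e.\ $\rho\in{\rm Hom}(H,P)'$. Here I would apply Theorem~\ref{main_thm} with $K=H$, $K'=U$, $\zeta$ the inclusion, $V=R_u(P)$, and $R$ the set of these $\rho$ together with their $P$-conjugates: hypothesis (iii) is immediate since $\{\rho|_U\}$ lies in a single $P$-class, and hypothesis (i) holds because $R_L\subseteq{\rm Hom}(H,L)_{\rm ir}\subseteq{\rm Hom}(H,L)_{\rm cr}$, a finite union of $L$-classes by Theorem~\ref{thm:cr_finite}. Everything then comes down to hypothesis (ii): for each $L$-irreducible $\omega$ with $R_\omega\neq\emptyset$, the map $\widetilde{H^1}(\zeta)$ on the relevant quotients of $H^1(H,R_u(P))_\omega$ and $H^1(U,R_u(P))_{\omega'}$ should have finite fibres. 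When $\omega$ is faithful this is handed to us by Lemma~\ref{lem:faithful} (after replacing $\omega$ by an $L$-conjugate in ${\rm Hom}(H,L)'$): it says ${\rm Hom}(H,P)_\omega$ is a union of at most two $Z(L)R_u(P)$-conjugacy classes, so $H^1(H,R_u(P))_\omega/C_L(\omega(H))$ is finite. When $\omega$ is not faithful, put $N=\ker\omega\neq1$; for $\rho\in R_\omega$, $N=\rho^{-1}(R_u(P))$ is normal unipotent in $H$ and hence $N\subseteq U$ (Corollary~\ref{cor:normaluniptmax}). The key structural input — the step I expect to need the most care — is that $L$-irreducibility of $\omega$ forces $\rho(N)\subseteq U_{2\alpha+\beta}$: otherwise $\xi(\rho(N))$ would be a nonzero $\omega(H)$-submodule of the natural module $V=\xi(R_u(P))\cong k^2$, hence all of $V$, contradicting finiteness of $\rho(N)$; via Lemma~\ref{lem:regular}(d),(e) this is exactly the assertion that every $\rho\in{\rm Hom}(H,P)'$ is singular. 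Then $\xi\circ\rho$ kills $N$, so by Lemma~\ref{lem:almost_par} the maps $\xi\circ\rho$ lie in finitely many $Q$-classes; and since $\ker\xi=U_{2\alpha+\beta}$ is abelian, Lemma~\ref{lem:abeliancoh} — injectivity of $H^1(H,U_{2\alpha+\beta})\to H^1(U,U_{2\alpha+\beta})$ — together with the $C_L(\omega(H))=Z(L)$-action (Schur's lemma, as $\omega$ is $L$-irreducible) should pin $\rho$ down to finitely many $P$-classes once $\xi\circ\rho$ and $\rho|_U$ are fixed, giving the finite fibres demanded by (ii). Theorem~\ref{main_thm} then yields that $R$, hence with the previous step the whole set of relevant $\rho$, is a finite union of $P$-conjugacy classes. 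The delicate bookkeeping — keeping track of which conjugacy group acts on which cohomology set when one passes between exact restriction and restriction up to $P$-conjugacy, and between $P$, $Q$ and the Borel $B$ — is where I expect the real work of the argument to lie.
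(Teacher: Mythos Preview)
Your reductions and the split into the Borel case versus the $L$-irreducible case are fine, and the idea of organising the hard case via Theorem~\ref{main_thm} is exactly the spirit of the paper's argument (see the Remark following the proof). The faithful-$\omega$ subcase, handled by Lemma~\ref{lem:faithful}, is also correct.

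The genuine gap is in the non-faithful subcase. Your ``key structural input'' --- that $L$-irreducibility of $\omega$ forces $\rho(N)\subseteq U_{2\alpha+\beta}$ --- is false, and the justification conflates two different notions of submodule. Irreducibility of $V\cong k^2$ as an $\omega(H)$-module rules out proper nonzero $k$-\emph{subspaces}, but $\xi(\rho(N))$ is only a finite $\omega(H)$-stable \emph{subgroup}, and these are plentiful: if $\omega(H)\cong\mathrm{SL}_2(q)$ acting naturally, then $\mathbb{F}_q^2\subset k^2$ is a nonzero finite $\omega(H)$-stable subgroup. Concretely, take $H=\mathrm{SL}_2(q)\ltimes\mathbb{F}_q^2$ (for $q\geq 4$), map $\mathrm{SL}_2(q)$ into $G_\beta\cong\mathrm{SL}_2(k)$ naturally, and map $\mathbb{F}_q^2$ into $U_\alpha U_{\alpha+\beta}$ via the natural inclusion of $\mathbb{F}_q^2$ into the $2$-dimensional $G_\beta$-module. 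This gives a faithful $\rho\in\mathrm{Hom}(H,P)'$ with $\omega$ $L$-irreducible, $N=\mathbb{F}_q^2\neq 1$, and $\rho_\alpha|_N\neq 1$, so $\rho$ is regular. Thus your equivalence ``$\rho(N)\subseteq U_{2\alpha+\beta}\iff\rho$ singular'' is correct, but the left-hand side simply need not hold.

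The paper does not split on faithfulness of $\omega$ at all. Instead it uses Lemma~\ref{lem:almost_par} uniformly: after arranging that the $\xi\circ\rho$ agree, it proves the crucial claim that the conjugating element $g_\rho\in P$ effecting this can be taken, modulo the centraliser of $\sigma$, in $Z(L)R_u(P)$ (equation~(\ref{eqn:Ru_conj})). This is done by writing $g_\rho=zv_\beta v_\alpha v_{\alpha+\beta}v_{2\alpha+\beta}$ and showing via the commutation relation~(\ref{eqn:noncomm}) that if $v_\beta$ fails to centralise $\sigma(U)$ then $\sigma$ must be regular, and in that case a short root-group computation using Lemma~\ref{lem:regular}(e) yields a contradiction. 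Once $(g_\rho\cdot\rho)|_U$ is known to be $Z(L)R_u(P)$-conjugate to $\sigma$, Lemma~\ref{lem:abelianpar} (abelian $R_u(P)$) finishes. This explicit control of $g_\rho$ is precisely the ``delicate bookkeeping'' you flag at the end but do not carry out; it is where the substance of the proof lies, and it requires the regular/singular dichotomy for $\sigma$, not for $\omega$.
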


\begin{proof}
 Fix $\sigma\in {\rm Hom}(U, P)$.  Let $C\subseteq \{\rho\in {\rm Hom}(H,P)\mid \rho|_U= \sigma\}$.  We show that $C$ is contained in a finite union of $P$-conjugacy classes.  We are free to replace $\sigma$ with a $P$-conjugate of $\sigma$, so we can assume that $\sigma(U)\leq U_\beta$.  By induction on $|H|$, we can assume that $C$ consists of faithful representations; in particular, we can assume that $\sigma$ is faithful.  If $\sigma(U)\leq R_u(P)$ then $U= \sigma^{-1}(R_u(P))$ is a normal unipotent subgroup of $H$, and the result follows from Theorem~\ref{thm:linredKQ} and Remark~\ref{rem:Nclassfn}.  Hence we can assume that $\sigma_\beta(U)\neq 1$.  If $\rho\in C$ and $\rho^L$ is not $L$-ir then $\rho^L(H)$ is contained in a Borel subgroup of $L$, so $\rho(H)$ is contained in a Borel subgroup of $P$ and the result follows from Corollary~\ref{cor:Borel_image}.  Hence we can assume that $C\subseteq {\rm Hom}(H,P)'$.
 
 As $(Q, H)$ is a K\"ulshammer pair (Lemma~\ref{lem:almost_par}), we can assume that the representations $\xi\circ \rho$ are pairwise $Q$-conjugate as $\rho$ ranges over the elements of $C$.  Fix $\rho_0\in C$ and set $g_{\rho_0}= 1$.  Set $\omega= \rho_0^L$; then $\omega|_U= \sigma^L$.  For each $\rho\in C$ with $\rho\neq \rho_0$, choose $g_\rho\in P$ such that
 \begin{equation}
 \label{eqn:Qproj}
  \xi\circ (g_\rho\cdot \rho)= \xi\circ \rho_0,
 \end{equation}
 and set $C_1= \{g_\rho\cdot \rho\mid \rho\in C\}\subseteq {\rm Hom}(H,P)'_\omega$.  Then
 \begin{equation}
 \label{eqn:tauproj}
   \tau^L|_U= \sigma^L \ \mbox{for all $\tau\in C_1$}.
 \end{equation}
So the representations $\tau|_U$ for $\tau\in C_1$ need not {\em a priori} all be equal to $\sigma$, but $\tau(u)$ has the form $\tau_\beta(u)\tau_\alpha(u)\tau_{\alpha+ \beta}(u)\tau_{2\alpha+ \beta}(u)$, where $\tau_\beta(u)= \sigma_\beta(u)$, $\tau_\alpha(u)= \sigma_\alpha(u)$ and $\tau_{\alpha+ \beta}(u)= \sigma_{\alpha+ \beta}(u)$.
 
 Fix $\rho\in C$.  We claim that 
 \begin{equation}
 \label{eqn:Ru_conj}
  (g_\rho\cdot \rho)|_U= m\cdot \sigma \ \mbox{for some $m\in Z(L)R_u(P)$}.
 \end{equation}
To establish this, we argue as follows.  We can write $g_\rho= lv_\alpha v_{\alpha+ \beta} v_{2\alpha+ \beta}$ for some $l\in L$ and some $v_\gamma\in U_\gamma$ ($\gamma= \alpha, \alpha+ \beta, \alpha+ 2\beta$).  Since $\sigma(U)\leq U_\beta$, it follows from (\ref{eqn:tauproj}) that $l= zv_\beta$ for some $z\in Z(L)$ and some $v_\beta\in U_\beta$, so we have
$$ g_\rho= zv_\beta v_\alpha v_{\alpha+ \beta} v_{2\alpha+ \beta}. $$
If $v_\beta$ commutes with $\sigma(U)$ then a short calculation using the commutation relations for the root groups shows that $(g_\rho\cdot \rho)|_U= m\cdot \sigma$, where $m:= zv_\alpha$, and the claim is proved.  In particular, this is the case if $\sigma$ is singular or $v_\beta= 1$.  So let us suppose that $\sigma$ is regular and $v_\beta\neq 1$.  We show this leads to a contradiction.

By Lemma~\ref{lem:regular}(a) and (b), there exists $u_1\in U$ such that $\sigma_\beta(u_1)\neq 1$ and $\sigma_\alpha(u_1)\neq 1$.  Now $\xi(g_\rho \rho(u_1)g_\rho^{-1})= \xi(\sigma(u_1))$ by (\ref{eqn:Qproj}), which implies that
\begin{equation}
 g_\rho\sigma_\beta(u_1) \sigma_\alpha(u_1)  \sigma_{\alpha+ \beta}(u_1) \sigma_{2\alpha+ \beta}(u_1)g_\rho^{-1}= \sigma_\beta(u_1) \sigma_\alpha(u_1)  \sigma_{\alpha+ \beta}(u_1) u_{2\alpha+ \beta}
\end{equation}
for some $u_{2\alpha+ \beta}\in U_{2\alpha+ \beta}$.  Another calculation using the commutation relations now shows that $z\sigma_\alpha(u_1)z^{-1}= \sigma_\alpha(u_1)$.  This implies that $z$ centralises both $U_\beta$ and $U_\alpha$, so $z\in Z(G)$.  By Lemma~\ref{lem:regular}(e), there exists $u_3\in U$ such that $\sigma_\beta(u_3)= 1$ and $\sigma_\alpha(u_3)\neq 1$.  Now $\xi(g_\rho \rho(u_3)g_\rho^{-1})= \xi(\sigma(u_3))$ by (\ref{eqn:Qproj}), so the commutation relations imply that $\xi(v_\beta\sigma(u_3) v_\beta^{-1})= \xi(\sigma(u_3))$, which is impossible by (\ref{eqn:noncomm}) as $v_\beta$ and $\sigma_\alpha(u_3)$ are nontrivial but $\sigma_\beta(u_3)$ is trivial.  This proves the claim.
 
 It follows from (\ref{eqn:Ru_conj}) that the representations $\tau|_U$ are pairwise $Z(L)R_u(P)$-conjugate to each other as $\tau$ ranges over the elements of $C_1$.  Lemma~\ref{lem:abelianpar} implies that the representations in $C_1$ are pairwise $Z(L)R_u(P)$-conjugate to each other.  This completes the proof.
\end{proof}

\begin{rem}
 We did not directly invoke Theorem~\ref{main_thm} in the above proof, but our argument amounts to checking that the fibres of $\widetilde{H}^1(\zeta)$ are finite.  Indeed, we prove that the representations $\tau|_U$ for $\tau\in C_1$, which {\em a priori} are only $P$-conjugate to each other, are in fact $Z(L)R_u(P)$-conjugate to each other.
\end{rem}

\begin{prop}
\label{prop:B2_localtoglobal}
 Let $\sigma\in {\rm Hom}(U,P)$ and let $C\subseteq \{\rho\in {\rm Hom}(H,P)\mid \rho|_U\ \mbox{is $G$-conjugate to $\sigma$}\}$.  Then $C$ is contained in a finite union of $G$-conjugacy classes.
\end{prop}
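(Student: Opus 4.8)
The plan is to reduce the statement to a finiteness assertion about unipotent subgroups of $P$, and then feed it into Proposition~\ref{prop:B2}. Since $(P,H)$ is a K\"ulshammer pair (Proposition~\ref{prop:B2}), for every $\tau\in{\rm Hom}(U,P)$ the set of $\rho\in{\rm Hom}(H,P)$ with $\rho|_U$ $P$-conjugate to $\tau$ is a finite union of $P$-conjugacy classes. Hence it suffices to show that the set
$$ \mathcal{T}:=\{\tau\in{\rm Hom}(U,P)\mid \tau\mbox{ is $G$-conjugate to }\sigma\} $$
meets only finitely many $P$-conjugacy classes; for then, picking representatives $\tau_1,\dots,\tau_n$ of these classes, $C$ is contained in the union over $i=1,\dots,n$ of the sets of $\rho\in{\rm Hom}(H,P)$ with $\rho|_U$ $P$-conjugate to $\tau_i$, hence in a finite union of $P$-conjugacy classes, and a fortiori in a finite union of $G$-conjugacy classes.

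The case $\sigma(U)=1$ is immediate: then $C$ is contained in the set of $\rho\in{\rm Hom}(H,P)$ that are trivial on $U$, and such a $\rho$ is trivial on every unipotent element of $H$ (all maximal unipotent subgroups of $H$ are conjugate), hence factors through $H/N$, where $N$ is the subgroup of $H$ generated by its unipotent elements; since $H$ is finite, $H/N$ has order prime to $p$ and is therefore linearly reductive, so ${\rm Hom}(H/N,G)$ is a finite union of $G$-conjugacy classes by Lemma~\ref{lem:finitelinred}, whence so is $C$. So assume $A_0:=\sigma(U)$ is a nontrivial finite unipotent subgroup of $G$. I would then reduce the claim about $\mathcal{T}$ to a claim about subgroups: suppose we know that the subgroups of $P$ which are $G$-conjugate to $A_0$ fall into only finitely many $P$-conjugacy classes, with representatives $A_1,\dots,A_m$. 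Given $\tau\in\mathcal{T}$, after a $P$-conjugation we may assume $\tau(U)=A_j$ for some $j$; and any two $\tau,\tau'\in\mathcal{T}$ with $\tau(U)=\tau'(U)=A_j$ differ by conjugation by an element of $N_G(A_j)$, while $C_G(A_j)$ acts trivially on representations with image $A_j$ and $N_G(A_j)/C_G(A_j)$ embeds into the finite group ${\rm Aut}(A_j)$. Hence only finitely many elements of $\mathcal{T}$ have image $A_j$, and therefore $\mathcal{T}$ meets only finitely many $P$-conjugacy classes.

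It remains to prove the assertion that \emph{the subgroups of $P$ that are $G$-conjugate to the fixed nontrivial finite unipotent subgroup $A_0$ fall into only finitely many $P$-conjugacy classes}, and I expect this to be the main obstacle. Each such subgroup is unipotent, hence contained in a Borel subgroup of $P$; all Borel subgroups of $P$ are $P$-conjugate, so one may assume it lies in $R_u(B)$. I would complete the argument with an explicit analysis of $G={\rm Spin}_5$: since $G$ has semisimple rank $2$ there are just two conjugacy classes of maximal parabolics, and $R_u(B)=U_\beta U_\alpha U_{\alpha+\beta}U_{2\alpha+\beta}$ is a $4$-dimensional, two-step nilpotent group with $2$-dimensional centre whose $T$-action and commutation relations (in particular~(\ref{eqn:noncomm})) are completely explicit. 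Stratifying the unipotent subgroups of $R_u(B)$ by their images in the Levi $L$ and by their intersections with the centre of $R_u(B)$ and with $R_u(P)$---and using, as elsewhere in this section, the regular/singular dichotomy of Lemma~\ref{lem:regular}---one checks that only finitely many $P$-conjugacy classes of subgroups can occur inside a single $G$-conjugacy class. It is exactly here that the low-rank hypothesis (and the exclusion of type $G_2$) is essential, since for larger or more singular root systems $G$-conjugacy of unipotent subgroups of a parabolic need not entail $P$-conjugacy. Granting this finiteness, the two reductions above finish the proof.
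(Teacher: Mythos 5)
Your opening reductions are sound: invoking Proposition~\ref{prop:B2} to reduce the problem to showing that $\mathcal{T}=\{\tau\in{\rm Hom}(U,P)\mid \tau \mbox{ is $G$-conjugate to } \sigma\}$ meets only finitely many $P$-conjugacy classes is legitimate, the case $\sigma(U)=1$ is handled correctly, and the passage from representations to subgroups via the finiteness of $N_G(A_j)/C_G(A_j)$ is fine. But the statement you are left with---that the subgroups of $P$ which are $G$-conjugate to the fixed finite unipotent subgroup $A_0$ fall into finitely many $P$-conjugacy classes---is the entire content of the proposition, and ``stratifying\ldots one checks'' is not a proof of it. This claim is genuinely delicate: the paper's own ${\rm SL}_3$ example (after Proposition~\ref{prop:global_to_local}) produces a one-parameter family of finite unipotent subgroups $\rho_a(H)$ of a Borel subgroup $B$ of ${\rm SL}_3$, all $G$-conjugate to one another, and a direct computation with the induced action of $B$ on $\langle U_{13},U_{23}\rangle$ shows these subgroups fall into \emph{infinitely} many $B$-conjugacy classes. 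So ``$G$-conjugate unipotent subgroups of a parabolic lie in finitely many $P$-classes'' is false in general even for groups of type $A_2$, and whatever makes it true (if it is true) for the maximal parabolic $P_\beta$ of $B_2$ in characteristic $2$ requires a real argument that your proposal does not supply.

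The paper proves the proposition by a different and more targeted route, which you may find instructive to compare. After reducing to $C\subseteq{\rm Hom}(H,P)'_\omega$, it splits on the regular/singular dichotomy and shows in two of the three cases that the element $g$ with $\rho|_U=g\cdot\sigma$ may be taken in $P$: in the regular case because a regular unipotent element lies in a unique Borel subgroup, and in the singular case with $N=\sigma^{-1}(R_u(P))\neq 1$ because (after a $Z(L)$-adjustment) $g$ centralises a nontrivial element of $U_{2\alpha+\beta}$ and $C_G(U_{2\alpha+\beta})$ lies in the unique proper parabolic $P$ by Lemma~\ref{lem:allconj}. In the remaining case ($N=1$, so $\omega\in{\rm Hom}(H,L)'$) the paper does \emph{not} try to show $g\in P$ at all; it instead bounds the whole fibre ${\rm Hom}(H,P)_\omega$ by two $Z(L)R_u(P)$-classes via the explicit cohomological computation of Lemma~\ref{lem:faithful}. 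Your reduction commits you to a uniform subgroup-theoretic finiteness statement that the paper deliberately avoids, and until that statement is proved (or replaced by case-specific arguments of the above kind) the proof is incomplete.
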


\begin{proof}
 By an argument like the one at the start of the proof of Proposition~\ref{prop:B2}, we can assume that $\sigma\in {\rm Hom}(U,P)'$ and $C\subseteq {\rm Hom}(H,P)'$.  Since ${\rm Hom}(H,L)_{\rm ir}$ is a finite union of $L$-conjugacy classes (Theorem~\ref{thm:cr_finite}), we can assume that $C\subseteq {\rm Hom}(H,P)'_\omega$ for some $\omega\in {\rm Hom}(H,L)_{\rm ir}$.  We separate the proof into cases.\smallskip\\
 (a) {\bf $\sigma$ is regular:}  By Lemma~\ref{lem:regular}(b), there exists $u_1\in U$ such that $\sigma(u_1)$ is regular.  Let $\rho\in C$.  Then $\rho|_U= g\cdot \sigma$ for some $g\in G$, so $\rho(u_1)= g\sigma(u_1)g^{-1}$ is regular.  Now $\sigma(u_1)$ and $\rho(u_1)$ both belong to $B$, by construction.  But a unipotent regular element of $G$ belongs to exactly one Borel subgroup of $G$ \cite[Ch.\ 4]{hum_conj}, so $gBg^{-1}= B$, so $g\in P$.  It follows from Proposition~\ref{prop:B2} that $C$ is contained in a finite union of $P$-conjugacy classes, as required.\smallskip\\
 (b) {\bf $\sigma$ is singular:}  Given $N\leq H$, set
 $$ C_N= \{\rho\in C\mid \rho^{-1}(R_u(P))= N\}. $$
 Note that if $C_N$ is nonempty---say, $\rho\in C_N$---then $N\unlhd H$ and $N$ is unipotent, as $\rho$ is faithful and $\rho(N)$ is unipotent, so $N\leq U$ (Corollary~\ref{cor:normaluniptmax}) and $N= (\rho|_U)^{-1}(R_u(P))$.  Since there are only finitely many possibilities for $\sigma^{-1}(R_u(P))$, it is enough to prove that $C_N$ is contained in a finite union of $G$-conjugacy classes, where $N:= \sigma^{-1}(R_u(P))$.  If $N= 1$ then $\rho^L= \omega$ belongs to ${\rm Hom}(H,L)'$.  But then ${\rm Hom}(H,P)_\omega$ is contained in a finite union of $P$-conjugacy classes by Lemma~\ref{lem:faithful}, so in this case we are done.  We can assume, therefore, that $N\neq 1$.  Clearly we can assume that $\sigma= \rho_0|_U$ for some $\rho_0\in C$.
 
 Let $\rho\in C$.  Choose $1\neq u_1\in N$.  Since $\rho$ is singular, $\rho_{\alpha+ \beta}(u_1)= 1$ by Lemma~\ref{lem:regular}(d), so $1\neq \rho(u_1)\in U_{2\alpha+ \beta}$.  Likewise, $\rho_0$ is singular, so $1\neq \sigma(u_1)= \rho_0(u_1)\in U_{2\alpha+ \beta}$.  After conjugating by an element of $Z(L)$ if necessary, we can assume that $\sigma(u_1)= \rho(u_1)$.  Let $A= \langle \sigma(u_1)\rangle$.  Choose $g\in G$ such that $(g\cdot \rho)|_U= \sigma$.  Then $g$ centralises $\sigma(u_1)$, so $g\in C_G(A)$.  Now $C_G(A)$ is non-reductive (see, e.g., \cite[Sec.\ 30.3]{hum}) and $C_G(A)$ is not contained in a Borel subgroup of $G$ because $C_G(A)$ contains $[L, L]$.  It follows from Lemma~\ref{lem:allconj} that $C_G(A)$ is contained in a unique proper parabolic subgroup $P'$ of $G$.  By a similar argument, $P'$ is the unique proper parabolic subgroup of $G$ that contains $C_G(U_{2\alpha+ \beta})$, and it is not hard to see that $P'= P$.  Hence $g\in P$.  It follows from Proposition~\ref{prop:B2} that $C$ is contained in a finite union of $P$-conjugacy classes.\smallskip\\
 This completes the proof.
\end{proof}

\begin{prop}
\label{prop:B2_p=2}
 Let $p= 2$, let $G$ be a simple group of type $B_2$ and let $H$ be finite.  Then $(G,H)$ is a K\"ulshammer pair.
\end{prop}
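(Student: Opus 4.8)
The plan is to reduce immediately to the situation set up in the section, namely $G=\mathrm{Spin}_5$ simply connected, and then to split an arbitrary set of representations
$$ C = \{\rho\in\mathrm{Hom}(H,G)\mid \rho|_U\text{ is }G\text{-conjugate to }\sigma\} $$
(for a fixed maximal unipotent subgroup $U\le H$ and a fixed $\sigma\in\mathrm{Hom}(U,G)$) into its $G$-irreducible part and its ``parabolic'' part, disposing of the former with Theorem~\ref{thm:cr_finite} and of the latter with Proposition~\ref{prop:B2_localtoglobal}, one class of maximal parabolics at a time. First I would settle the isogeny type: since $p=2$, the centre of the simply connected group of type $B_2$ is the infinitesimal group scheme $\mu_2$, which has trivial group of $k$-points (as $x\mapsto x^2$ is injective on $k$), so the central isogeny $\mathrm{Spin}_5\to G_{\mathrm{ad}}$ is a bijection on $k$-points and hence an isomorphism of abstract groups. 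As $H$ is finite, Remark~\ref{rem:abstractiso} then reduces the proposition to the case $G=\mathrm{Spin}_5$, which is exactly the standing hypothesis underlying Propositions~\ref{prop:B2} and \ref{prop:B2_localtoglobal}.

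Now fix $U$, $\sigma$, $C$ as above and note that $C$ is stable under $G$-conjugation. A $G$-irreducible representation is $G$-cr, so the $G$-irreducible members of $C$ lie in $\mathrm{Hom}(H,G)_{\mathrm{cr}}$, which is a finite union of $G$-conjugacy classes by Theorem~\ref{thm:cr_finite} since $H$ is finite; these contribute only finitely many classes. Every remaining $\rho\in C$ has image in a proper parabolic subgroup of $G$, hence in a maximal one, and the maximal parabolics of $G$ form two $G$-conjugacy classes represented by $P_\alpha$ and $P_\beta$. Since $C$ is $G$-stable, it therefore suffices to bound the number of $G$-conjugacy classes meeting $C^\gamma := \{\rho\in C\mid \rho(H)\subseteq P_\gamma\}$ for $\gamma=\alpha,\beta$. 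For $\gamma=\beta$ this is immediate: if $C^\beta\neq\emptyset$, pick $\rho_0\in C^\beta$ and set $\sigma_0=\rho_0|_U\in\mathrm{Hom}(U,P_\beta)$; since $\sigma_0$ is $G$-conjugate to $\sigma$, we have $C^\beta=\{\rho\in\mathrm{Hom}(H,P_\beta)\mid \rho|_U\text{ is }G\text{-conjugate to }\sigma_0\}$, to which Proposition~\ref{prop:B2_localtoglobal} applies directly.

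For $\gamma=\alpha$ I would exploit the exceptional isogeny of $B_2$ available in characteristic $2$: composing the special isogeny $\mathrm{Spin}_5\to\mathrm{SO}_5$, which interchanges long and short roots, with the inverse of the abstract isomorphism $\mathrm{Spin}_5(k)\cong\mathrm{SO}_5(k)$ furnished by the central isogeny, one obtains an abstract group automorphism $\theta$ of $G$ that interchanges the two $G$-conjugacy classes of maximal parabolic subgroups. Choose $g_0\in G$ with $\theta(P_\alpha)=g_0P_\beta g_0^{-1}$. Using that every map out of the finite group $H$ is a morphism of varieties (Remark~\ref{rem:abstractiso}), the assignment $\rho\mapsto g_0^{-1}\cdot(\theta\circ\rho)$ is a bijection of $\mathrm{Hom}(H,G)$ that carries $G$-conjugacy classes to $G$-conjugacy classes, preserves $G$-conjugacy of restrictions to $U$, and maps $C^\alpha$ into $\mathrm{Hom}(H,P_\beta)$. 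Applying Proposition~\ref{prop:B2_localtoglobal} to the image (with $\sigma$ there taken to be $\tau_0|_U$ for any fixed $\tau_0$ in the image, if it is nonempty) and then applying the inverse bijection shows that $C^\alpha$ meets only finitely many $G$-conjugacy classes. Combining the three parts proves the proposition.

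The bookkeeping above is routine; the real content lies in the two structural inputs about type $B_2$ in characteristic $2$, namely that the simply connected and adjoint groups have the same group of rational points, and — crucially — the existence of the exceptional automorphism $\theta$ interchanging the two classes of maximal parabolics. The latter is what lets us recycle the delicate cocycle analysis of Lemmas~\ref{lem:regular}--\ref{lem:faithful} (carried out only for $P_\beta$) on the $P_\alpha$ side instead of repeating it. Both facts are standard consequences of Steinberg's theory of special isogenies, but they are exactly the features of $B_2$ that are unavailable for $G_2$ in characteristic $2$, which is why that case behaves differently.
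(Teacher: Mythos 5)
Your proposal is correct and follows essentially the same route as the paper: reduce to the simply connected form via the abstract isomorphism with the adjoint form in characteristic $2$ together with Remark~\ref{rem:abstractiso}, dispose of representations not landing in a proper parabolic via Theorem~\ref{thm:cr_finite}, use the exceptional bijective isogeny swapping long and short roots to move the $P_\alpha$ case to the $P_\beta$ case, and conclude with Proposition~\ref{prop:B2_localtoglobal}. The only difference is presentational (the paper states the two isogeny facts without spelling out their construction), so there is nothing substantive to add.
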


\begin{proof}
 As $p= 2$, the simply connected and adjoint forms of $B_2$ are isomorphic as abstract groups, so we can assume by Remark~\ref{rem:abstractiso} that $G$ is simply connected since $H$ is finite.  Fix $\sigma\in {\rm Hom}(U,G)$ and let $C\subseteq \{\rho\in {\rm Hom}(H,G)\mid \rho|_U\ \mbox{is $G$-conjugate to $\sigma$}\}$; we prove that $C$ is contained in a finite union of $G$-conjugacy classes.  By Theorem~\ref{thm:cr_finite}, we can assume that every $\rho\in C$ has image contained in some maximal parabolic subgroup of $G$, so we can assume that $C$ is contained in ${\rm Hom}(H,P_\alpha)\cup {\rm Hom}(H,P_\beta)$.  There is a bijective isogeny $f\colon G\ra G$ such that $f$ swaps long roots with short roots, and we can pick $f$ in such a way that $f(P_\beta)= P_\alpha$.  Hence we can assume that $C$ is contained in ${\rm Hom}(H,P_\beta)$.  The result now follows from Proposition~\ref{prop:B2_localtoglobal}.
\end{proof}

\begin{proof}[Proof of Theorem~\ref{thm:lowrank}]
 If $p= 0$ then the result follows from Theorem~\ref{thm:char0KQ}, so we assume that $p> 0$.  By Proposition~\ref{prop:subgrpcrit} and Remark~\ref{rem:reduction}, we can assume that $H$ is finite.  If $G$ is of type $B_2$ and $p= 2$ then the result follows from Proposition~\ref{prop:B2_p=2}, while if $G$ is of type $A_1$, $A_1\times A_1$ or $A_2$ then the result also follows (see Section~\ref{sec:intro}).  The only other possibilities are that $G$ is of type $B_2$ and $p\neq 2$, or $G$ is of type $G_2$ and $p\neq 2, 3$.  But then $p$ is good for $G$, so the result follows from \cite[I.5, Thm.\ 3]{slodowy}.
\end{proof}

\begin{rem}
 We do not know of any $H$ such that $H^0/R_u(H)$ is semisimple and $(G,H)$ is not a K\"ulshammer pair, where $G$ is of type $G_2$ and $p= 3$.
\end{rem}



\end{document}